\pgfplotsset{compat=1.12}
\DeclareFontFamily{U}{BOONDOX-calo}{\skewchar\font=45 }
\DeclareFontShape{U}{BOONDOX-calo}{m}{n}{
  <-> s*[1.05] BOONDOX-r-calo}{}
\DeclareFontShape{U}{BOONDOX-calo}{b}{n}{
  <-> s*[1.05] BOONDOX-b-calo}{}
\DeclareMathAlphabet{\mathcalb}{U}{BOONDOX-calo}{m}{n}
\SetMathAlphabet{\mathcalb}{bold}{U}{BOONDOX-calo}{b}{n}
\DeclareMathAlphabet{\mathbcalb}{U}{BOONDOX-calo}{b}{n}
\numberwithin{equation}{section}
\newtheorem{theorem}{Theorem}[section]
\newtheorem{lemma}[theorem]{Lemma}
\newtheorem{proposition}[theorem]{Proposition}
\newtheorem{cor}[theorem]{Corollary}
\newtheorem{rem}[theorem]{Remark}
\newtheorem{definition}[theorem]{Definition}
\DeclareMathOperator{\p}{\mathbb{P}}
\DeclareMathOperator{\sign}{\mathrm{sign}}
\newcommand{\ind}{\mathbf{1}}
\newcommand{\R}{\mathbb{R}}
\newcommand{\Z}{\mathbb{Z}}
\renewcommand{\tilde}{\widetilde}
\newcommand{\cF}{{\ensuremath{\mathcal F}} }
\newcommand{\cH}{{\ensuremath{\mathcal H}} }
\newcommand{\cS}{{\ensuremath{\mathcal S}} }
\newcommand{\cZ}{{\ensuremath{\mathcal Z}} }
\newcommand{\bP}{{\ensuremath{\mathbf P}} }
\newcommand{\bE}{{\ensuremath{\mathbf E}} }
\DeclareMathSymbol{\leqslant}{\mathalpha}{AMSa}{"36} % nicer `smaller or equal'
\DeclareMathSymbol{\geqslant}{\mathalpha}{AMSa}{"3E} % nicer `larger or equal'
\DeclareMathSymbol{\eset}{\mathalpha}{AMSb}{"3F}     % nicer `emptyset'
\newcommand{\dd}{\,\text{\rm d}}             % a straight d for differentials
\newcommand{\bbE}{{\ensuremath{\mathbb E}} }
\newcommand{\bbL}{{\ensuremath{\mathbb L}} }
\newcommand{\bbN}{{\ensuremath{\mathbb N}} }
\newcommand{\bbP}{{\ensuremath{\mathbb P}} }
\newcommand{\bbR}{{\ensuremath{\mathbb R}} }
\newcommand{\bbZ}{{\ensuremath{\mathbb Z}} }
\newcommand{\ga}{\alpha}
\newcommand{\gd}{\delta}
\newcommand{\gep}{\varepsilon}       % \ge already exists...
\newcommand{\gG}{\Gamma}
\newcommand{\gD}{\Delta}
\newcommand{\gk}{\kappa}
\newcommand{\go}{\omega}
\newcommand{\gl}{\lambda}
\newcommand{\gs}{\sigma}
\def\captionfont@{\footnotesize}
\def\captionheadfont@{\scshape}
\long\def\@makecaption#1#2{%
  \vspace{2mm}
  \setbox\@tempboxa\vbox{\color@setgroup
    \advance\hsize-6pc\noindent
    \captionfont@\captionheadfont@#1\@xp\@ifnotempty\@xp
        {\@cdr#2\@nil}{.\captionfont@\upshape\enspace#2}%
    \unskip\kern-6pc\par
    \global\setbox\@ne\lastbox\color@endgroup}%
  \ifhbox\@ne % the normal case
    \setbox\@ne\hbox{\unhbox\@ne\unskip\unskip\unpenalty\unkern}%
  \fi
  \ifdim\wd\@tempboxa=\z@ % this means caption will fit on one line
    \setbox\@ne\hbox to\columnwidth{\hss\kern-6pc\box\@ne\hss}%
  \else % tempboxa contained more than one line
    \setbox\@ne\vbox{\unvbox\@tempboxa\parskip\z@skip
        \noindent\unhbox\@ne\advance\hsize-6pc\par}%
\fi
  \ifnum\@tempcnta<64 % if the float IS a figure...
    \addvspace\abovecaptionskip
    \moveright 3pc\box\@ne
  \else % if the float IS NOT a figure...
    \moveright 3pc\box\@ne
    \nobreak
    \vskip\belowcaptionskip
  \fi
\relax
}
\def\writefig#1 #2 #3 {\rlap{\kern #1 truecm
\raise #2 truecm \hbox{#3}}}
\def\r{{\mathbb R}}
\def\e{{\mathbb E}}
\def\p{{\mathbb P}}
\def\E{{\bf E}}
\def\d{\, \mathrm{d}}
\newcommand{\ttt}{\mathtt{t}}
\newcommand{\ttdown}{\mathtt{t}^\downarrow}
\newcommand{\ttup}{\mathtt{t}^\uparrow}
\newcommand{\tu}{\mathtt{u}}
\newcommand{\tudown}{\mathtt{u}^\downarrow}
\newcommand{\tuup}{\mathtt{u}^\uparrow}
\newcommand{\ts}{\mathtt{s}}
\newcommand{\tsdown}{\mathtt{s}^\downarrow}
\newcommand{\tsup}{\mathtt{s}^\uparrow}
\newcommand{\tv}{\mathtt{v}}
\newcommand{\tvdown}{\mathtt{v}^\downarrow}
\newcommand{\tvup}{\mathtt{v}^\uparrow}
\newcommand{\arrow}{\mathcalb{a}}
\newcommand{\Brev}{B\mystrut^{\scaleto{\mathtt{rv}\mathstrut}{5pt}}}
\newcommand{\sF}{s^\ssup{F}}
\newcommand{\sFR}{s^\ssup{F, \r}}
\newcommand{\ls}{\widehat{l}}%\newcommand{\ls}{l^{\mathrm{simp.}}}
\newcommand{\rs}{\widehat{r}}%\newcommand{\rs}{r^{\mathrm{simp.}}}
\newcommand{\ms}{\widehat{m}}
\newcommand{\ssup}[1] {{{\scriptscriptstyle{({#1}})}}} %correspond à ^{(#1)} mais en plus petit
\newcommand{\arcth}{{\rm arctanh}}
\newcommand{\loglog}{\log_{\circ 2}}
\def\law{{\buildrel \mbox{\rm\tiny (law)} \over =}}
\newcommand{\tf}{\textsc{f}}
\newcommand\mystrut{\rule{0pt}{6pt}}
\begin{document}

\title[Strong interaction limit for the Continuum Random Field Ising Chain]{Infinite disorder renormalization fixed point for the Continuum Random Field Ising chain}

\author[O. Collin, G. Giacomin and Y. Hu]{Orph\'ee Collin, Giambattista Giacomin and Yueyun Hu}
\address[OC, GG]{Universit\'e  Paris Cit\'e and Sorbonne Universit\'e, CNRS, Laboratoire de Probabilit{\'e}s, Statistique et Mod\'elisation,
            F-75013 Paris, France}
    \address[YH]{Universit\'e Paris XIII, LAGA, Institut Galil\'ee, F-93430 Villetaneuse, France}
\address{Corresponding author: G. Giacomin, giacomin@lpsm.paris}    

\begin{abstract}
We consider the continuum version of the random field Ising model in one dimension: this  model arises naturally as weak disorder scaling limit %, sometimes called \emph{intermediate scaling limit}, 
of the original Ising model. Like for the Ising model,  a spin configuration is conveniently described as a sequence of spin domains with alternating signs (\emph{domain-wall structure}). 
We show that
for fixed  centered  external field and as 
 spin-spin couplings become large, the domain-wall structure scales to a  disorder dependent limit  that coincides with the \emph{infinite disorder fixed point} process introduced by D.~S.~Fisher in the context of zero temperature quantum Ising chains. In particular, our results establish a number of predictions that one can find in \cite{cf:FLDM01}. The infinite disorder  fixed point process for centered external field
 is equivalently described in terms of the process of  \emph{suitably selected}  extrema of a Brownian trajectory  introduced and studied by J. Neveu and J. Pitman \cite{cf:NP89}. This characterization of the infinite disorder fixed point  is one of the important ingredients of  our analysis.
   
\bigskip

\noindent  \emph{AMS  subject classification (2010 MSC)}:
60K37,  %Processes in random environments
82B44, % Disordered systems
%60K35, % Stat mech. type models
60J70, %Applications of Brownian motions and diffusion theory 
82B28 % Renormalization group methods in equilibrium statistical mechanics

\smallskip
\noindent
\emph{Keywords}: disordered systems, Brownian disorder, renormalization group fixed point, Neveu-Pitman extrema process, excursion theory, Skorohod problem
\end{abstract}

\maketitle
\tableofcontents

\section{Introduction}
Understanding the  behavior of disordered systems is often extremely challenging, above all when the disorder modifies in a substantial way the behavior of the system with respect to when disorder is absent (\emph{pure model}). 
One of the most basic  and highly studied  models in this field  is the 
ferromagnetic Random Field Ising Chain (RFIC), that is the probability measure on $\{-1,1\}^{V_N}$, $V_N:= \{1,2, \ldots, N\}$, with (discrete)
probability  density proportional to 
\begin{equation}
\label{eq:RFIC}
\exp\left( J\sum_{j=1}^{N} \gs _j \gs_{j-1} + \sum_{j=1}^N h_j \gs_j \right)\, ,
\end{equation}
for $ \gs \in \{-1,1\}^{V_N}$ and  for a choice of the boundary condition $\gs_0$. In addition, $J\ge 0$ is the ferromagnetic interaction that plays in favor of alignment between spins,  and 
the real values $h_1, h_2, \ldots$ 
 form the sequence of external fields: for us they will be (the realization of) a sequence of IID random variables  with 
 $\bbE [ \exp( t h)] < \infty$ for $t $ in a neighborhood of the origin.  In  the end this work is only about  $\bbE[h]=0$, even if at some point, mostly  for presentation issues, the  case $\bbE[h]\neq 0$ will be  considered too. 
 
{It is well known that this model does not exhibit a phase transition, so there is no critical phenomenon. However, a pseudo-critical  behavior appears in the $J \to \infty$  limit (we always consider $N \to \infty$
before $J \to \infty$). We aim at understanding the configurations of the system in this limit:}

\smallskip

\begin{itemize}[leftmargin=0.4 cm]
\item In the pure case, i.e. $h_j=0$ for every $j$, the model is equivalent to a Markov chain with state space $\{-1, +1\}$ with transition probability from $+1$ (respectively, $-1$) to $-1$ (respectively, $+1$) equal to  $1/(1+e^{2J})$. % \sim e^{-2J}$. 
Therefore the typical configurations for $J$ large are very long %($\propto e^{2J}$) 
\emph{domains} of aligned spins: 
the distance between the  \emph{walls} (i.e., the spin flip locations that identify the interfaces between domains of different sign) are independent geometric variables with mean  $1+e^{2J}$.
\item As soon as disorder is present the model is no longer solvable, but an Imry-Ma argument (see for example \cite[p.~373]{cf:IM}) rapidly leads to guessing that the domain-wall structure changes radically 
and that the expected spatial scale drastically reduces to $J^2/ \text{Var}(h)$. The Imry-Ma argument may be given as follows: if we are in a (say) $-1$ domain region, 
switching to $+1$ in an interval $I_L$ of length $L$ costs two spin flips (i.e., $4J$) but the energy change due to the external fields 
is $2 \sum_{j\in I_L} h_j$ which, for $L$ large, is $2 \sqrt{L\, \text{Var}(h)}$ times a Gaussian standard variable.  Hence, if  $L\propto J^2/\text{Var}(h)$, by choosing 
wisely (and in a disorder dependent fashion!) $I_L$, the energetic gain from the external random field can overcome the spin flip cost. This not only strongly suggests that  $J^2/\text{Var}(h)$
is the correct space scale, but also that the wall positions  heavily depend on the disorder configuration  $(h_j)$. 
 \end{itemize}
\smallskip

The  Imry-Ma argument we just outlined therefore suggests that disorder is \emph{strongly relevant} for the RFIC in the $J \to \infty$ limit. 
This terminology -- at least the term \emph{relevant} -- is not only suggestive of the fact that disorder  changes the behavior of the system, and in a substantial way: it is a standard terminology  
in the Renormalization Group (RG) context (see for example \cite[\S~5.3]{cf:GB} and \cite{cf:IM,cf:Vojta}). 
Disorder is dubbed \emph{relevant} when it changes the RG fixed point (in practice, the large scale behavior of the system) with respect to the pure system: as we are going to explain, in the case we are considering the disorder ends up having an \emph{infinitely strong effect}. 
To make a long story short, it is  in the RG community that a very precise description of the domain-wall structure has been set forth. In fact, in \cite{cf:FLDM01}
 D.~S.~Fisher, P.~Le Doussal and C.~Monthus claim that the $J \to \infty$ domain-wall structure is sharply captured by 
 the \emph{infinite disorder fixed point} identified by D.~S.~Fisher in the framework of disordered quantum  chains \cite{cf:F92,cf:F95}. 
 
 It should be noted that in \cite{cf:FLDM01} the equilibrium case is considered as well as the problem of the approach to equilibrium: for the equilibrium case a vast literature is cited (we single out the extensive review monograph 
  \cite{cf:Luck-book}),  but the precise description of the domain-wall structure,  to our knowledge, first appears in \cite{cf:FLDM01}. We only consider the equilibrium model.
 
  Fisher's infinite disorder fixed point and, more generally, Fisher's approach is expected to apply to a wide universality class of models.  
  We  refer to  the review papers \cite{cf:IM,cf:IM2,cf:Vojta} for extended accounts, but we stress that how wide this universality class is represents a very challenging problem, and not only from a mathematical viewpoint  
  (for example, in \cite{cf:CDDHLS,cf:DR} a special pinning model is shown not to belong to this class, suggesting that general pinning models \cite{cf:GB}, see \cite{cf:IM2} for the RG approach,  do not belong either).  Nevertheless it is worth pointing out that the most basic model to which Fisher's ideas apply is Random Walk in Random Environment (RWRE). 
  The mathematical understanding of  RWRE, above all in  one dimension, is extremely advanced (see \cite{Zeitouni}) and the agreement with Fisher's RG approach has been verified at least in part. We are not going into a detailed discussion  of this issue and we  refer to  \cite{cf:Cheliotis,cf:BF08}. We rather point to the fact that in   \cite{cf:Cheliotis,cf:BF08}, see also \cite{cf:Faggionato}, it is spelled out that  Fisher's infinite disorder fixed point is a stochastic process that was already known in the mathematical literature:
  it is the Brownian motion $\gG$-extrema process introduced and studied by J. Pitman and J. Neveu in \cite{cf:NP89}.  
  In Fisher's approach instead,  the $\gG$-extrema process arises from a dynamical coarse graining procedure that progressively absorbes  the small spin domains into the neighboring ones: the coarse graining procedure is stopped when all the domains have size  $\gG$ or more ($\gG={2J}$ in our case). 
  We insist on the fact that Fisher's fixed point, alias Pitman-Neveu process, just depends on the disorder (that is, on an IID sequence that, in the large scale limit, becomes a white noise or, equivalently, a Brownian motion). {We stress that we have kept the original presentation of the Pitman-Neveu process, which is rather a family of $\gG$-index processes. But, as spelled out in Remark~\ref{rem:NP}, the Pitman-Neveu process enjoys a scale invariance 
 and Fisher's fixed point can be seen, in a more customary fashion, as the  Pitman-Neveu process with $\gG=1$.} 
  Two sources of  randomness are present in a  disordered system: the disorder itself, and the thermal fluctuations of the spins. Characterizing the disorder as  \emph{infinite} in Fisher RG fixed point is directly linked to the fact that disorder (drastically!) dominates over thermal fluctuations whenever Fisher's approach applies. 

\smallskip

The purpose of this work is to give a precise statement  that explains how  Fisher's infinite disorder fixed point captures the domain-wall structure in a continuum version of the one-dimensional Ising chain with centered random external field. The continuum version of the Ising model we consider appears in the literature since  the end of the  60s \cite{cf:MW1} where the authors
{ analyze  the weak disorder scaling limit for the two dimensional Ising model with interaction disorder of \emph{columnar} type \cite{cf:MW1,cf:CGG}:
the key point is that the two dimensional partition function is written in terms of a family of (one dimensional) RFIC transfer-matrices. We signal that the naturally related, but very different issue of  the weak disorder scaling limit of the two dimensional  Ising model with random external field is considered in \cite{cf:BS22}.  }
Weak disorder scaling limits were first introduced in \cite{cf:FrLl} in a different context (Anderson localization in  one dimension). For a review of the literature on this limit we refer to \cite{cf:CGG,cf:CTT} and references therein. Much more generally and  recently, weak disorder scaling limits have attracted  a lot of attention, both because of the mathematicians' efforts to understand disorder relevance in statistical mechanics models and because of  their link with singular SPDEs \cite{cf:AKQ,cf:CSZ}. 
We stress  that, in  spite of the fact that these continuum models arise as \emph{weak} disorder scaling limit of the original models,  they appear to capture phenomena in which disorder \emph{strongly} dominates on large scales. And this is in fact what we prove for the model we consider: we  show that \emph{to leading order} the behavior of  the model is fully captured by  Fisher's infinite disorder fixed point. 

\section{The model and the main results}
\subsection{The Continuum RFIC}
\label{sec:model}
$(N_t)_{t\ge 0}$ denotes a Poisson process with intensity 
\begin{equation}
\label{eq:gep}
\gep\,:=\, e^{-\Gamma}\,.
\end{equation}
We denote by 
 $\bP_\gG$ the law of $(N_t)_{t\ge 0}$ and we introduce the free spin process ${\bf s}_t:=(-1)^{N_{t}}\in \{-1,1\}$ for $t \ge 0$. 
 So, under $\bP_0$, $(N_t)_{t\ge 0}$ is a Poisson process of rate $\exp(-0)=1$. {We follow the standard convention of choosing the right-continuous version of $(N_t)_{t\ge 0}$.}
$\bbP$ denotes the law of $B=(B_t)_{t\ge 0}$,  which is a standard  Brownian motion.

The partition function of the Continuum Random Field Ising Chain (Continuum RFIC) of length $\ell>0$ is
\begin{equation}
\label{eq:Z0}
\begin{split}
 Z^{\mathtt{f}}_{\gG, B_\cdot, \ell}\,:&=\,  \E_\Gamma \left[ \exp\left( \int_{0}^\ell {\bf s}_t \left(\dd B_t+\alpha \dd t\right)\right)    \right]
 \\&=\, 
 \E_0 \left[ \exp\left(- \gG N_\ell + \int_{0}^\ell {\bf s}_t \left(\dd B_t+\alpha \dd t\right)\right)    \right]\Big/ \E_0 \left[ \exp\left(- \gG N_\ell\right)\right] \, ,
 \end{split}
 \end{equation}
with $\ga \in \bbR$ an asymmetry parameter that we introduced for presentation purposes -- it corresponds to $\bbE [h]$ in the (discrete) RFIC -- 
and  the results in this work just concern the case $\ga=0$: so, unless explicitly specified otherwise, $\ga=0$. Note that  in the standard RFIC \eqref{eq:RFIC} 
 we can write the energy due to the nearest-neighbor interaction as $JN$ minus $\gG(=2J)$ times the number of nearest-neighbor spins that have different sign: hence 
 the second line in \eqref{eq:Z0} makes clear  the link between RFIC and Continuum RFIC (see Appendix~\ref{sec:C0} for a deeper analysis of this link).
 
The partition function \eqref{eq:Z0} has the superscript $\mathtt{f}$ because the boundary condition on the right is free (and on the left it is $+1$: note that ${\bf s}_{0}=+1$ since $N_0=0$). But we will mostly work with 
\begin{equation}
\label{eq:Z0+}
 Z_{\gG, B_\cdot, \ell}\,:=\,  \E_\Gamma \left[ \exp\left( \int_{0}^\ell {\bf s}_t \left(\dd B_t+\alpha \dd t\right)\right)  ; \, {\bf s}_\ell=+1  \right]\, .
 \end{equation}
That is, the boundary conditions are $+1$ on both ends:
we use the standard notation $\E_\Gamma [X; A]=\E_\Gamma [X \ind_A]$ with $X$  a random variable and $A$ an event.
 There is no fundamental reason to choose these boundary conditions and in fact 
the results we prove hold with any choice of boundary conditions $\pm 1$ on the right and on the left (see Remark \ref{rem:bc}). Hence the results hold also for 
the free model  corresponding to \eqref{eq:Z0}. But for definiteness, and without loss of generality, we focus on \eqref{eq:Z0+}.

%$(N_t)_{t\ge 0}$ denotes a Poisson process with intensity $\varepsilon:=e^{-\Gamma}$, $\bP_\gG$ is its law, and, for $\ell>0$,  we introduce the free spin process   ${\bf s}_t:=(-1)^{N_{t-\ell}}\in \{-1,1\}$ for $t \ge -\ell$. 
 %It will be practical to work also with $\eta_t:=(1-{\bf s}_t)/2 \in \{0, 1\}$. 

%$\bbP$ denotes the law of $B=(B_t)_{t \in \bbR}$,  which is a standard bilateral Brownian motion.

%The partition function of the Continuum Random Field Ising Chain (CRFIC) is
%\begin{equation}
%\label{eq:Z}
 %Z_{\gG, B_\cdot, \ell}\,:=\,  \E_\Gamma \left[ \exp\left( \int_{-\ell}^\ell {\bf s}_t \left(\dd B_t+\alpha \dd t\right) \ind_{{\bf s}_\ell=+1} \right) \right]\, ,
 %\end{equation}
%with $\ga \in \bbR$ an asymmetry parameter that we introduced for presentation purposes -- it corresponds to $\bbE [h]$ in the (discrete) RFIC -- 
%and  the results in this work just concern the case $\ga=0$: so, unless explicitly specified otherwise, $\ga=0$. The choice of the boundary conditions $+1$ on both ends, note that ${\bf s}_{-\ell}=+1$ since $N_0=0$, is for definiteness. 

We denote by $\mu_{\gG, B_\cdot, \ell}$ the  Gibbs measure associated with $Z_{\gG, B_\cdot, \ell}$, in particular for $t \in [0, \ell]$
\begin{equation}
\mu_{\gG, B_\cdot, \ell}\left( \mathbf{s}_t=+1 \right) \, =\, 
  \E_\Gamma \left[ \exp\left( \int_{0}^\ell {\bf s}_t \left(\dd B_t+\alpha \dd t\right)\right) ; \, \mathbf{s}_t=+1, \, 
  {\bf s}_\ell=+1 \right]\Big / Z_{\gG, B_\cdot, \ell}\, .
\end{equation}
{By construction, this Gibbs measure gives probability one to the right-continuous configurations $\mathbf{s}_\cdot$ taking values in $\{-1,+1\}$ and 
such that $\mathbf{s}_0=\mathbf{s}_\ell=+1$.}

\subsection{Neveu-Pitman $\gG$-extrema and Fisher's RG fixed point}
\label{sec:Gextrema}

To present Fisher's RG fixed point and our result we need some notation. For the next paragraph it is useful to refer to
Fig.~\ref{fig1}.
\smallskip

For every continuous real-valued process $\xi=(\xi_t)_{t\ge 0}$,   let  
$T_\xi(x)$ be the first hitting of $x\in \r $, that is
\begin{equation}
T_\xi(x)\,:=\,  \inf\left\{t>0: \xi_t=x\right\}\, .
\end{equation}  
Furthermore define 
\begin{equation}
 \xi^\uparrow_t\,:=\,  \sup_{0\le r\le s\le t} (\xi_s-\xi_r), \qquad \xi^\downarrow_t:= \sup_{0\le r\le s\le t} (\xi_r-\xi_s)\,,  
 \end{equation}
 and for every $r>0$  
\begin{equation}
 \label{eq:completenotation1}
 \ttup_\xi(r)\,:=\, \inf\left\{t>0: \xi^\uparrow_t>r\right\}, 
 \qquad \ttdown_\xi(r)\,:=\, \inf\left\{t>0: \xi^\downarrow_t>r\right\} \, . 
 \end{equation}
Let 
\begin{equation}
 \label{eq:completenotation2}
\begin{split}
 \tuup_\xi(r) \,&:=\,  \sup\left\{0\le t \le \ttup_\xi(r): \xi_t= \inf_{0\le s \le \ttup_\xi(r)} \xi_s\right\}\, , 
 \\
\tudown_\xi(r)\,&:=\, \sup\left\{0\le t \le \ttdown_\xi(r): \xi_t= \sup_{0\le s \le \ttdown_\xi(r)} \xi_s\right\}\,. 
\end{split}
\end{equation}

When $\xi=B$, we will remove the subscript $\xi$ in $\ttup, \ttdown, \tuup$ and $\tudown$. 

\smallskip

Let us consider first the standard Brownian motion $(B_t)_{t \ge 0}$ (issued from $0$).
 If $\ttup(\Gamma) < \ttdown(\Gamma)$, we can define alternatively the sequence of rises and drops of height $\Gamma$ such that  $\ttup_1:= \ttup(\Gamma)< \ttdown_1(\Gamma):=\ttdown(\Gamma) <\ttup_2(\Gamma) <\ttdown_2(\Gamma) <...  $: For  $n\ge 2$, \begin{equation}
 \begin{split}
 \ttup_n(\Gamma)\,&:=\, \inf\left\{t>\ttdown_{n-1}(\Gamma): \sup_{\ttdown_{n-1}(\Gamma) \le r \le s \le t} (B_s- B_r) > \Gamma\right\}\,,
 \\
 \ttdown_n(\Gamma)\,&:=\,\inf\left\{t>\ttup_n(\Gamma): \sup_{\ttup_n(\Gamma) \le r \le s \le t} (B_r- B_s) > \Gamma\right\}\,.
 \end{split}
 \end{equation}
  Let also $\tuup_1(\Gamma)=\tuup(\Gamma)$, $\tudown_1(\Gamma)=\tudown(\Gamma)$ and for $n\ge 2$ $\tuup_n (\Gamma)\in [ \ttdown_{n-1}(\Gamma), \ttup_n(\Gamma)]$ and $\tudown_n(\Gamma) \in [\ttup_n(\Gamma), \ttdown_n(\Gamma)]$ be such that  
  \begin{equation}
   B_{\tuup_n (\Gamma)}\,=\,\inf_{\ttdown_{n-1}(\Gamma) \le s \le \ttup_n(\Gamma)} B_s, 
   \qquad B_{\tudown_n(\Gamma)}\,=\,\sup_{\ttup_n(\Gamma) \le s \le 
 \ttdown_n(\Gamma)} B_s\, .
 \end{equation}

\begin{figure}[ht]
\begin{center}
  \includegraphics[width=0.95\linewidth]{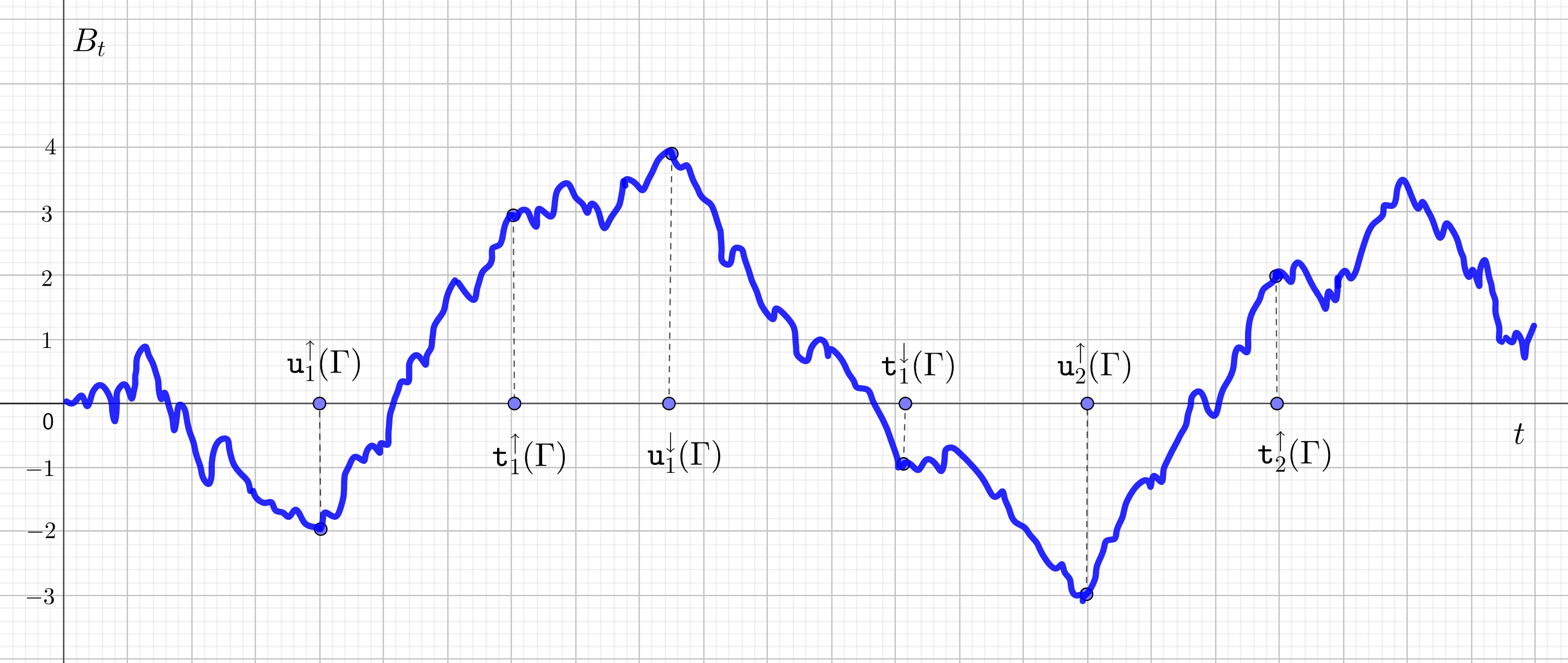}
  \end{center}
  \caption{%\leftskip=1truecm \rightskip=1truecm
  \scriptsize  
  A Brownian trajectory  for which we identify three $\Gamma$-extrema, $\gG=5$. In this  case 
  $\ttup(\Gamma)<\ttdown(\Gamma)$. }  
  \label{fig1}
\end{figure}

 Similarly if $\ttdown(\Gamma)< \ttup(\Gamma) $, we let $\ttdown_1(\Gamma):=\ttdown(\Gamma)< \ttup_1(\Gamma):= \ttup(\Gamma)<  \ttdown_2(\Gamma) < \ttup_2(\Gamma)  <...$ We define the sequences $\tudown_n(\Gamma)$ and $\tuup_n(\Gamma)$ similarly. 
 
 Denote by $0<\tu_1(\Gamma)< \tu_2(\Gamma)< ...$ the ordered sequence formed by  $\{\tudown_n(\Gamma), \tuup_n(\Gamma)\}_{n=1, 2 , \ldots}$. Following Neveu and Pitman \cite{cf:NP89}, $(\tu_n(\Gamma))_{n\ge1}$ are called the times of $\Gamma$-extrema of $(B_t)_{t\ge 0}$.
We associate to the sequence $(\tu_n(\Gamma))_{n=1,2, \ldots}$ the auxiliary (or \emph{arrow}) sequence $(\arrow_n)_{n=1,2, \ldots}$:
if $\arrow_n=+1$ (respectively, if $\arrow_n=-1$),  then $\tu_n(\Gamma)$ is a maximum (respectively, a minimum). 
 
 We then introduce the Fisher trajectory $\sF_\cdot=(\sF_t)_{t\ge 0}$. For $t \ge 0$ we set
 \begin{equation}
 \label{eq:sFisher}
 \sF_t\, :=\,  \sum_{n=1}^\infty
  \arrow_{n} \ind_{ (\tu_{n-1}(\Gamma), \tu_n(\Gamma))}(t) \, \in \, \{-1,0,+1\}\,,
 \end{equation}
 where for this definition we have introduced $\tu_0(\Gamma)=0$. % and $\arrow_0=-\arrow_1$.
 In an informal way, $\sF_t=+1$ (respectively $-1$) if $t$ is in an 
\emph{ascending stretch} (respectively, in a  \emph{descending stretch}) and $\sFR_t=0$ if there is a $\gG$-extremum at $t$.

 \smallskip
 
 \begin{rem}
 \label{rem:NP}
 Note that the $\ttup_n(\gG)$ and $\ttdown_n(\gG)$ times are stopping times, but the ${\tt u}_n(\gG)$'s are not stopping times.
 Nevertheless they generate a remarkable renewal structure and relevant to us is notably that
  $((\tu_{n+1}(\gG)- \tu_n(\gG), \vert B_{\tu_{n+1}(\gG)}-B_{\tu_{n}(\gG)}\vert))_{n=1, 2, \ldots}$ is an IID sequence of 
  random vectors in 
  $(0, \infty)^2$ whose distribution is explicitly known \cite{cf:NP89}. In particular, both entries of the vector have exponentially decaying probability tails and the expectation is $( \gG^2,2\gG)$ {and the following scaling invariance holds: for every $\gG>0$
  \begin{equation}
  \left(\frac{\tu_{2}(\gG)- \tu_1(\gG)}{\gG^2}, \frac{\vert B_{\tu_{2}(\gG)}-B_{\tu_{1}(\gG)}\vert}\gG\right) \stackrel{\mathrm{law}}=
(\tu_{2}(1)- \tu_1(1), \vert B_{\tu_{2}(1)}-B_{\tu_{1}(1)}\vert)\,.
  \end{equation}}
   \end{rem}
 
 \smallskip
 
 The claim in \cite{cf:FLDM01} is that, in the limit $J\to \infty$, the spin configuration  is fully captured by  
 $s^\ssup{F}_\cdot$. The computations are performed by assuming precisely ${\bf s}_\cdot=s^\ssup{F}_\cdot$,
 but this is clearly impossible because the thermal fluctuations induce wall fluctuations, i.e. fluctuations of the loci where the spin domains switch signs, of at least $O(1)$. The authors of \cite{cf:FLDM01} are aware of this fact and they typically do not push their claims beyond what is expected to hold true (and, as we will explain in Section~\ref{sec:exactresults}, for this model one can compute exactly the free energy for every $J$: this can of course be exploited in a number of ways, in particular as a sanity    
check). 
 
 \smallskip
 
 Here is our main result:
 
\medskip

  \begin{theorem}
 \label{th:main}  
  We set $\alpha=0$.   For every $\gG>0$ and for   almost every Brownian trajectory $B_\cdot$ the following limit  exists
  \begin{equation}
  \label{eq:main-3}
  \lim_{\ell \to \infty}
  \frac1\ell \int_0^\ell  \ind_{{\bf s}_t \neq \sF_t} \dd t\, =:\, D_\gG\, ,
  \end{equation} 
  in $ \mu_{\gG, B_\cdot, \ell}$-probability. Moreover $D_\gG$ is a positive constant  and 
   \begin{equation}
   \label{eq:main-2}  
   D_\gG \overset{\gG \to \infty}= O \left(\frac {\loglog \gG}{\Gamma}\right)\, ,
   \end{equation} 
   where
   $\loglog (\cdot):= \log (\log(\cdot))$.  
 \end{theorem}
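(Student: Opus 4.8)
\medskip
\noindent\textbf{Outline of a proof.}
The plan is to pass from the Gibbs measure to its one-point function, express the latter through two ``effective field'' diffusions, approximate these by a Skorohod double-reflection, and then run an ergodic/renewal argument.

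\emph{Effective fields and locality.} Cutting $[0,\ell]$ at $t$ and using the factorization of the partition function, one gets $\mu_{\gG,B_\cdot,\ell}(\mathbf s_t=+1)=\tfrac12(1+\ms_t)$ with $\ms_t=\tanh\!\big(\tfrac12(\ls_t+\rs_t)\big)$, where $\ls_t$ (resp.\ $\rs_t$) is the logarithm of the ratio of the $(+,+)$ and $(+,-)$ partition functions on $[0,t]$ (resp., read from the right, on $[t,\ell]$). Differentiating in $t$, $\ls_\cdot$ solves an It\^o equation of the form $\dd\ls_t=2\,\dd B_t-2\gep\sinh(\ls_t)\,\dd t$ started at the left boundary datum, and $\rs_\cdot$ solves the same equation driven by the reversed path and started at $\ell$. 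Since $\gep=e^{-\gG}$, the drift is negligible while $|\ls_t|\lesssim\gG$ and becomes an $O(1)$ restoring force once $|\ls_t|$ reaches $\gG$; hence $\ls_t$ is, up to a controllable error, the Skorohod double-reflection in $[-\gG,\gG]$ of a translate of $2B$, and $\rs_t$ the analogous reflection of the reversed path. The dynamics is \emph{contracting} precisely when $|\ls|$ is near $\gG$, which happens only near the $\gG$-extrema of $B$ (density $\Theta(\gG^{-2})$); thus two solutions started from different data merge at rate $\Theta(\gG^{-2})$, so $\ls_t,\rs_t,\ms_t$ depend on $B$ only through a window of size $O(\gG^2\log(1/\delta))$ around $t$ up to an error $\delta$. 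Together with the (exponentially tailed) locality of the $\gG$-extrema construction, this makes $\tfrac12(1-\sF_t\ms_t)$ a shift-covariant local functional of the increments of $B$, up to a uniformly small error.

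\emph{Comparison with $\sF$.} Inserting the Skorohod description into the Neveu--Pitman picture shows that away from the $\gG$-extrema one has $\sign(\ls_t+\rs_t)=\sF_t$ and $|\ls_t+\rs_t|$ of order $2\gG$; e.g.\ on an ascending stretch $(\tu_{n-1}(\gG),\tu_n(\gG))$ one finds $\ls_t\approx\min\{\gG,-\gG+2(B_t-B_{\tu_{n-1}(\gG)})\}$ and $\rs_t\approx\min\{\gG,-\gG+2(B_{\tu_n(\gG)}-B_t)\}$, symmetrically on descending stretches. Hence $\ms_t$ is exponentially close to $\sF_t\in\{\pm1\}$ except on short neighbourhoods of the $\tu_n(\gG)$ (and of the ``almost-$\gG$'' drawdowns of $B$ from its running extrema inside a stretch), where $\ls_t+\rs_t$ is small, $\mathbf s_t$ is essentially a fair coin, and the disagreement rate is of order $\tfrac12$. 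Via Williams' decomposition of the Brownian path near its extrema, the length of this ``small $|\ls+\rs|$'' zone attached to a $\gG$-extremum is controlled by the time a $\mathrm{BES}(3)$-type profile issued from $0$ spends below $O(1)$ before leaving at the stretch scale $\gG^2$: a random variable with a heavy but --- after this deterministic truncation --- integrable tail, whose truncated mean carries a logarithmic factor.

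\emph{Existence, positivity, concentration.} The $\mu_{\gG,B_\cdot,\ell}$-mean of $\tfrac1\ell\int_0^\ell\ind_{\mathbf s_t\ne\sF_t}\dd t$ is $\tfrac1\ell\int_0^\ell\tfrac12(1-\sF_t\ms_t)\dd t$, and the fluctuation about it is $o(1)$ for a.e.\ $B$ by a second-moment bound: under $\mu_{\gG,B_\cdot,\ell}$ the spin process is Markov and $\ms$ is bounded away from $\pm1$ on a set of positive density, so the covariance of $\ind_{\mathbf s_t\ne\sF_t}$ and $\ind_{\mathbf s_{t'}\ne\sF_{t'}}$ decays on the scale $\gG^2$ of the $\gG$-extrema spacing (regeneration of the domain structure near each extremum), and the double integral is $o(\ell^2)$. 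For the mean, locality plus Birkhoff's ergodic theorem for the (ergodic) increment field of $B$ --- or, equivalently, the renewal--reward theorem using the IID structure of Remark~\ref{rem:NP} with regeneration at the $\tu_n(\gG)$ --- yields $\tfrac1\ell\int_0^\ell\tfrac12(1-\sF_t\ms_t)\dd t\to D_\gG:=\bbE[\tfrac12(1-\sF_0\ms_0)]$ for a.e.\ $B$, a deterministic constant, the same for every admissible boundary condition since it is a bulk quantity. Positivity is immediate: with positive probability $0$ lies within an $O(1)$-neighbourhood of a $\gG$-extremum, where $|\ms_0|\le\tfrac12$, so $D_\gG\ge\tfrac18\,\p(|\ms_0|\le\tfrac12)>0$.

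\emph{The bound, and the main obstacle.} It remains to estimate $D_\gG=\bbE[\tfrac12(1-\sF_0\ms_0)]$. Split according to the position of $0$ relative to the nearest $\gG$-extrema: on the bulk event $\tfrac12(1-\sF_0\ms_0)\le e^{-|\ls_0+\rs_0|}=O(e^{-\gG})$; on the transition events one integrates $\bbE[e^{-|\ls_0+\rs_0|}\wedge1]$ against the $\Theta(\gG^{-2})$ density of $\gG$-extrema (and the density of almost-$\gG$ drawdowns inside stretches), using the exact post-extremum law of $B$ from \cite{cf:NP89} and the estimate above of the small-$|\ls+\rs|$ zone; the $\mathrm{BES}(3)$-type computation, truncated at the stretch scale, produces the $\loglog\gG$ factor and the overall $O(\loglog\gG/\gG)$. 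The technical heart, and the main obstacle, is precisely this last estimate together with the \emph{quantitative} Skorohod comparison near the $\gG$-extrema --- controlling the $O(\log\gG)$-scale corrections coming from the softness of the walls in the $\ls,\rs$-dynamics and combining them with the fine Neveu--Pitman / excursion description of $B$ near its $\gG$-extrema; securing the uniform-in-$B$ correlation decay needed for the concentration step (handling the rare nearly degenerate regions where $|\ms|$ is not bounded away from $1$) is the other delicate point.
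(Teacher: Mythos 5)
Your outline follows, at the structural level, essentially the same route as the paper: the one-point marginal written through the left/right effective-field diffusions with drift $-2\gep\sinh(\cdot)$ (the paper's $l^\ssup{a},r^\ssup{b}$ of \eqref{eq:la}; beware that you use $\ls,\rs,\ms$ for these, while in the paper these symbols denote the hard-wall objects), their approximation by the Skorohod double reflection in $[-\gG,\gG]$, the Neveu--Pitman $\gG$-extrema with $\sFR_0=\sign$ of the reflected sum, Birkhoff's theorem for the quenched mean and a second-moment bound under the Gibbs measure for the convergence in $\mu_{\gG,B_\cdot,\ell}$-probability. This matches Proposition~\ref{th:formain}, Lemma~\ref{th:invtimesimp} and Theorem~\ref{th:mainsimp}. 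But the two quantitative pillars on which \eqref{eq:main-2} rests are missing, and where you do give specifics they point at the wrong mechanisms.

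First, the bound $O(\loglog\gG/\gG)$. Your accounting --- disagreement zones of BES(3)-occupation size attached to the $\gG$-extrema, integrated against their $\Theta(\gG^{-2})$ density, with a deterministic truncation ``producing the $\loglog\gG$'' --- neither yields the correct order nor identifies the actual source of the iterated logarithm. The $\Theta(1/\gG)$ bulk of $D_\gG$ does not come from $O(1)$-neighbourhoods of the extrema (those contribute only $O(\gG^{-2})$, since the relevant BES(3) occupation times have finite mean); it comes from the bulk of the independent uniform laws of the reflected fields, i.e.\ pathwise from \emph{borderline} stretches whose height exceeds $\gG$ by only $O(1)$ (probability $\Theta(1/\gG)$ per stretch), on which $|\ls_t+\rs_t|=O(1)$ over a positive fraction of the stretch; this is exactly the computation giving $\log 2/\gG$ in \eqref{eq:comp-U}. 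The $\loglog\gG$ in \eqref{eq:main-2} has nothing to do with occupation times: it is the price of the quantitative soft-wall/hard-wall comparison, namely $|l_0-\ls_0|\le\loglog\gG+C_\gk$ outside an event of probability $O(\gG^{-\gk})$ (Lemma~\ref{th:modelsclose}), where the $\loglog\gG$ arises from the tail of the invariant density $p_\gG$ and from taking logarithms of the $O(\log\gG)$ high-probability bounds on exponential functionals such as $\int e^{-2(B_{\tudown(\gG)}-B_t)}\dd t$ (Lemma~\ref{th:excthlemma}, proved by excursion theory and Williams' decomposition), fed into the two-sided comparison Lemma~\ref{th:forTrandom}. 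You explicitly defer this step as ``the technical heart, and the main obstacle'', so the bound \eqref{eq:main-2} is not established by your argument. Second, the concentration step: you assert correlation decay on the scale $\gG^2$ via ``regeneration near the extrema'' and flag the nearly degenerate regions as an unresolved difficulty; likewise your locality/merging claims (windows of size $O(\gG^2\log(1/\delta))$) are unproven. None of this is needed: the Markov property of the Gibbs measure together with the deterministic contraction $\tanh\bigl((l^\ssup{2}_t-l^\ssup{1}_t)/4\bigr)\le e^{-2\gep(t-s)}\tanh\bigl((l^\ssup{2}_s-l^\ssup{1}_s)/4\bigr)$ of Lemma~\ref{th:OCbound} --- whose tail $\arcth(e^{-2\gep t})$ is integrable --- already gives an $O(\ell)$ error in the second moment, hence variance $O(1/\ell)$, uniformly in the disorder, bypassing the degenerate-region issue entirely.
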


 \medskip

 A formula for $D_\gG$ appears in \eqref{eq:formain1}.

 In order to properly discuss  Theorem~\ref{th:main}, in particular explaining the relations with the existing literature and exposing the open issues that it raises, we introduce some of the mathematical tools we exploit. We just anticipate that
 $\liminf_{\gG\to\infty} \gG D_\gG \ge 1/2$ (see Remark~\ref{rem:overlap}).

\subsection{A two sided view of the model}
\label{sec:defs}
It is helpful from the technical viewpoint to consider the model on 
$[a, b]$, any $a<b$, instead of simply $[0, \ell]$. In view of this generalized setup it is natural to introduce
the standard bilateral Brownian motion
$B=(B_t)_{t \in \bbR}$.

%Instead of considering the model on $[0, \ell]$, we can consider it on 
%$[a, b]$, any $a<b$. In particular, we will in the end show Theorem~\ref{th:main}  with $b=-a=\ell$: exactly the same statement 
%holds, but of course in this case
%the free process $({\bf s}_t)_{t\ge 0}$ should be replaced by 
%$({\bf s}_t)_{t\ge -\ell}$ defined by ${\bf s}_t= (-1)^{N_{t+ \ell}}$ and the Gibbs measure $\mu_{\gG, B_\cdot, \ell}$ should be defined in the obvious way. 

For the general case of the model on $[a,b]$ the free process is $({\bf s}_t)_{t\ge a}$ with 
${\bf s}_t= (-1)^{N_{t-a}}$. 
The partition function of the Continuum RFIC on $[a,b]$ becomes
\begin{equation}
\label{eq:Z}
 Z_{\gG, B_\cdot, a,b}\,:=\,  \E_\Gamma \left[ \exp\left( \int_{a}^b {\bf s}_t \dd B_t \right) ; \, {\bf s}_b=+1  \right]\, ,
 \end{equation}
 and we note that we  set $\ga=0$. The stochastic integral in \eqref{eq:Z} should be understood as $\int_{0}^{b-a} {\bf s}_{a+t} \dd B^\ssup{a}_t$, with
 \begin{equation} \label{def:Bat} 
  B_t^\ssup{a}\,:=\,  B_{t+a}-B_a, \qquad t\ge 0. 
 \end{equation}  
 The associated Gibbs measure is denoted by $\mu_{a,b}= \mu_{\gG, B_\cdot, a, b}$.

 The reason to prefer this setting is because it permits to explain (and exploit) in a more natural way a symmetry that is present in the system and that we present now. It will be practical to work also with $\eta_t:=(1-{\bf s}_t)/2 \in \{0, 1\}$. In particular
\begin{equation}
\label{eq:ZBab}
 Z_{\gG, B_\cdot, a,b }\,:=\,  e^{B_b- B_{a}}\E_\Gamma \left[ \exp\left( -2\int_{a}^b \eta_t \dd B_t \right) ; \, {\eta}_b=0  \right]\, ,
 \end{equation}
 and $Z_{\gG, B_\cdot, a,b}e^{B_{a}-B_b}$ is an equivalent partition function since $ e^{B_a- B_{b}}$ does not involve the spin configuration, hence it yields the same Gibbs measure $\mu_{\gG, B_\cdot, a, b}$. 

By the Markov property of the Poisson process  we have for every $t\in (a, b)$
\begin{multline}
\mu_{a,b}\left( {\bf s}_t=+1\right)\, =\, \mu_{a,b}\left( \eta_t=0\right)\, =\\
\frac{
 \bE_\Gamma \left[ \exp\left( -2\int_{a}^t \eta_s \dd B_s \right);\, \eta_t =0   \right]
 \bE_\Gamma \left[ \exp\left( -2\int_t^b \eta_s \dd B_s \right);\, \eta_b =0\Big\vert \eta_t=0   \right]
}
{ \bE_\Gamma \left[ \exp\left( -2\int_{a}^b \eta_s \dd B_s \right);\, \eta_b =0   \right]}\, ,
\end{multline}
and by decomposing the denominator according to whether  $\{\eta_t=0\}$ or $\{\eta_t=1\}$ 
we see that 
\begin{equation}
\label{eq:muelleta}
\mu_{a,b}\left( {\bf s}_t=+1\right)\, =\, \frac{L^\ssup{a}_t R^\ssup{b}_t}{1+ L^\ssup{a}_t R^\ssup{b}_t}\, ,
\end{equation}
 where we introduced
\begin{equation}
\label{eq:L}
L^\ssup{a}_t\,  =\,  \frac
{ \bE_\Gamma \left[ \exp\left( -2\int_{a}^t \eta_s \dd B_s \right);\, \eta_t =0   \right]}
{\bE_\Gamma \left[ \exp\left( -2\int_{a}^t \eta_s \dd B_s \right);\, \eta_t =1   \right]}
\, \quad {\text{for every } t>a},
\end{equation}
and 
\begin{equation}
\label{eq:R}
R^\ssup{b}_t\, =\, 
\frac
{
 \bE_\Gamma \left[ \exp\left( -2\int_t^b \eta_s \dd B_s \right);\, \eta_b =0\Big\vert \eta_t=0   \right]
}
{
 \bE_\Gamma \left[ \exp\left( -2\int_t^b \eta_s \dd B_s \right);\, \eta_b =0\Big\vert \eta_t=1   \right]
} \, \quad {\text{for every } t<b}.
\end{equation}

We note that in \eqref{eq:L}, $\bP_\Gamma(\eta_a=0)=1$. 
There is a symmetry between $L^\ssup{a}_\cdot$ and $R^\ssup{b}_\cdot$ that is somewhat hidden by the expressions \eqref{eq:L} and \eqref{eq:R} and we present it here by pointing out that 
(see  Section~\ref{sec:SDE}):
\smallskip

\begin{itemize}
\item
$L_\cdot=L^\ssup{a}_\cdot$ is the unique strong solution of the SDE driven by the bilateral Brownian motion $B_\cdot$
  \begin{equation}  
  \label{eq:Lsde}
  \dd L_t\, =\, 2   L_t \dd B_t + \left(\gep (1-L_t^2)+  2L_t \right) \dd t\, ,
    \end{equation}
for $t \ge a$, initial condition $L_{a}= \infty$ and we recall that $\gep= e^{-\gG}$.
\item
$(R^\ssup{b}_{-t})_{t\ge -b}$ solves the same SDE, with the very same initial conditions, but with $B_\cdot$
replaced by $(-B_{-t})_{t \in \bbR}$. Explicitly, if we set $\mathrm{R}_t:=R^\ssup{b}_{-t}$ and 
$\Brev_t:=B_{-t}$
\begin{equation}
\label{eq:Rsde}
\dd \mathrm{R}_t
\, =\, -2   \mathrm{R}_t \dd \Brev_t + \left(\gep (1-\mathrm{R}_t^2)+  2\mathrm{R}_t\right) \dd t\,,
\end{equation}
and we solve this equation for $t\ge -b$ with $ \mathrm{R}_{-b}=\infty$.
\end{itemize}

\smallskip

While a priori the fact that $L_{a}=\mathrm{R}_{-b}=\infty$ may look like a source of problems and this issue is treated in Sec.~\ref{sec:SDE}, we anticipate that
$\tilde L_t= 1/L_t$ solves the same SDE, but with $B_t$ replaced by $-B_t$ and (above all)  initial condition $0$. Therefore  $\tilde L_\cdot$ falls into the standard existence and uniqueness framework. 

\smallskip

\begin{rem}
\label{rem:a=-b} This symmetry becomes more transparent if one considers the case $b=-a=\ell$ and if one thinks of the statistical mechanics origin of the model. 
Moreover  \eqref{eq:Lsde} and \eqref{eq:Rsde}  imply that 
$(L^\ssup{-\ell}_t)_{t\ge -\ell}$ and $(R^\ssup{\ell}_{-t})_{t\ge -\ell}$ have the same law.
\end{rem}

\smallskip
    
It is not difficult to  show (see \cite{cf:CGG}) that  the SDE \eqref{eq:Lsde} (and of course \eqref{eq:Rsde})  is reversible with respect to its unique invariant density: this is more practically presented in terms of
\begin{equation}
\label{eq:l}
l^\ssup{a}_t\, :=\,  \log L^\ssup{a}_t \\ \text{ and } \ \ r^\ssup{b}_t\, :=\,  \log R^\ssup{b}_t ,
\end{equation}
for which we have
\begin{equation}
\label{eq:l-SDE}
\dd l^\ssup{a}_t \, =\,  - U_\gG'\left(l^\ssup{a}_t\right) \dd t + 2 \dd B_t\, ,
\end{equation}
with the strongly confining potential $U_\gG(x):= \gep ( \exp(x)+ \exp(-x))$. Therefore the  density 
of the unique invariant probability $p_\gG$ of \eqref{eq:l-SDE} is 
\begin{equation}
\label{eq:pGa}
x \mapsto \frac 1{2 K_0(\gep)}\exp\left(-\frac 12 U_\gG(x)\right)\, =\, \frac 1{2 K_0(\gep)}\exp\left(-\frac \gep 2 \left( e^x+e^{-x}\right)\right)\, ,
\end{equation}
where $K_0(\gep)$ is the modified Bessel function of second kind with index $0$ evaluted in $\gep= e^{-\gG}$
(given explicitly in \eqref{eq:BesselK}, see \cite[Ch.~10]{cf:DLMF}), in particular $2 K_0(\gep)= 2\gG+2 (  \log 2- \gamma_{\textrm{EM}})+O(\gep^2 \log \gG)$ for $\gG \to \infty$, where $\gamma_{\textrm{EM}}$ is the Euler-Mascheroni constant. 
Therefore this density is close for $\gG$ large to the indicator function of $(-\gG, \gG)$, divided by the normalization $2\gG$. Note that  
$l^\ssup{a}_\cdot$ is nearly a Brownian motion (times $2$) as long as it keeps away from $\pm \log \gep= \mp \gG$. 
When it approaches $\pm \gG$ and, even more, when it tries to go far from $(-\gG, \gG)$, a strong recalling force acts on it. 
Analogous observations of course apply to $r^\ssup{b}_\cdot$.

\begin{figure}[ht]
\begin{center}
  \includegraphics[width=0.8\linewidth]{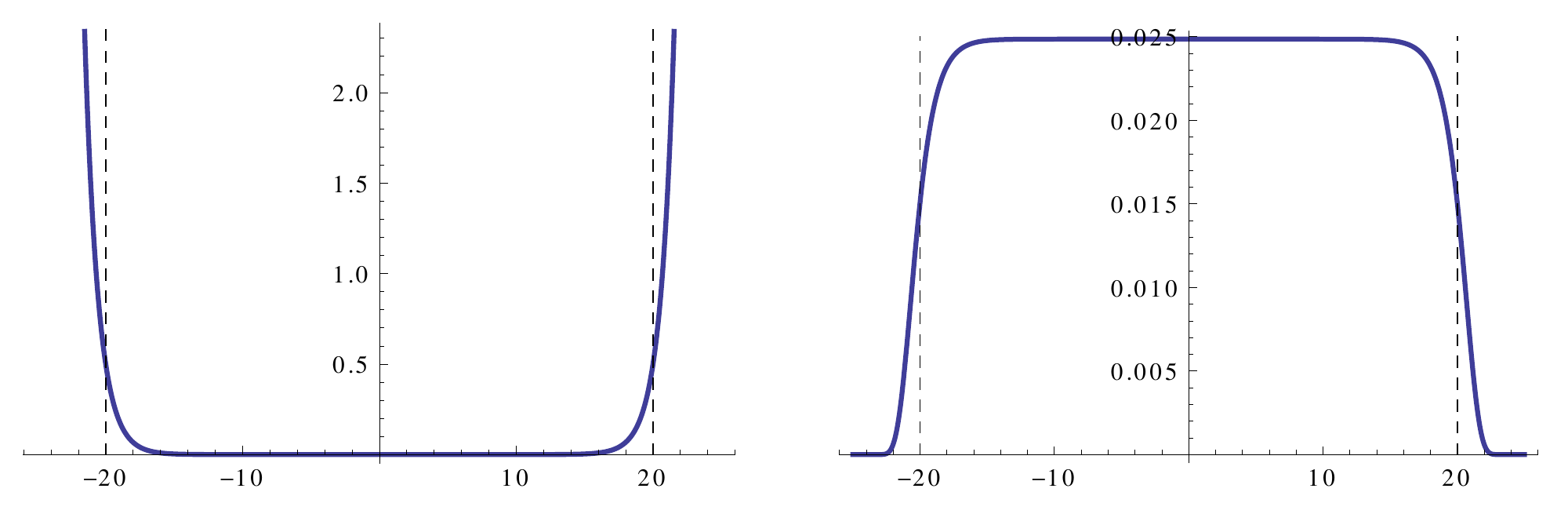}
  \end{center}
  \caption{%\leftskip=1truecm \rightskip=1truecm
  \scriptsize The plot of the potential $U_\gG$ on the left and of the invariant density $p_\gG$ on the right, both for $\gG=20$. The strong proximity of $U_\gG$ with $\infty \ind_{[-\gG, \gG]^\complement}$ and of $p_\gG$ with  the uniform measure on $[-\gG, \gG]$ motivates the introduction of the simplified model of Section~\ref{sec:simplified} that is going to be an important tool in our analysis.}
  \label{fig2}
\end{figure} 
%Mathematica for plot
%GraphicsRow[{Plot[eps Cosh[x], {x, -25, 25}, PlotStyle -> Thickness [0.007], Epilog -> {{Dashed, Line[{{-20, 0}, {-20, 2.35}}],  Line[{{20, 0}, {20, 2.35}}]}}], Plot[Exp[- eps Cosh[x]]/(2* BesselK[0, eps]), {x, -25, 25},    PlotStyle -> Thickness [0.007],    Epilog -> {{Dashed, Line[{{-20, 0}, {-20, 0.025}}],    Line[{{20, 0}, {20, 0.025}}]}}]}]

It is also useful to introduce $m^\ssup{a,b}_t:= l^\ssup{a}_t+r^\ssup{b}_t$ and, with this notation, \eqref{eq:muelleta} has the particularly compact expression
\begin{equation}
\label{eq:muelleta2}
\mu_{a,b}\left( {\bf s}_t=+1\right)\, =\, \frac{1}{1+ \exp\big(-m^\ssup{a,b}_t\big)}\ \ \ \text{ and } \ \ \
\mu_{a,b}\left( {\bf s}_t=-1\right)\, =\, \frac{1}{1+ \exp\big(m^\ssup{a,b}_t\big)}\, .
\end{equation}
\smallskip

Without surprise, see \cite{cf:CGG},
the law of $l^\ssup{a}_t$ (respectively $r^\ssup{b}_t$) converges for $t\to \infty$ (respectively, $t\to - \infty$) to $p_\gG$. 
In Section~\ref{sec:SDE} we are going to show the following related result:
\smallskip

\begin{lemma}
\label{th:invtime}
For every $t\in \bbR$ and 
for almost every $B_\cdot$ we have that
the almost sure limits $\lim_{a \to -\infty} l^\ssup{a}_t \, =:\, l_t$ and  
$\lim_{b \to \infty} r^\ssup{b}_t \, =:\, r_t$ exist. 
\end{lemma}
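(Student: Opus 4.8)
\emph{Strategy.} Lemma~\ref{th:invtime} is a pull-back (``random fixed point'') statement for the gradient dynamics \eqref{eq:l-SDE}: since the noise is additive and $U_\gG$ is strongly --- indeed uniformly --- convex, the solutions issued from $\pm\infty$ at time $a$ should, as $a\to-\infty$, converge to one and the same entire stationary solution. I will prove this by combining the monotone coupling of solutions driven by the \emph{same} bilateral Brownian motion with a priori bounds coming from the confinement of $U_\gG$. Only $l_t$ needs to be treated: by \eqref{eq:Rsde}, $r^\ssup{b}_t$ is, up to the substitutions $B\mapsto\Brev=B_{-\cdot}$ and $t\mapsto -t$, an instance of $l^\ssup{a}_t$ with the starting time equal to $-b$, and $\Brev$ is again a standard bilateral Brownian motion, so $b\to+\infty$ becomes $-b\to-\infty$.

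\emph{Well-posedness and monotonicity.} Following the discussion preceding the lemma, I work throughout with $\tilde L^\ssup{a}_\cdot:=1/L^\ssup{a}_\cdot$, which solves \eqref{eq:Lsde} with $B$ replaced by $-B$ and with the \emph{finite} initial condition $\tilde L^\ssup{a}_a=0$: its coefficients are locally Lipschitz, the diffusion coefficient $x\mapsto-2x$ is globally Lipschitz, there is no finite-time explosion (equivalently $L^\ssup{a}=1/\tilde L^\ssup{a}$ is well defined, cf.\ Section~\ref{sec:SDE}), and $\tilde L^\ssup{a}_\cdot\geq0$ because the drift equals $\gep>0$ on $\{\tilde L=0\}$. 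Hence the one-dimensional comparison theorem applies to any two members of $\{\tilde L^\ssup{a}\}_a$, all driven by the common Brownian motion, on any interval on which both are defined. Fixing $t$ and taking $a_1<a_2<t$, one has $\tilde L^\ssup{a_1}_{a_2}\geq0=\tilde L^\ssup{a_2}_{a_2}$, so comparison on $[a_2,t]$ gives $\tilde L^\ssup{a_1}_t\geq\tilde L^\ssup{a_2}_t$. Thus $a\mapsto\tilde L^\ssup{a}_t$ is non-increasing on $(-\infty,t)$, the limit $\tilde L_t:=\lim_{a\to-\infty}\tilde L^\ssup{a}_t=\sup_{a<t}\tilde L^\ssup{a}_t\in[0,\infty]$ exists for a.e.\ $B$, and equivalently $l^\ssup{a}_t=-\log\tilde L^\ssup{a}_t$ is non-decreasing in $a$ and decreases to $l_t:=-\log\tilde L_t\in[-\infty,+\infty)$. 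Everything then reduces to showing $0<\tilde L_t<\infty$.

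\emph{A priori bounds.} For the \emph{upper} bound on $l^\ssup{a}_t$: by the monotonicity just proved, $\tilde L^\ssup{a}_t\geq\tilde L^\ssup{t-1}_t$ for all $a\leq t-1$, and $\tilde L^\ssup{t-1}_t>0$ a.s.\ (equivalently $L^\ssup{t-1}_t<\infty$ a.s., by well-posedness), so $l^\ssup{a}_t\leq-\log\tilde L^\ssup{t-1}_t<\infty$ for all $a\leq t-1$, a.s. For the \emph{lower} bound: by comparison with the same noise $B$, $l^\ssup{a}_t$ dominates the solution of \eqref{eq:l-SDE} issued from $-\infty$ at time $a$; by the deterministic symmetry $l\leftrightarrow-l$, $B\leftrightarrow-B$ of \eqref{eq:l-SDE} (legitimate because $U_\gG$ is even, hence $U_\gG'$ odd), this latter solution equals minus the solution of \eqref{eq:l-SDE} in the environment $-B$ issued from $+\infty$ at $a$; and the upper bound already obtained, read in the environment $-B$, bounds the latter above by $-\log\tilde L^\ssup{t-1}_t(-B)<\infty$ uniformly in $a\leq t-1$, a.s.\ (here $\tilde L^\ssup{t-1}(-B)$ denotes, with an obvious abuse, the $\tilde L$-process associated with the environment $-B$ started from $0$ at $t-1$, positive at time $t$ a.s.). Hence $l^\ssup{a}_t\geq\log\tilde L^\ssup{t-1}_t(-B)>-\infty$ for all $a\leq t-1$, a.s. Therefore $l^\ssup{a}_t$ stays, for all $a\leq t-1$, inside an a.s.\ finite interval depending only on $t$; being monotone in $a$ it converges in $\bbR$ as $a\to-\infty$, which is the claim for $l_t$, and then for $r_t$ by the reduction above.

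\emph{Main obstacle, and a remark.} The only genuinely substantial point is ruling out $\tilde L_t=\infty$ (equivalently $l_t=-\infty$): the monotone coupling gives the limit in $[0,\infty]$ almost for free, and the finiteness is exactly where the confining character of $U_\gG$ must enter --- above, through the compact form of the well-posedness from $0$ of $\tilde L$ (equivalently from $\infty$ of $L$) combined with the $l\leftrightarrow-l$ symmetry, rather than an explicit Lyapunov computation. One technical care point is that the comparison theorem cannot be invoked literally at the initial values $\pm\infty$; this is circumvented by passing systematically to $\tilde L$, or $L$, and/or by comparing only on time intervals strictly after the initial time, where all solutions are finite. Finally, the uniform convexity $U_\gG''=2\gep\cosh(\cdot)\geq2\gep>0$ yields $\tfrac{\dd}{\dd t}\big(l^\ssup{a_1}_t-l^\ssup{a_2}_t\big)^2\leq-4\gep\big(l^\ssup{a_1}_t-l^\ssup{a_2}_t\big)^2$ (the difference has no martingale part), hence $\big|l^\ssup{a_1}_t-l^\ssup{a_2}_t\big|\leq e^{-2\gep(t-s)}\big|l^\ssup{a_1}_s-l^\ssup{a_2}_s\big|$ for $a_1<a_2\leq s\leq t$; this is not needed for the statement, but it shows the convergence is exponentially fast and that $l_\cdot$ (resp.\ $r_\cdot$) is the \emph{unique} entire solution of \eqref{eq:l-SDE}, whose one-dimensional marginals are $p_\gG$.
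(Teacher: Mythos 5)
Your proof is correct, and its skeleton is the one the paper itself uses in Proposition \ref{th:lonR}: reduce $r$ to $l$ by time reversal, get pathwise monotonicity in the starting time $a$ from a comparison theorem for solutions driven by the same Brownian motion, and then sandwich against the solution with the opposite ($-\infty$) boundary condition, whose monotonicity in $a$ goes the other way, so that the monotone limit is finite. The differences are in the implementation, and they make your route lighter for this particular statement: you run the comparison directly on $\tilde L^{\ssup{a}}=1/L^{\ssup{a}}$, whose initial value $0$ is extremal in the state space, so the ordering $\tilde L^{\ssup{a_1}}_{a_2}\ge \tilde L^{\ssup{a_2}}_{a_2}$ comes for free and you never need the paper's monotonicity in the initial datum $x$ together with the identification $l^{\ssup{a}}=\lim_{x\to\infty} l^{\ssup{a,x}}$ (Proposition \ref{th:Laxinfty}); and you obtain finiteness of the limit softly, from positivity and finiteness of the partition-function ratio plus the $l\leftrightarrow -l$, $B\leftrightarrow -B$ symmetry, instead of the explicit barrier bounds of Lemma \ref{th:lbub-on-l}. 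The paper pays that extra price because Proposition \ref{th:lonR} proves more (uniqueness of the entire solution, stationarity, marginal law $p_\gG$), for which it also invokes the contraction Lemma \ref{th:OCbound}; as you correctly note, none of that is needed for bare existence of the limits. Two small cautions: the comparison between $L^{\ssup{a}}$ (started from $\infty$) and the solution started from $0$ does require the argument you only sketch (order them at some time $a+\delta$, using $L^{\ssup{a}}_s\to\infty$ and $L^{\text{from }0}_s\to 0$ as $s\searrow a$, then propagate by comparison); and your closing claim that the linear contraction $|l^{\ssup{a_1}}_t-l^{\ssup{a_2}}_t|\le e^{-2\gep(t-s)}|l^{\ssup{a_1}}_s-l^{\ssup{a_2}}_s|$ alone yields uniqueness of the entire solution is too quick, since one needs an a priori control of the gap as $s\to-\infty$ (this is precisely what the $\tanh$ bound of Lemma \ref{th:OCbound} supplies) --- but you flag that remark as outside the statement, so it does not affect the proof of the lemma.
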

\medskip

Lemma~\ref{th:invtime} and several related results are included in Proposition~\ref{th:lonR}: in particular the fact that
the law of $l_t$ and of $r_t$ is $p_\gG$ for every $t$. Moreover 
 $(l_t)_{t\in \bbR}$, respectively $( \mathrm{r}_t)_{t \in \bbR}$, $\mathrm{r}_t:= r_{-t}$, can be characterized as
the only \emph{strong} solution for $t \in \bbR$ of 
\begin{equation}
\label{eq:l-SDE1}
\dd l_t \, =\, -U_\gG'\left(l_t\right) \dd t +2 \dd B_t\, ,
\end{equation}
respectively of
\begin{equation}
\label{eq:r-SDE1}
\dd \mathrm{r}_t \, =\, -U_\gG'\left(\mathrm{r}_t\right) \dd t -2 \dd \Brev_t\, ,
\end{equation}
and we recall that  $\Brev_t:= B_{-t}$.

Of course we set 
\begin{equation}
\label{eq:m}
m_t\,= \, l_t+r_t\, .
\end{equation}

\smallskip

Now we are going to state another fundamental result (proven in Section~\ref{sec:maintech}).
For this result we need to introduce also
the sequence of $\gG$-extrema of a bilateral Brownian motion. At this stage we do this informally: Appendix~\ref{sec:alt}
is fully dedicated to this issue. But the key point is that
by defining the $\gG$-extrema times starting from time $0$, as we did, we create a sequence
$(\tu_n(\gG))_{n=1,2, \dots}$ in which $\tu_1(\gG)$ is different in nature from  $\tu_2(\gG), \tu_3(\gG), \ldots$
because the latter ones have an ascending $\gG$-stretch on one side and a descending one on the other.
Instead,  $\tu_1(\gG)$ has a $\gG$-stretch only on the right.
If we repeat the procedure starting from an arbitrary time $a<0$, instead of $0$, the time $\tu_1(\gG)$ found starting 
from $0$ may not be in the sequence of $\gG$-extrema that one finds starting from $a$. Nevertheless,
$\tu_2(\gG), \tu_3(\gG), \ldots$ are $\gG$-extrema times in $(\tu_1(\gG), \infty)$ even if we define them starting from $a<0$. 
This stability allows to define the full sequence $(\tu^\ssup{\r}_{n}(\gG))_{n \in \bbZ}$ (the indexation is chosen by requiring that $\tu^\ssup{\r}_{0}(\gG)<0$ and $\tu^\ssup{\r}_{1}(\gG)\ge 0$)
of the $\gG$-extrema of a bilateral Brownian motion (see Appendix~\ref{sec:alt} for details). 
The point is that the sequence of the $\gG$-extrema of a bilateral Brownian motion partitions $\bbR$ into ascending $\gG$-stretches, descending $\gG$-stretches and the discrete set of $\gG$ extrema. Therefore we can 
define $(\sFR_t)_{t\in\r}$, the Fisher trajectory for the bilateral Brownian motion: for every $t\in \bbR$, $\sFR_t=+1$ (respectively $-1$) it $t$ is in an ascending (respectively descending) stretch, and $\sFR_t=0$ if there is a $\gG$-extremum at $t$. In a formula we obtain 
\begin{equation}
\label{eq:sFRalt0}
 \sFR_t\, =\,  \sum_{n=-\infty}^\infty \arrow^\ssup{\r}_{n} \ind_{ \left(\tu^\ssup{\r}_{n-1}, \tu^\ssup{\r}_{n}\right)}(t)% \, \in \, \{-1,+1\}
 \,,
 \end{equation}
 that implicitly defines $(\arrow^\ssup{\r}_{n})_{n \in \bbZ}$.
 For the next result, 
 %which provides a more explicit expression for the constant $D_\gG$ appearing in Theorem \ref{th:main},
 {which is a reformulation of the first part of Theorem \ref{th:main} with a more explicit expression for the constant $D_\gG$,}
 we actually need only $\sFR_0$ and part of the arguments in Appendix~\ref{sec:alt} are about recovering $\sFR_0$ without building the whole sequence.

\medskip

\begin{proposition}
 \label{th:formain} 
 For every $\gG>0$ the following limit exists
 \begin{equation}
 \label{eq:formain1}  
   \lim_{\ell\to\infty}\e\left[\frac1\ell \int_0^\ell  \mu_{\gG, B_\cdot, \ell}\left({\bf s}_t \neq \sF_t\right) \dd t \right]
   \, = \, \bbE\left[ \frac{1}{1+ \exp\big(m_0\sFR_0\big)}\right]\, = : D_\gG \,.
   \end{equation}
 Furthermore, for every $\gG>0$ and for almost every $B_\cdot$, 
\begin{equation}
 \label{eq:formain2}  
   \frac1\ell \int_0^\ell   \mu_{\gG, B_\cdot, \ell}\left({\bf s}_t \neq s^\ssup{F}_t\right) \dd t \, \underset{\ell \to\infty} \longrightarrow D_\gG
   \end{equation}
   and, even more, in $ \mu_{\gG, B_\cdot, \ell}$-probability,
  \begin{equation}
  \label{eq:formain3}
  \frac1\ell \int_0^\ell  \ind_{{\bf s}_t \neq \sF_t} \dd t\, \underset{\ell \to\infty} \longrightarrow D_\gG\, .
  \end{equation} 
\end{proposition}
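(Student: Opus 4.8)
The plan is to read the left-hand side of \eqref{eq:formain2} off the closed-form expression \eqref{eq:muelleta2}, to replace the finite-volume quantities by their stationary infinite-volume counterparts from Lemma~\ref{th:invtime} and Proposition~\ref{th:lonR}, and then to apply the ergodic theorem. Indeed, \eqref{eq:muelleta2} on $[0,\ell]$ gives, for Lebesgue-almost every $t\in[0,\ell]$ (all $t$ with $\sF_t\neq 0$; the set $\{t:\sF_t=0\}$ is countable),
\begin{equation}
\mu_{\gG, B_\cdot, \ell}\big({\bf s}_t\neq \sF_t\big)\,=\,\frac{1}{1+\exp\big(m^\ssup{0,\ell}_t\,\sF_t\big)}\,=:\,g\big(m^\ssup{0,\ell}_t,\sF_t\big)\,,\qquad g(x,\sigma):=\frac1{1+e^{x\sigma}}\,,
\end{equation}
so that the average in \eqref{eq:formain2} equals $\tfrac1\ell\int_0^\ell g(m^\ssup{0,\ell}_t,\sF_t)\dd t$. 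Since $g\in[0,1]$, the statement \eqref{eq:formain1} will follow from \eqref{eq:formain2} by bounded convergence, so it suffices to prove \eqref{eq:formain2}.

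First I would remove the boundary data. The potential $U_\gG(x)=\gep(e^x+e^{-x})$ in \eqref{eq:l-SDE} is uniformly strongly convex, $U_\gG''=U_\gG\ge 2\gep$, so two solutions of \eqref{eq:l-SDE} driven by the same $B$ contract exponentially: $|l^{(1)}_t-l^{(2)}_t|\le |l^{(1)}_s-l^{(2)}_s|\,e^{-2\gep(t-s)}$ for $t\ge s$ (the $\dd B$-terms cancel in the dynamics of $(l^{(1)}_t-l^{(2)}_t)^2$). Applying this to $l^\ssup{0}_\cdot$ versus the stationary solution $l_\cdot$, and symmetrically — after time reversal, cf.\ \eqref{eq:r-SDE1} — to $r^\ssup{\ell}_\cdot$ versus $r_\cdot$, together with the fact that the law of $l^\ssup{0}_\cdot$ near $0$ and of $r^\ssup{\ell}_\cdot$ near $\ell$ does not depend on $\ell$ (hence uniform-in-$\ell$ tail bounds), one gets $\tfrac1\ell\int_0^\ell|m^\ssup{0,\ell}_t-m_t|\dd t\to 0$ almost surely. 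Next, by the stability of the $\gG$-extrema of $B$ recalled around \eqref{eq:sFRalt0} (Appendix~\ref{sec:alt}) there is an a.s.\ finite $T_0$ with $\sF_t=\sFR_t$ for all $t\ge T_0$. Since $\sF_t,\sFR_t\in\{\pm1\}$ for a.e.\ $t$ and $x\mapsto g(x,\pm1)$ is $\tfrac14$-Lipschitz, these two facts give $\tfrac1\ell\int_0^\ell g(m^\ssup{0,\ell}_t,\sF_t)\dd t-\tfrac1\ell\int_0^\ell g(m_t,\sFR_t)\dd t\to 0$ almost surely.

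It then remains to apply the ergodic theorem to $\tfrac1\ell\int_0^\ell g(m_t,\sFR_t)\dd t$. The process $(m_t,\sFR_t)_{t\in\bbR}$ is a measurable, shift-covariant functional of the bilateral Brownian motion $B$ — here $m=l+r$ with $l,r$ the stationary solutions of \eqref{eq:l-SDE1}--\eqref{eq:r-SDE1} (Proposition~\ref{th:lonR}), and $\sFR$ is read off the $\gG$-extrema of $B$ — hence it is stationary and ergodic (as a factor of the time-shift flow of the mixing increments of $B$). As $g$ is bounded, Birkhoff's theorem yields $\tfrac1\ell\int_0^\ell g(m_t,\sFR_t)\dd t\to\bbE[g(m_0,\sFR_0)]$ almost surely. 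Combined with the previous paragraph this proves \eqref{eq:formain2} with $D_\gG=\bbE\big[(1+e^{m_0\sFR_0})^{-1}\big]$, and then \eqref{eq:formain1}; note that $l_0$ and $r_0$ are independent with common law $p_\gG$ and $\sFR_0\in\{\pm1\}$ almost surely, so $D_\gG$ is a genuine constant in $(0,1)$. Granted Lemma~\ref{th:invtime}/Proposition~\ref{th:lonR} and the convexity of $U_\gG$, this part is essentially soft.

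For \eqref{eq:formain3} it remains to upgrade \eqref{eq:formain2} to convergence in $\mu_{\gG,B_\cdot,\ell}$-probability. By Fubini the $\mu_{\gG,B_\cdot,\ell}$-mean of $\tfrac1\ell\int_0^\ell\ind_{{\bf s}_t\neq\sF_t}\dd t$ equals $\tfrac1\ell\int_0^\ell\mu_{\gG,B_\cdot,\ell}({\bf s}_t\neq\sF_t)\dd t$, which tends to $D_\gG$ a.s.\ by \eqref{eq:formain2}; so by Chebyshev it is enough that the $\mu_{\gG,B_\cdot,\ell}$-variance of $\tfrac1\ell\int_0^\ell\ind_{{\bf s}_t\neq\sF_t}\dd t$ tend to $0$ (in $B$-probability, then a.s.\ along a fast subsequence with a continuity-in-$\ell$ interpolation). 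Writing this variance as $\tfrac1{\ell^2}\iint_{[0,\ell]^2}C_\ell(t,u)\dd t\,\dd u$ with $C_\ell(t,u)=\mathrm{Cov}_{\mu_{\gG,B_\cdot,\ell}}(\ind_{{\bf s}_t\neq\sF_t},\ind_{{\bf s}_u\neq\sF_u})$, I would use that $({\bf s}_t)$ is Markov under $\mu_{\gG,B_\cdot,\ell}$: conditioning on ${\bf s}_\tau$ at a single $\tau$ between $t$ and $u$ gives the exact bound $|C_\ell(t,u)|\le \mu_{\gG,B_\cdot,\ell}({\bf s}_\tau=+1)\,\mu_{\gG,B_\cdot,\ell}({\bf s}_\tau=-1)\le e^{-|m^\ssup{0,\ell}_\tau|}$, and iterating this conditioning along a chain of intermediate times at which the spin is essentially frozen — which, uniformly in the configuration at other times, happens whenever $r^\ssup{\ell}_\tau$ (resp.\ $l^\ssup{0}_\tau$) is of order $\gG$, and such times occupy a positive fraction of each $\gG$-stretch of $B$ — would produce a product bound $|C_\ell(t,u)|\le \exp(-c\,\#\{\text{freezing times between }t\text{ and }u\})$; with Remark~\ref{rem:NP} (the $\gG$-stretches have length of order $\gG^2$ with exponentially decaying tails) this gives $\iint_{[0,\ell]^2}|C_\ell|\,\dd t\,\dd u=O(\gG^2\ell)$ and hence variance $O(\gG^2/\ell)\to0$. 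The hard part will be exactly this decorrelation estimate: making the Markov/pinning heuristic quantitative requires controlling the two-point transfer weights of the chain through the $L$ and $R$ processes, proving that the spin really is frozen at the claimed times \emph{uniformly} over the configuration at other times, and that such times occur with a density bounded below — all with constants uniform in $\ell$ and measurably controlled in $B$.
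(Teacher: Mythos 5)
Your treatment of \eqref{eq:formain1} and \eqref{eq:formain2} follows the same route as the paper (the identity \eqref{eq:muelleta2}, removal of the boundary data by a contraction estimate for \eqref{eq:l-SDE}, replacement of $\sF$ by $\sFR$ beyond $\tu_1(\gG)$, then Birkhoff for the shift flow on Wiener space, with \eqref{eq:formain1} recovered by bounded convergence) and is essentially sound. Two small caveats: your linear contraction $|l^{\ssup{1}}_t-l^{\ssup{2}}_t|\le |l^{\ssup{1}}_s-l^{\ssup{2}}_s|e^{-2\gep(t-s)}$ is vacuous when the initial separation is infinite (which is exactly the situation at the boundary, where $l^{\ssup{0}}_0=\infty$), and the passage from ``uniform-in-$\ell$ tail bounds'' for $r^{\ssup{\ell}}$ to an almost sure statement holding simultaneously for all $\ell$ is left vague. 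The paper sidesteps both points with Lemma~\ref{th:OCbound}: the $\tanh$ form of the contraction gives the deterministic, initial-condition-free bound $l^{\ssup{2}}_t-l^{\ssup{1}}_t\le 4\,\arcth(e^{-2\gep(t-a)})$, whence $|\mu_{\gG,B_\cdot,0,\ell}({\bf s}_t\ne\sFR_t)-\mu_{\gG,B_\cdot,-\infty,\infty}({\bf s}_t\ne\sFR_t)|\le A_\gep(t)+A_\gep(\ell-t)$ with $A_\gep$ integrable (Lemma~\ref{th:OCboundforformain}), valid for every trajectory and every $\ell$; this makes the boundary-removal step trivial.

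The genuine gap is in \eqref{eq:formain3}. Your single-time bound $|C_\ell(t,u)|\le e^{-|m^{\ssup{0,\ell}}_\tau|}$ is correct but does not decay in $|t-u|$, and the upgrade you propose -- iterating the conditioning along ``freezing times'', proving the spin is frozen there \emph{uniformly} over the configuration elsewhere, and that such times have positive density in each $\gG$-stretch -- is precisely the part you do not carry out; it would require substantial new estimates and is considerably harder than what is needed. The paper's argument is much shorter: by the Markov property, $\mu_{\gG,B_\cdot,\ell}\left({\bf s}_{t_1}\ne\sFR_{t_1},\,{\bf s}_{t_2}\ne\sFR_{t_2}\right)=\mu^{\ssup{++}}_{\gG,B_\cdot,0,\ell}\left({\bf s}_{t_1}\ne\sFR_{t_1}\right)\mu^{\ssup{-\sFR_{t_1}+}}_{\gG,B_\cdot,t_1,\ell}\left({\bf s}_{t_2}\ne\sFR_{t_2}\right)$, and the estimate of Lemma~\ref{th:OCboundforformain} holds \emph{uniformly in the boundary conditions} (Remarks~\ref{rem:bc0} and~\ref{rem:bc}), so each factor equals $\mu_{\gG,B_\cdot,-\infty,\infty}(\cdot)$ up to errors of the form $A_\gep(t_2-t_1)+A_\gep(\ell-t_2)$ etc.; integrating over $t_1<t_2$ shows that the second moment equals the square of the first moment plus $O(\ell)$, hence the $\mu_{\gG,B_\cdot,\ell}$-variance of $\frac1\ell\int_0^\ell\ind_{{\bf s}_t\ne\sFR_t}\dd t$ is $O(1/\ell)$ for every $B_\cdot$, and Chebyshev concludes. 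Note that your strong-convexity contraction cannot supply this decoupling as stated: conditioning on ${\bf s}_{t_1}=\pm1$ amounts to restarting the $l$-process at $\pm\infty$ at time $t_1$, i.e.\ infinite initial separation, which is exactly the case the $\tanh$ bound of Lemma~\ref{th:OCbound} was designed to handle.
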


\medskip
 
Proposition~\ref{th:formain} is an ergodic type statement that is intuitive in the light of the identities in \eqref{eq:muelleta2}, so that, for example in order to establish \eqref{eq:formain2}, we are interested in the $\ell \to \infty$ limit of 
\begin{equation}
 \label{eq:formain-te}  
 \frac1\ell \int_0^\ell  
 \frac{1}{1+ \exp\big(m^\ssup{0, \ell}_t \sF_t\big)}  
 \dd t  \,.
\end{equation}
We stress that using the Dominated Convergence Theorem \eqref{eq:formain1} can be recovered from \eqref{eq:formain2} which is itself implied by \eqref{eq:formain3}.

\smallskip
\begin{rem}
\label{rem:pointwise}
From \eqref{eq:muelleta2} and Lemma~\ref{th:invtime} we readily see that $\mu_{a,b}({\bf s}_t \neq s^\ssup{F}_t)$ converges a.s. as $a \to - \infty$ and $b \to +\infty$ to 
$\left(1+ \exp\big(m_t\sFR_t\big)\right)^{-1}$. If we call  $\mu_{-\infty,+\infty}({\bf s}_t \neq \sFR_t)$ the limit of $\mu_{a,b}({\bf s}_t \neq s^\ssup{F}_t)$,  
 by Dominated Convergence and by applying the argument also to $B^\ssup{t}_\cdot$ {- defined in \eqref{def:Bat} -} we have that for every $t$
\begin{equation}
\bbE \left[ \mu_{-\infty,\infty}\left({\bf s}_t \neq \sFR_t\right)\right]\, =\, 
 \bbE \left[ \mu_{-\infty,\infty}\left({\bf s}_0 \neq \sFR_0\right)\right]\, =\, 
\, \bbE\left[\frac1{1+ \exp\big(m_0\sFR_0\big)}\right]\, = \,   D_\gG.
\end{equation}
This provides a pointwise result that complements Theorem~\ref{th:main}: it says for example that 
 $\lim_{\gG \to \infty}\bbP(\mu_{\gG, B_\cdot, -\infty,\infty}\left({\bf s}_t \neq \sFR_t \right)> D'_\gG)=0$ whenever 
 $\lim_{\gG\to\infty} D'_\gG/D_\gG= \infty$.
\end{rem}
\smallskip

We remark also that $l_0$ and $r_0$ are independent, because $l_0$ depends only on $(B_t)_{t \le 0}$ and 
 $r_0$ depends only on $(B_t)_{t \ge 0}$. On the other hand, $l_0$ and $r_0$ have the same law $p_\gG$. However this is not of much help to evaluate $D_\gG$. Rather, we will aim at making the $B_\cdot$ dependence  of 
 $l_0$ and $r_0$ explicit, so that we will understand the joint behavior of 
 the random variables $m_0$ and $\sFR_0$ and, in turn, the behavior of $D_\gG$.
 
 We complete this section by remarking that, by the expression for $D_\gG$ in  \eqref{eq:formain1}, 
  showing that  $D_\gG$ tends to zero amounts to showing  that (for 
  $\gG\to \infty$ and with high probability) $m_0$ is very large and positive (respectively negative)
when $\sFR_0=+1$ (respectively, $\sFR_0=-1$). 
%In different terms, we need to show that with high probability $l_0 +m_0\gg  1$ (respectively $-(l_0 +m_0)\gg  1$) when $\sFR_0=1$ (respectively, $\sFR_0=-1$).
This is handled in Section \ref{sec:proof-main}.

\subsection{About the free energy and the infinite disorder RG fixed point}
\label{sec:exactresults}

A remarkable fact for this model is that the free energy density $\tf_\ga(\gG)$ is explicit \cite{cf:MW1,cf:Luck-book,cf:CTT,cf:CGG}.
The free energy density is defined up to an additive constant: if we multiply the partition function by 
a factor that does not depend on ${\bf s}_\cdot$, but which may depend on the parameters of the system ($\gG$, the system size $\ell$,  the disorder $B_\cdot$) the Gibbs measure, i.e. the model itself, is not modified.  In this sense and just for the sake of this discussion we introduce
\begin{equation}
\label{eq:calZ0}
 \cZ^{\mathtt{f}}_{\gG, B_\cdot, \ell}\,:=\, \E_0 \left[ \exp\left(\ell -\gG N_\ell+\int_{0}^\ell {\bf s}_t \left(\dd B_t+\alpha \dd t\right)\right)    \right]\, ,
 \end{equation}
 and analogous formula for \eqref{eq:Z0+}. We recall that, under $\bP_0$, $\eta_\cdot$ is the standard Poisson process of rate $\exp(-0)=1$, see \eqref{eq:Z0}, and   one directly checks that
 $ \cZ^{\mathtt{f}}_{\gG, B_\cdot, \ell}/Z^{\mathtt{f}}_{\gG, B_\cdot, \ell}= \bE_0[ \exp(\ell - \gG N_\ell)]=
 \exp(\gep \ell)$. So we define the free energy density $\tf_\ga(\gG)$ to be the  exponential rate of growth of 
 $ \cZ^{\mathtt{f}}_{\gG, B_\cdot, \ell}$. More precisely  we have that (a.s. and in $\bbL^1$, see \cite{cf:CGG})
\begin{equation}
\label{eq:fe}
\tf_\ga(\gG)\,:=\,
\lim_{\ell \to \infty} \frac 1 \ell \log  \cZ_{\gG, B_\cdot, \ell}\, =\, \lim_{\ell \to \infty} \frac 1 \ell \log  \cZ^{\mathtt{f}}_{\gG, B_\cdot, \ell}\, =\, \ga  + \frac{e^{-\gG}K_{\ga -1}\left( e^{-\gG}\right)}{K_{\ga }\left( e^{-\gG}\right)} \, ,
 \end{equation}
where 
\begin{equation}
\label{eq:BesselK}
K_\ga(x)\, =\, \frac 12 \int_0^\infty \frac 1 {y^{1+ \ga}} \exp \left(-\frac x 2 \left( y +\frac 1y \right) \right)\dd y\, ,
\end{equation}
is the modified Bessel function of second kind of index $\ga$ evaluated in $x \in (0, \infty)$. 
This can be seen in various ways, in particular with the formalism we have introduced it is not difficult 
 to see that for $\ga=0$ a.s.
\begin{equation}
\label{eq:onlyonesided}
\tf_0(\gG)\, =\,   \lim_{\ell \to \infty} \frac {1 } \ell \int_0^\ell {e^{-\gG}} \exp(-l^\ssup{0}_t) \dd t
\, =\,  {e^{-\gG}} \int_\bbR   e^{-l} p_\gG(l) \dd l\, ,
 \end{equation}
 where for the first equality see Remark~\ref{rem:Lyap12}  and for the second one
we have used  that the law of $l^\ssup{0}_t$ converges to $p_\gG$ when $t\to \infty$  \cite{cf:CGG}. By using the explicit expression for $p_\gG$, given in \eqref{eq:pGa}, we recover \eqref{eq:fe} for  $\ga=0$. Moreover,  by exploiting the known Bessel asymptotic behaviors \cite{cf:DLMF,cf:CGG}, we see that
\begin{equation}
\label{eq:fe0}
\tf_0(\gG)\,=\,
 \frac{e^{-\gG}K_{ -1}\left( e^{-\gG}\right)}{K_{0 }\left( e^{-\gG}\right)} 
\overset{\gG \to \infty}= 
\frac 1 {\gG + \log 2 - \gamma_{\textrm{EM}}}+ O\left(\exp(-2\gG) \right)\, . 
 \end{equation}
 where $\gamma_{\textrm{EM}}=0,5772\ldots$ is the Euler-Mascheroni constant. We refer to  \cite{cf:GG22} for the analogous result 
 for (discrete) RFIC.
 
 What we just presented, see in particular \eqref{eq:onlyonesided},  shows that, in order to compute the free energy density, we just need the one-sided process $l_\cdot$ and, ultimately, the key is given by the invariant probability $p_\gG$ of $l_\cdot$.
If we want to get more information on the system we can differentiate the free energy with respect to the available parameters: for example for almost every $B_\cdot$
\begin{equation}
\label{eq:DW-formula}
\lim_{\ell \to \infty} \mu_{\gG, B_\cdot,  \ell} \left( \frac {N_\ell} \ell \right)\, =\,  
-\partial _\gG \tf_0(\gG) \, =\, \frac 1 {\left(\gG + \log 2 - \gamma_{\textrm{EM}}\right)^2}+ O\left(\exp(-\gG) \right)
\overset{ \gG \to \infty} \sim \frac 1 {\gG^2}\, .
\end{equation} 
where we recall that $N_\ell$ is the total number of Poisson events on $[0, \ell]$, so that \eqref{eq:DW-formula} yields the density  of domain walls. 
Two comments are in order:

\smallskip

\begin{enumerate}[leftmargin=*]
%\item The exact expression in \eqref{eq:fe0} is obtained by using that, on $[0, \ell]$,  the Radon-Nydodim derivative $\dd \bP_\gG/\dd \bP_0$, that is the density of the Poisson process with rate $e^{-\gG}$ with respect to the Poisson process with rate $1$, is $\exp(-\gG N_\ell)/ \bE_0[\exp(-\gG N_\ell)]$ (in particular, $\bE_0[\exp(-\gG N_\ell)]= \exp(-\ell (1-e^{-\gG}))$).
\item The asymptotic behavior in \eqref{eq:fe0} follows once again by known asymptotic behaviors  \cite[(10.25.2) and (10.27.4)]{cf:DLMF}. And one can push this computation further: notably, by taking one more derivative in $\gG$ one obtains that the variance, under $\mu_{\gG, B_\cdot,  \ell}$, of $N_\ell / \sqrt{\ell}$ converges for $\ell \to \infty$ a.s..
The limit is explicit in terms of Bessel functions.  %(and the asymptotic behavior for $\gG \to \infty$  coincides with what one obtains by taking formally (minus) the derivative in $\gG$ of the right-most term in \eqref{eq:DW-formula})). 
Therefore, the wall density concentrates around its mean value as $\ell$ tends to $\infty$.
\item A scaling argument yields that the free energy density for the model with $B_\cdot$ replaced by
$\gl B_\cdot$ is $e^{-\gG} K_{-1}(e^{-\gG}/ \gl^2) / K_{0}(e^{-\gG}/ \gl^2)$. Hence, by differentiating with respect 
to $\gl$, one obtains the \emph{density of disorder energy }
\begin{equation}
\lim_{\ell \to \infty}
\frac 1 \ell \mu_{\gG, B_\cdot,  \ell}\left( \int_0^\ell {\bf s}_t \dd B_t\right) \overset{\mathrm{a.s.}}=
\frac{\gep^2 \left(K_0(\gep)^2+K_2(\gep) K_0(\gep)-2 K_1(\gep)^2\right)}{K_0(\gep)^2} \overset{ \gG \to \infty} \sim \frac 2 \gG\, .
\end{equation}
\end{enumerate} 

\medskip

\begin{rem}
\label{rem:overlap}
A well-known integration by part argument frequently used in the statistical mechanics of disordered systems, see Lemma~\ref{th:overlap}, 
  shows that, when disorder is Gaussian (like in our case),
the disorder energy density is directly related to the \emph{overlap}. In our model the result is that ($\mu_\ell= \mu_{\gG, B_\cdot,  \ell}$)
\begin{equation}
\label{eq:overlap}
 \lim_{\ell \to \infty} \frac1\ell \e \left[ \mu^{\otimes 2}_\ell   \left(  \int_0^\ell \ind_{\{ {\bf s}_t^\ssup{1} \neq  {\bf s}_t^\ssup{2}\}} \dd t   \right)\right]\overset{\mathrm{a.s.}}= \frac 12
 \lim_{\ell \to \infty}
\frac 1 \ell \mu_{\ell}\left( \int_0^\ell {\bf s}_t \dd B_t\right) \overset{ \gG \to \infty} \sim \frac 1 \gG\, ,
 \end{equation}
  where ${\bf s}_\cdot^\ssup{1}, {\bf s}_\cdot^\ssup{2}$ denote two independent copies of ${\bf s}_\cdot$ under $\mu^{\otimes 2}_\ell$.  An interesting implication of \eqref{eq:overlap} follows by observing that ${\bf s}_t^\ssup{1} \neq {\bf s}_t^\ssup{2}$ implies that ${\bf s}_t^\ssup{1} \neq \sF_t$ or ${\bf s}_t^\ssup{2} \neq \sF_t$, so that $\mu^{\otimes 2}_\ell\left( {\bf s}_t^\ssup{1} \neq {\bf s}_t^\ssup{2}\right)\le 2  \mu_{  \ell}\left( {\bf s}_t\neq \sF_t\right)$.
  Therefore, recalling \eqref{eq:formain1}, we have
  \begin{equation}
  \label{eq:liminfDG}
  \liminf_{\gG \to \infty} \gG D_\gG \,\ge\,  \frac 1  2\, .
  \end{equation} 
\end{rem}

\medskip

As we have already pointed out, our main result (Theorem~\ref{th:main}) is a quantitative and mathematically rigorous version of the fact that D.~Fisher's infinite disorder fixed point does capture the leading order behavior of the domain-wall structure in the $\gG\to \infty$ limit. 
{This result does not allow to recover the extremely sharp control on some observables that one can extract from   the explicit formula for the free energy of the Continuous RFIC, but it is really different in nature:}

%From Theorem~\ref{th:main} one can extract the behavior of several observables: for example, exploiting also  that $(\tu_{n+1}(\gG) -\tu_n(\gG), \vert B_{\tu_{n+1}(\gG)}-B_{\tu_{n}(\gG)}\vert )$ is an IID sequence  with expectation $(\gG^2, 2 \gG)$, see Remark~\ref{rem:NP},  it implies that  the density of domains walls is $1/\gG^2$ to leading order in $\gG\to \infty$ (see \eqref{eq:DW-formula}) and that the density of disorder energy is asymptotically equivalent to  $2/ \gG$. But this is a very reductive view of Theorem~\ref{th:main}, that   identifies the pathwise behavior of the model and goes well beyond the computation of the leading $\gG \to \infty$ behavior of observables. Related to this we stress two key points:
\smallskip

\begin{enumerate}[leftmargin=*]
\item
Via the explicit knowledge of  the free energy density of the Continuum RFIC, see 
 \eqref{eq:fe} and 
\eqref{eq:fe0}, the relevant observables can be computed explicitly
in this model (see for example \eqref{eq:DW-formula}). In   
   \cite{cf:FLDM01}  these observables are computed beyond leading order  in $\gG\to \infty$ using the infinite disorder fixed point and, by comparison with the exact results, one readily sees that the validity of the computations based on the infinite disorder fixed point does not  go beyond leading order {because of  the \emph{thermal noise}, which forces a positive discrepancy density $D_\gG$, with the lower bound in Remark~\ref{rem:overlap}}. This point is taken up again in Section~\ref{sec:open}.
\item But one  cannot infer, or even guess, the pathwise behavior from the knowledge of the free energy density. The pathwise behavior must of course be compatible  on large scale with the correct behavior of the observables computed by differentiating the free energy density, but the prediction of the pathwise behavior comes from D.~Fisher's RG procedure.  The RG procedure is highly nontrivial and does not depend at all  of the explicit, model dependent, free energy computations. 
\end{enumerate}

\smallskip

\begin{rem}
\label{rem:2}
 For a quantitative comparison of our results with the ones in  \cite{cf:FLDM01} one should keep in mind that the constant $\gG$ in  \cite{cf:FLDM01} is twice \emph{our} $\gG$. The value of $\gG$ chosen in   \cite{cf:FLDM01} is the interaction 
 energy paid to generate a new domain, i.e. two walls. But, for example, the domain density \cite[(45) and (39)]{cf:FLDM01} is $1/(2J)^2$ (times the variance of the disorder), as in our case. 
 % This factor $2$ is the difference between spin up and down, that may lead to work  with $2B_\cdot$ (as done in \cite{cf:FLDM01}) rather than with $B_\cdot$ (as we do).
\end{rem}

\subsection{Open and related problems}
\label{sec:open}

\subsubsection{About more complete results for the Continuum RFIC} 
Our main result, Theorem~\ref{th:main}, controls the proximity of
%$ \mu_{\gG, B_\cdot, \ell}\left({\bf s}_\cdot\right)$ 
${\bf s}_\cdot$
and $\sF_\cdot$ in a global sense and in probability (see Remark~\ref{rem:pointwise} for a pointwise result, always in probability). 
In Remark \ref{rem:cpmp-U} we will explain  that our results  say that the process $\sign(m_\cdot)$, $m_\cdot$ defined in \eqref{eq:m}, 
is a better approximation of ${\bf s}_\cdot$ %$\mu_{\gG, B_\cdot, \ell}\left({\bf s}_\cdot\right)$ 
than $\sF_\cdot$ is.
On the other hand, $\sF_\cdot$ is related in a much more elementary and intuitive way to the Brownian trajectory than $\sign(m_\cdot)$.

But of course $\sF_\cdot$ and $\sign(m_\cdot)$ are just functions of the disorder, while thermal noise is  present in the system, that is   
in ${\bf s}_\cdot$. As a matter of fact, as discussed  in Section~\ref{sec:exactresults},
the infinite disorder RG fixed point $\sF_\cdot$ captures the behavior of the free energy only up to the order $1/ \gG$ (and similar considerations for other
quantities of interest like the density of domain walls or the density of energy). It is not difficult to argue  
that the precise location  of the domain walls fluctuates on a spatial scale of order $1$ and, since the density of the domain walls is $1/ \gG^2$, these (thermal) fluctuations 
lead to a correction of order $1/\gG^2$ to the free energy density   and to the density of energy. But thermal noise may also affect domains that are \emph{borderline} from the energetic viewpoint. In fact, the domains we observe are the ones determined by the $\gG$-extrema and the absolute value of the energy (due to the disorder!) in a domain (i.e., the difference 
between the value of $B_\cdot$ at two neighboring $\gG$-extrema) is at least $\gG$: if this energy differs from $\gG$ just by a constant, the thermal noise may generate a global spin flip in the domain. It is not difficult to argue that such domain switch phenomena contribute to the free energy a term which is $o(1/ \gG^2)$, but they are  contributing in a more substantial $O(1/\gG)$ way to the overlap.

With respect to this issue,  \eqref{eq:fe0} (more precisely, just the second order expansion
$\tf_0(\gG)= 1/ \gG - ( \log 2 - \gamma_{\textrm{EM}})/ \gG^2 +o(1/ \gG^2)$)
  strongly suggests (via a non trivial computation that we do not reproduce here) that the domain-wall boundaries actually should behave in the $\gG\to \infty$ limit as diffusions in a (quenched) random potential given by a bilateral three-dimensional Bessel process.  Establishing such a result is well beyond what we  prove here, {but the considerations we just developed  lead to conjecturing that a finer approximation of the spin  configurations is obtained, starting from the Fisher configuration, by randomizing the location of the domain wall locations (according to independent diffusions in a
  $3$d Bessel environment), with the addition of another  mechanism that randomly  suppresses the domains whose disorder energy content is close 
  to $\gG$.}
   
%
%
%This is the occasion to underline the relation about the problem we are considering and the Sinai Random Walk in Random Environment, or its continuum analog  (diffusion in random environment: \cite{cf:Cheliotis,cf:ADD}), which have been treated by infinite disorder RG analysis in \cite{cf:RWRE-phys}.  Understanding whether the domain boundaries are close
%to diffusions in random environment  remains an open issue and it directly  addresses the first of the two issues we have just raised (about refining the  infinite disorder RG fixed point in the context of the Continuum RFIC). 

Another direction in which Theorem~\ref{th:main} could be improved, notably in view of Theorem~\ref{th:mainsimp} for the simplified model (to which we hinted to in the caption of Fig.~\ref{fig2} and to which we dedicate Section~\ref{sec:simplified}), is understanding the behavior of $D_\gG$ in a sharper way. We do not know whether  $\loglog \gG$ 
 can be replaced by a constant.

\subsubsection{About the (discrete) RFIC} 
{After the completion of this work we  have been able to establish  \cite{cf:CGH} a result similar to Theorem~\ref{th:main} for the (discrete) RFIC. The central tool in the discrete setting is the transfer-matrix method that in the disordered systems context, see for example 
 \cite{cf:DH83,cf:NL86,cf:GGG17,cf:GG22}, leads to considering random matrix products and the corresponding dynamics induced on a naturally associated 
 projective space \cite{cf:Viana}.  We refer to  \cite{cf:CGH} for details and discussion about the relation with the present work. }

\subsubsection{About the $\ga \neq 0$ case}
 D.~Fisher, P.~Le Doussal and C.~Monthus  in \cite{cf:FLDM01} deal also with the case $\ga \neq 0$ (that we use here as a synonymous of $\bbE[h]\neq 0$). And the infinite disorder RG fixed point in the non centered case is considered in a number of other occasions, starting with the original works \cite{cf:F92,cf:F95}, see \cite{cf:IM,cf:Vojta,cf:GB} and references therein. 
 Strictly speaking, the limit process, or RG fixed point, for $\ga\neq 0$ is not the Neveu-Pitman process. But it is the process that describes the $\gG$-extrema (in terms of distances and height differences) of the Brownian motion with constant drift $\ga$: in this sense, it is the direct generalization of the Neveu-Pitman process (and it has been studied mathematically in \cite{cf:Faggionato}). Also in this case one is dealing with a process of independent random variables in $[0, \infty)^2$, but they are not IID, rather the sequence of the variables with even (respectively odd) indexes are IID. This is simply due to the fact that (let us suppose that $\ga>0$) ascending stretches are favored over the descending ones. The infinite disorder RG fixed point is explicit also for $\ga \neq 0$, but,
unlike for the $\ga=0$ case, it is clear that for $\ga \neq 0$ the infinite disorder RG fixed point gives just an approximate (or qualitative) picture of what really happens 
(mathematical arguments can be set forth in this direction, but this is implicitly admitted  also in the physical literature \cite{cf:IM,cf:Vojta} where the $\ga=0$ case is considered to be \emph{exact} and the $\ga\neq 0$ case  is claimed to be only \emph{asymptotically exact}, possibly referring to $\ga \to 0$). The  RG claims for $\ga\neq 0$ cannot hold because they do not agree
 with the  precise asymptotic behavior of the free energy 
density: these can be found in \cite{cf:DH83,cf:GGG17} for  the RFIC with $\ga \neq 0$  and  the free energy density of the Continuum RFIC is exactly known for every $\ga$, see \eqref{eq:fe}. On the other hand, as pointed out at the end of Section~\ref{sec:exactresults}, the knowledge of the free energy density, as complete as it may be, is very far from  
disclosing  the pathwise domain-wall structure. And understanding the domain-wall structure for the case $\ga\neq 0$ is, to our opinion and knowledge, an open issue.

\subsection{Organization of the paper}

We start, Section~\ref{sec:simplified}, with introducing and analyzing the \emph{simplified model},  {announced in the caption of Fig.~\ref{fig2}}.
In particular we prove Theorem~\ref{th:mainsimp} which is the analog of Theorem~\ref{th:main} in the simplified framework.
This section is definitely  useful for the intuition, but it is also needed because the simplified model 
enters the main proof as a tool. 

Our main result, Theorem~\ref{th:main}, is proven in Section~\ref{sec:proof-main}. This is a short section, because it relies on
two results, Lemma~\ref{th:modelsclose} and Proposition~\ref{th:formain}. 

Lemma~\ref{th:modelsclose} and Proposition~\ref{th:formain} are proven in Section~\ref{sec:maintech}, 
after the detailed analysis of the \emph{one-sided processes} $l_\cdot$ and $r_\cdot$ that can be found in Section~\ref{sec:SDE}. 

In App.~\ref{sec:B} we have collected several  properties on  Brownian motion: some are general and known, but some are less standard and more specific to our problem. In App.~\ref{sec:alt} we give characterizations of the sequence of $\gG$-extrema for the bilateral Brownian motion and more details about how this sequence is built. In App.~\ref{sec:C0} we show that the Continuum RFIC can be recovered as a scaling limit of the standard (i.e., discrete) RFIC. We also sketch the proof of the overlap formula 
\eqref{eq:overlap}.

\section{The simplified model}
\label{sec:simplified}

We now define the simplified versions of processes $l^\ssup{a}$ and $r^\ssup{b}$ {(informally mentioned in the caption of
Fig.~\ref{fig2})}, which we denote by $\ls^\ssup{a}$ and $\rs^\ssup{b}$. 
In this model the steps of our proofs are much less technical, but they correspond  in a precise way to the steps of the proof for the Continuum RFIC. In particular  Lemma \ref{th:invtime} and Proposition \ref{th:formain} (except for \eqref{eq:formain3}: this is related to Remark~\ref{rem:completereducedmodel?}) have strict analogs in this simplified context and one can estimate in a simple way the analog of the expression for $D_\gG$ in  \eqref{eq:formain1}.

\subsection{Definition and analysis of the simplified model}
\label{simpmodel} 

Recall that $U_\gG(x)=\varepsilon (e^x+e^{-x})$ is the potential confining processes $l^\ssup{a}$ and $r^\ssup{b}$, see Fig.~\ref{fig2}. We interpret it as a potential well with \emph{soft barriers} at $\Gamma$ and $-\Gamma$, as is attested by the shape of the invariant measure $p_\Gamma$. 
The present heuristic consists in replacing this pair of soft barriers by a pair of hard walls placed precisely at $\Gamma$ and $-\Gamma$. In other words, we rewrite equation \eqref{eq:l-SDE} and the analog for the $r$ process with a potential that is zero in the interval $[-\Gamma, \Gamma]$ and infinite outside of it. Let us give the formal definition of $\ls^\ssup{a}$ and $\rs^\ssup{b}$. Instead of starting $\ls^\ssup{a}$ at value $\infty$, we start it at $\Gamma$, since the infinite potential outside the pair of walls is supposed to bring $\ls^\ssup{a}$ instantly to $\Gamma$: we set $\ls^{\:(a)}_a =\Gamma$. 
Then, the process $\ls^\ssup{a}$ is constrained to remain in $[-\Gamma, \Gamma]$ ever after and $\left(\ls_t^{\:(a)}, \mathfrak{L}^\ssup{-}_t, \mathfrak{L}^\ssup{+}_t\right)_{t\ge a}$ is the unique solution of  the following Skorokhod problem (see \cite[equation (1.6)]{Kruk07})
 \begin{equation} 
 \label{eq:lsimp-SDE} 
   \ls_t^\ssup{a}\, =\, \Gamma + \mathfrak{L}^\ssup{-}_t- \mathfrak{L}^\ssup{+}_t+ 2  (B_t-B_a) \, \in [-\Gamma, \Gamma]\,  , \qquad t \,\ge\, a\, , 
\end{equation}
  where  $\mathfrak{L}^\ssup{-}_t$ and $\mathfrak{L}^\ssup{+}_t$, for $t\ge a$, are continuous non-decreasing functions that vanish at $a$ and satisfy 
  \begin{equation}
  \int_a^\infty \ind_{\left\{ \ls_t^\ssup{a}>-\Gamma\right\}} \dd \mathfrak{L}^\ssup{-}_t\,=\,0
  \ \ \text{ and } \ \ 
   \int_a^\infty \ind_{\left\{ \ls_t^\ssup{a}<\Gamma\right\}} \dd \mathfrak{L}^\ssup{+}_t=0 \, . 
  \end{equation} 
  
  We refer to formula (1.14) in \cite{Kruk07} for a representation of $\ls_t^\ssup{a}$ in terms of $(B_t)$.

% If we denote by $\mathfrak{L}_t(x)$ the local time of the semimartingale $\ls^\ssup{a}$ at the point $x\in \mathbb{R}$ up to time $t$, then $\mathfrak{L}^{(-)}_t=  \mathfrak{L}_t(-\Gamma)/2$ and $\mathfrak{L}^{(+)}_t = \lim_{x \nearrow \Gamma} \mathfrak{L}_t(x)/2$. These equalities and the factor $1/2$ are obtained by applying Tanaka's formula to $\ls_t^\ssup{a}$. We need to take the left-limit of $\mathfrak{L}_t(x)$ at $\Gamma$ because $\mathfrak{L}_t(\Gamma)=0$ if we choose a c\`adl\`ag version of $x\to \mathfrak{L}_t(x)$, see \cite{RY}, Theorem VI.1.2 and Corollary VI.1.8. 
  
  The reader who is not familiar with Skorokhod reflection problems should not get frightened at this point.  In fact, the process $\ls^{\:(a)}_\cdot$ may  be  understood as follows: $\ls^{\:(a)}$ is driven by the Brownian motion $B$ (times 2) except if it \emph{saturates}  at $\Gamma$ or at $-\Gamma$, in which case it is forbidden to cross these boundaries. 
  The process $\rs^{\:(b)}_\cdot$ behaves similarly but in reversed time: the process $(\rs_{- t}^\ssup{b})_{t\ge -b}$ is started at $\rs_{b}^\ssup{b}=\Gamma$ and is given by the solution of the Skorokhod problem \eqref{eq:lsimp-SDE}, except that $B$ is replaced by $-\Brev$ there, with the same constraint to stay in $[-\Gamma, \Gamma]$.
   We also define $\ms_\cdot^\ssup{a,b}$ by
 \begin{equation}
 \ms_t^\ssup{a,b}\, :=\, \ls_t^\ssup{a}+\rs_t^\ssup{b} \, ,
 \end{equation}
 for $ a \le t \le b$.
  These two simplified processes, and the associated $\ms$ process, constitute what we call the \emph{simplified model}.  
   
 \smallskip
 \begin{rem}
 \label{rem:completereducedmodel?}
 We do not know whether there exists a spin process analogous to process $(\mathbf{s}_t)_{t \in [a, b]}$ under law $\mu_{\gG, B_{\cdot},a,b}$ for which the probability that the spin at $t$ is equal to $+1$ would be equal to $(1+\exp(-\ms_t^\ssup{a,b}) )^{-1}$ (recall \eqref{eq:muelleta2}).
 \end{rem}
 \smallskip  
 
  The simplified model has the virtue of providing explicit expressions, which are related to the structure of the $\Gamma-$extrema of $B_\cdot$ in a very direct way: we are going to explore this  next.
   
   \smallskip
   
 We recall that we have introduced in  Section~\ref{sec:Gextrema} the spin trajectory  $\sFR_\cdot$ -- i.e.,  Fisher infinite disorder RG fixed point over the whole of $\bbR$ -- via a global procedure, see in particular \eqref{eq:sFRalt0} (this is detailed in   App.~\ref{sec:alt-1}). 
   As hinted to just before Proposition~\ref{th:formain}, 
   $\sFR_\cdot$ may
   be introduced also via a local procedure, see App.~\ref{sec:alt-2}, in particular \eqref{eq:sFR}. For what we are doing now, it suffices to focus on the local approach.
   
   For conciseness, we derive the explicit expression of $\ls_0^\ssup{a}$, when $a$ is large and negative. This is the objective of the next lemma. Recall from  Section~\ref{sec:Gextrema} the definitions  of  $\ttup(\gG)$, $\ttdown(\gG)$, $\tuup(\gG)$, $\tudown(\gG)$, $\tu_1(\Gamma)$ and $\arrow_1$. We consider the analoguous quantities for the Brownian motion $\Brev$ and set: \begin{equation}
\label{eq:reversed-rv}
\tsup(\gG)\, :=\, -\ttup_{\Brev}(\gG)\, , \ \
\tsdown(\gG)\, :=\, -\ttdown_{\Brev}(\gG)\, , \ \ 
\tvup(\gG)\, :=\, -\tuup_{\Brev}(\gG)\, , \ \
\tvdown(\gG)\, :=\, -\tudown_{\Brev}(\gG)\, ,
\end{equation}
as well as $\tv_1(\gG)\, :=\, -\tu_{\Brev, 1}(\gG)$ and  $\arrow'_1:=\arrow_{\Brev, 1}$.
   \medskip
  
\begin{lemma}
\label{th:invtimesimp}
	For every $a \le \max(\tsup(\Gamma), \tsdown(\Gamma))$, $\ls_0^\ssup{a}$
	does not depend on the value of $a$, so we write $\ls_0$ for $\ls_0^\ssup{a}$ for $a\le \max(\tsup(\Gamma), \tsdown(\Gamma))$, and we have
  \begin{equation}
 \label{eq:invtimesimp}
  \ls_0\, = \arrow'_1 \gG - 2B_{\tv_1(\gG)} =
  \begin{cases}
   +\Gamma-2\sup_{[\tsdown(\Gamma),0]} B_\cdot&  \text{ if } \tsdown(\Gamma)>\tsup(\Gamma)\, ,
   \\
   -\Gamma-2\inf_{[\tsup(\gG),0]} B_\cdot &   \text{ if } \tsup(\Gamma)>\tsdown(\Gamma)\, .
  \end{cases}
  \end{equation}
  	Respectively, for every $b \ge \min(\ttup(\Gamma), \ttdown(\Gamma))$, $\rs_0^\ssup{b}=: \rs_0$ does not depend on $b$ and
  \begin{equation} \label{def-rs0}
  \rs_0 \, = -\arrow_1 \gG + 2B_{\tu_1(\gG)} =
  \begin{cases}
   -\Gamma+2\sup_{[0,\ttdown(\Gamma)]} B_\cdot & \text{ if } \ttdown(\Gamma)<\ttup(\Gamma)\, ,\\
   +\Gamma+2\inf_{[0,\ttup(\Gamma)]} B_\cdot&  \text{ if } \ttup(\Gamma)<\ttdown(\Gamma)\, .
  \end{cases}
  \end{equation}
  As a consequence, when $a \le \max(\tsup(\Gamma), \tsdown(\Gamma))$ and $b \ge \min(\ttup(\Gamma), \ttdown(\Gamma))$ we have
  \begin{equation}
  \ms^\ssup{a, b}_0 \, = \ms_0 : = (\arrow'_1 - \arrow_1)\gG +2(B_{\tu_1(\gG)}-B_{\tv_1(\gG)})\, ,
  \end{equation}
  and $\sFR_0=\sign(\ms_0)$ with the convention that $\sign(0)=0$.
  
  These results extend to every $t\in\mathbb{R}$ by replacing $B_\cdot$ by $B_\cdot^\ssup{t}$ everywhere, thus defining processes $t\mapsto \ls_t$, $t\mapsto \rs_t$ and $t\mapsto \ms_t$ on the whole of $\mathbb{R}$. In particular we have $\sFR_t=\sign(\ms_t)$ for all $t\in\r$.
\end{lemma}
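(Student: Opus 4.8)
The plan is to solve the Skorokhod problem \eqref{eq:lsimp-SDE} explicitly and match the answer against the local construction of the $\gG$-extrema. First I would recall the standard explicit formula for the one-sided (or two-sided) Skorokhod reflection: if $\ls^\ssup{a}$ reflects in $[-\gG,\gG]$ and is driven by $2(B_t-B_a)$ started at $\gG$, then $\ls_t^\ssup{a}$ is a deterministic functional of the path $s\mapsto 2(B_s-B_a)$ on $[a,t]$; the cited reference \cite[(1.14)]{Kruk07} gives such a representation. The key qualitative feature I would extract from it is: starting from the top wall $\gG$, the reflected process stays equal to $\gG+2(B_t-B_a)-2\sup_{[a,t]}(B_s-B_a)$ (i.e. it tracks the running maximum) until the first time the Brownian increment has dropped by $\gG$, i.e. until $2\big(\sup_{[a,s]}B - B_s\big)$ first reaches $\gG$, which by definition \eqref{eq:completenotation1}–\eqref{eq:completenotation2} (applied to $\Brev$, hence to $B$ on the left of $0$) is the time $\tsdown(\gG)$, and at that time it has value $-\gG$ having been pushed down only by the upper wall. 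After that, a symmetric description applies with the roles of the two walls reversed, and so on.

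Concretely, I would argue as follows. Fix $a$ with $a\le\max(\tsup(\gG),\tsdown(\gG))$. Consider first the case $\tsdown(\gG)>\tsup(\gG)$, meaning that, reading $B$ backward from $0$, a downward $\gG$-fluctuation of the Brownian increment is completed before an upward one. Then on $[a,0]$ the reflected process, started at $\gG$ at time $a$, can only have touched the \emph{upper} wall (a touch of the lower wall would require an upward increment of size $\gG$ from a running minimum, which happens only after $\tsdown(\gG)$, hence after $0$ is not the issue — I must be careful about direction here and will spell it out with $\Brev$). The upper-wall local time makes $\ls^\ssup{a}$ equal to $\gG$ plus the driving increment minus its running supremum; evaluating at $t=0$ and noting that the running supremum of $2(B_s-B_a)$ over $[a,0]$ equals $2(\sup_{[\tsdown(\gG),0]}B_\cdot - B_a)$ once $a$ is far enough left (because the supremum is attained at the ``$\tvup$-type'' time $\tvdown(\gG)\in[\tsup(\gG),\tsdown(\gG)]$, which lies to the right of $a$ but is irrelevant since after it the path only goes down by at most $\gG$), gives $\ls_0=\gG+2B_0-2\sup_{[\tsdown(\gG),0]}B_\cdot - (\text{cancellation of }B_a)$. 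Taking $B_0=0$ yields exactly $\ls_0=\gG-2\sup_{[\tsdown(\gG),0]}B_\cdot$, and one checks this equals $\arrow'_1\gG-2B_{\tv_1(\gG)}$ using that in this case $\arrow'_1=+1$ (the first $\gG$-extremum of $\Brev$ reading forward is a minimum, i.e. of $B$ reading backward from $0$ it is a minimum, consistent with the sign bookkeeping in Appendix~\ref{sec:alt}) and $B_{\tv_1(\gG)}=\sup_{[\tsdown(\gG),0]}B_\cdot$. The symmetric case $\tsup(\gG)>\tsdown(\gG)$ is identical with the two walls swapped. Independence of $a$ is then immediate: once $a\le\max(\tsup(\gG),\tsdown(\gG))$, pushing $a$ further left only prepends an initial segment on which $\ls^\ssup{a}$ reaches the appropriate wall before time $\max(\tsup(\gG),\tsdown(\gG))$, so by the flow/monotonicity property of the Skorokhod map the value at $0$ is unchanged. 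The statement for $\rs_0$ is the same computation applied to the time-reversed driver $-\Brev$, giving \eqref{def-rs0}, and then $\ms_0=\ls_0+\rs_0$ is just the sum. Finally $\sFR_0=\sign(\ms_0)$ follows by comparing $\ms_0=(\arrow'_1-\arrow_1)\gG+2(B_{\tu_1(\gG)}-B_{\tv_1(\gG)})$ with the local description of the Fisher trajectory at $0$ recalled before Proposition~\ref{th:formain} (Appendix~\ref{sec:alt-2}, eq.~\eqref{eq:sFR}): when $\arrow_1=\arrow'_1$, $t=0$ lies in the interior of a $\gG$-stretch of that common sign and one checks $\sign(\ms_0)$ equals that sign; when $\arrow_1\neq\arrow'_1$, $0$ sits between a left $\gG$-extremum and a right one of opposite type, so $0$ is itself the extremum iff the heights match, i.e. iff $\ms_0=0$, and otherwise $\sign(\ms_0)$ picks out the winning side, which is exactly $\sFR_0$.

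The extension to general $t\in\bbR$ is by the stated shift: replacing $B_\cdot$ by $B_\cdot^\ssup{t}$ recenters the construction at $t$, the processes $\ls,\rs,\ms$ are defined pathwise by the same formulas with the shifted Brownian motion, and the identity $\sFR_t=\sign(\ms_t)$ for all $t$ is inherited from the $t=0$ case together with the fact that the $\gG$-extrema structure is itself shift-covariant.

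I expect the main obstacle to be the careful bookkeeping of \emph{directions and signs}: the problem mixes the forward $\gG$-extrema of $B$ (used for $\rs_0$) with the $\gG$-extrema of $\Brev$ (used for $\ls_0$), and one must be meticulous that ``the reflected process touches the upper wall first'' corresponds precisely to ``$\tsdown(\gG)>\tsup(\gG)$'' with the definitions \eqref{eq:completenotation2} and \eqref{eq:reversed-rv}, and that the arrow signs $\arrow_1,\arrow'_1$ match the conventions of Appendix~\ref{sec:alt}. The analytic content — that the reflected path tracks its running extremum until a $\gG$-sized reversal, then switches walls — is elementary from the explicit Skorokhod representation; the risk is purely in getting every $\pm$ right and in justifying cleanly that the value at $0$ stabilizes once $a$ is past $\max(\tsup(\gG),\tsdown(\gG))$, for which the composition/monotonicity property of the one-dimensional Skorokhod map on an interval \cite{Kruk07} is the right tool.
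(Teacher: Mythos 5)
Your computation of $\ls_0$ rests on the claim that, when $\tsdown(\Gamma)>\tsup(\Gamma)$, the process started at $\Gamma$ at time $a$ ``can only have touched the upper wall'' on $[a,0]$, so that a single one-sided reflection formula applies on all of $[a,0]$ and $\sup_{[a,0]}B_\cdot=\sup_{[\tsdown(\Gamma),0]}B_\cdot$. Both claims are true only for $a\in(\tsup(\Gamma),\tsdown(\Gamma)]$, whereas the lemma is asserted for every $a\le\tsdown(\Gamma)$; for $a\le\tsup(\Gamma)$ (the relevant regime, since the whole point is that the value at $0$ stabilizes as $a\to-\infty$) the interval $[a,0]$ does contain drops of $B_\cdot$ of size $\Gamma$, the process $\ls^\ssup{a}$ does hit the lower wall, and $\sup_{[a,0]}B_\cdot$ is in general strictly larger than $\sup_{[\tsdown(\Gamma),0]}B_\cdot$ — your hedge ``once $a$ is far enough left'' points the wrong way, as these claims only get worse as $a$ decreases. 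The missing idea is a forgetting-of-the-initial-condition step, which is exactly how the paper argues: whatever the value $\ls^\ssup{a}_{\tsdown(\Gamma)}\in[-\Gamma,\Gamma]$, the rise of size $\Gamma$ of $B_\cdot$ between $\tsdown(\Gamma)$ and $\tvdown(\Gamma)$ (with no intermediate $\Gamma$-drop) forces $\ls^\ssup{a}_{\tvdown(\Gamma)}=\Gamma$, and the absence of $\Gamma$-drops on $[\tvdown(\Gamma),0]$ then lets the process follow $2B_\cdot$ (reflected at the top) down to time $0$, giving $\ls_0=\Gamma-2B_{\tvdown(\Gamma)}$. This same step is what makes the answer independent of $a$: your appeal to monotonicity of the Skorokhod map presupposes that the process ``reaches the appropriate wall before time $\max(\tsup(\Gamma),\tsdown(\Gamma))$'', which is not guaranteed — saturation at the top wall is only ensured by time $\tvdown(\Gamma)$, which lies in $[\tsdown(\Gamma),0]$, not in $[\tsup(\Gamma),\tsdown(\Gamma)]$ as you place it.

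There are also direction and sign slips that you partly flag but that are not harmless as written: the first hitting of the lower wall after $a$ occurs when $\sup_{[a,s]}B_\cdot-B_s$ reaches $\Gamma$ (your ``$2(\sup_{[a,s]}B-B_s)$ reaches $\Gamma$'' is off by a factor $2$), and it is a forward-looking time from $a$, not $\tsdown(\Gamma)$, which is defined by reading the path backward from $0$ and concerns rises of $B_\cdot$; likewise, when $\tsdown(\Gamma)>\tsup(\Gamma)$ the first $\Gamma$-extremum of $\Brev$ is a maximum (consistent with $\arrow'_1=+1$), not a minimum as you say in passing. The remaining parts — obtaining $\rs_0$ by time reversal, summing to get $\ms_0$, identifying $\sFR_0=\sign(\ms_0)$ through the case analysis of Appendix~\ref{sec:alt-2} (formula \eqref{eq:sFR}), and the shift to general $t$ — match the paper's route and are fine once the computation of $\ls_0$ is repaired along the lines above.
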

  	
  This lemma should be considered as the corresponding version of Lemma \ref{th:invtime} for the simplified model, with the further advantage that, here, we are able to derive explicit expressions for the limiting processes and these expressions relate to the sequence of $\gG$-extrema.
  We have in particular that for $a\to-\infty$ and $ b\to \infty$
 \begin{equation}
 \label{eq:mhatab}
 \lim  \ms_t^\ssup{a,b}\, =\, \ls_t+\rs_t = \ms_t\, ,
  \end{equation}
  but in fact, from Lemma~\ref{th:invtimesimp} one can extract substantially stronger statements. For example that, for almost every $B_\cdot$ 
  and for every sufficiently large $\ell$   one can find $\ell_0$ (not depending on $\ell$) and $\ell_1(\ell)$, with $\lim_{\ell \to \infty} \ell_1(\ell)/ \ell =1$, 
  such that $ \ms_t^\ssup{0,\ell}= \ms_t$ for every $t \in [\ell_0, \ell_1(\ell)]$. 
  	
  \begin{proof}
  Let us assume for example that $\tsdown(\Gamma)>\tsup(\Gamma)$ and let us take $a$ such that $a\le\tsdown(\Gamma)$. 
  Recall that $\tvdown(\Gamma)$ is the (almost surely) unique time $\tv\in[\tsdown(\Gamma),0]$ such that  
  \begin{equation} \label{vdownGamma} B_{\tv}\, =\, \sup_{[\tsdown(\Gamma),0]} B_\cdot\, .
  \end{equation} 
  We proceed in three steps (see Figure \ref{fig:compa}):
  \begin{enumerate}
  \item At time ${\tsdown(\Gamma)}$, we simply remark that $\ls^\ssup{a}_{\tsdown(\Gamma)}\in[-\Gamma, \Gamma]$.
  \item {We remark that the rise in $B_\cdot$ from time $\tsdown(\Gamma)$ to time $\tvdown(\Gamma)$ is of size $\gG$, that $\tvdown(\Gamma)$ is an upward record time after time $\tsdown(\Gamma)$ and that $B_\cdot$ exhibits no drop of size $\gG$ inside $[\tsdown(\Gamma),\tvdown(\Gamma)]$, hence we obtain that $\ls_{\tvdown(\Gamma)}^\ssup{a}=\Gamma$.
  \item On $[\tvdown(\Gamma),0]$, $B_\cdot$ remains not larger than $B_{\tvdown(\Gamma)}$ and exhibits no drop of size $\Gamma$, hence $\ls^\ssup{a}$ evolves according to $2B_\cdot$ and we obtain that 
  \begin{equation}
  \ls_0^\ssup{a}\,=\,\Gamma+2(B_0-B_{\tvdown(\Gamma)})\,=\, \Gamma-2B_{\tvdown(\Gamma)}\, ,
  \end{equation}  which is the announced value of $\ls_0$ in the case when $\tsdown(\Gamma)>\tsup(\Gamma)$.}
   \end{enumerate}
   
  {Let us make the argument more explicit, by taking an analytic approach. At time $\tsdown(\Gamma)$, $\ls$ is in $[-\Gamma, \Gamma]$ by definition. From this time on, $\ls$ evolves as $2B_\cdot$ until it hits $-\gG$ or $\gG$. By definition of $\tsdown(\Gamma)$ and $\tvdown(\Gamma)$, we have that $B_{\tvdown(\Gamma)}-B_{\tsdown(\Gamma)}=\gG$ and that for every $t\in[\tsdown(\Gamma), \tvdown(\Gamma)], B_t\ge B_{\tsdown(\Gamma)}$. Therefore, $t\mapsto \ls^\ssup{a}_{\tsdown(\Gamma)}+2(B_t-B_{\tsdown(\Gamma)})$ \emph{will not} hit $-\gG$ before time $\tsdown(\Gamma)$ but it \emph{will} hit $\gG$ before time $\tsdown(\Gamma)$. Introducing
  \begin{equation}
  \tau:=\inf\left\{t>\tsdown(\Gamma):\ls^\ssup{a}_{\tsdown(\Gamma)}+2(B_t-B_{\tsdown(\Gamma)})\in\{-\gG, \gG\}\right\}
  \end{equation}
  we get that $\tau$ is not larger than $\tvdown(\Gamma)$ and that $\ls^\ssup{a}_\tau=\ls^\ssup{a}_{\tsdown(\Gamma)}+2(B_\tau-B_{\tsdown(\Gamma)})=\gG$.
  Now, from time $\tau$ on, $\ls^\ssup{a}$ evolves as $2B_\cdot$ reflected at $\gG$, until it touches $-\gG$. Explicitly: $\ls^\ssup{a}_t=\gG-2\max_{s\in[\tau, t]} (B_s-B_t)$ until time 
  \begin{equation}
  \tau':=\inf\left\{t>\tsdown(\Gamma):\gG-2\max_{s\in[\tau, t]} (B_s-B_t)=-\gG\}\right\}
  \end{equation}
  Since we assumed $\tsdown(\Gamma)>\tsup(\Gamma)$, there is no drop of size $\gG$ in $B_\cdot$ between $\tsdown(\Gamma)$ and 0, \emph{a fortiori} between $\tau$ and 0. Therefore $\tau'$ is not smaller than 0,   we get $\ls_{\tvdown(\Gamma)}^\ssup{a}=\Gamma$ and 
  $\ls^\ssup{a}_0= \gG - 2\max_{s\in[\tau, 0]} (B_s-B_0)= \gG - 2B_{\tvdown(\Gamma)}.$
  }
  
  Replacing $B_\cdot$ by $-B_\cdot$, we get the corresponding result in the case when $\tsup(\Gamma)>\tsdown(\Gamma)$ because, as we pointed out, the result does not depend on the initial condition.
  %\ref{[NB : Yueyun asked for some details. Switching from B to -B changes l in -l but the initial condition changes... TO BE DONE]}
  The result for process $\rs^\ssup{b}$ follows by changing $B_\cdot$ into $-\Brev_\cdot$.
  Finally, the identity $\sFR_0=\sign(\ms_0)$  follows from the analysis in Appendix \ref{sec:alt}, specifically it is just a more concise version of \eqref{eq:sFR}.
  \end{proof}
  
    \begin{figure}[ht!]
\begin{center}
  \includegraphics[width=\linewidth]{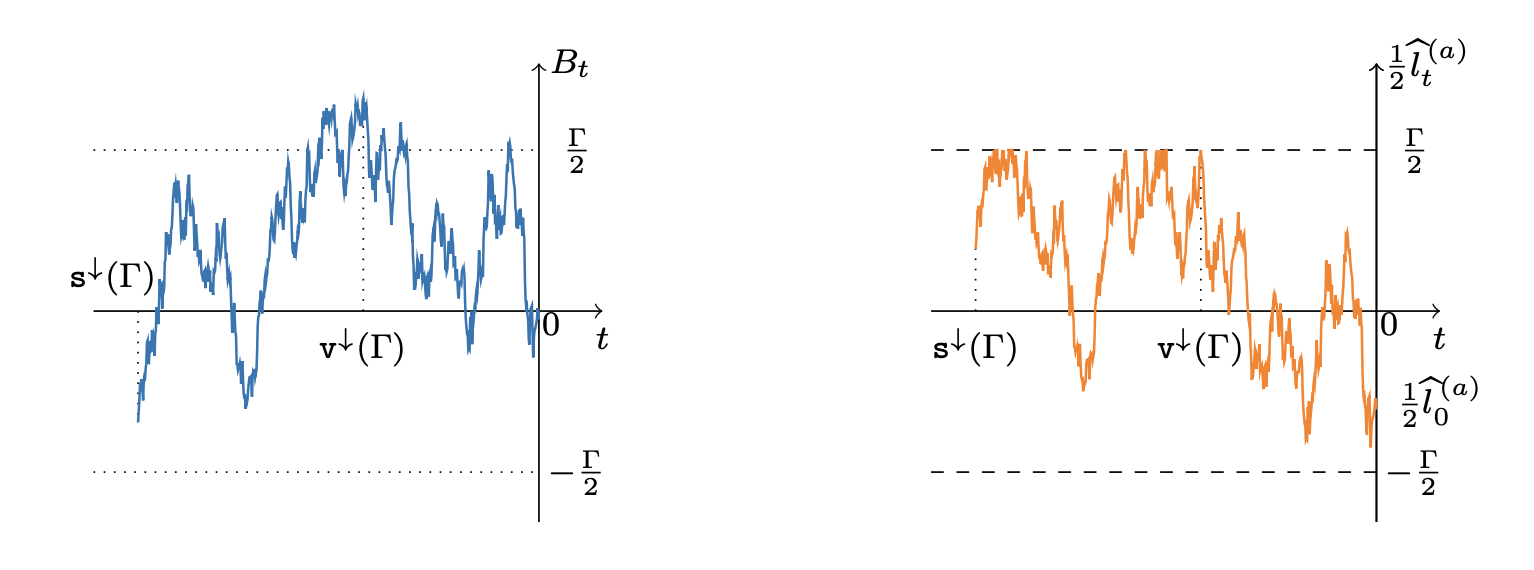}
  \end{center}
%\addtolength{\abovecaptionskip}{-10mm}
  \caption{%\leftskip=1.6truecm \rightskip=1.6truecm
  \scriptsize  An illustration of Steps (1), (2) and (3) in the proof of Lemma \ref{th:invtimesimp}. In particular, $\ls_t^\ssup{a}\,=\,\Gamma+2(B_t-B_{\tvdown(\Gamma)})$ for any $t \in [\tvdown(\Gamma), 0].$  }
  \label{fig:compa}
  \end{figure} 
  
%   \begin{figure}[h]
%\begin{center}
%  \includegraphics[width=0.9\linewidth]{Comparison.png}
%  \end{center}
%  \caption{\leftskip=1.6truecm \rightskip=1.6truecm\scriptsize  An illustration of steps (1), (2) and (3) in the proof of Lemma \ref{th:invtimesimp}.  }
%  \label{fig:compa}
%  \end{figure} 
  
%  \begin{figure}[ht]
%\centering
%\begin{subfigure}[b]{.5\textwidth}
%  \centering
%  \includegraphics[width=0.95\linewidth]{BM-ell2.png}
%\end{subfigure}% 
%\begin{subfigure}[b]{.5\textwidth}
%  \centering
%  \includegraphics[width=0.95\linewidth]{BM-ell1.png}
%\end{subfigure}%
% \caption{\leftskip=1truecm \rightskip=1truecm\scriptsize  An illustration of steps (1), (2) and (3) in the proof of Lemma \ref{th:invtimesimp} with $\Gamma=6$. In particular, $\ls_t^\ssup{a}\,=\,\Gamma+2(B_t-B_{v^\downarrow(\Gamma)})$ for any $t \in [v^\downarrow(\Gamma), 0].$ }
%  \label{fig:compa}
%\end{figure}
%

%  \begin{figure}[h]
%\centering
%\begin{subfigure}{.8\textwidth}
%   \centering
%  \includegraphics[width=.9\linewidth]{BM-ell2.png}
%  %\caption{Robert Brown (botaniste)}
% % \label{fig:sub1}
%\end{subfigure}% 
%\bigskip
%\\
%\begin{subfigure}{.8\textwidth}
%   \centering
%  \includegraphics[width=.89\linewidth]{BM-ell1.png}
%  %\caption{}
% % \label{fig:sub2}
%\end{subfigure}%
% \caption{\leftskip=1.6truecm \rightskip=1.6truecm\scriptsize  An illustration of steps (1), (2) and (3) in the proof of Lemma \ref{th:invtimesimp} with $\Gamma=6$. In particular, $\ls_t^\ssup{a}\,=\,\Gamma+2(B_t-B_{v^\downarrow(\Gamma)})$ for any $t \in [v^\downarrow(\Gamma), 0].$ }
%  \label{fig:compa}
%\end{figure}

\medskip
  
  \subsection{Main theorem for the simplified model}
  Now we are going to state and prove  the analog of our main theorem (Theorem~\ref{th:main}) for the simplified model.  In fact, due to Remark \ref{rem:completereducedmodel?} we formulate only the analog of \eqref{eq:formain2} (the analog of \eqref{eq:formain1} follows), but the result is more precise in terms of the large $\gG$ dependence:
  \medskip
  
 \begin{theorem}
 \label{th:mainsimp}  
  For almost every Brownian trajectory $B_\cdot$, the following limit exists and does not depend on $B_\cdot$:
  \begin{equation}
   \label{eq:mainsimp}
  \lim_{\ell\to\infty}\frac1{\ell} \int_0^\ell \frac{1}{1+ \exp\big(\ms_t^\ssup{0, \ell}\sF_t\big)} \dd t \, =:\,  \widehat D_\gG\,.
   \end{equation}
   Moreover
   \begin{equation}
    \widehat D_\gG
  \overset{\gG \to \infty} \sim \frac{\log 2}{\gG}\, .
  %\frac{2\log 2}{\gG}\, .
   \end{equation}
 \end{theorem}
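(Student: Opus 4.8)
The plan is to reduce the statement to an ergodic-type averaging along $\bbR$ and then to an explicit moment computation for the pair $(\ms_0, \sFR_0)$ given by Lemma~\ref{th:invtimesimp}. First I would argue that the Cesàro average in \eqref{eq:mainsimp} with the finite-volume integrand $\ms_t^\ssup{0,\ell}$ can be replaced, in the $\ell\to\infty$ limit, by the same average with the bilateral integrand $\ms_t$. This is exactly what the remark after Lemma~\ref{th:invtimesimp} gives: for almost every $B_\cdot$ and every large $\ell$ one has $\ms_t^\ssup{0,\ell}=\ms_t$ for all $t\in[\ell_0,\ell_1(\ell)]$ with $\ell_1(\ell)/\ell\to1$, and the integrand is bounded in $[0,1]$, so the contribution of $[0,\ell_0]\cup[\ell_1(\ell),\ell]$ is $o(1)$. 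Hence it suffices to show
\begin{equation}
\label{eq:ergsimp}
\lim_{\ell\to\infty}\frac1\ell\int_0^\ell \frac{1}{1+\exp(\ms_t\sF_t)}\dd t\,=\,\bbE\left[\frac{1}{1+\exp(\ms_0\sFR_0)}\right]\,=:\,\widehat D_\gG\,,
\end{equation}
for almost every $B_\cdot$, and also that $\sF_t$ may be replaced by $\sFR_t$ in the limit (the two Fisher trajectories differ only near the origin, again an $o(\ell)$ effect). The process $t\mapsto (\ms_t,\sFR_t)$ is a measurable functional of the increments of the bilateral Brownian motion, hence stationary and ergodic under the shift; since $\sFR_t=\sign(\ms_t)$ by Lemma~\ref{th:invtimesimp}, Birkhoff's ergodic theorem yields \eqref{eq:ergsimp}, with $\widehat D_\gG=\bbE[(1+\exp(|\ms_0|))^{-1}]$ because $\ms_0\sFR_0=|\ms_0|$ (on the null event $\ms_0=0$ the integrand equals $1/2$ and is irrelevant).

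The core of the proof is then the asymptotic evaluation of $\widehat D_\gG=\bbE[(1+e^{|\ms_0|})^{-1}]$ as $\gG\to\infty$. From Lemma~\ref{th:invtimesimp}, $\ms_0=(\arrow'_1-\arrow_1)\gG+2(B_{\tu_1(\gG)}-B_{\tv_1(\gG)})$, where $\ls_0$ is built from the past $(B_t)_{t\le0}$ and $\rs_0$ from the future $(B_t)_{t\ge0}$, so $\ls_0$ and $\rs_0$ are independent and, by the reflection symmetry of Brownian motion, identically distributed. Thus I would first identify the law of $\rs_0$: conditionally on which of $\ttup(\gG)<\ttdown(\gG)$ or the reverse occurs (each by symmetry of probability $1/2$), $\rs_0$ is $\gG$ plus twice an extremum of $B$ on an interval of the form $[0,\tuparrow\text{ or }\tdownarrow]$. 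Concretely, on $\{\ttdown(\gG)<\ttup(\gG)\}$ one has $\rs_0=-\gG+2\sup_{[0,\ttdown(\gG)]}B_\cdot$, and the distribution of $\sup_{[0,\ttdown(\gG)]}B_\cdot$ (equivalently of $\gG+B_{\tudown(\gG)}$) is known from Neveu--Pitman theory \cite{cf:NP89}; the upshot should be that $\rs_0$ has a density on $[-\gG,\gG]$ which, after rescaling by $\gG$, converges to a nondegenerate law on $[-1,1]$, with an \emph{exponential-type} accumulation of mass near the endpoints $\pm\gG$ governed by the exponential tails recorded in Remark~\ref{rem:NP}. Because the integrand $(1+e^{|\ms_0|})^{-1}$ is exponentially small unless $|\ms_0|=O(1)$, and $|\ms_0|$ can be $O(1)$ only when $\ls_0\approx -\rs_0$, i.e. when one of $\ls_0,\rs_0$ is within $O(1)$ of $\gG$ and the other within $O(1)$ of $-\gG$ (the arrows then being opposite, contributing the $\pm\gG$ terms that nearly cancel), the whole expectation localizes to this boundary regime. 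There I would substitute the local (exponential) description of $\ls_0$ near $\gG$ and of $\rs_0$ near $-\gG$: writing $\ls_0=\gG-2X$, $\rs_0=-\gG+2Y$ with $X,Y\ge0$ small on the scale of $\gG$, the boundary densities of $X$ and $Y$ converge (after the appropriate normalization involving the Neveu--Pitman constants) to explicit exponential densities, and $\ms_0\approx 2(Y-X)$. A change of variables then reduces $\widehat D_\gG$ to $\frac1\gG$ times a convergent double integral of the form $\int\!\!\int (1+e^{2|y-x|})^{-1}\,(\text{exponential densities})\,\dd x\,\dd y$, whose value I expect to come out to exactly $\log 2$; combined with the normalization this gives $\widehat D_\gG\sim (\log 2)/\gG$.

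The main obstacle is the second part: getting the precise constant $\log 2$ rather than merely $\widehat D_\gG=\Theta(1/\gG)$. This requires an exact, not just qualitative, description of the law of $\ls_0$ (hence of $\rs_0$) near the hard wall at $\gG$, including the correct normalizing constant, and a clean justification that the contributions from configurations where both $\ls_0$ and $\rs_0$ sit in the ``bulk'' $(-\gG+c,\gG-c)$, or where the two arrows agree, are $o(1/\gG)$ (exponentially small, since there $|\ms_0|\gg1$). I would handle the near-wall law either by the explicit Skorohod representation (formula (1.14) of \cite{Kruk07}) specialized via Lemma~\ref{th:invtimesimp} to $\ls_0=\arrow'_1\gG-2B_{\tv_1(\gG)}$, i.e. by transferring everything to the law of the Brownian extremum $B_{\tv_1(\gG)}$ over the Neveu--Pitman interval, whose density is computed in \cite{cf:NP89}; the delicate point is that the relevant scale near the wall is $O(1)$ (not $O(\gG)$), so one must expand that density in the boundary layer and track the leading behavior. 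Once the boundary-layer density and the localization estimate are in place, the final integral is an elementary computation and the constant $\log 2$ should fall out; I would double-check it against the lower bound $\liminf_{\gG\to\infty}\gG D_\gG\ge 1/2$ of Remark~\ref{rem:overlap} (noting $\log 2>1/2$, consistently, since $\widehat D_\gG$ is the simplified-model analogue and need not coincide with $D_\gG$).
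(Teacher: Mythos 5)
Your first step --- replacing $\ms_t^{\ssup{0,\ell}}$ by $\ms_t$ and $\sF_t$ by $\sFR_t$ up to $o(\ell)$ boundary effects and then invoking Birkhoff together with $\sFR_0=\sign(\ms_0)$ --- is exactly the paper's reduction, and it is fine. The gap is in the evaluation of $\widehat D_\gG=\bbE[(1+e^{|\ms_0|})^{-1}]$. The paper does not need any boundary-layer analysis because it knows the \emph{exact} law of the pair: $\ls_0$ and $\rs_0$ are independent and each is \emph{uniform} on $[-\gG,\gG]$ (Lemma~\ref{th:excthlemma}$(ii)$, proved by excursion theory), so $\widehat D_\gG$ is the elementary double integral \eqref{eq:comp-U}, which gives $\frac{\log 2}{\gG}-\frac{\pi^2}{24\gG^2}+O(\gG^{-2}e^{-2\gG})$. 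Your guessed structure for the law of $\rs_0$ ("rescaled convergence on $[-1,1]$ with exponential-type accumulation of mass near $\pm\gG$") is not correct: there is no boundary layer at all, the density is flat, and Remark~\ref{rem:NP} concerns the renewal increments $|B_{\tu_{n+1}(\gG)}-B_{\tu_n(\gG)}|$, not the law of $\rs_0$.

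More seriously, the localization step on which your computation rests is false. From $\ms_0=\ls_0+\rs_0$ with $(\ls_0,\rs_0)$ uniform on the square $[-\gG,\gG]^2$, the event $\{|\ms_0|\le C\}$ is the full anti-diagonal band $\{\ls_0\approx-\rs_0\}$, along which $\ls_0$ ranges over essentially all of $[-\gG,\gG]$; its probability is of order $C/\gG$ and this band is what produces the leading term, the integral $\int_0^\infty(1+e^s)^{-1}\dd s=\log 2$ coming from the width of the band, not from corner asymptotics. The corner region you single out, $\{\ls_0\in[\gG-O(1),\gG],\ \rs_0\in[-\gG,-\gG+O(1)]\}$, carries only $O(\gG^{-2})$ of the mass, so the scheme as written cannot recover the $\log 2/\gG$ asymptotics. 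Likewise, discarding the configurations with $\arrow_1=\arrow'_1$ as $o(1/\gG)$ is unjustified: there $\ms_0=2(B_{\tu_1(\gG)}-B_{\tv_1(\gG)})$ is $O(1)$ whenever the forward and backward candidate extrema have nearly equal heights, an event of probability of order $1/\gG$, so these configurations contribute at leading order. To repair the argument you would need the exact law of $(\ls_0,\rs_0)$ --- i.e.\ precisely Lemma~\ref{th:excthlemma}$(ii)$ (the conditional law of $B_{\tudown(\gG)}$ given $\ttdown(\gG)<\ttup(\gG)$ is triangular, and the two half-events recombine into the uniform law) --- after which the computation is the one in \eqref{eq:comp-U}.
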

 \medskip

\begin{proof}
  It is not difficult to show that the ergodic result \eqref{eq:formain2} holds for the simplified model, in the sense that 
  \begin{equation}
  \label{eq:formainsimp}
  	\lim_{\ell\to\infty}\frac1\ell \int_0^\ell \frac{1}{1+ \exp\big(\ms_t^\ssup{0, \ell}\sF_t\big)} \dd t\,=\, \bbE\left[ \frac{1}{1+ \exp\big(\ms_0\sFR_0\big)}\right] =:\widehat D_\gG \, .
  	\end{equation}
   We only sketch the proof of \eqref{eq:formainsimp}
    (because it is very close to the argument in the proof of \eqref{eq:formain2} itself, see Section~\ref{sec:proof-formain}).
   The idea is to show first
   that almost surely
 \begin{equation}
 \label{eq:preformainsimp}
  \lim_{\ell\to\infty}\frac1\ell \int_0^\ell \frac{1}{1+ \exp\big(\ms_t^\ssup{0, \ell}\sF_t\big)} \dd t\, =\, \lim_{\ell\to\infty}\frac1\ell \int_0^\ell  \frac{1}{1+ \exp\big(\ms_t \sFR_t\big)}\dd t\, ,
  \end{equation}
  by an adequate control of the domain where the two integrands differ: see \eqref{eq:mhatab} (and the observation right after it), moreover the substitution of $ \sF_t$ with $\sFR_t$ is also straightforward because they can possibly differ only when $t \in [0, \tu_1(\Gamma)]$  a.s.. 
  Finally, one goes from \eqref{eq:preformainsimp} to \eqref{eq:formainsimp} by applying (the time-continuous version of) Birkhoff's Ergodic Theorem.
  
  To conclude the proof of Theorem \ref{th:mainsimp}, we analyze the expectation in the right-hand side of \eqref{eq:formainsimp}. Observe that $\ls_0$ is measurable with respect to $\mathcal{F}((B_t)_{t\le 0})$, while  $\rs_0$ is measurable with respect to $\mathcal{F}((B_t)_{t\ge 0})$, hence they are independent. 
  Furthermore, they follow the uniform law on $[-\Gamma, \Gamma]$
  (see Lemma \ref{th:excthlemma}$(ii)$).
 % , as can be checked using the explicit expressions in Lemma \ref{th:invtimesimp} and excursion theory (see also appendix ADD for a proof relying on an adequate  representation of $\ls$). 
  Now the key point is that the simplified model captures exactly Fisher's strategy - or the other way around - in the sense that $\sFR_0=\sign(\ms_0)$, as stated in Lemma \ref{th:invtimesimp}. Therefore:
  \begin{equation}
  \label{eq:comp-U}
  \begin{split}
    \widehat D_\gG = &\, \bbE\left[ \left(1+ \exp\big(|\ms_0|\big)\right)^{-1}\right] \\
     = &\, \int_{-\Gamma}^\Gamma \int_{-\Gamma}^\Gamma
     \left(1+ \exp\big(|l+r|\big)\right)^{-1} \frac{\dd l}{2\Gamma} \frac{\dd r}{2\Gamma}\\ 
     = &\,  \frac{1}{4\Gamma^2}\int_{0}^{2\Gamma} 
     \left(1+ \exp\big(|s|\big)\right)^{-1} (2\Gamma-s)\dd s
     \\
     =& \frac{\log 2}{\Gamma} -\frac{\pi^2}{24\,   \gG^2} +\frac1{2\gG ^2} \mathrm{Li}_2(- \exp(-2\gG))\, ,
     \end{split}
   \end{equation}
   where $\mathrm{Li}_2(\cdot)$ is the Polylogarithm of order $2$ that has an analytic behavior at the origin, with  $ \mathrm{Li}_2(x)\sim x$ for $x$ near $0$ \cite[(25.12.10)]{cf:DLMF}.
\end{proof}

\section{Proof of Theorem~\ref{th:main} }
\label{sec:proof-main}

{We give the proof of Theorem~\ref{th:main} assuming the validity of 
 Lemma \ref{th:invtime} and Proposition \ref{th:formain}, stated in the introduction, and of 
 Lemma~\ref{th:modelsclose} which is stated below. 
 The first result is proven in the next section, see Proposition~\ref{th:lonR},   
 and to the other two we devote Section~\ref{sec:maintech}.
 A look at  Proposition \ref{th:formain} makes clear that Theorem~\ref{th:main} follows if we obtain an adequate upper bound
 on}
   the expression for $D_\gG$ given  in \eqref{eq:formain1}. As already discussed in Section~\ref{sec:defs} (and proven in Proposition~\ref{th:lonR}), the law of $(l_0,r_0)$ is simply $p_\Gamma \otimes p_\Gamma$, with $p_\gG$ defined in \eqref{eq:pGa}, but knowing the law of $(l_0,r_0)$ is not of much help because we need to relate $(l_0,r_0)$ and 
$\sFR_0$ in a pathwise sense, that is for (almost) every trajectory of
$B_\cdot=(B_t)_{t\in\R}$. In our study of the simplified model, we established that $\sFR_0$ is equal to  the sign of $\ms_0=\ls_0+\rs_0$. We are going to show that this remains \emph{essentially} true for the real model. This comes from the next lemma and it is made completely explicit in the corollary that follows it.

\medskip
\begin{lemma}
\label{th:modelsclose}
    For every $\gk>0$, there exists a constant $C_\gk$ such that, with probability $1-O(\Gamma^{-\gk})$, we have:
    \begin{equation}\label{lapproxls}
    \left| l_0-\ls_0\right |\, \le \, \loglog \Gamma + C_\gk\qquad \text{ and } \qquad \left| r_0-\rs_0\right|\, \le \,\loglog\Gamma+C_\gk\,.
    \end{equation}
    \end{lemma}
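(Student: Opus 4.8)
The plan is to control the difference $l_0-\ls_0$ by coupling the two SDEs — the confining-potential equation \eqref{eq:l-SDE1} for $l_\cdot$ and the Skorokhod-reflected equation \eqref{eq:lsimp-SDE} for $\ls_\cdot$ — both driven by the same bilateral Brownian motion $B_\cdot$, and then patching together the short-time behavior near the initialization (where $L_a=\infty$ vs. $\ls_a=\Gamma$) with the long-time behavior. First I would recall from Lemma~\ref{th:invtimesimp} that $\ls_0$ is already an explicit functional: once $B_\cdot$ has performed a $\Gamma$-rise or $\Gamma$-drop to the left of $0$, the value $\ls_0$ is frozen and equals $\arrow'_1 \Gamma - 2B_{\tv_1(\Gamma)}$. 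So the task reduces to showing that $l_0$, which is $\log L_0$ with $L_0$ the $a\to-\infty$ limit of the solution of \eqref{eq:Lsde}, is within $\loglog\Gamma + C_\gk$ of that explicit value with probability $1-O(\Gamma^{-\gk})$.

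The key steps, in order: (i) Both processes $l_\cdot$ and $\ls_\cdot$ behave like $2B_\cdot$ as long as they stay well inside $(-\Gamma,\Gamma)$, since there $U_\gG'(x)=\gep(e^x-e^{-x})$ is exponentially small. The discrepancy can therefore only build up in excursions of the driving process near the boundaries $\pm\Gamma$. I would use the SDE comparison: writing the difference $\Delta_t := l_t - \ls_t$, the martingale parts cancel and $\dd\Delta_t = \big(-U_\gG'(l_t)\,\dd t\big) - \big(\dd\mathfrak{L}^\ssup{-}_t - \dd\mathfrak{L}^\ssup{+}_t\big)$, so $\Delta$ has finite variation and its increments are tied to the local-time-type terms of the two models, which are active only when the processes are at (or, for $l$, slightly past) $\pm\Gamma$. (ii) Quantify the overshoot of $l_\cdot$ beyond $\pm\Gamma$: when $l_t$ tries to exceed $\Gamma$, the drift $-U_\gG'(l_t)\approx -\gep e^{l_t} = -e^{l_t-\Gamma}$ becomes of order $1$ once $l_t-\Gamma$ is of order $1$, and of order $\log\Gamma$ once $l_t - \Gamma \approx \loglog\Gamma$; a one-dimensional estimate on the diffusion in the potential $U_\gG$ near its wall — essentially comparing with an Ornstein–Uhlenbeck / exponential-drift barrier — shows that, on any time interval of polynomial length in $\Gamma$, $l_\cdot$ does not go higher than $\Gamma + \loglog\Gamma + C_\gk$ except with probability $O(\Gamma^{-\gk})$. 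This is where the $\loglog\Gamma$ in the statement comes from. (iii) Conversely, show that $\ls_\cdot$ and $l_\cdot$ hit the relevant boundary at comparable times: use the excursion/record estimates for Brownian motion collected in Appendix~\ref{sec:B} to say that the first $\Gamma$-rise (or drop) to the left of $0$ occurs at a time $\tsup(\Gamma)$ or $\tsdown(\Gamma)$ that is, with probability $1-O(\Gamma^{-\gk})$, not too far back (polynomially in $\Gamma$), so the bad-probability estimates of step (ii) applied on that window suffice. (iv) Finally, after the last boundary interaction before time $0$, \emph{both} processes evolve exactly as $2B_\cdot$ (the reflection terms are dormant, and $l_\cdot$'s drift is negligible once it has come back into the bulk), so $\Delta_t$ stays frozen at whatever value it had acquired, which by (ii)–(iii) is at most $\loglog\Gamma + C_\gk$ in absolute value. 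Symmetric reasoning with $B_\cdot$ replaced by $-\Brev_\cdot$ handles $r_0 - \rs_0$.

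The main obstacle I expect is step (ii): making the overshoot estimate quantitative and uniform enough. The process $l_\cdot$ near $\Gamma$ is a diffusion in a potential whose restoring force grows like $e^{x-\Gamma}$, so it is not literally reflected and can spend time — and accumulate drift — above $\Gamma$; one must show the \emph{net} displacement that this contributes to $l_0$ relative to the hard-wall process $\ls_0$ is only $O(\loglog\Gamma)$ with the stated failure probability, and do so on a time window long enough to cover the (random, possibly large) time back to the relevant $\Gamma$-extremum. The natural tool is a Girsanov or a direct hitting-time/Green-function computation for the diffusion with generator $2\frac{\dd^2}{\dd x^2} - U_\gG'(x)\frac{\dd}{\dd x}$, combined with a union bound over a polynomial number of unit time intervals; controlling the constant $C_\gk$'s dependence on $\gk$ and checking that $p_\Gamma$-typical starting points (recall the invariant law is essentially uniform on $(-\Gamma,\Gamma)$) do not spoil the estimate is the delicate part. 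A secondary subtlety is handling the initialization at $-\infty$: one should first invoke Lemma~\ref{th:invtime} and the representation in Section~\ref{sec:SDE} (via $\tilde L_t = 1/L_t$) to legitimately work with $l_0$ as an a.s.\ limit, then note that the explicit frozen form of $\ls_0$ from Lemma~\ref{th:invtimesimp} makes the comparison independent of how far back one starts.
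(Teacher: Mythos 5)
Your overall architecture (compare $l_\cdot$ with the hard-wall process, locate the last $\gG$-rise/drop to the left of $0$, control the overshoot above $\pm\gG$ on a window of polynomial length) is in the right spirit and partially matches the paper, but there is a genuine gap at the heart of the argument: you never establish the \emph{lower} synchronization bound, i.e.\ that by the time $\tvdown(\gG)$ of the last $\gG$-maximum the true process has actually been pushed all the way up, $l_{\tvdown(\gG)}\ge \gG-\loglog\gG-C_\gk$, and that on $[\tvdown(\gG),0]$ it tracks $\gG+2(B_t-B_{\tvdown(\gG)})$ from below as well as from above. Your step (ii) only bounds how far $l$ can exceed $\gG$, and your step (iv) then asserts that the difference $\Delta$ ``stays frozen at whatever value it had acquired, which by (ii)--(iii) is at most $\loglog\gG+C_\gk$'' --- but (ii)--(iii) give no bound on $\Delta$ at the last boundary interaction in the downward direction: before the $\gG$-rise the two processes can differ by order $\gG$, and the entire content of the lemma is that the rise of size exactly $\gG$ \emph{destroys} this discrepancy because the accumulated downward drift $\gep\int e^{l_s}\dd s$ along the rise (and while $l$ hovers near $\gG$ on $[\tvdown(\gG),0]$) is small. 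Showing this requires two ingredients absent from your plan: an a priori bound on $l$ at the start of the rise (the paper's Step~1, using that $l_{\tsdown(\gG)}$ has the stationary law $p_\gG$, which also contributes to the $\loglog\gG$), and quantitative bounds on the exponential functionals $\int_{\tsdown(\gG)}^{\tvdown(\gG)} e^{2(B_s-B_{\tvdown(\gG)})}\dd s$ and $\int_{\tvdown(\gG)}^{0} e^{-2(\gG-(B_{\tvdown(\gG)}-B_s))}\dd s$, which the paper shows are $O_\gk(\log\gG)$ with probability $1-O(\gG^{-\gk})$ via the deterministic comparison Lemma~\ref{th:forTrandom} and the excursion-theoretic/Bessel estimates of Lemma~\ref{th:excthlemma}(iii)--(iv); taking logarithms of these $O(\log\gG)$ quantities is precisely where most of the $\loglog\gG$ comes from, not (only) from the overshoot.

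Relatedly, your ``drift is negligible once it has come back into the bulk'' claim on $[\tvdown(\gG),0]$ is exactly the non-trivial point: $l$ starts this interval within $\loglog\gG$ of $\gG$ while $B$ lingers near its maximum, so the restoring force is of order one there and its cumulative effect must be controlled by the occupation-type functional above (conditioned on $\{\tsdown(\gG)>\tsup(\gG)\}$, which is why the paper needs part (iv) of Lemma~\ref{th:excthlemma} rather than a generic bound). Your overshoot estimate (your step (ii)) does correspond to something in the paper --- the modulus-of-continuity argument of Lemma~\ref{lemuppboundstep2} and Corollary~\ref{cor:uppboundstep2} on the window $[-\gG^3,0]$ --- but by itself it only yields one half of one of the three steps; without the stationary-law input and the exponential-functional estimates the two-sided bound $|l_0-\ls_0|\le\loglog\gG+C_\gk$ does not follow.
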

    
    \medskip
    
We postpone the proof of Lemma~\ref{th:modelsclose} to Section~\ref{sec:modelsclose}. Of course in 
 Lemma~\ref{th:modelsclose} time $0$ can be replaced with any time $t\in \bbR$: this is 
 just a matter of using $B_\cdot^\ssup{t}$ instead of $B_\cdot$ and the event underlying probability $1-O(\Gamma^{-\gk})$
 changes accordingly.
 
 Moreover, using that $\ls_0$ and $\rs_0$ are uniformly distributed over $[-\Gamma, \Gamma]$, from Lemma~\ref{th:modelsclose} for $\gk=1$ we directly obtain:
 
 \medskip

\begin{cor}\label{th:modelsclosecor}
	With probability $1-O\left({\loglog \Gamma}/{\Gamma}\right)$, $m_0$ has the same sign as $\ms_0$.
\end{cor}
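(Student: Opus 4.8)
The plan is to derive Corollary~\ref{th:modelsclosecor} from Lemma~\ref{th:modelsclose} by combining the closeness estimate for $(l_0,r_0)$ and $(\ls_0,\rs_0)$ with the anti-concentration of $\ms_0=\ls_0+\rs_0$ near $0$. First I would note that, since $\ls_0$ and $\rs_0$ are independent and each uniform on $[-\Gamma,\Gamma]$ (Lemma~\ref{th:excthlemma}$(ii)$), the density of $\ms_0$ is the triangular density on $[-2\Gamma,2\Gamma]$, bounded above by $1/(2\Gamma)$. Hence for any threshold $\delta>0$ we have $\bbP(|\ms_0|\le \delta)\le \delta/\Gamma$. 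The point is that on the event where $|\ms_0|>\delta$ \emph{and} $|l_0-\ls_0|+|r_0-\rs_0|<\delta$, the real quantity $m_0=l_0+r_0$ differs from $\ms_0$ by strictly less than $\delta$, so $m_0$ cannot have crossed $0$: it has the same sign as $\ms_0$.

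The implementation is then a union bound. Set $\delta:=2(\loglog\Gamma+C_1)$, with $C_1=C_\gk$ the constant from Lemma~\ref{th:modelsclose} for the choice $\gk=1$. By Lemma~\ref{th:modelsclose} (applied with $\gk=1$), the event
\begin{equation*}
A\,:=\,\left\{|l_0-\ls_0|\le \tfrac\delta2\right\}\cap\left\{|r_0-\rs_0|\le \tfrac\delta2\right\}
\end{equation*}
has probability $1-O(\Gamma^{-1})$. On $A$ we have $|m_0-\ms_0|\le\delta$. Let $B:=\{|\ms_0|>\delta\}$; by the density bound, $\bbP(B^c)=\bbP(|\ms_0|\le\delta)\le \delta/\Gamma=O(\loglog\Gamma/\Gamma)$. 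On $A\cap B$ we have $|m_0-\ms_0|\le\delta<|\ms_0|$, which forces $\sign(m_0)=\sign(\ms_0)$ (and in particular $m_0\neq0$). Finally $\bbP((A\cap B)^c)\le \bbP(A^c)+\bbP(B^c)=O(\Gamma^{-1})+O(\loglog\Gamma/\Gamma)=O(\loglog\Gamma/\Gamma)$, which is exactly the claimed bound.

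There is essentially no hard step here: the only things to be careful about are the direction of the inequalities (we need the gap $|\ms_0|$ to dominate the perturbation $|m_0-\ms_0|$, hence the slightly loose choice of $\delta$ with a factor that swallows both $\loglog\Gamma$ terms and the additive constants) and the fact that one really does have the exact uniform law for $\ls_0,\rs_0$ — this is where the computation \eqref{eq:invtimesimp} together with the Neveu--Pitman identities is used, and it is precisely what makes the density of $\ms_0$ explicitly bounded by $1/(2\Gamma)$. If one only wanted $\bbP(\sign(m_0)\neq\sign(\ms_0))\to0$ it would suffice to know that $\ms_0$ has a law with no atom at $0$ and with bounded density near $0$; the uniform law gives the sharp rate $O(\loglog\Gamma/\Gamma)$ for free. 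The mild subtlety worth a sentence in the write-up is that $\sign$ is taken with the convention $\sign(0)=0$, so one should point out that on $A\cap B$ one actually has $m_0\neq0$, so the statement ``$m_0$ has the same sign as $\ms_0$'' is unambiguous there.
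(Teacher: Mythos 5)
Your argument is correct and is exactly the one the paper intends: the corollary is obtained from Lemma~\ref{th:modelsclose} with $\gk=1$ plus the uniform law (hence triangular density of $\ms_0$ bounded by $1/(2\Gamma)$) and a union bound, which the paper states as a direct consequence without writing out the details. Your write-up simply makes those details explicit, including the harmless point about the $\sign(0)=0$ convention.
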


\medskip

	Now, we are ready to finish the proof of our main theorem.

\smallskip
\begin{proof}[Proof of Theorem~\ref{th:main}] 
   We estimate the expression for $D_\gG$ in \eqref{eq:formain1}. We recall that with the convention $\sign(0)=0$
  \begin{equation} 
 \sFR_0\, =\, \sign \left(  \ms_0\right)\, ,
  \end{equation}
   and
    we introduce:
   \begin{equation}
   s^\ssup{m}_0\, = \, \sign\left(m_0 \right)\, .
   \end{equation}
     
We handle the expression for $D_\gG$ in \eqref{eq:formain1} \emph{when $\sFR_0$ is replaced by $s^\ssup{m}_0$ there}, so using the independence of $l_0$ and $r_0$:
    \begin{equation}
    \label{eq:useBessel2}
    \begin{split}
    \e\left[\left(1+\exp\big(m_0s^\ssup{m}_0\big)\right)^{-1}\right]
    & = \,\e\left[\left(1+\exp(|m_0|)\right)^{-1}\right]\\
    & =\, \int_{\mathbb{R}^2} \left(1+\exp(|l+r|)\right)^{-1} \;  p_\gG\otimes p_\gG(dl, dr)\\
    &=\,  \frac{\log 2}{\Gamma}+ o \left(\frac 1 \gG\right)\, ,
    \end{split}
    \end{equation}
   where in the last step
    we used that for every $\eta>0$, every $\gG$ sufficiently large and every $x \in \bbR$ we have $p_\gG(x) \ge (1-\eta) u_\gG(x(1+\eta))$, with $u_\gG$ the density of the uniform probability on $[-\Gamma, \Gamma]$, 
    and $p_\gG(x) \le u_\gG(x) + \ind_{(\gG, \infty)}(\vert x \vert) \exp(-(1/2) \exp(\vert x \vert - \gG))/(2\gG)$. 
    
    To conclude we use the fact that $(1+\exp(\cdot))^{-1}$ is bounded and Corollary \ref{th:modelsclosecor}, which yield
\begin{equation}
\e\left[\left(1+\exp\big(m_0\sFR_0\big)\right)^{-1}-\left(1+\exp\big(m_0 s^\ssup{m}_0\big)\right)^{-1}\right]\,=\, O\left(\frac{\loglog \Gamma}{\Gamma}\right)\, .
\end{equation}
    The proof of Theorem~\ref{th:main} is therefore complete.
    \end{proof}
    
    \medskip
    
 \begin{rem}
 \label{rem:cpmp-U}
As suggested by the above proof, instead of Fisher's trajectory $(\sF_t)_{t\in[0, \ell]}$ we can consider the trajectory $\left(\sign(m^\ssup{0, \ell}_t)\right)_{t\in[0, \ell]}$. Observe that this trajectory minimises the expectation 
 \begin{equation}
	\frac1\ell \int_0^\ell \mu_{\gG, B_\cdot, 0,\ell}\left({\bf s}_t\neq {S}_t\right)\dd t
 \end{equation}
 over all   stochastic processes $({S}_t)_{t\in[0, \ell]}$ defined in the same probability space as $B_\cdot$ and   taking values in $\{-1,0,+1\}$.
 By the  same argument of proof, also in this case $m^\ssup{0, \ell}_\cdot$ can be  replaced in the limit by $m_\cdot$ and the analog of Theorem \ref{th:main} holds with $D_\gG$ now equal to 
 \begin{equation}
 \e\left[\left(1+\exp\big(m_0s^\ssup{m}_0\big)\right)^{-1}\right] \,  = \,\e\left[\left(1+\exp(|m_0|)\right)^{-1}\right].
 \end{equation}
 \end{rem}
 
  \begin{rem}
 \label{rem:cpmp-U2}
 One can of course make the estimate \eqref{eq:useBessel2} precise to all orders: here we give the second order in the expansion. The  law of $l_0+r_0$, i.e. $p_\gG * p_\gG$, is 
 \begin{equation}
 \label{eq:convgG}
 p_\gG * p_\gG (x) \, =\, \frac{
 K_0\left( 2 \gep \cosh(x/2) \right)
 }{
 2\left( K_0( \gep)^2\right)
 }\,,
 \end{equation}
 which is an even distribution, close to the convolution of the uniform law over $[-\gG, \gG]$ with itself.
 Using $K_0(x)= \log (1/x)+ C+O(x^2 \log (1/x))$ for $x \searrow 0$, $C=\log 2 -\gamma_{\mathrm{EM}}=0.1159\ldots$,
 we obtain that
 \begin{equation}
 \label{eq:convgG-2}
 p_\gG * p_\gG (x)- \frac 1 {2 \gG}
 \left( 1- \frac x{2\gG}\right)
 \, =\, - \frac{C + \log\left( 1-e^{-x} \right)}{2 \gG^2}+ O\left( \frac{ \max(1, \vert x\vert)} {\gG^3}\right)\, , 
 \end{equation}
 so that, recalling \eqref{eq:comp-U} for the contribution due to the approximation with uniform laws, we obtain that the expression in  \eqref{eq:useBessel2} is equal to 
 \begin{equation}
  \frac{\log 2} {\gG} - \frac{\pi^2} {24\gG^2} - 
  \frac 1 {2\gG^2} \int_0^\infty 
  \left( 1+\exp(x)\right)^{-1}
  \left(C + \log\left( 1-e^{-x} \right)\right) + O\left( \frac 1{\gG^3}\right)\, .
 \end{equation}
 That is  
 \begin{equation}
 \e\left[\left(1+\exp(|m_0|)\right)^{-1}\right]\,=\,  \frac{\log 2} {\gG} - \frac 1 {\gG^2}\left( \frac {\pi^2}{24} + \frac32 (\log(2))^2 - \gamma_{\mathrm{EM}}\log 2 \right)
 + O\left( \frac 1{\gG^3}\right)\, .
 \end{equation}
 \end{rem}

\section{Analysis of the one sided processes}
\label{sec:SDE}

Recall the definitions of Section~\ref{sec:defs}, %notably \eqref{eq:ZBab}. 
notably that $a<b$,  $L^\ssup{a}_t$ is defined in \eqref{eq:L} and that $R^\ssup{b}_t$ is defined in \eqref{eq:R}.
Also $\mathrm{R}_t:=R^\ssup{b}_{-t}$ and $\Brev_t:=B_{-t}$ are introduced in Section~\ref{sec:defs}.
{The notion of strong solution to a stochastic differential equation with time running from $0$ to $\infty$ is given for example in \cite[p.~366]{RY} and it directly generalizes to the case in which $t\in [s,\infty)$, for every $s\in \bbR$.
We say that $(L_t)_{t>a} $ is a strong solution of \eqref{eq:Lsde} if 
$(L_t)_{t\ge a'} $ is a strong solution of \eqref{eq:Lsde}
for every $a'>a$. In particular, $(L_t)_{t\ge a'}$ is adapted to the filtration $(\sigma(B_s-B_{a'}, a'\le s\le t ))_{t\ge a'}$.
Analogous definition for $(\mathrm{R}_t)_{t>-b}$.
}
\medskip

{
\begin{lemma}
\label{th:SDEs0} 
$(L_t)_{t> a} =(L^\ssup{a}_t)_{t>a}$ is a strong solution of \eqref{eq:Lsde}  and satisfies 
$\lim_{t\searrow a} L^\ssup{a}_t= \infty$ a.s.. 
$(\mathrm{R}_t)_{t>-b}$ is a strong solution of \eqref{eq:Rsde} and satisfies 
$\lim_{t\searrow -b} \mathrm{R}_t= \infty$ a.s..
\end{lemma}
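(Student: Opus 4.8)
The plan is to start from the Feynman--Kac type representations \eqref{eq:L} and \eqref{eq:R}, to derive by It\^o calculus a closed \emph{linear} system of stochastic differential equations for the numerator and denominator of the defining ratio, and then to read off the announced nonlinear SDEs \eqref{eq:Lsde}--\eqref{eq:Rsde}; the boundary behaviour at $a$ (resp.\ $-b$) is then immediate. Fix $a$ and, for $t\ge a$ and $\sigma\in\{0,1\}$, introduce
\begin{equation}
Y_t\,:=\,\exp\Big(-2\int_a^t\eta_s\,\dd B_s\Big)\,,\qquad f_\sigma(t)\,:=\,\bE_\Gamma\big[Y_t\,;\,\eta_t=\sigma\big]\,,
\end{equation}
so that $f_0(a)=1$, $f_1(a)=0$ and $L^\ssup{a}_t=f_0(t)/f_1(t)$. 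Pathwise in $B$ one has the crude bound $Y_t\le\exp\!\big(2(1+N_{t-a})\sup_{[a,t]}|B|\big)$, which is integrable against the rate-$\gep$ Poisson law; hence $0<f_1(t)\le\bE_\Gamma[Y_t]<\infty$ for $t>a$ and, since $Y_t>0$ and there is positive probability of no Poisson event on $(a,t]$, $f_0(t)\ge e^{-\gep(t-a)}>0$ for all $t\ge a$.

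First I would compute $\dd\big(Y_t\ind_{\eta_t=\sigma}\big)$ on the filtration generated jointly by the Poisson process and $B$. It\^o's formula gives $\dd Y_t=Y_t\big(-2\eta_t\,\dd B_t+2\eta_t\,\dd t\big)$ (using $\eta_t^2=\eta_t$), while $\ind_{\eta_t=\sigma}$ is a pure-jump process with $\dd\ind_{\eta_t=\sigma}=\dd M^\sigma_t+\gep\big(\ind_{\eta_t=1-\sigma}-\ind_{\eta_t=\sigma}\big)\dd t$ for some local martingale $M^\sigma$, because under $\bP_\Gamma$ the spin flips at rate $\gep=e^{-\Gamma}$. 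Since $Y$ is continuous and $\ind_{\eta_\cdot=\sigma}$ has no continuous martingale part their covariation vanishes, so the product rule, together with the integrability above (which makes $\int Y_{s-}\,\dd M^\sigma_s$ a genuine mean-zero martingale and legitimizes the stochastic Fubini interchange $\bE_\Gamma\int\!\cdot\,\dd B=\int\bE_\Gamma[\cdot]\,\dd B$), yields after taking $\bE_\Gamma$
\begin{equation}
\dd f_0(t)=\gep\big(f_1(t)-f_0(t)\big)\dd t\,,\qquad
\dd f_1(t)=-2f_1(t)\,\dd B_t+\big(2f_1(t)+\gep(f_0(t)-f_1(t))\big)\dd t\,.
\end{equation}
Identifying this linear system is the technical heart of the argument; everything after it is fairly routine.

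Then, using $f_0>0$, I would apply It\^o to $\tilde L_t:=1/L^\ssup{a}_t=f_1(t)/f_0(t)$ (the covariation term in the product rule vanishes because $f_0$, hence $1/f_0$, carries no martingale part); a short computation produces
\begin{equation}
\dd\tilde L_t\,=\,-2\tilde L_t\,\dd B_t+\big(2\tilde L_t+\gep(1-\tilde L_t^2)\big)\dd t\,,\qquad\tilde L_a=0\,,
\end{equation}
which is exactly \eqref{eq:Lsde} driven by $-B$ and started at $0$. Its coefficients are locally Lipschitz, so pathwise uniqueness holds up to an explosion time; but $0\le\tilde L_t<\infty$ for all $t$, being a ratio of finite and strictly positive quantities, so $\tilde L$ does not explode and therefore coincides with the unique global strong solution — in particular it is a measurable functional of $\tilde L_{a'}$ and of the increments of $B$ after any $a'>a$, hence adapted to the forward filtration of $B$. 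Since $\tilde L_t>0$ for $t>a$, the reciprocal $L^\ssup{a}_t=1/\tilde L_t$ is finite there, solves \eqref{eq:Lsde} on $(a,\infty)$ and is adapted, i.e.\ it is a strong solution in the sense of Section~\ref{sec:SDE}; and continuity of $\tilde L$ with $\tilde L_a=0$ gives $\lim_{t\searrow a}\tilde L_t=0$, that is $\lim_{t\searrow a}L^\ssup{a}_t=+\infty$.

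Finally, for $\mathrm{R}$ I would rerun the identical computation after the time reversal $s\mapsto-s$: by the Markov property of the Poisson process, conditioning on $\eta_t$ in \eqref{eq:R} produces again a rate-$\gep$ flip process run forward in reversed time and driven by $\Brev$, so $\mathrm{R}_t=R^\ssup{b}_{-t}$ is the strong solution of \eqref{eq:Rsde} for $t>-b$ with $\lim_{t\searrow-b}\mathrm{R}_t=+\infty$; this is precisely the symmetry already stated around Remark~\ref{rem:a=-b}, which I would invoke rather than redo in detail. The main obstacle is the displayed linear system for $(f_0,f_1)$: correctly combining It\^o's formula for the $B$-driven exponential with the Poisson jump calculus for $\eta$ on the product filtration, and justifying the integrability (and the stochastic Fubini) that makes the compensated jump contribution vanish under $\bE_\Gamma$; the non-explosion and positivity bookkeeping for $\tilde L$ is routine by comparison.
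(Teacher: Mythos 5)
Your proposal is correct and follows essentially the same route as the paper: a linear SDE system for the numerator and denominator of \eqref{eq:L} (your $(f_0,f_1)$ are the paper's $(X_1,X_2)$ up to the factor $e^{\gep t}$), obtained by the same jump/It\^o calculus plus taking the Poisson expectation, then the passage to the reciprocal $\tilde L$ with initial condition $0$ so that standard existence/uniqueness applies, and finally a time-reversal reduction of $R^\ssup{b}$ to the $L$-type analysis. The only small caveat is on the $\mathrm{R}$ part: the reduction rests on reversibility of the rate-$\gep$ flip process with respect to its uniform equilibrium (the paper's computation under $\bP^{\mathtt{eq}}_\Gamma$), not merely the Markov property, and you cannot simply invoke the symmetry stated around Remark~\ref{rem:a=-b}, since that statement is part of what the lemma is proving — but your stated plan of rerunning the identical computation in reversed time is exactly the paper's argument.
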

}
\medskip

\begin{proof} {The proof is given in two steps: the first one exploits a time inversion trick on the Poisson process $(\eta_t)$, via the equilibrium version of this process, to reduce the analysis of $R^\ssup{b}$
to the analysis of  
$L^\ssup{a}$. Then the analysis of  $L^\ssup{a}$ is performed by exploiting
that $L^\ssup{a}$ (see see \eqref{eq:L}) is a ratio of two stochastic processes  defined on the same probability space and  adapted to the same 
 Brownian filtration. It is then practical to consider these two stochastic processes as one stochastic process taking values in $\bbR ^2$. % (in fact, they are both taking values in $[0, \infty)$)
We then show that, by It\^o formula, this two dimensional stochastic process solves a linear stochastic differential system: this system is treated  in detail in \cite[Sec.~2]{cf:CGG}, where it is shown that both components are strictly positive except at the initial time and that the ratios of the two components solve differential equations.
In fact, the two ratios  solve the same stochastic differential equation, except for the sign of the driving Brownian motion and for the initial condition.
 The initial condition of the ratio we consider is $\infty$, which makes a bit unpleasant the analysis, but this disappears if one considers the other ratio that has initial condition $0$ and directly falls into the standard theory of  solutions to stochastic differential equations. 
}

{The fact that solutions are strong is no surprise: as we just pointed out, the  stochastic differential equations we deal with are ultimately even coming from a linear system.  
But we stress it because it implies a number of useful properties: for example that, given $s\in (a,b)$, $(L_t^\ssup{a})_{t\in (a,s]}$ and 
$(R_t^\ssup{b})_{t\in [s,b)}$ are independent. }

\smallskip 

Let us therefore begin by arguing that it suffices to establish the result for $L^\ssup{a}_\cdot$.
As we have anticipated, for this it is practical to work with the equilibrium process $(\eta_t)_{t\in \bbR}$:
this is defined by stipulating that the wall locations are given by an homogeneous Poisson process on $\bbR$ with intensity
$\gep=\exp(-\gG)$ and the sign of the spins between walls is determined once the sign $\eta_0$ of the domain that contains the origin is chosen. The sign of  $\eta_0$ is a Bernoulli
of parameter $1/2$, independent of the Poisson process.  We call  $\bP_\Gamma ^\mathtt{eq}$ the law of this process and we remark that  
\begin{equation} \label{eq:LaEq}
L^\ssup{a}_t\, =\,  \frac
{ \bE^\mathtt{eq}_\Gamma \left[ \exp\left( -2\int_a^t \eta_s \dd B_s \right);\, \eta_a=0 \,, \eta_t =0   \right]}
{\bE^\mathtt{eq}_\Gamma \left[ \exp\left( -2\int_a^t \eta_s \dd B_s \right);\, \eta_a=0,\, \eta_t =1   \right]}
\, .
\end{equation}
In fact, since $\bP^\mathtt{eq}_\gG(\eta_a=0)=\bP^\mathtt{eq}_\gG(\eta_a=1)=1/2$, the process $(\eta_s)_{a\le s \le t}$ under $\bP^\mathtt{eq}_\Gamma(\cdot \,|\, \eta_a=0)$ has the same law as under $\bP_\Gamma$,  so \eqref{eq:LaEq} holds. Analogously 
\begin{equation}
R^\ssup{b}_t\, =\, 
\frac{
 \bE^\mathtt{eq}_\Gamma \left[ \exp\left( -2\int_t^b \eta_s \dd B_s \right);\, \eta_b =0\, , \eta_t=0   \right]
}
{
 \bE^\mathtt{eq}_\Gamma \left[ \exp\left( -2\int_t^b \eta_s \dd B_s \right);\, \eta_b =0\, ,\eta_t=1   \right]
}\,.
\end{equation}
{Now we remark also that $(\eta_{-s})_{s \in \bbR}$ and  $(\eta_{s})_{s \in \bbR}$ have the same finite dimensional laws, and 
if we set $\overline{ \eta}_{-s}:= \lim_{s'\searrow -s} \eta_{-s'}$ then $(\overline{\eta}_{-s})_{s \in \bbR}$ and  $(\eta_{s})_{s \in \bbR}$ have the same law,} so
\begin{equation}
\begin{split}
R^\ssup{b}_t\, &=\, 
\frac{
 \bE^\mathtt{eq}_\Gamma \left[ \exp\left( -2\int_t^b \eta_{-s} \dd B_s \right);\, \eta_{-b} =0\, , \eta_{- t}=0   \right]
}{
 \bE^\mathtt{eq}_\Gamma \left[ \exp\left( -2\int_t^b \eta_{b -s} \dd B_s \right);\, \eta_{-b} =0\, ,\eta_{-t}=1   \right]
}
\\
&=\, 
\frac{
 \bE^\mathtt{eq}_\Gamma \left[ \exp\left( 2\int_{-b}^{-t} \eta_{s} \dd \Brev_{s} \right);\, \eta_{-b} =0\, , \eta_{- t}=0   \right]
}{
 \bE^\mathtt{eq}_\Gamma \left[ \exp\left( 2\int_{-b}^{-t} \eta_{s} \dd \Brev_{s} \right);\, \eta_{-b} =0\, ,\eta_{-t}=1  \right]
}\, ,
\end{split}
\end{equation}
and let us remark that the symmetry is more evident in the case $a=-b$.
These expressions show  that it suffices to prove the statement for $L^\ssup{a}_\cdot$ and this is what we do next.

\smallskip

For the rest of the proof we can use the more compact expression \eqref{eq:L} for $L^\ssup{a}_t$
 and let us note that $\lim_{t \searrow a} L^\ssup{a}_t= \infty$ a.s. is immediate from 
 \eqref{eq:L}  because ${\bf s}_t= (-1)^{N_{t-a}}$, hence 
 $\bP_\gG(\eta_a=0)=0$. We set for $j=1$ and $2$
 \begin{equation} 
 \label{eq:X12}
 X_j(t)\,:=\,e^{\gep t}\, 
 { \bE_\Gamma \left[ \exp\left( -2\int_{a}^t \eta_s \dd B_s \right);\, \eta_t =j-1   \right]}
 \, .
\end{equation}
The process $(X_1(t),X_2(t))_{t \ge a}$ is clearly adapted to the filtration $(\sigma(B_s-B_a, a\le s \le t))_{ t\ge a}$ and we claim that it solves the linear SDE system
\begin{equation}
\label{eq:sys12}
\dd X_1(t)\, =\, \gep X_2 (t) \dd t \ \ \text{ and } \ \ 
\dd X_2(t)\, =\, \left( \gep X_1(t) +2 X_2(t) \right) \dd t - 2 X_2(t) \dd B_t\, ,
\end{equation}
for $t \ge a$ with $(X_1(a),X_2(a))=(1,0)$.
Since $L_t= X_1(t)/ X_2(t)$, the statement follows by applying It\^o formula (note that we have partly kept the same notations as in  \cite{cf:CGG}: in our case  $\gs=2$ and $L_t=1/Y_t$, $\gs$ and $Y_\cdot$ defined in  \cite{cf:CGG}).  In order to establish \eqref{eq:sys12} we remark that $\eta_t=\ind_{\eta_t=1}$,  $1-\eta_t=\ind_{\eta_t=0}$ and we 
set
\begin{equation}
\mathbf{X}_j(t)\,:=\,e^{\gep t}\, 
 \exp\left( -2\int_{a}^t \eta_s \dd B_s \right)\ind_{\eta_t=j-1}
 \, .
\end{equation}
By rewriting in integral formulation we have for $t>a$ and with $ \mathbf{X}(t):=  \mathbf{X}_1(t)+ \mathbf{X}_2(t)$
\begin{equation}
\begin{split}
\mathbf{X}_1(t) \, &=\, \mathbf{X}_1(a)+ \gep \int_a^t \mathbf{X}_1(s) \dd s 
-2 \int_a^t \mathbf{X}_1 (s) \eta_s \dd B_s +2  \int_a^t \mathbf{X}_1 (s) \eta_s^2 \dd s
- \int_a^t \mathbf{X}(s\text{\footnotesize{-}\!}) \dd \eta_s
\\
&=\, \mathbf{X}_1(a)+ \gep \int_a^t \mathbf{X}_1(s) \dd s - \int_a^t \mathbf{X}(s\text{\footnotesize{-}\!}) \dd \eta_s
\\
&=\, \mathbf{X}_1(a)+ \gep \int_a^t \mathbf{X}_1(s) \dd s - \int_a^t \mathbf{X}(s\text{\footnotesize{-}\!}) \left(\gep (1-2\eta_s)\dd s + \dd M_{\gep,s} \right)\, ,
\end{split}
\end{equation} 
where $\mathbf{X}(s\text{\footnotesize{-}\!})= \lim _{t \nearrow s}\mathbf{X}(t)$. Moreover   from the first to the second line the Brownian  integral term and the It\^o term disappear because the integrands contain $\eta_s (1-\eta_s)=0$ and in the next step we have used 
$\eta_t= \gep \int_a^t (1-2\eta_s)\dd s + M_{\gep,t}$ where $(M_{\gep,t})$ is a martingale (in fact $M_{\gep,t}= \int_a^t (1-2\eta_{s\text{\footnotesize{-}\!}}) \dd N_{\gep,s}$, where $N_{\gep,\cdot}$ is the martingale associated to
the Poisson process $N_\cdot$, i.e. $N_t= \gep t+ N_{\gep,t}$: in particular, $M_t \in \bbL ^p$ for every $p$).
Since $  \mathbf{X}(s)(1-2\eta_s) =  \mathbf{X}_1(s)-  \mathbf{X}_2(s)$, by taking the expectation with respect to the Poisson process (recall that $X_j(t)= \bE_\gG [\mathbf{X}_j(t)]$) we readily see that 
\begin{equation}
X_1(t) \, =\, X_1(a)+ \gep \int_a^t X_2 (s) \dd s \, ,
\end{equation} 
which is the first equation in \eqref{eq:sys12}. 
For the second one we write 
\begin{equation}
\mathbf{X}_2(t) \, =\, \mathbf{X}_2(a)+ \gep \int_a^t \mathbf{X}_2(s) \dd s 
-2 \int_a^t \mathbf{X}_2 (s) \dd B_s +2  \int_a^t \mathbf{X}_2 (s) \dd s
+ \int_a^t \mathbf{X}(s\text{\footnotesize{-}\!}) \dd \eta_s\, ,
\end{equation} 
where $\eta_s$ disappears in  Brownian integral and in the It\^o term because
 $\eta_s^2=\eta_s$. By using again $  \mathbf{X}(s)(1-2\eta_s) =  \mathbf{X}_1(s)-  \mathbf{X}_2(s)$ and by taking the expectation we recover the second 
 equation in \eqref{eq:sys12}. This completes the proof of Lemma~\ref{th:SDEs0}.
 \end{proof}

 \smallskip

\begin{rem}
\label{rem:bc0}
Lemma~\ref{th:SDEs0} is easily generalized 
to arbitrary fixed boundary conditions: 
if instead of boundary conditions $+1$ on the left we have $-1$ then 
$L_t^\ssup{a}$ solves the very same SDE, but $L_a^\ssup{a}=0$ (no need to taking the limit in this case).
Exactly the same statement holds if we change the boundary on the right.
And  it is useless to repeat the arguments we have just detailed in the case of plus boundary conditions. In fact 
it suffices to remark that if we use that 
$(\eta_t)_{t \in \bbR}$ has, under $\bP^\mathtt{eq}_\Gamma$, the same law as 
$(1-\eta_t)_{t \in \bbR}$, so
 \eqref{eq:LaEq} becomes 
 \begin{equation} \label{eq:LaEq-2}
L^\ssup{a}_t\, =\,  \frac
{ \bE^\mathtt{eq}_\Gamma \left[ \exp\left( 2\int_a^t \eta_s \dd B_s \right);\, \eta_a=1 \,, \eta_t =0   \right]}
{\bE^\mathtt{eq}_\Gamma \left[ \exp\left( 2\int_a^t \eta_s \dd B_s \right);\, \eta_a=1,\, \eta_t =1   \right]}
\, ,
\end{equation}
so $1/L^\ssup{a}_t$ becomes the expression we need with $-1$ left boundary condition, except that $B_\cdot$ is replaced
by $-B_\cdot$. But, as already pointed out in the introduction,  It\^o formula  yields that
 $1/L^\ssup{a}_t$ solves the same SDE as  $L^\ssup{a}_t$ with $B_\cdot$ replaced
by $-B_\cdot$, and with the obvious change in the initial condition. Therefore we see that the change in the boundary condition for the spin system just leads to changing the boundary condition for the SDE.
\end{rem}

\smallskip

For what follows we step to work with $l^\ssup{a}_t=  \log L^\ssup{a}_t$ and $r^\ssup{b}_t =  \log R^\ssup{b}_t$, introduced in \eqref{eq:l}. 
 
\medskip
 
 \begin{rem}
 \label{rem:Lyap12}
 Let us set $a=0$ for conciseness.
 One can integrate the first equation in \eqref{eq:sys12} to obtain 
 \begin{equation}
 \label{eq:Lyap12}
 \log X_1(t)= \gep \int_0^t \frac1{L^\ssup{0}_s}\dd s=   \gep \int_0^t \exp\left(-l^\ssup{0}_s\right)\dd s\, ,
 \end{equation}
 which relates the (almost sure) exponential rate of growth of $X_1(\cdot)$ (that is the free energy density up to the additive constant $\gep$, see \eqref{eq:X12}) with the  ergodic properties of $l^\ssup{0}_\cdot$. An analogous argument  can be applied to $X_2(t)$ and this leads to a formula that differs from  \eqref{eq:Lyap12}: nonetheless, one can show that $X_1(\cdot)$ and $X_2(\cdot)$ have the same rate of growth 
 (see \cite[Th.~1.1]{cf:CGG}).
 \end{rem}
 
 \medskip
 
 Without loss of generality, we just consider  $l^\ssup{a}_t$: one readily checks that 
 \begin{equation}
 \label{eq:la}
 \dd l^\ssup{a}_t \, =\,  2 \dd B_t - \gep \left( e^{l^\ssup{a}_t} - e^{-l^\ssup{a}_t} \right) \dd t \, ,
 \end{equation}
 for every $t \ge a$. It is the unique strong solution of this SDE, but since $ l^\ssup{a}_a= \infty$ we need to make this statement precise. 
 We are therefore going to look at the problem with more general initial condition and establish some monotonicity properties. 
 This will allow us also to deal with $a \to -\infty$ and with the construction of a (unique) solution on the whole of $\bbR$. Some of the technical results that we state and prove here will be of use also outside of this section. 
 
 \smallskip
 
 We start with a comparison lemma.
 
 \medskip

\begin{lemma}\label{l:comparison} Let $(\omega_t)_{t\ge 0}$ be a $({\cF}_t)_{t\ge0}$-Brownian motion and $ f^*:   \r \to \r$ be a locally Lipschitz function. Assume $x_t, x^*_t,  a_t, f_t$ to be $({\cF}_t)_{t\ge0}$-adapted real-valued continuous processes such that almost surely, for every $t\ge 0$,
\begin{equation}  
\begin{split}
x_t\, &=\,  x_0 + 2 \omega_t + \int_0^t f_s \dd s\, , \\
x^*_t\,&=\,  x^*_0 + 2 \omega_t + \int_0^t \left(a_s+f^*(x^*_s)\right) \dd s\, ,
\end{split}
\end{equation}
with
\begin{equation}
x_0\, \le\,  x^*_0 \ \ \ \text{ and } \ \ \ 
f_t \,\le \, a_t+ f^*(x_t)\, .
  \end{equation} 
  Then  a.s. $ x_t \le x^*_t$  for every $  t\ge 0$.   

Similarly if $x_0 \ge  x^*_0$ and $f_t \ge   a_t+ f^*(x_t)$, then a.s. we  have $ x_t \ge x^*_t$ for every $t\ge 0$.   
  \end{lemma}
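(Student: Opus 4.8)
The plan is to reduce the stochastic comparison to a pathwise, deterministic inequality by exploiting that both equations are driven by the \emph{same} Brownian motion $\omega_\cdot$, so the difference process $\Delta_t := x_t - x^*_t$ has no martingale part:
\[
\Delta_t \, =\, \Delta_0 + \int_0^t \left( f_s - a_s - f^*(x^*_s)\right) \dd s \, , \qquad \Delta_0 = x_0 - x^*_0 \le 0 \, .
\]
Since $f_\cdot$, $a_\cdot$, $x^*_\cdot$ are continuous and $f^*$ is continuous, the integrand $s \mapsto f_s - a_s - f^*(x^*_s)$ is continuous, hence $\Delta_\cdot$ is $C^1$ with $\dd \Delta_t/\dd t = f_t - a_t - f^*(x^*_t) \le f^*(x_t) - f^*(x^*_t)$, where the inequality is exactly the hypothesis $f_t \le a_t + f^*(x_t)$. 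Note it is crucial that the hypothesis compares $f_t$ with $f^*$ evaluated at $x_t$ (not at $x^*_t$), as this is what turns the bound into a statement about $f^*(x_t)-f^*(x^*_t)$.

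First I would fix an arbitrary horizon $T>0$. On $[0,T]$ the continuous trajectories $x_\cdot$ and $x^*_\cdot$ stay in a bounded (random) interval $I_T$, and since $f^*$ is locally Lipschitz there is an almost surely finite random constant $L_T$ with $|f^*(u)-f^*(v)| \le L_T|u-v|$ for all $u,v \in I_T$. Hence on $\{\Delta_t > 0\}$ (where $x_t > x^*_t$, both in $I_T$) we get $f^*(x_t) - f^*(x^*_t) \le L_T \Delta_t$. Next I would pass to $\Delta_t^+ := \max(\Delta_t,0)$, which is absolutely continuous (composition of the $1$-Lipschitz map $u\mapsto u^+$ with the $C^1$ map $\Delta_\cdot$) and satisfies, for a.e.\ $t\in[0,T]$,
\[
\frac{\dd}{\dd t}\Delta_t^+ \, =\, \ind_{\{\Delta_t>0\}}\,\frac{\dd}{\dd t}\Delta_t \, \le\, \ind_{\{\Delta_t>0\}}\, L_T\, \Delta_t \, =\, L_T\, \Delta_t^+ \, .
\]
Since $\Delta_0^+ = 0$ because $\Delta_0 \le 0$, Gronwall's lemma yields $\Delta_t^+ \le \Delta_0^+ e^{L_T t} = 0$ for all $t\in[0,T]$, i.e.\ $x_t \le x^*_t$ on $[0,T]$; letting $T\to\infty$ gives the first assertion almost surely for all $t\ge 0$. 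The second assertion follows by running the same argument on $(-x_t, -x^*_t, -a_t)$ with the locally Lipschitz function $u \mapsto -f^*(-u)$, or equivalently by interchanging the roles of the two processes.

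The only delicate point — and the one I would flag as the main (though minor) obstacle — is that $f^*$ is merely \emph{locally} Lipschitz, so the Gronwall constant $L_T$ is random and depends on the realized range $I_T$ of the trajectories. This causes no difficulty because the entire argument is carried out pathwise on a set of full measure and $L_T < \infty$ almost surely; it is, however, precisely the reason one must localize on a finite horizon $[0,T]$ and on the (random) compact range of $x_\cdot, x^*_\cdot$ before invoking Gronwall, rather than applying a global comparison theorem directly.
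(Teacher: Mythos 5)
Your proof is correct and follows essentially the same route as the paper: the difference $x_t-x^*_t$ has no martingale part because both equations share the same driving Brownian motion, it is $C^1$, and after bounding $f^*(x_t)-f^*(x^*_t)$ by a localized Lipschitz constant one applies Gronwall to the positive part $\max(0,x_t-x^*_t)$, with the reverse inequality obtained by symmetry. The only cosmetic differences are that the paper localizes via the stopping times $\inf\{t\ge 0:|x_t|+|x^*_t|\ge n\}$ instead of your fixed horizon with a pathwise random Lipschitz constant, and justifies the integral identity for $\max(0,x_t-x^*_t)$ via Tanaka's formula (using that the local time vanishes for this finite-variation difference) rather than your elementary almost-everywhere chain rule for the positive part of a $C^1$ function.
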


\medskip

\smallskip

We put the constant $2$ before $\omega_t$ only for notation reasons in the applications of this lemma. When $a_s$ (resp. $f_s$) is a deterministic function of $s$ (resp. of $(s, x_s)$), this lemma is a particular case of the general comparison theorems for one-dimensional diffusions, see \cite{IW77}. 

\medskip

\begin{proof}
 By considering the processes $x_t$ and $x^*_t$ stopped at $\inf\{t\ge 0: |x_t|+|x^*_t|\ge n\}$ for  $n=1,2, \ldots$, we can assume that they are bounded. Then there exists $K>0$ such that for every $s\ge 0$ we have  $| f^*(x_s) - f^*(x^*_s)| \le K | x_s - x^*_s|$.
Since $x_t-x^*_t= x_0- x^*_0 + \int_0^t (f_s - a_s-f^*(x^*_s)) \dd s$ we see that $t \mapsto x_t-x^*_t$ is $C^1$. {Then, considered as a $({\mathcal F}_t)_{t\ge0}$-continuous semimartingale  the process $x_t-x^*_t$ has   vanishing local times.  Applying the Tanaka formula (\cite{RY}, Theorem VI.1.2), } we have 
\begin{equation}
\begin{split} 
  \max(0, x_t-x^*_t)  \,
&=\,  \max(0, x_0- x^*_0)   +    \int_0^t   \ind_{\{x_s > x^*_s\}}(f_s- a_s-f^*(x^*_s))  \dd s  
\\
&\le \,
 \int_0^t  \ind_{\{x_s > x^*_s\}} (f^*(x_s) - f^*(x^*_s))   \dd s 
 \\
 &\le \, 
 K  \int_0^t   \max(0, x_s  -  x^*_s)    \dd s\, . 
 \end{split}
 \end{equation}
 We conclude by Gronwall's Lemma that $  \max(0, x_t-x^*_t)  =0$.
 \end{proof}

\medskip

Let us introduce the unique strong solution $l_t^\ssup{a, x}$ of \eqref{eq:la} with initial condition $x\in\R$, that is
the only continuous process, adapted to the filtration $\sigma(B_s-B_a, a\le s \le t), t\ge a$, that satisfies a.s. 
\begin{equation} \label{eds-la}
l_t^\ssup{a, x}\, =\,x + 2 (B_t-B_a) - \gep \int_a^t \left( e^{l_s^\ssup{a, x}} - e^{-l_s^\ssup{a, x}}\right) \dd s\, ,
\end{equation}
for every $t\ge a$. 

\smallskip

We collect in the next lemma useful upper and lower bounds on $l_t^\ssup{a, x}$ that follow by applying Lemma~\ref{l:comparison}.
\medskip

\begin{lemma}
\label{th:lbub-on-l}
For every $x \in \bbR$ and every $t>a$ we have $ \underline{l}^\ssup{a, x}_{t}\le l^\ssup{a, x}_t \le  \overline{l}^\ssup{a, x}_{t}$ with
\begin{equation} 
\label{eq:ell-ax-low}   
\underline{l}^\ssup{a, x}_{t} \,  := \, x + 2 (B_t-B_a)
    - \log \left( 1+ \varepsilon e^x   \int_a^t e^{2 (B_s- B_a)   } \dd  s\right)   \, ,
 \end{equation}
 and 
  \begin{equation}    \label{eq:ell-ax-upp} 
  \overline{l}^\ssup{a, x}_{t}\, :=\,  \theta^x_t  - \log \left(1+ \varepsilon \int_a^t e^{\theta^x_s} \dd s \right)\, , 
\end{equation}
with
\begin{equation}
\label{eq:thetata}
\theta^x_t\, :=\, 
 x +  2  (B_t-B_a) + \varepsilon e^{-x} \int_a^t e^{- 2 (B_s-B_a)} \dd s + \varepsilon^2 \int_a^t \int_a^s e^{2 (B_r-B_s)} \dd r \dd s\, .
\end{equation}
Moreover for every $t>a$ a.s. we have
\begin{equation} 
\label{eq:lblinfty}
\lim_{x \to \infty} \underline{l}^\ssup{a, x}_{t}
 \, =\,  
 -\log \varepsilon  - \log   \int_a^t e^{2 (B_s-B_t)   } \dd s \, ,
 \end{equation}
 and 
 \begin{equation} 
\label{eq:ublinfty}
\limsup_{x \to \infty} \overline{l}^\ssup{a, x}_{t}
 \, \le \,   
 -\log \varepsilon + \varepsilon^2 \int_a^t \int_a^s e^{2 (B_r-B_s)} \dd r \dd s   -  \log   \int_a^t e^{2 (B_s- B_t)} \dd s\, .
 \end{equation}
\end{lemma}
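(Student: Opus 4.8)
The plan is to sandwich $l_t^\ssup{a,x}$ between two \emph{explicit} processes obtained from \eqref{eq:la} by dropping, respectively, the repelling term $+\varepsilon e^{-l}$ and the confining term $-\varepsilon e^{l}$ from the drift, and then to invoke the comparison Lemma~\ref{l:comparison} twice. Fix $t>a$ once and for all; all the time-integrals appearing below are over subintervals of $[a,t]$ and are a.s.\ finite and strictly positive, their integrands being continuous and positive. To match the hypotheses of Lemma~\ref{l:comparison} one passes to the time-shifted picture: with $\omega_s:=B_{a+s}-B_a$, the process $s\mapsto l_{a+s}^\ssup{a,x}$ is, by \eqref{eds-la}, of the form required there, with driving Brownian motion $\omega$, drift $f_s=-\varepsilon(e^{l_{a+s}^\ssup{a,x}}-e^{-l_{a+s}^\ssup{a,x}})$ and initial value $x$.

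\emph{Lower bound.} First I would check that the process $\underline l^\ssup{a,x}$ defined by \eqref{eq:ell-ax-low} is the strong solution of $\dd \underline l_t=2\dd B_t-\varepsilon e^{\underline l_t}\dd t$ with $\underline l_a=x$. Writing $\underline l_t^\ssup{a,x}=(x+2(B_t-B_a))-\log A_t$ with $A_t:=1+\varepsilon e^x\int_a^t e^{2(B_s-B_a)}\dd s$, the process $A$ has finite variation, $\dd A_t=\varepsilon e^{x+2(B_t-B_a)}\dd t$, hence $\dd\log A_t=(\varepsilon e^{x+2(B_t-B_a)}/A_t)\dd t=\varepsilon e^{\underline l_t^\ssup{a,x}}\dd t$, which gives the claim. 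Then I apply the second assertion of Lemma~\ref{l:comparison} with $x_t=l_{a+t}^\ssup{a,x}$, $x^*_t=\underline l_{a+t}^\ssup{a,x}$, $a_t\equiv 0$ and $f^*(y)=-\varepsilon e^y$ (locally Lipschitz): the two initial values coincide and $f_t=-\varepsilon e^{l_{a+t}^\ssup{a,x}}+\varepsilon e^{-l_{a+t}^\ssup{a,x}}\ge -\varepsilon e^{l_{a+t}^\ssup{a,x}}=f^*(l_{a+t}^\ssup{a,x})$, so the lemma yields $l_t^\ssup{a,x}\ge \underline l_t^\ssup{a,x}$ for every $t>a$.

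\emph{Upper bound.} The process $\theta^x$ of \eqref{eq:thetata} is tailored so that $\dd\theta^x_t=2\dd B_t+\varepsilon e^{-\underline l_t^\ssup{a,x}}\dd t$. Indeed, differentiating \eqref{eq:thetata} gives the drift $\varepsilon e^{-x-2(B_t-B_a)}+\varepsilon^2\int_a^t e^{2(B_r-B_t)}\dd r$, while from $\underline l_t^\ssup{a,x}=(x+2(B_t-B_a))-\log A_t$ one computes $\varepsilon e^{-\underline l_t^\ssup{a,x}}=\varepsilon e^{-x-2(B_t-B_a)}A_t=\varepsilon e^{-x-2(B_t-B_a)}+\varepsilon^2\int_a^t e^{2(B_r-B_t)}\dd r$, i.e.\ the two coincide. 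Consequently, repeating the finite-variation computation of the preceding paragraph for $\overline l_t^\ssup{a,x}=\theta^x_t-\log(1+\varepsilon\int_a^t e^{\theta^x_s}\dd s)$ from \eqref{eq:ell-ax-upp}, one sees that $\overline l^\ssup{a,x}$ solves $\dd \overline l_t=2\dd B_t+\varepsilon e^{-\underline l_t^\ssup{a,x}}\dd t-\varepsilon e^{\overline l_t}\dd t$ with $\overline l_a^\ssup{a,x}=x$. Now I apply the first assertion of Lemma~\ref{l:comparison} with $x_t=l_{a+t}^\ssup{a,x}$, $x^*_t=\overline l_{a+t}^\ssup{a,x}$, $f^*(y)=-\varepsilon e^y$ and $a_t=\varepsilon e^{-\underline l_{a+t}^\ssup{a,x}}\ge 0$ (a continuous adapted process): by the lower bound just proved, $\underline l_{a+t}^\ssup{a,x}\le l_{a+t}^\ssup{a,x}$, hence $e^{-l_{a+t}^\ssup{a,x}}\le e^{-\underline l_{a+t}^\ssup{a,x}}$, so that $f_t=-\varepsilon e^{l_{a+t}^\ssup{a,x}}+\varepsilon e^{-l_{a+t}^\ssup{a,x}}\le a_t-\varepsilon e^{l_{a+t}^\ssup{a,x}}=a_t+f^*(l_{a+t}^\ssup{a,x})$, and the lemma gives $l_t^\ssup{a,x}\le\overline l_t^\ssup{a,x}$ for every $t>a$.

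\emph{Limits as $x\to\infty$ and the main obstacle.} These limits are elementary; I would fix an $\omega$ in the a.s.\ event on which all integrals involved are finite and positive. For \eqref{eq:lblinfty}, from \eqref{eq:ell-ax-low} write $\underline l_t^\ssup{a,x}=x+2(B_t-B_a)-\log(1+\varepsilon e^x I)$ with $I:=\int_a^t e^{2(B_s-B_a)}\dd s\in(0,\infty)$ and use $\log(1+\varepsilon e^x I)=x+\log(\varepsilon I)+o(1)$ as $x\to\infty$, obtaining $\underline l_t^\ssup{a,x}\to 2(B_t-B_a)-\log\varepsilon-\log I=-\log\varepsilon-\log\int_a^t e^{2(B_s-B_t)}\dd s$. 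For \eqref{eq:ublinfty}, from \eqref{eq:ell-ax-upp} bound $\overline l_t^\ssup{a,x}\le\theta^x_t-\log\varepsilon-\log\int_a^t e^{\theta^x_s}\dd s$, then use $\theta^x_s=x+2(B_s-B_a)+\delta^x_s+\varepsilon^2 J_s$ with $\delta^x_s:=\varepsilon e^{-x}\int_a^s e^{-2(B_r-B_a)}\dd r\ge 0$ and $J_s:=\int_a^s\int_a^u e^{2(B_r-B_u)}\dd r\,\dd u\ge 0$ (both nondecreasing in $s$, and $\delta^x_t\to 0$ as $x\to\infty$). Since $\delta^x_s,J_s\ge 0$ we get $\int_a^t e^{\theta^x_s}\dd s\ge e^x\int_a^t e^{2(B_s-B_a)}\dd s$, hence $\theta^x_t-\log\int_a^t e^{\theta^x_s}\dd s\le 2(B_t-B_a)+\delta^x_t+\varepsilon^2 J_t-\log\int_a^t e^{2(B_s-B_a)}\dd s$, so that $\overline l_t^\ssup{a,x}\le -\log\varepsilon+\delta^x_t+\varepsilon^2 J_t-\log\int_a^t e^{2(B_s-B_t)}\dd s$; letting $x\to\infty$ yields \eqref{eq:ublinfty}. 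There is no deep difficulty here: the only genuinely clever step is the identity $\varepsilon e^{-\underline l_t^\ssup{a,x}}=\varepsilon e^{-x-2(B_t-B_a)}+\varepsilon^2\int_a^t e^{2(B_r-B_t)}\dd r$, which is exactly what makes $\theta^x$ the correct auxiliary drift for the second comparison; everything else is checking that Lemma~\ref{l:comparison} applies verbatim (adapted continuous coefficients, locally Lipschitz $f^*$, the factor $2$ in front of the Brownian motion, the time origin) together with routine bookkeeping with the explicit formulas.
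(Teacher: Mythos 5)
Your proof is correct and follows essentially the same route as the paper: both bounds come from applying Lemma~\ref{l:comparison} to the explicitly solvable one-barrier equations, with the same key identity $\gep e^{-\underline{l}^\ssup{a, x}_{t}}=\gep e^{-x-2(B_t-B_a)}+\gep^2\int_a^t e^{2(B_s-B_t)}\dd s$ (the paper's \eqref{eq:thetata2}) making $\theta^x$ the correct auxiliary drift, and the $x\to\infty$ limits are handled in the same way. The only cosmetic difference is that you verify the explicit solutions by differentiating the finite-variation logarithmic term directly, whereas the paper obtains the same SDEs via It\^o's formula applied to $\exp\left(-\underline{l}^\ssup{a, x}_{t}+2B_t\right)$.
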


\medskip

{The idea behind the proof of this lemma is that we are dealing with a diffusion  between two barriers. In particular we can bound the solution from below 
in terms of the solution found when we suppress the left barrier (and this bound will not be bad as long as the solution without left barrier does not approach the region where 
the left barrier was located). And when there is only one barrier the solution is explicit. 
}

\medskip

\begin{proof} {For the lower bound, we argue that $\underline{l}^\ssup{a, x}_{s}$ was defined such as to satisfy the stochastic differential equation
\begin{equation}\label{eq:SDEunderlinel}
 \underline{l}^\ssup{a, x}_{t}\, =\, x + 2 (B_t-B_a)  -\int_a^t \gep e^{ \underline{l}^\ssup{a, x}_{s}} \dd s\, ,
\end{equation} 
It\^o formula indeed yields that this SDE is equivalent to the SDE
$ \dd \exp\left(-\underline{l}^\ssup{a, x}_{t}+  2 B_t\right)= \gep \exp(2 B_t) \dd t$, which can be integrated immediately.}
By applying Lemma~\ref{l:comparison} we  establish the lower bound. 
For \eqref{eq:lblinfty} it suffices to observe that
\begin{equation}  
\underline{l}^\ssup{a, x}_{t} \,  = \, 
    - \log \left( e^{-x - 2 (B_t-B_a)}+ \varepsilon    \int_a^t e^{2 (B_s- B_t)   } \dd  s\right)   \, ,
 \end{equation}
 and the passage to $x\to \infty$ is straightforward.

 \smallskip
 
 For the upper bound we first exploit the lower bound that we have just established to obtain that for $t>a$
 \begin{equation}
 l^\ssup{a, x}_{t} \,\le\,  x +  2  \left(B_{t}-B_a\right) + \varepsilon \int_a^t \left(e^{-\underline{l}^\ssup{a, x}_{u}} - e^{l^\ssup{a, x}_{u}}\right) \dd u\, ,
 \end{equation}
 hence, by Lemma~\ref{l:comparison}, we have that 
 $ l^\ssup{a, x}_t \le  \overline{l}^\ssup{a, x}_{t}$ if we define $ (\overline{l}^\ssup{a, x}_{t})_{t \ge a}$ as the unique solution of 
 \begin{equation}
 \label{eq:SDEwiththeta}
 \begin{split}
 \overline{l}^{a,x}_{t} \,&=\,  x +  2  \left(B_{t}-B_a\right) + \gep \int_a^t e^{-\underline{l}^\ssup{a, x}_{u}} \dd u  - \gep \int_a^t  e^{\overline{l}^\ssup{a, x}_{u}} \dd u
 \\
 &=\,  \theta^x_t - \gep \int_a^t  e^{\overline{l}^\ssup{a, x}_{u}} \dd u
 \, ,
 \end{split}
 \end{equation} 
 for every $t>a$: 
 in the second line we have used that
 \begin{equation}
\label{eq:thetata2}
\theta^x_t\, =\,  x +  2  \left(B_{t}-B_a\right) + \gep \int_a^t e^{-\underline{l}^\ssup{a, x}_{u}} \dd u \, ,
\end{equation}
which can be verified by plugging in the explicit expression for $\underline{l}^\ssup{a, x}_{u}$ given in \eqref{eq:ell-ax-low} ($\theta^x_t$ is defined in \eqref{eq:thetata}). 
 The proof of the upper bound is now completed by remarking that the stochastic process explicitly given in \eqref{eq:ell-ax-upp} solves
 \eqref{eq:SDEwiththeta}:
{in fact \eqref{eq:SDEwiththeta} is the same as \eqref{eq:SDEunderlinel} with $x+2(B_t-B_a)$ replaced by $\theta^x_t$ and can be solved in the same way.}
 %this can be verified by observing that by \eqref{eq:ell-ax-upp} and \eqref{eq:thetata2}
 %we obtain
 %\begin{equation}
 % \dd \overline{l}^\ssup{a, x}_{t}\, =\, 2 \dd B_t + \gep e^{- \underline{l}^\ssup{a, x}_{t}} \dd t - \frac{\gep e^{\theta^x_t}}{1+ \gep \int_0^t e^{\theta^x_s} \dd s} \dd t\, ,
 %\end{equation}
 %and by using that  \eqref{eq:ell-ax-upp} can be rewritten as $1+ \gep \int_0^t e^{\theta^x_s} \dd s=\exp( \theta_t^x- \overline{l}^\ssup{a, x}_{t})$. 
 
 We are left with proving \eqref{eq:ublinfty}.
 For this we remark that from \eqref{eq:ell-ax-upp} and \eqref{eq:thetata} we have 
 \begin{equation}    
  \overline{l}^\ssup{a, x}_{t}\, \le \,  \theta^x_t  - \log \left(1+ \varepsilon \int_a^t e^{x+2(B_s-B_a)} \dd s \right)\,=\,  \theta^x_t -x-  \log \left(e^{-x}+ \varepsilon \int_a^t e^{2(B_s-B_a)} \dd s \right)  . 
\end{equation}
From \eqref{eq:thetata} we readily see that
$\lim_{x \to \infty}(\theta^x_t-x)=2  (B_t-B_a) + \varepsilon^2 \int_a^t \int_a^s e^{2 (B_r-B_s)} \dd r \dd s$ a.s. and 
  \eqref{eq:ublinfty} follows by rearranging the terms.
\end{proof}

\medskip

We give here also another result in the same spirit as Lemma~\ref{l:comparison} that is useful in order to extend the bounds in Lemma~\ref{th:lbub-on-l} to random times that are not stopping times. In fact, the result is fully deterministic:

\medskip

\begin{lemma}
\label{th:forTrandom}
Given $T\in \bbR$ and a continuous function $t \mapsto b_t$ we consider a continuous function  $\ell_\cdot$ that solves
\begin{equation}
\ell_t\, =\, \ell_T+2(b_t-b_T) - \gep \int_T^t \left( e^{\ell_s} - e^{-\ell_s} \right) \dd s\, ,
\end{equation}
for every $t\ge T$, for a given value $\ell_T$. Then for every $t\ge T$,
\begin{eqnarray}
\label{eq:forTrandom}
\ell_t\, &\ge &\, \ell_T+2(b_t-b_T)- \log \left( 1+ \gep e^{\ell_T} \int_T^t e^{2(b_s-b_T)} \dd s\right)\,,
\\
\ell_t\, &\le &\, \ell_T+2(b_t-b_T) + \log \left( 1+ \gep e^{-\ell_T} \int_T^t e^{-2(b_s-b_T)} \dd s\right).  \label{eq:forTrandom_upp}
\end{eqnarray}
% 
%
%\begin{equation}
%\label{eq:forTrandom}
%\ell_t\, \ge \, \ell_T+2(b_t-b_T)- \log \left( 1+ \gep e^{\ell_T} \int_T^t e^{2(b_s-b_T)} \dd s\right)\,,
%\end{equation}
%for every $t\ge T$.
\end{lemma}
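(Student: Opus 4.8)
The plan is to re-run, in the purely deterministic setting, the explicit‑solution trick already used in the proof of Lemma~\ref{th:lbub-on-l}: bound $\ell_\cdot$ from below by dropping the $+\gep e^{-\ell_s}$ term (so that $\ell_\cdot$ becomes a supersolution of the exactly solvable equation $\dot y = 2\dot b - \gep e^{y}$), and from above by dropping the $-\gep e^{\ell_s}$ term (making it a subsolution of $\dot y = 2\dot b + \gep e^{-y}$). Since everything is deterministic, no stopping‑time hypothesis on $T$ is needed, which is exactly why this lemma, rather than Lemma~\ref{th:lbub-on-l}, is the form one wants to plug in at $\gG$-extremum times. The only regularity remark required is that, $s\mapsto e^{\ell_s}-e^{-\ell_s}$ being continuous, the integral equation shows $t\mapsto \ell_t-2b_t$ is $C^1$ on $[T,\infty)$ with
\begin{equation}
\frac{\dd}{\dd t}\left(\ell_t-2b_t\right)\,=\,-\gep\left(e^{\ell_t}-e^{-\ell_t}\right)\,.
\end{equation}

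For the lower bound \eqref{eq:forTrandom}, set $h_t:=\exp\!\left(-\ell_t+2(b_t-b_T)\right)$, which is $C^1$ with $h_T=e^{-\ell_T}$. Using the identity above,
\begin{equation}
h_t'\,=\,h_t\,\gep\left(e^{\ell_t}-e^{-\ell_t}\right)\,=\,\gep\,e^{2(b_t-b_T)}-\gep\,e^{-2\ell_t+2(b_t-b_T)}\,\le\,\gep\,e^{2(b_t-b_T)}\,,
\end{equation}
since the subtracted term is nonnegative. Integrating from $T$ to $t$ gives $h_t\le e^{-\ell_T}+\gep\int_T^t e^{2(b_s-b_T)}\dd s$; writing $e^{-\ell_t}=h_t\,e^{-2(b_t-b_T)}$ and taking logarithms produces exactly \eqref{eq:forTrandom}.

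For the upper bound \eqref{eq:forTrandom_upp}, the symmetric choice $k_t:=\exp\!\left(\ell_t-2(b_t-b_T)\right)$ works: $k_T=e^{\ell_T}$ and
\begin{equation}
k_t'\,=\,-k_t\,\gep\left(e^{\ell_t}-e^{-\ell_t}\right)\,=\,\gep\,e^{-2(b_t-b_T)}-\gep\,e^{2\ell_t-2(b_t-b_T)}\,\le\,\gep\,e^{-2(b_t-b_T)}\,,
\end{equation}
so $k_t\le e^{\ell_T}+\gep\int_T^t e^{-2(b_s-b_T)}\dd s$, and expressing $e^{\ell_t}=k_t\,e^{2(b_t-b_T)}$ and taking logarithms gives \eqref{eq:forTrandom_upp}.

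There is no genuine obstacle here: the proof is a two‑line computation in each direction, and the only thing to be careful about is that in each of the two displays exactly one of the two exponential terms arising from $e^{\ell_t}-e^{-\ell_t}$ has the favorable sign and may be discarded. If one wishes to avoid the differentiation step altogether, the same two bounds follow by comparing $\ell_\cdot$ with the explicit solutions of $\dot y=2\dot b-\gep e^{y}$ and $\dot y=2\dot b+\gep e^{-y}$ via a one‑dimensional comparison argument of the type of Lemma~\ref{l:comparison}; but the direct computation above is shorter and self‑contained.
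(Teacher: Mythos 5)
Your proof is correct, and it takes a somewhat more direct route than the paper's. The paper first reduces the upper bound \eqref{eq:forTrandom_upp} to the lower bound via the symmetry $(\ell,b)\mapsto(-\ell,-b)$, then denotes by $u_t$ the right-hand side of \eqref{eq:forTrandom}, checks that $u_t=\ell_T+2(b_t-b_T)-\gep\int_T^t e^{u_s}\dd s$, and finally proves $\ell_t\ge u_t$ by showing that $t\mapsto\max(0,u_t-\ell_t)^2$ is $C^1$, nonincreasing and starts from $0$ (the same device as in Lemma~\ref{l:comparison}). You instead exponentiate $\ell$ itself: since $t\mapsto\ell_t-2b_t$ is $C^1$ (the correct regularity remark, as $b$ is only continuous), the functions $h_t=\exp(-\ell_t+2(b_t-b_T))$ and $k_t=\exp(\ell_t-2(b_t-b_T))$ are $C^1$, and discarding the term of favorable sign in $h_t'$, resp.\ $k_t'$, yields $h_t'\le\gep e^{2(b_t-b_T)}$ and $k_t'\le\gep e^{-2(b_t-b_T)}$, which integrate immediately and give the two bounds after taking logarithms. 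This merges into one step what the paper splits into ``explicit solution of the reduced one-barrier equation'' plus ``comparison'': the substitution is precisely the one that makes $\dot y=2\dot b-\gep e^{y}$ explicitly solvable in the proof of Lemma~\ref{th:lbub-on-l}, but applying it directly to $\ell$ turns each bound into a trivially integrable differential inequality, so no auxiliary comparison function, no $\max(0,\cdot)^2$ (Gronwall-type) argument, and no symmetry reduction are needed. The paper's formulation has the advantage of making explicit the comparison structure it reuses elsewhere; yours buys brevity and treats the two bounds symmetrically.
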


\medskip

\begin{proof} Note that we may apply \eqref{eq:forTrandom} to $-\ell$ and $-b$ instead of $\ell$ and $b$ and obtain \eqref{eq:forTrandom_upp}. Then it is enough to show \eqref{eq:forTrandom}.

Call $u_t$ the right-hand side in \eqref{eq:forTrandom} and set $x_t:= u_t-\ell_t$.
One can directly check by taking the time derivative of  $u_t- 2(b_t-b_T)$, i.e. the logarithmic part of the expression, 
 and by integrating that 
 \begin{equation}
u_t\, =\, \ell_T+2(b_t-b_T) - \gep \int_T^t e^{u_s}  \dd s\, . 
\end{equation}
 Therefore $t \mapsto x_t$ is $C^1$ and, since also $x\mapsto \max(0, x)^2$ is $C^1$, we obtain
 \begin{equation}
 \frac{\dd}{\dd t} \max\left(0, x_t\right)^2\, =\, 
 2  \max\left(0, x_t\right) \left(-\gep e^{\ell_t} \left(e^{x_t}-1\right) - \gep e^{-\ell_t} \right) \, \le \, 0\, .
 \end{equation}
 Since $x_T=0$, we get \eqref{eq:forTrandom} and complete the proof.
\end{proof}

\medskip

%\begin{rem}
%\label{rem:forTrandom}
%By using Lemma~\ref{th:forTrandom} we can for example generalize the lower bound in 
%Lemma~\ref{th:lbub-on-l} to: for every random time $T$ with $T\ge a$ we have that almost surely
%\begin{equation} 
%\label{eq:ell-ax-lowT}   
%l^{a, x}_{t} \,  \ge \, l^{a, x}_{T} + 2 (B_t-B_{T})
%    - \log \left( 1+ \varepsilon e^{ l^{a, x}_{T}}   \int_T^t e^{2 (B_s- B_T)   } \dd  s\right)   \, .
% \end{equation}
%\end{rem}

\medskip

\begin{proposition}
\label{th:Laxinfty}
For every $x<x'$ we have
$l_t^\ssup{a, x}< l_t^\ssup{a, x'}$ for every $t\ge a$, hence $l_t^\ssup{a, -\infty}:= \lim_{x \searrow - \infty} l_t^\ssup{a, x}$ 
and $l_t^\ssup{a, \infty}:= \lim_{x \nearrow + \infty} l_t^\ssup{a, x}$ are well defined for every $t$. 
Moreover, almost surely we have that $t \mapsto \vert  l_t^\ssup{a, \pm \infty}\vert$ are bounded
over any compact subset of $(a, \infty)$, $ \lim_{t \searrow a} l_t^\ssup{a, \pm\infty}= \pm\infty$ and for every $s,t$ with $t>s>a$,  we have
\begin{equation}
\label{eq:SDEinfty}
l_t^\ssup{a, \pm\infty} \, =\,l_s^\ssup{a, \pm\infty} + 2 (B_t-B_s) - \gep \int_s^t \left( e^{l_u^\ssup{a, \pm\infty}} - e^{-l_u^\ssup{a, \pm\infty}}\right) \dd u\, ,
\end{equation}
In addition,  $l_\cdot^\ssup{a, + \infty}$ coincides with $l^\ssup{a}_\cdot= \log L_\cdot^\ssup{a}$ given in \eqref{eq:L}. 
\end{proposition}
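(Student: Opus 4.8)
The plan is to construct $l^\ssup{a, \pm\infty}_\cdot$ as monotone limits of the finite--initial--condition solutions $l^\ssup{a, x}_\cdot$, to extract the pathwise properties from the explicit a priori bounds of Lemma~\ref{th:lbub-on-l} and Lemma~\ref{th:forTrandom}, and finally to identify the limit with $\log L^\ssup{a}_\cdot$ via the substitution $\tilde L = 1/L = e^{-l}$, which turns the singular initial datum into the regular one $\tilde L_a = 0$. For the monotonicity, fix $x<x'$: Lemma~\ref{l:comparison} (with $f^*(y)=-\gep(e^y-e^{-y})$ and $a_s\equiv 0$) gives $l^\ssup{a, x}_t\le l^\ssup{a, x'}_t$ for all $t\ge a$, and since the two Brownian terms cancel, $D_t:=l^\ssup{a, x'}_t-l^\ssup{a, x}_t$ is $C^1$ with $\dot D_t=-\gep(e^{l^\ssup{a, x'}_t}-e^{l^\ssup{a, x}_t})+\gep(e^{-l^\ssup{a, x'}_t}-e^{-l^\ssup{a, x}_t})$; on any $[a,t]$ both solutions are bounded (they are continuous on the closed interval and start from $x,x'$), so by the mean value theorem $\dot D_s\ge -C_t D_s$ with $C_t<\infty$, and Gronwall yields $D_t\ge (x'-x)e^{-C_t(t-a)}>0$. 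Hence $l^\ssup{a, +\infty}_t:=\lim_{x\nearrow\infty}l^\ssup{a, x}_t$ and $l^\ssup{a, -\infty}_t:=\lim_{x\searrow-\infty}l^\ssup{a, x}_t$ exist in $[-\infty,+\infty]$.

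I would then collect the remaining properties of $l^\ssup{a, +\infty}_\cdot$; those for $l^\ssup{a, -\infty}_\cdot$ follow from the symmetry $l\mapsto -l$, $B\mapsto -B$, under which $l^\ssup{a, -\infty}_\cdot = -\hat l^\ssup{a, +\infty}_\cdot$ with $\hat l$ the solution of \eqref{eq:la} driven by $-B$. Finiteness and local boundedness: from Lemma~\ref{th:lbub-on-l}, $\underline l^\ssup{a, x}_t\le l^\ssup{a, x}_t\le \overline{l}^\ssup{a, x}_t$, and \eqref{eq:lblinfty}--\eqref{eq:ublinfty} show that as $x\to\infty$ the lower bound converges and the upper bound has a finite $\limsup$, both expressed through functions of $t$ continuous on $(a,\infty)$; squeezing gives that $l^\ssup{a, +\infty}_t$ is finite and bounded on every $[s,t]\subset(a,\infty)$. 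The limiting SDE \eqref{eq:SDEinfty}: on $[s,t]\subset(a,\infty)$ and for $x\ge x_0$ the integrands $e^{l^\ssup{a, x}_u}-e^{-l^\ssup{a, x}_u}$ in the difference of \eqref{eds-la} between $s$ and $t$ are dominated (they lie between the bounded functions attached to $x_0$ and to $+\infty$), so dominated convergence together with $l^\ssup{a, x}_s\to l^\ssup{a, +\infty}_s$ yields \eqref{eq:SDEinfty}, and continuity of $l^\ssup{a, +\infty}_\cdot$ on $(a,\infty)$ is read off from its right--hand side. Behaviour at $a$: $l^\ssup{a, +\infty}_t\ge l^\ssup{a, x_0}_t$ for every finite $x_0$ and $l^\ssup{a, x_0}_t\to x_0$ as $t\searrow a$, so $\liminf_{t\searrow a}l^\ssup{a, +\infty}_t\ge x_0$ for all $x_0$, i.e. $l^\ssup{a, +\infty}_t\to+\infty$ (and symmetrically $l^\ssup{a, -\infty}_t\to-\infty$).

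For the identification with $l^\ssup{a}_\cdot$, set $L^\ssup{a, x}_t:=e^{l^\ssup{a, x}_t}$; by It\^o $L^\ssup{a, x}_\cdot$ solves \eqref{eq:Lsde} with $L^\ssup{a, x}_a=e^x$, so $\tilde L^\ssup{a, x}_t:=1/L^\ssup{a, x}_t=e^{-l^\ssup{a, x}_t}$ solves the regular SDE satisfied by $\tilde L=1/L$ (the one recalled right after \eqref{eq:Rsde}: locally Lipschitz coefficients, strongly restoring drift, no explosion, hence pathwise uniqueness) with initial value $\tilde L^\ssup{a, x}_a=e^{-x}$. By the previous paragraph $\tilde L^\ssup{a, +\infty}_t:=e^{-l^\ssup{a, +\infty}_t}=\lim_{x\to\infty}\tilde L^\ssup{a, x}_t$ stays bounded near $a$, extends continuously to $a$ with $\tilde L^\ssup{a, +\infty}_a=0$, and, being $e^{-(\cdot)}$ of a process satisfying \eqref{eq:SDEinfty}, solves that regular SDE on $[a,\infty)$ with initial value $0$; on the other hand, by Lemma~\ref{th:SDEs0} and the same remark, $1/L^\ssup{a}_\cdot$ solves the very same SDE with the same initial value. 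Pathwise uniqueness forces $\tilde L^\ssup{a, +\infty}_\cdot=1/L^\ssup{a}_\cdot$, i.e. $l^\ssup{a, +\infty}_\cdot=\log L^\ssup{a}_\cdot=l^\ssup{a}_\cdot$.

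I expect this last step to be the delicate one: everything else follows softly from the comparison lemma and the explicit bounds, whereas here one must check that the substitution genuinely tames the singular initial condition — in particular that $\tilde L^\ssup{a, +\infty}_\cdot$ is a bona fide solution of the regular SDE up to and including time $a$ (continuity at $a$ and integrability on $(a,s]$ of both the drift and the integrand of the It\^o term, which hold precisely because $\tilde L^\ssup{a, +\infty}_\cdot$ is bounded near $a$), so that standard pathwise uniqueness may be invoked to pin the limit down to $\log L^\ssup{a}_\cdot$.
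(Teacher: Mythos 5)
Your proof is correct and takes essentially the same route as the paper's: monotonicity in the initial condition via the comparison lemma, the explicit bounds of Lemma~\ref{th:lbub-on-l} for local boundedness and for the blow-up as $t\searrow a$, dominated convergence for \eqref{eq:SDEinfty}, and identification through $e^{-l}=1/L$, which solves the SDE driven by $-B$ with initial value $0$, concluded by pathwise uniqueness. The only (harmless) departures are that you establish strict ordering by a Gronwall argument, whereas the paper cites Lemma~\ref{l:comparison} (with strictness also available from the flow property used in Lemma~\ref{th:OCbound}), and that you obtain $\lim_{t \searrow a} l_t^\ssup{a,+\infty}=+\infty$ from monotonicity in $x$ rather than directly from \eqref{eq:lblinfty}.
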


\medskip

\begin{proof} The first remark is that it suffices to argue for $x \nearrow \infty$ because $-l_t^\ssup{a, -x}$ solves the same SDE as $l_t^\ssup{a, x}$ except for the driving $B_\cdot$ that is replaced by $-B_\cdot$. 
The claimed monotonicity follows directly from Lemma~\ref{l:comparison} and the claimed local boundedness of 
 $l_t^\ssup{a, \pm\infty}$ follows directly from the explicit upper and lower bounds in  \eqref{eq:ublinfty} and \eqref{eq:lblinfty}. 
 And  \eqref{eq:lblinfty} implies also that $ \lim_{t \searrow a} l_t^\ssup{a,\infty}= \infty$.
 
 The validity of \eqref{eq:SDEinfty} follows because we know that  \eqref{eq:SDEinfty}  holds if $\pm \infty$ is replaced by $x$ and, for almost every $B_\cdot$, we can pass to $x \nearrow \infty$  by applying the Dominated Convergence Theorem for the integral term, because $l_u(a,x)\in [l_u(a,-\infty), l_u(a,\infty)]$ and because we have just proven that a.s. $\sup_{u \in [s, t]}\vert  l_u(a,\pm \infty)\vert < \infty$. 
 
Finally, in order to show that  $l_\cdot^\ssup{a, + \infty}=l^\ssup{a}_\cdot$ we remark that they solve the same SDE, i.e. \eqref{eq:SDEinfty}, and that
$\lim_{t \searrow a} l_t^\ssup{a, + \infty}=  l_t^\ssup{a} = \infty$. An application of It\^o formula shows that both 
$L_\cdot^\ssup{1}:=\exp(-l_\cdot^\ssup{a, + \infty})$ and $L_\cdot^\ssup{2}:=\exp(-l^\ssup{a}_\cdot)=1/L_\cdot^\ssup{a}$ solve the same SDE as the one solved by $L_\cdot^\ssup{a}$, that is \eqref{eq:Lsde}, but with 
 $B_\cdot$ replaced by $-B_\cdot$, that is for $j\in \{1,2\}$
  \begin{equation}  
  \dd L^\ssup{j}_t\, =\, -2   L^\ssup{j}_t \dd B_t + \left(\gep (1-(L^\ssup{j}_t)^2)+  2L^\ssup{j}_t \right) \dd t\, ,
    \end{equation}
 for every $t>a$ and this time  $L_a^\ssup{1}=L_a^\ssup{2}=0$.  
 {Since there is a unique  solution to such a SDE, the proof of Proposition~\ref{th:Laxinfty} is complete}.
 \end{proof}

\medskip

Let $\cF_t:=\sigma(B_u-B_s, -\infty<s<u\le t)$ for $t\in \r$. 
\begin{proposition}
\label{th:lonR}
There exists a unique continuous random process $(l_t)_{t \in \bbR}$ adapted to $\{ \cF_t\}_{t \in \bbR}$ that solves 
\begin{equation}
\label{eq:lonR}
l_t\, =\,l_s + 2 (B_t-B_s) - \gep \int_s^t \left( e^{l_u} - e^{-l_u}\right) \dd u\, ,
\end{equation}
for every $s<t$. Moreover, a.s. we have that $l_t=\lim_{a \to -\infty} l^\ssup{a,x}_t$ for every $x \in [- \infty, \infty]$ and every $t \in \bbR$. The process  $(l_t)_{t \in \bbR}$ is stationary and the law of $l_t$ is $p_\gG$ (given in \eqref{eq:pGa}).
\end{proposition}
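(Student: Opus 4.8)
The plan is to build $(l_t)_{t\in\bbR}$ by letting the starting time $a\to-\infty$ in the family $l^{\ssup{a, x}}_\cdot$ furnished by Proposition~\ref{th:Laxinfty}, and then to extract each listed property from the construction. By that proposition we already know, for fixed $a$, the sandwich $l^{\ssup{a, -\infty}}_t\le l^{\ssup{a, x}}_t\le l^{\ssup{a, +\infty}}_t=l^\ssup{a}_t$ for $t>a$ and every $x\in[-\infty,+\infty]$, that these extremal processes are a.s.\ locally bounded on $(a,\infty)$, and that they solve \eqref{eq:SDEinfty}. First I would prove monotonicity in $a$: for $a<a'$ the (a.s.\ finite) value $l^{\ssup{a, +\infty}}_{a'}$ is $\le+\infty=l^{\ssup{a', +\infty}}_{a'}$, so since the two processes solve \eqref{eq:SDEinfty} on $[a',\infty)$ with the same Brownian increments, the comparison Lemma~\ref{l:comparison} gives $l^{\ssup{a, +\infty}}_t\le l^{\ssup{a', +\infty}}_t$ for $t\ge a'$; symmetrically $l^{\ssup{a, -\infty}}_t\ge l^{\ssup{a', -\infty}}_t$. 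Hence $l^{\ssup{a, +\infty}}_t\downarrow\overline l_t$ and $l^{\ssup{a, -\infty}}_t\uparrow\underline l_t$ as $a\to-\infty$, with $\underline l_t\le\overline l_t$; both limits are a.s.\ finite, and $t\mapsto\underline l_t,\overline l_t$ a.s.\ locally bounded on $\bbR$, being squeezed between $l^{\ssup{a_0, -\infty}}_t$ and $l^{\ssup{a_0, +\infty}}_t$ for any fixed $a_0<t$.

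The heart of the argument is the coalescence $\overline l_t=\underline l_t$ a.s. Put $D^\ssup{a}_t:=l^{\ssup{a, +\infty}}_t-l^{\ssup{a, -\infty}}_t\ge0$. Since $u\mapsto\gep(e^u-e^{-u})$ is nondecreasing, since $e^{z}-e^{y}\ge e^{y}(z-y)$ and $e^{-y}-e^{-z}\ge e^{-z}(z-y)$ for $y\le z$, and since the two processes are driven by the same Brownian motion, one gets $\frac{\dd}{\dd t}D^\ssup{a}_t\le-\gep\big(e^{l^{\ssup{a, -\infty}}_t}+e^{-l^{\ssup{a, +\infty}}_t}\big)D^\ssup{a}_t$, hence for $a<s<t$
\[
D^\ssup{a}_t\ \le\ D^\ssup{a}_s\,\exp\!\Big(-\gep\int_s^t\big(e^{l^{\ssup{a, -\infty}}_u}+e^{-l^{\ssup{a, +\infty}}_u}\big)\dd u\Big)\,.
\]
Letting $a\to-\infty$ (monotone convergence in the exponent; the limiting integral is finite by local boundedness) yields, with $\Delta_u:=\overline l_u-\underline l_u$, that $\Delta_t\le\Delta_s\exp(-\gep\int_s^t(e^{\underline l_u}+e^{-\overline l_u})\dd u)$. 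I would then send $s\to-\infty$ along a sequence: $\Delta_s$ is stochastically dominated by the fixed random variable $l^{\ssup{-1, +\infty}}_0-l^{\ssup{-1, -\infty}}_0$ (shift invariance of $B$), while $\int_{s}^t e^{\underline l_u}\dd u\ge\int_{s}^t e^{l^{\ssup{s, -\infty}}_u}\dd u\to+\infty$ a.s.\ as $s\to-\infty$, because $l^{\ssup{s, -\infty}}$ is the positively recurrent diffusion \eqref{eq:l-SDE} with invariant law $p_\gG$ \cite{cf:CGG}, so the time-integral of $e^{l^{\ssup{s, -\infty}}_\cdot}$ grows linearly; consequently $\Delta_s\exp(-\gep\int_s^t\cdots)\to0$ and $\Delta_t=0$. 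Set $l_t:=\overline l_t=\underline l_t$; the sandwich then shows $l_t=\lim_{a\to-\infty}l^{\ssup{a, x}}_t$ for every $x\in[-\infty,+\infty]$.

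The remaining claims follow routinely. Passing to the limit in $l^\ssup{a}_t=l^\ssup{a}_s+2(B_t-B_s)-\gep\int_s^t(e^{l^\ssup{a}_u}-e^{-l^\ssup{a}_u})\dd u$ (dominated/monotone convergence for the two exponentials, using that the sandwiching processes are locally bounded on $[s,t]$) gives \eqref{eq:lonR}, which in particular makes $t\mapsto l_t$ continuous; adaptedness to $\{\cF_t\}$ is inherited from the strong solutions $l^{\ssup{a, x}}$. For uniqueness, if $\tilde l$ is any $\{\cF_t\}$-adapted continuous solution of \eqref{eq:lonR}, then for each $a$ and each finite $x\le\tilde l_a$ comparison on $[a,\infty)$ gives $l^{\ssup{a, x}}_t\le\tilde l_t$ for $t\ge a$; letting $x\to-\infty$ and then $a\to-\infty$, and arguing symmetrically from above, squeezes $\tilde l_t=l_t$. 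Stationarity holds because the measurable map sending the Brownian increments to $l^{\ssup{a, x}}_\cdot$ turns $(B_{\cdot+h}-B_h)$ into a version of $B$ while carrying $l^{\ssup{a, x}}_t$ to $l^{\ssup{a+h, x}}_{t+h}$, hence $(l_t)_{t\in\bbR}$ to $(l_{t+h})_{t\in\bbR}$; since $(B_{\cdot+h}-B_h)\law B$ this gives $(l_{t+h})_{t\in\bbR}\law(l_t)_{t\in\bbR}$. Finally the one-dimensional marginal is $p_\gG$: it is the unique invariant probability of the confining diffusion \eqref{eq:l-SDE} \cite{cf:CGG} and $(l_t)$ is stationary; or concretely $\bbE[\phi(l_0)]=\lim_{a\to-\infty}\bbE[\phi(l^{\ssup{a, 0}}_0)]=\lim_{u\to\infty}\bbE[\phi(l^{\ssup{0, 0}}_u)]=\int\phi\,\dd p_\gG$ for bounded continuous $\phi$, using the known convergence of the one-sided diffusion to $p_\gG$.

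I expect the coalescence step to be the main obstacle: it is where the soft confinement of $U_\gG$ — strong only near $\pm\Gamma$ and essentially absent on the flat middle — must be converted into a genuine contraction, and this is exactly the point where one needs recurrence and ergodicity of the one-sided process, both to force $\int(e^{\underline l_u}+e^{-\overline l_u})\dd u=+\infty$ and to keep $\Delta_s$ from blowing up as $s\to-\infty$.
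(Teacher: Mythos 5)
Your proof is correct, and its skeleton (monotone limits $\overline l_t=\lim_a l^{\ssup{a,+\infty}}_t$, $\underline l_t=\lim_a l^{\ssup{a,-\infty}}_t$, passage to the limit in the SDE, then stationarity and the identification of the marginal with $p_\gG$ via the one-sided ergodic convergence) matches the paper's. The genuine difference is in the crucial coalescence/uniqueness step. The paper proves a deterministic, pathwise contraction (Lemma~\ref{th:OCbound}): using that $\min_u(\sinh(u+x)-\sinh(u))=2\sinh(x/2)$, the difference $\gD_t$ of any two solutions driven by the same $B_\cdot$ satisfies $\tanh(\gD_t/4)\le e^{-2\gep(t-s)}\tanh(\gD_s/4)$, and since $\tanh\le 1$ the limit $s\to-\infty$ kills $\gD_t$ with no need to control $\gD_s$ or to know anything probabilistic about the solutions; uniqueness for two arbitrary adapted solutions on $\bbR$ follows in one line, and the same estimate is recycled later (Lemma~\ref{th:OCboundforformain}, the integrable bound $A_\gep$ in the proof of Proposition~\ref{th:formain}). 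Your route instead linearizes the drift, getting the state-dependent rate $\gep(e^{\underline l_u}+e^{-\overline l_u})$, which degenerates precisely when the two solutions straddle $0$ far apart; you compensate with two probabilistic inputs -- tightness of $\gD_s$ via shift invariance, and divergence of $\int_s^t e^{\underline l_u}\dd u$ via positive recurrence/ergodicity of the one-sided diffusion -- and then handle uniqueness by squeezing an arbitrary solution between $l^{\ssup{a,-\infty}}$ and $l^{\ssup{a,+\infty}}$ via Lemma~\ref{l:comparison}. This works (the only point to state carefully is the a.s.\ divergence of the exponent: for each fixed $s$ the bound $\int_s^t e^{l^{\ssup{s,-\infty}}_u}\dd u$ only diverges in law as $s\to-\infty$, since the process changes with $s$; combine it with the monotonicity in $s$ of $\int_s^t e^{\underline l_u}\dd u$ to upgrade to almost sure divergence, and note that tightness of $\gD_s$ plus a.s.\ decay of the exponential factor gives convergence to $0$ in probability, which suffices because $\gD_t$ is a fixed nonnegative random variable). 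So your argument is softer and needs ergodic input where the paper's convexity trick is uniform and quantitative; the paper's version buys an explicit exponential coupling rate $2\gep$ that is reused downstream, while yours is arguably more elementary in that it only uses the crude linearization of the drift.
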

 
 \medskip

  \begin{rem}
  \label{rem:bc}
 For conciseness, we have chosen to focus on plus boundary conditions. But, as pointed out in  
  Remark~\ref{rem:bc0}, the change in the boundary conditions in the spin system simply lead to changing 
  the initial condition  $L^\ssup{a}_a$ from $\infty$ to $0$ (and the same for $R^\ssup{b}_b$). This simply amounts to changing 
  $l^\ssup{a,\infty}_t$ to $l^\ssup{a,-\infty}_t$ and Proposition~\ref{th:lonR} guarantees that this is irrelevant in the $a \to - \infty$ limit. 
  \end{rem}
  
\medskip  
    
In the proof of Proposition \ref{th:lonR} we are going to use the following result. Recall that $U_\gG(x):= \gep ( \exp(x)+ \exp(-x))$.  
 \medskip
 
 \begin{lemma}
 \label{th:OCbound}
 If  $(l^\ssup{1}_t)_{t\in (a, \infty)}$ 
 and $(l^\ssup{2}_t)_{t\in (a, \infty)}$, with $a\in [-\infty, \infty)$, are two continuous adapted processes solving on $(a, \infty)$ the SDE
 \begin{equation}
 \label{eq:lgGSDE}
\dd l_t \, =\, -\gep \left( e^{l_t} - e^{-l_t}\right) \dd t + 2 \dd B_t\, ,
\end{equation}
and if there exists $t_0>a$ such that $l^\ssup{2}_{t_0}>l^\ssup{1}_{t_0}$, then $l^\ssup{2}_t>l^\ssup{1}_t$ for every $t>a$ and 
\begin{equation}
\label{eq:OCbound}
\tanh \left( \frac{l^\ssup{2}_t-l^\ssup{1}_t}4 \right)\, \le \, e^{-2 \gep (t-s)}\tanh \left(\frac{ l^\ssup{2}_s-l^\ssup{1}_s}4 \right) \, ,
\end{equation}
for every $t>s>a$. 
 \end{lemma}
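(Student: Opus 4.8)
The plan is to study the difference $\delta_t := l^\ssup{2}_t - l^\ssup{1}_t$ and to exploit the fact that the Brownian parts of the two equations cancel. Subtracting the integral form of \eqref{eq:lgGSDE} written for $l^\ssup{1}$ and $l^\ssup{2}$ gives, for every $a<s<t$,
\[
\delta_t \,=\, \delta_s \,-\, 2\gep \int_s^t \left( \sinh l^\ssup{2}_u - \sinh l^\ssup{1}_u\right) \dd u\,,
\]
using $e^x-e^{-x}=2\sinh x$. The integrand is continuous in $u$, hence $t\mapsto \delta_t$ is $C^1$ on $(a,\infty)$ with $\delta_t' = -2\gep( \sinh l^\ssup{2}_t - \sinh l^\ssup{1}_t)$. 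Crucially, the processes $l^\ssup{1}_\cdot$ and $l^\ssup{2}_\cdot$ themselves are only continuous semimartingales and need not be differentiable; all the manipulations below are carried out on $\delta_\cdot$, never on the $l^\ssup{i}$ separately.

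Next I would linearise. Using $\sinh A-\sinh B = 2\cosh\frac{A+B}{2}\sinh\frac{A-B}{2}$ together with $\sinh\frac\delta2 = 2\sinh\frac\delta4\cosh\frac\delta4$ one gets
\[
\delta_t' \,=\, -8\gep\,\cosh\!\left(\tfrac{l^\ssup{1}_t+l^\ssup{2}_t}{2}\right)\sinh\!\left(\tfrac{\delta_t}{4}\right)\cosh\!\left(\tfrac{\delta_t}{4}\right)\,.
\]
Set $\phi_t := \tanh(\delta_t/4)$, which is then $C^1$. From $\phi_t' = \tfrac14(1-\phi_t^2)\,\delta_t'$ and $1-\phi_t^2 = \cosh^{-2}(\delta_t/4)$, the two factors $\cosh(\delta_t/4)$ and the factor $\sinh(\delta_t/4)$ recombine into $\tanh(\delta_t/4)=\phi_t$, yielding the \emph{linear} scalar ODE
\[
\phi_t' \,=\, -2\gep\,\cosh\!\left(\tfrac{l^\ssup{1}_t+l^\ssup{2}_t}{2}\right)\phi_t\,, \qquad t>a\,,
\]
whose explicit solution is $\phi_t = \phi_s\exp\!\big(-2\gep\int_s^t \cosh(\tfrac{l^\ssup{1}_u+l^\ssup{2}_u}{2})\,\dd u\big)$ for $a<s<t$.

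Both conclusions then drop out. Since $\phi_{t_0}=\tanh(\delta_{t_0}/4)>0$ by the hypothesis $l^\ssup{2}_{t_0}>l^\ssup{1}_{t_0}$, the explicit formula forces $\phi_t>0$ for all $t>a$, hence $\delta_t>0$, i.e. $l^\ssup{2}_t>l^\ssup{1}_t$, for every $t>a$. And since $\cosh\ge 1$ we have $\int_s^t \cosh(\cdots)\,\dd u \ge t-s$, so $\phi_t \le \phi_s\,e^{-2\gep(t-s)}$ for $a<s<t$, which is precisely \eqref{eq:OCbound}. There is no deep obstacle here: the substance is the hyperbolic-identity computation that collapses the difference equation to a linear ODE. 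The one point I would be careful to get exactly right is the regularity bookkeeping just described — that $\delta_\cdot$ (and hence $\phi_\cdot=\tanh(\delta_\cdot/4)$) is genuinely $C^1$ because the two driving Brownian motions coincide and cancel, so that ordinary calculus (chain rule and integration of a linear ODE with continuous coefficient) is legitimate — whereas one must never differentiate the semimartingales $l^\ssup{i}_\cdot$ in $t$.
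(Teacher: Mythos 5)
Your proof is correct, and it takes a genuinely different route from the paper's. The paper first invokes Kunita's theorem on stochastic flows of diffeomorphisms to conclude that the two solutions never meet, then bounds $\dd \gD_t/\dd t$ by $-4\gep\sinh(\gD_t/2)$ (via minimizing $u\mapsto\sinh(u+x)-\sinh(u)$), and finally compares $\gD_\cdot$ with the explicit solution of the autonomous ODE $\dd D_t/\dd t=-4\gep\sinh(D_t/2)$ through a positive-part/Gronwall estimate, which yields \eqref{eq:OCbound} after inverting the arctanh formula. You instead observe that the identity $\sinh A-\sinh B=2\cosh\frac{A+B}2\sinh\frac{A-B}2$ turns the ($C^1$, since the Brownian parts cancel) difference $\delta_t$ into an \emph{exactly} linear ODE for $\phi_t=\tanh(\delta_t/4)$, namely $\phi_t'=-2\gep\cosh\bigl(\tfrac{l^\ssup{1}_t+l^\ssup{2}_t}2\bigr)\phi_t$, whose closed-form solution gives both conclusions at once: propagating the explicit formula forward and backward from $t_0$ yields strict ordering on all of $(a,\infty)$ with no appeal to the flow theorem, and $\cosh\ge1$ gives \eqref{eq:OCbound}. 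Your argument is more elementary and self-contained (no Kunita citation, no Tanaka/Gronwall comparison step) and in fact produces a sharper exact identity from which the lemma's inequality is a one-line corollary; the paper's comparison scheme is less computation-specific and would survive in situations where only a one-sided bound on the drift difference is available rather than an exact linearisation. The one point to keep, which you did flag correctly, is that all calculus is done on $\delta_\cdot$ and $\phi_\cdot$ (genuinely $C^1$ because the two equations share the same driving Brownian motion), never on the semimartingales $l^\ssup{i}_\cdot$ themselves.
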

 \medskip
 
 Note that, since $\vert \tanh(\cdot) \vert \le 1$, \eqref{eq:OCbound} implies that for every $t >a$,  
 \begin{equation}
\label{eq:OCbound2}
 {l^\ssup{2}_t-l^\ssup{1}_t} \, \le \, 4\, \arcth\left(  e^{-2 \gep (t-a)} \right) \, .
\end{equation}
 
 \medskip
 
 \begin{proof} Note that solutions of \eqref{eq:lgGSDE} form a stochastic flow of $C^\infty$-diffeomorphisms of $\r$ (see \cite{Kunita82}, Theorem II.6.1). In particular solutions {do not meet}, so $l^\ssup{2}_t>l^\ssup{1}_t$ for all $t>a$.
% The well-posedness of \eqref{eq:lgGSDE} guarantees that solutions do not cross, so $l^\ssup{2}_\cdot>l^\ssup{1}_\cdot$.
 Then we set $\gD_t:= l^\ssup{2}_t-l^\ssup{1}_t>0$, so
\begin{equation}
\dd \gD_t \, =\, -2 \gep \left( \sinh \left(  l^\ssup{2}_t \right)-  \sinh \left( l^\ssup{1}_t \right)\right) \dd t\, =: f_\gep (t) \dd t\, ,
\end{equation}
and $\gD_\cdot$ is $C^1$. 
Now we remark that, for $x >0$, the minimum of the convex function  $u \mapsto  \sinh(u+x)- \sinh(u)$ is reached at
$u=-x/2$, hence it is equal to $2 \sinh(x/2)$ and 
$f_\gep(t) \le -4\gep \sinh(\gD_t/2)$. So, for every $s >a$, we introduce the solution $D_\cdot$ to the 
ODE $\dd D_t / \dd t=  -4\gep \sinh(D_t/2)$ with initial condition $D_s=\gD_s>0$. Explicitly
\begin{equation}
D_t\, =\, 4\, \mathrm{arctanh} \left( \tanh\left(\frac{\gD_s}4\right) e^{- 2 \gep (t-s)} \right)\, ,
\end{equation}
for every $t\ge s$. Note that $D_t\in (0, D_s)$ for every $t>s$.
By proceeding as in the proof of Lemma~\ref{l:comparison},  we consider 
$\max(0,\gD_t - D_t) $ for $t\ge s$:
\begin{equation}
\begin{split}
\max(0,\gD_t - D_t) \, &=\, \int_s ^t \ind_{\{\gD_u> D_u\}}  \left( f_\gep(u) + 4 \gep \sinh(D_u /2) \right) \dd u
\\ 
&\le \, -4 \gep \int_s ^t \ind_{\{\gD_u> D_u\}}  \left(   \sinh(\gD_u /2)- \sinh(D_u /2) \right) \dd u\, \le \, 0\, ,
\end{split}
\end{equation} 
so $0<\gD_t \le D_t$ which implies that for every $t>s$ 
\begin{equation}
\tanh( \gD_t /4) \, \le \, \tanh( D_t/4)\, =\, e^{- 2 \gep (t-s)}\tanh( \gD_s/4)   \,,
\end{equation} 
and the proof of Lemma~\ref{th:OCbound} is complete.
 \end{proof}

 \begin{proof}[Proof of Proposition~\ref{th:lonR}]
 Another application of Lemma~\ref{l:comparison} yields that, if $a'<a$, then $l_t^\ssup{a', + \infty} \le l_t^\ssup{a, + \infty}$ and $l_t^\ssup{a', - \infty} \ge l_t^\ssup{a, - \infty}$ for every $t>a$. 
 Therefore we can define both $\overline{l}_t:= \lim_{a \to - \infty} l_t^\ssup{a, + \infty}$ and $\underline{l}_t:= \lim_{a \to - \infty} l_t^\ssup{a, - \infty}$ and note that they are a.s. locally bounded by construction. Also by construction we have that both $\overline {l}_\cdot$ and $\underline {l}_\cdot$ are adapted to the filtration $(\cF_t)_{t\in \bbR}$. By the 
 very same argument used to show \eqref{eq:SDEinfty} we obtain that both $\overline{l}_\cdot$ and $\underline{l}_\cdot$ solve the SDE \eqref{eq:lgGSDE} for every $t\in \bbR$.
 
We are now going use Lemma~\ref{th:OCbound} (with $a= -\infty$) to show that there is only one adapted continuous solution on the whole of $\bbR$ to such an equation.  Let us suppose that there are two solutions: let us call them $l^\ssup{1}_\cdot$ and $l^\ssup{2}_\cdot$ and assume that $l^\ssup{1}_t<l^\ssup{2}_t$ for some $t\in\bbR$. 
Lemma~\ref{th:OCbound} yields that for every $s<t$ 
\begin{equation}
\tanh\left(\frac{l^\ssup{2}_t- l^\ssup{1}_t}4 \right) \, \le \, e^{- 2 \gep (t-s)} \tanh\left(\frac{l^\ssup{2}_s- l^\ssup{1}_s}4\right)   \, \le \, e^{- 2 \gep (t-s)}\overset{s \to -\infty} \longrightarrow 0 \,,
\end{equation} 
which is incompatible with $l^\ssup{2}_t- l^\ssup{1}_t>0$. Therefore $l^\ssup{2}_t= l^\ssup{1}_t$ and the uniqueness statement is proven.

The fact that the law of $l_t$ is $p_\gG$ follows by observing that $(l_t)_{t \in \bbR}$ is stationary: this is a consequence 
of the fact that $(l_t)_{t \in \bbR}$ is a strong solution of \eqref{eq:lonR} and therefore $l_t$ is a measurable function of
$B_\cdot$ and $l_{t+s}$ is the same function of the Brownian motion $B^\ssup{s}_\cdot$, hence $l_t $ and $l_{t+s}$ have the same law.  
By the ergodic properties of the SDE \eqref{eq:lonR} we know that $(l_{t+s})_{s>0}$ converges in law to
$p_\gG$ for $s\to \infty$ and the proof is complete.
 \end{proof}

\section{Proof of main technical results: Lemma~\ref{th:modelsclose} and Proposition~\ref{th:formain}}
\label{sec:maintech}

\subsection{Proof of Lemma~\ref{th:modelsclose} }
\label{sec:modelsclose}

\begin{proof}[Proof of Lemma~\ref{th:modelsclose}]
We recall that the stationary process $(l_t)_{t \in \bbR}$ is defined via Proposition~\ref{th:lonR} and that the law of $l_t$ is $p_\gG$.
Recall also from \eqref{eq:reversed-rv} the definition  of $\tsup(\Gamma)$ and $\tsdown(\Gamma)$ which are stopping times for the reversed Brownian filtration. Recall from \eqref{vdownGamma} that $\tvdown(\gG)$ is the unique time in $ [\tsdown(\Gamma),0]$ such that 
\begin{equation}B_{\tvdown(\gG)}\, =\, \sup_{t\in [\tsdown(\Gamma),0]} B_t\, .
\end{equation}
 
Pick $\gk>0$. In strict analogy with the proof of Lemma $\ref{th:invtimesimp}$, our program is resumed in three steps: on the event $\{\tsdown(\Gamma)>\tsup(\Gamma)\}$ and excluding an event of probability 
   $O(\Gamma^{-\gk})$   \smallskip 
  \begin{itemize} [leftmargin=20pt]
  \item  \emph{Step 1}: establishing that 
  \begin{equation}
  -\Gamma-\loglog\gG-C_{\gk,1} \,\le\, l_{\tsdown(\Gamma)} \,\le\, \Gamma+\loglog\gG+C_{\gk,1}\, ,
  \end{equation}
  \item \emph{Step 2}: exploiting that there is a rise in $B_\cdot$ of size $\Gamma$ between times $\tsdown(\Gamma)$ and $\tvdown(\gG)$ to show that
  \begin{equation} \gG-\loglog\gG - \underline{C_{\gk,2}} \,\le\, l_{\tvdown(\gG)}\, \le\,\gG+\loglog\gG + \overline{C_{\gk,2}} \, , 
  \end{equation}
  \item \emph{Step 3}: exploiting that for $t \in [\tvdown(\gG),0]$ the process $l_\cdot $ evolves approximately according to $2B_\cdot$ to show that
  \begin{equation}
  \gG-2B_{\tvdown(\gG)}-\loglog\gG-\underline{C_{\gk,3}}\,\le\, l_0\,\le\, \gG-2B_{\tvdown(\gG)}+\loglog\gG + \overline{C_{\gk,3}}\, ,
  \end{equation} 
   \end{itemize}
with constants $C_{\gk,1}, \underline{C_{\gk,2}}, \overline{C_{\gk,2}}, \underline{C_{\gk,3}}$ and $\overline{C_{\gk,3}}$ depending on $\gk$. Setting $C_\gk=\max(\underline{C_{\gk,3}}, \overline{C_{\gk,3}})$, we will have shown that $|l_0-\ls_0|\le \loglog\gG + C_\gk$ on the event $\{\tsdown(\Gamma)>\tsup(\Gamma)\}$ and excluding an event of probability $O(\Gamma^{-\gk})$. Replacing $B$ by $-B$, the same will hold on event $\{\tsup(\Gamma)>\tsdown(\Gamma)\}$, so Lemma \ref{th:modelsclose} will follow.

\medskip

Actually, in the proofs of \emph{Step 1}, of \emph{Step 2} and of the lower bound in \emph{Step 3}
we will work without assuming that $\tsdown(\Gamma)>\tsup(\Gamma)$: the statements  remain of course true on this event, which has probability $1/2$.

\smallskip 
	
\subsubsection{Proof of Step 1.}
  Since $\ttdown(\Gamma)$ is a stopping time, $(B_{\ttdown(\Gamma)+t}-B_{\ttdown(\Gamma)})_{t\ge0}$ is a standard Brownian motion, independent of $(B_t)_{0\le t\le \ttdown(\Gamma)}$. The same argument holds replacing $B$ with $\Brev$, so $(B_{\tsdown(\Gamma)+t}-B_{\tsdown(\Gamma)})_{t\le0}$ is independent of $(B_t)_{\tsdown(\Gamma)\le t\le 0}$ and has the same law as $(B_t)_{t\le0}$. From this, we deduce that $l_{\tsdown(\Gamma)}$  has law $p_\Gamma$ (and is independent of $(B_t)_{\tsdown(\Gamma)\le t\le 0}$)
hence
\begin{equation}
\begin{split}
\bbP\left( \vert l_{\tsdown(\Gamma)}\vert > \gG +\loglog\gG +C_{\gk, 1}\right) \, 
& =\, 2 \int_{\gG +\loglog\gG +C_{\gk, 1}}^\infty p_\gG (x) \dd x \, 
\\
& \le \, \frac {1} {K_0(\gep)} \int_{\gG +\loglog\gG +C_{\gk, 1}}^\infty e^{- \frac{\gep} 2 e^x} \dd x 
\\
&  =\, 
\frac {1} {K_0(\gep)} 
\int_{\frac{e^{C_{\gk, 1}}}{2}\log\gG}^\infty \frac{e^{-y}}y
 \dd y\, ,
\end{split}
\end{equation}
 and, since $K_0(\gep)\sim - \log \gep = \gG$ we have, as soon as $1+{e^{C_{\gk, 1}}}/2\ge\gk$, that $\vert l_{\tsdown(\Gamma)}\vert \le \gG +\loglog\gG +C_{\gk, 1}$ with probability $1-O(\Gamma^{-\gk})$.
  
  \smallskip

  \subsubsection{Proof of Step 2:  lower bound.}
  %By applying Lemma~\ref{th:forTrandom} (see Remark~\ref{rem:forTrandom})
%we see  that for every (possibly random) time $T$ 
%we have that
%  \begin{equation}
%   \label{eq:llow} 
%   l_t \, \ge\,  -\log\left(e^{-l_T-2(B_t-B_T)}+\gep \int_T^t e^{2(B_s-B_t)} \dd s\right)\, ,
%  \end{equation}
%   for every  $t\ge T$.  
%  With $T=\tsdown(\Gamma)$ and $t=v^\downarrow(\Gamma)$, we obtain from \eqref{eq:llow} that
%  \begin{equation}
%  l_{v^\downarrow(\gG)}\, \ge\,  -\log\left(e^{-l_{\tsdown(\Gamma)}-2\Gamma}+\gep  \int_{\tsdown(\Gamma)}^{v^\downarrow(\gG)} e^{2\left(B_s-B_{v^\downarrow(\gG)}\right)} \dd s\right)\, ,
%  \end{equation}
%  where we also used that $B_{v^\downarrow(\Gamma)}-B_{\tsdown(\Gamma)}=\Gamma$.
  
  By applying Lemma~\ref{th:forTrandom}, \eqref{eq:forTrandom} with $T=\tsdown(\Gamma)$ and $t=\tvdown(\Gamma)$, we have that
  \begin{equation}\label{eq:llow} 
  \begin{aligned}
   l_{\tvdown(\gG)} & \, \ge \,  l_{\tsdown(\gG)}+2(B_{\tvdown(\gG)}-B_{\tsdown(\Gamma)})-\log\left(1+\gep e^{l_{\tsdown(\gG)}} \int_{\tsdown(\Gamma)}^{\tvdown(\gG)} e^{2\left(B_s-B_{\tsdown(\gG)}\right)} \dd s\right)\\   
   & \, = \, -\log\left(e^{-l_{\tsdown(\Gamma)}-2\Gamma}+\gep  \int_{\tsdown(\Gamma)}^{\tvdown(\gG)} e^{2\left(B_s-B_{\tvdown(\gG)}\right)} \dd s\right)\, ,
   \end{aligned}
  \end{equation}
  where we also used that $B_{\tvdown(\Gamma)}-B_{\tsdown(\Gamma)}=\Gamma$.
Note that $\int_{\tsdown(\gG)}^{\tvdown(\Gamma)} e^{2 (B_s-B_{\tvdown(\gG)} )} \dd s$ is distributed as $\int_{\tudown(\gG)}^{\ttdown(\gG)} e^{-2(B_{\tudown(\gG)}-B_t)} \dd t $ which is less than  $\int_0^{\ttdown(\gG)}  e^{-2(B_{\tudown(\gG)}-B_t)} \dd t$. It follows from lemma \ref{th:excthlemma} $(iii)$ and an application of Markov inequality that there exists a positive constant $C_{\kappa,4}$ such that with probability $1-O(\Gamma^{-\gk})$
\begin{equation}
\int_{\tsdown(\Gamma)}^{\tvdown(\gG)} e^{2\left(B_s-B_{\tvdown(\gG)}\right)} \dd s\,\le\,  C_{\gk, 4}\log \Gamma\, .
\end{equation}
Therefore,
using $l_{\tsdown(\Gamma)} \ge -\Gamma-\loglog\gG-C_{\gk, 1}$ (i.e., \emph{Step 1}), we conclude that with probability $1- O(\Gamma^{-\gk})$
\begin{equation}
l_{\tvdown(\gG)}\, \ge\,  \Gamma - \loglog \Gamma-\underline{C_{\gk,2}}\, ,
\end{equation} 
where $\underline{C_{\gk,2}}=\log\left(e^{C_{\gk,1}}+C_{\gk, 4}\right)$.
  
\subsubsection{Proof of Step 2:  upper bound.}
	The  idea is to use a bound on the modulus of continuity of $B_\cdot$: we first give a sketch of the argument.  Choose $F>0, a>0$ and $\lambda>0$ such that $2\lambda\le a e^F$. Assume that $B_\cdot$ cannot make a rise larger than $\lambda$ inside any interval of length $a$. When $l_\cdot$ is above $\Gamma+F$, then the repulsion, which acts essentially like $-e^F \dd t$, makes $l$ smaller by at least $a e^F$ in an interval of length $a$, while $B_\cdot$ can make it  larger of   at most $2\lambda$. Thus $l_\cdot$ cannot stay above $\Gamma+F$ over an interval of length longer than $a$, so it cannot grow beyond $\Gamma+F+2\lambda$. 
	
	\smallskip
	
	In order to fill in the details, we start with:
	
	\smallskip
	
	\begin{lemma}
	\label{lemuppboundstep2}
	Let $t>0$ and $b:[0,t]\to \mathbb{R}$ be a continuous function. Consider $\ell :[0,t]\to \mathbb{R}$ solution of the ODE 
	\begin{equation}
	  \ell_s\,=\, \ell_0+2(b_s-b_0)-2\varepsilon \int_0^s \sinh(\ell_u) \dd u\, ,
	  \end{equation}
	for every $s\in [0, t]$ (and for a given initial condition $\ell_0$).
	Let $F>0, a>0$ and $\lambda>0$. Assume moreover that
	 \begin{enumerate}
	 \item     $ |b_s-b_{s'}| \le \lambda$
	 if  $s,s'\in[0,t]$ and $|s-s'|\le a$;
	 \item $ \lambda\le\gep a \sinh(\Gamma+F)$.
	 \end{enumerate}
	 Then, provided that the initial condition satisfies
	 $\ell_0\le \Gamma+F$, 
	 we have
	 \begin{equation}
	 \label{uppboundls}
	 \sup_{s\in[0, t]}\ell_s\, \le\,  \Gamma+F+2\lambda\, .
	 \end{equation}
	 \end{lemma}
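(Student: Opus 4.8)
The plan is to turn the heuristic stated just above into a maximum‑principle argument by contradiction. Set $M:=\sup_{s\in[0,t]}\ell_s$; since $\ell$ is continuous on the compact interval $[0,t]$, this supremum is attained at some $s^*\in[0,t]$. Assume for contradiction that $M>\Gamma+F+2\lambda$. Then $s^*>0$ (because $\ell_0\le\Gamma+F$), so the set $\{s\in[0,s^*]:\ell_s\le\Gamma+F\}$ is nonempty (it contains $0$) and does not contain $s^*$; I would let $\sigma\in[0,s^*)$ denote its supremum. Continuity then gives $\ell_\sigma\le\Gamma+F$, while the maximality of $\sigma$ forces $\ell_s>\Gamma+F$, hence $\sinh(\ell_s)>\sinh(\Gamma+F)>0$ (using $\Gamma+F>0$), for every $s\in(\sigma,s^*]$.

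Next I would derive a contradiction by comparing, on $(\sigma,s^*]$, the growth of $\ell$ with that of $2b$, splitting into two cases. If $s^*-\sigma\le a$, writing the ODE between $\sigma$ and $s^*$ and discarding the repulsion term $-2\varepsilon\int_\sigma^{s^*}\sinh(\ell_u)\,\dd u\le 0$ gives $M=\ell_{s^*}\le\ell_\sigma+2(b_{s^*}-b_\sigma)\le(\Gamma+F)+2\lambda$ by Hypothesis~(1), contradicting $M>\Gamma+F+2\lambda$. If instead $s^*-\sigma>a$, then $s^*\ge a$ and $[s^*-a,s^*]$ is a compact subinterval of $(\sigma,s^*]$, so on it the continuous function $\sinh(\ell_\cdot)$ exceeds $\sinh(\Gamma+F)$ and is therefore $\ge\sinh(\Gamma+F)+\delta$ for some $\delta>0$; writing the ODE between $s^*-a$ and $s^*$, and using $\ell_{s^*-a}\le M$, Hypothesis~(1) on an interval of length $a$, and Hypothesis~(2) in the form $2\varepsilon a\sinh(\Gamma+F)\ge 2\lambda$, I would get $M=\ell_{s^*}\le M+2\lambda-2\varepsilon a(\sinh(\Gamma+F)+\delta)\le M-2\varepsilon a\delta<M$, again a contradiction. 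Hence $M\le\Gamma+F+2\lambda$, which is \eqref{uppboundls}.

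The argument is elementary, and I do not anticipate a serious obstacle; the one point that needs care---and where the choice of $\sigma$ as a last‑exit time rather than a first‑passage time to $\Gamma+F+2\lambda$ matters---is the extraction of the \emph{strict} decrement $2\varepsilon a\delta$ in the long‑interval case, since the bare inequality $\sinh(\ell_u)\ge\sinh(\Gamma+F)$ together with Hypothesis~(2) only yields the trivial $M\le M$. It is also worth recording that the hypothesis $\ell_0\le\Gamma+F$ is used only to locate $\sigma$, so the proof actually shows that a solution of this ODE which lies below $\Gamma+F+2\lambda$ can never afterwards exceed $\Gamma+F+2\lambda$; this is the form in which Lemma~\ref{lemuppboundstep2} will be fed into the analysis of $l_\cdot$, with $b=B$ and with $F,a,\lambda$ chosen so that Hypotheses~(1) and~(2) hold off an event of probability $O(\Gamma^{-\gk})$, via a modulus‑of‑continuity estimate for Brownian motion.
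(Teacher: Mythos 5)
Your proof is correct and is essentially the paper's argument: both hinge on the last-exit time $\sigma$ (the paper's $\varrho$) from $(-\infty,\Gamma+F]$, together with the observation that, by hypotheses (1) and (2), an excursion of $\ell$ above level $\Gamma+F$ cannot last longer than $a$ while the driving term can add at most $2\lambda$. The only organizational difference is that the paper argues directly for an arbitrary $s$ with $\ell_s>\Gamma+F$, showing $s\le\varrho+a$ by evaluating its decay estimate at $u=\varrho+a$ and contradicting the definition of $\varrho$ (non-strict $\le\Gamma+F$ against strict $>\Gamma+F$), which sidesteps the strictness issue that you correctly resolve at the maximizer via the compactness/$\delta$ device.
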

\smallskip

Lemma~\ref{lemuppboundstep2} applied to our context yields (together with \emph{Step 1}):
\smallskip 

	\begin{cor}
	\label{cor:uppboundstep2}
	For all $\gk>0$, there exists a constant $\overline{C_{\gk,_2}}$ such that with probability $1-O(\Gamma^{-\gk})$,
	we have that 
	\begin{equation}  
	\max_{s\in[\tsdown(\Gamma),0]}l_s\, \le\,  \Gamma+\loglog \Gamma+\overline{C_{\gk,_2}}.
	\end{equation} 
	\end{cor}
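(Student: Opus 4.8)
\smallskip

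\noindent\emph{Proof plan for Corollary~\ref{cor:uppboundstep2}.}
The plan is to apply the deterministic Lemma~\ref{lemuppboundstep2} pathwise to $\ell_s:=l_{\tsdown(\Gamma)+s}$ and $b_s:=B_{\tsdown(\Gamma)+s}-B_{\tsdown(\Gamma)}$ on the random interval $s\in[0,-\tsdown(\Gamma)]$: indeed, by \eqref{eq:lonR} and $e^{x}-e^{-x}=2\sinh x$, for fixed $B_\cdot$ the process $\ell_\cdot$ solves $\ell_s=\ell_0+2(b_s-b_0)-2\gep\int_0^s\sinh(\ell_u)\dd u$, which is exactly the ODE of Lemma~\ref{lemuppboundstep2}. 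To invoke that lemma I must produce, off an event of probability $O(\Gamma^{-\gk})$, three things with mutually consistent parameters $F,a,\lambda$: (i) the initial condition $\ell_0=l_{\tsdown(\Gamma)}\le\Gamma+F$; (ii) the modulus bound $|b_s-b_{s'}|\le\lambda$ for all $s,s'\in[0,-\tsdown(\Gamma)]$ with $|s-s'|\le a$; (iii) the compatibility condition $\lambda\le\gep a\sinh(\Gamma+F)$. For (i) I take $F:=\loglog\Gamma+C_{\gk,1}$ and quote \emph{Step 1}, which (using that $\tsdown(\Gamma)$ is a stopping time for the reversed filtration, so $l_{\tsdown(\Gamma)}$ has law $p_\Gamma$) gives $|l_{\tsdown(\Gamma)}|\le\Gamma+\loglog\Gamma+C_{\gk,1}$ off an event $E_1^c$ of probability $O(\Gamma^{-\gk})$.

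For (iii) I note $\gep\sinh(\Gamma+F)=\tfrac12(e^{F}-e^{-2\Gamma-F})\ge\tfrac13 e^{F}=\tfrac13 e^{C_{\gk,1}}\log\Gamma$ for $\Gamma$ large, so choosing $a=a_\Gamma:=3\lambda e^{-C_{\gk,1}}/\log\Gamma$ makes (iii) automatic for any value of $\lambda$; the constant $\lambda=\lambda_\gk$ will be fixed (large, depending only on $\gk$) at the end. It remains to get (ii). Since a Brownian path does have unit-length windows of large oscillation somewhere on $[0,\infty)$, I first localise: writing $-\tsdown(\Gamma)=\ttdown_{\Brev}(\Gamma)$, which has the law of $\ttdown(\Gamma)$, Brownian scaling gives $\ttdown(\Gamma)\stackrel{\mathrm{law}}=\Gamma^2\,\ttdown(1)$ and $\ttdown(1)$ has an exponential tail, so for a suitable constant $C_\gk$ the event $E_2:=\{-\tsdown(\Gamma)\le C_\gk\Gamma^2\log\Gamma=:T_\Gamma\}$ has probability $1-O(\Gamma^{-\gk})$. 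On $E_2$ the interval $[\tsdown(\Gamma),0]$ lies inside $[-T_\Gamma,0]$; covering $[-T_\Gamma,0]$ by $O(T_\Gamma/a_\Gamma)=O(\Gamma^2(\log\Gamma)^2)$ windows of length $2a_\Gamma$ and using the reflection estimate $\bbP(\mathrm{osc}_{[0,2a_\Gamma]}B>\lambda)\le 2\exp(-\lambda^2/(16a_\Gamma))=2\,\Gamma^{-\lambda e^{C_{\gk,1}}/48}$, I get that the oscillation event $E_3$ has probability $1-O\big(\Gamma^{2-\lambda_\gk e^{C_{\gk,1}}/48}(\log\Gamma)^2\big)$; taking, say, $\lambda_\gk:=48(\gk+3)e^{-C_{\gk,1}}$ makes this $O(\Gamma^{-\gk})$.

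On $E_1\cap E_2\cap E_3$ all hypotheses of Lemma~\ref{lemuppboundstep2} hold with $t=-\tsdown(\Gamma)$, $F=\loglog\Gamma+C_{\gk,1}$, $a=a_\Gamma$, $\lambda=\lambda_\gk$, so \eqref{uppboundls} gives $\max_{s\in[\tsdown(\Gamma),0]}l_s\le\Gamma+\loglog\Gamma+C_{\gk,1}+2\lambda_\gk$, which is the claim with $\overline{C_{\gk,_2}}:=C_{\gk,1}+2\lambda_\gk$, while $\bbP((E_1\cap E_2\cap E_3)^c)=O(\Gamma^{-\gk})$ by a union bound. The only genuinely delicate point is the bookkeeping of the previous paragraph: $a_\Gamma$ must be small enough that the repulsion wins over the Brownian increments (condition (iii)) yet large enough that the Gaussian tail $\exp(-\lambda_\gk^2/(16a_\Gamma))$ beats the polynomial number of windows. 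The choices above close this loop because $a_\Gamma$ is proportional to $\lambda_\gk/\log\Gamma$, hence $\lambda_\gk^2/a_\Gamma\propto\lambda_\gk\log\Gamma$ can be made as large a power of $\Gamma$ as desired by enlarging the $\Gamma$-independent constant $\lambda_\gk$.
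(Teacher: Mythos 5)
Your proof is correct and follows essentially the same route as the paper: apply the deterministic Lemma~\ref{lemuppboundstep2} starting at $\tsdown(\Gamma)$, with the initial condition supplied by \emph{Step 1}, a localization of $\tsdown(\Gamma)$ to a polynomial window via the exponential moment of $\ttdown(1)$, and a modulus-of-continuity/union-bound estimate with $a\propto 1/\log\Gamma$. The only (harmless) differences are bookkeeping: the paper quotes the packaged estimate \eqref{BM1} on $[-\Gamma^3,0]$ and beats the polynomial number of windows by enlarging the constant $C_{\gk,5}$ in $F$ while keeping $\lambda<1/2$ fixed, whereas you keep $F=\loglog\Gamma+C_{\gk,1}$ and enlarge $\lambda$ instead, both choices being absorbed into $\overline{C_{\gk,2}}$.
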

	
	Corollary \ref{cor:uppboundstep2} yields in particular the upper bound in \emph{Step 2}.	
	
\medskip

\begin{proof}[Proof of Lemma~\ref{lemuppboundstep2}]
Consider an arbitrary  $s\in[0,t]$. If $l_s\le \Gamma+F$ there is nothing to prove, so let us  assume that $l_s>\Gamma+F$. Define
\begin{equation}
\varrho\, :=\, \sup\left\{u\in[0,s]\colon
\, 
\ell_u\le \Gamma+F\right\}\, .
\end{equation}
Then, on the interval $[\varrho,s]$, $\ell_\cdot$ lies above $\Gamma+F$. Using this and assumption $(1)$  we obtain that  for every $u\in[\varrho, (\varrho+a)\wedge s]$
	\begin{equation}
	\label{equppcontrGa}
	\begin{split}
	\ell_u \, & =\,  \ell_\varrho+2(b_u-b_\varrho)-2\varepsilon \int_\varrho^u \sinh(\ell_v) \dd v\\
	& \le \Gamma+F+2\lambda-2\varepsilon (u-\varrho) \sinh(\Gamma+F)\, .
	\end{split}
\end{equation}
	Now, we argue that $s\le \varrho+a$. In fact, if $s>\varrho+a$ then, by applying  \eqref{equppcontrGa} to $u=\varrho+a$ and by assumption $(2)$, we obtain that 
\begin{equation} 
\ell_{\varrho+a} \, \le\,  \Gamma+F+2\lambda-2\gep a \sinh(\Gamma+F)\,  \le\,  \Gamma+F\, ,
\end{equation}
	which is impossible by the definition of $\varrho$ and because   $s>\varrho+a$.
	We conclude that $s\le \varrho+a$ and therefore inequality \eqref{equppcontrGa} applies to $u=s$, hence
\begin{equation}
l_s  \, \le\,  \Gamma+F+2\lambda\, .
\end{equation}
	This concludes the proof of Lemma~\ref{lemuppboundstep2}.
\end{proof}
	
	\medskip

\begin{proof}[Proof of Corollary~\ref{cor:uppboundstep2}]
	Choose $C_{\gk, 5}\ge C_{\gk, 1}$ and $\lambda\in (0, 1/2)$ and set $F=\loglog \Gamma+C_{\gk, 5}$ and $a=e^{-F}$. 
So $\gep a \sinh(\gG+F)$ is smaller than $1/2$, but approaches $1/2$ for $\gG \to \infty$,  hence 	$\gl \le \gep a \sinh(\gG+F)$ for $\gG$ sufficiently large, i.e.\
	assumption $(2)$ of Lemma~\ref{lemuppboundstep2} holds true. By the Brownian continuity modulus estimate \eqref{BM1}, provided that $e^{C_{\gk, 5}}> 3(\gk+3)/ \gl^2$ we have that, with probability $1-O(\Gamma^{-\gk})$, the 
	supremum of $\vert B_{s}-B_{s'}\vert$, for $s,s' \in [-\Gamma^3,0]$ and $\vert s-s' \vert \le a$, is 
	bounded by $\lambda$. 
	On the other hand, 
	$-\tsdown(\Gamma)$ is distributed as $\gG^2 \ttdown(1)$ and  $\bbE[e^{\ttdown(1)}]<\infty$ according to Lemma \ref{f:1} $(i)$ (since $1<\pi^2/8$), so by Markov inequality we have that, with probability $1-O(e^{-\Gamma})$,
	 $\tsdown(\Gamma)\ge -\Gamma^3$.
	 Therefore by using \emph{Step 1} and by applying Lemma~\ref{lemuppboundstep2} we obtain Corollary~\ref{cor:uppboundstep2} with $\overline{C_{\gk, 2}}=C_{\gk, 5}+2\lambda$.
\end{proof}
	
	Now we turn to the proof of \emph{Step 3}. 
	%Recall that (see \eqref{eq:invtimesimp}), on the event  $\{\tsdown(\Gamma)>\tsup(\Gamma)\}$ and for $t\in[v^\downarrow(\Gamma),0]$ 
	%\begin{equation}
	%\ls_t \,=\, \Gamma+2(B_t-B_{v^\downarrow(\Gamma)})\, .
	%\end{equation}

	\subsubsection{Proof of Step 3: lower bound}
		
	By applying Lemma~\ref{th:forTrandom}, \eqref{eq:forTrandom} with $T=\tvdown(\Gamma)$ and $t=0$,
   we obtain that
  \begin{equation}
  \begin{aligned}
  l_0 & \, \ge\, l_{\tvdown(\Gamma)}+2(B_0-B_{\tvdown(\Gamma)}) -\log\left(1+\gep e^{l_{\tvdown(\Gamma)}}  \int_{\tvdown(\Gamma)}^0 e^{2\left(B_s-B_{\tvdown(\gG)}\right)} \dd s\right)\, \\
  & \, = \, -\log\left(e^{-l_{\tvdown(\Gamma)}}+ \gep \int_{\tvdown(\Gamma)}^0 e^{2\left(B_s-B_{\tvdown(\gG)}\right)}\dd s\right)-2B_{\tvdown(\Gamma)}\, ,  
  \end{aligned}
  \end{equation}
where in the second line we used that $B_0=0$. Note that $\int_{\tvdown(\Gamma)}^0 e^{2 \left(B_s-B_{\tvdown(\gG)} \right)} \dd s$ is distributed as $\int_0^{\tudown(\gG)}  e^{-2\left(B_{\tudown(\gG)}-B_t\right)} \dd t $ which is less than  $\int_0^{\ttdown(\gG)}  e^{-2(B_{\tudown(\gG)}-B_t)} \dd t$. Then Lemma \ref{th:excthlemma}$(iii)$ and an application of Markov inequality imply that there exists a positive constant $C_{\gk, 6}$ such that with probability $1-O(\Gamma^{-\gk})$
\begin{equation}
\int_{\tvdown(\gG)}^0 e^{2\left(B_s-B_{\tvdown(\gG)}\right)} \dd s\,\le\,  C_{\gk, 6}\log \Gamma\, .
\end{equation}
Therefore,
using $l_{\tvdown(\gG)} \ge \Gamma-\loglog \gG-\underline{C_{\gk, 2}}$ (i.e., \emph{Step 2}), we conclude that with probability $1- O(\gG^{-\gk})$
\begin{equation}
l_0\, \ge\,  \Gamma - 2B_{\tvdown(\gG)} - \loglog \Gamma-\underline{C_{\gk, 3}}\, ,
\end{equation} 
where $\underline{C_{\gk, 3}}=\log\left(e^{\underline{C_{\gk, 2}}}+C_{\gk, 6}\right)$.

	\subsubsection{Proof of Step 3: upper bound}
		
	By applying Lemma~\ref{th:forTrandom}, \eqref{eq:forTrandom_upp}  with $T=\tvdown(\Gamma)$ and $t=0$,
   we obtain that
  \begin{equation}
  \begin{aligned}
  l_0 & \, \le\, l_{\tvdown(\Gamma)}+2(B_0-B_{\tvdown(\Gamma)}) +\log\left(1+\gep e^{-l_{\tvdown(\Gamma)}}  \int_{\tvdown(\Gamma)}^0 e^{-2\left(B_s-B_{\tvdown(\gG)}\right)} \dd s\right)\\
  & \, = \, \log\left(e^{l_{\tvdown(\Gamma)}}+ e^\gG \int_{\tvdown(\Gamma)}^0 e^{-2\left(\gG-(B_{\tvdown(\gG)}-B_s)\right)}\dd s\right)-2B_{\tvdown(\Gamma)}\, ,
  \end{aligned}
  \end{equation}
  where in the second line we used that $B_0=0$ and we wrote $\gep=e^\gG\times e^{-2\gG}$. Note that conditioning on the event $\{\tsdown(\Gamma)>\tsup(\Gamma)\}$, $\int_{\tvdown(\gG)}^0 e^{-2\left(\gG-(B_{\tvdown(\gG)}-B_s)\right)} \dd s$ is distributed like $\int_0^{\tudown(\gG)}  e^{-2(\Gamma-(B_{\tudown(\gG)}-B_t))} \d t$ conditioned on $\{ \ttdown(\gG)<\ttup(\gG)\}$.  It follows from lemma \ref{th:excthlemma}$(iv)$ and an application of Markov inequality that there exists a positive constant $C_{\gk, 7}$ such that with probability $1-O(\Gamma^{-\gk})$, on the event $\{\tsdown(\Gamma)>\tsup(\Gamma)\}$, we have 
\begin{equation}
\int_{\tvdown(\gG)}^0 e^{-2\left(\gG-(B_{\tvdown(\gG)}-B_s)\right)}  \dd s\,\le\,  C_{\gk, 7}\log \Gamma\, .
\end{equation}

Using $l_{\tvdown(\gG)} \le \Gamma+\loglog \gG+\overline{C_{\gk, 2}}$ (i.e., the upper bound in \emph{Step 2}), we conclude that with probability $1- O(\gG^{-\gk})$, on the event $\{\tsdown(\Gamma)>\tsup(\Gamma)\}$,
\begin{equation}
l_0\, \le\,  \Gamma - 2B_{\tvdown(\gG)} + \loglog \Gamma+\overline{C_{\gk, 3}}\, ,
\end{equation} 
where $\overline{C_{\gk, 3}}=\log\left(e^{\overline{C_{\gk, 2}}}+ C_{\gk, 7}\right)$.

This completes the proof of \emph{Step 3} and therefore also the proof of Lemma~\ref{th:modelsclose} is complete.
\end{proof}

\subsection{Proof of Proposition~\ref{th:formain}} 
\label{sec:proof-formain}
\begin{proof}[Proof of Proposition~\ref{th:formain}]

First we observe that we can replace $s^\ssup{F}$ with $\sFR$. In fact these two trajectories, as discussed at length in App.~\ref{sec:alt},
coincide on $(\tu_{1}(\gG), \infty)$ (actually even on $[0, \infty)$ if $\tu_1(\gG)$ is a true $\gG$-extremum), hence
\begin{equation}
\left \vert \int_0^\ell  \ind_{{\bf s}_t \neq \sFR_t} \dd t  -
\int_0^\ell   \ind_{{\bf s}_t \neq \sFR_t} \dd t \right \vert \, \le \, \tu_{1}(\gG)\, .
\end{equation}
So from now on we study the convergence (in $\e[\, \mu_{\gG, B_\cdot, \ell} [\, .\, ]\, ]$-expectation, and for almost every $B_\cdot$ in $\mu_{\gG, B_\cdot, \ell}$-expectation and in $\mu_{\gG, B_\cdot, \ell}$-probability) of 
\begin{equation}
   \label{eq:defIwo(bc)}   
   I_\ell\left(({\bf s}_t )_{t\in[0, \ell]}\right):=\frac1\ell \int_0^\ell \ind_{{\bf s}_t \neq \sFR_t} \dd t \, .
   \end{equation}
toward $D_\gG$. 

%We start with the convergence in $\e[\mu_{\gG, B_\cdot, \ell} [\cdot]]$-expectation (i.e. \eqref{eq:formain1}), and for almost every $B_\cdot$ in $\mu_{\gG, B_\cdot, \ell}$-expectation (i.e. \eqref{eq:formain2}). The convergence in $\mu_{\gG, B_\cdot, \ell}$-probability) will come from a control of the second moment .

Recall from Section~\ref{sec:defs} that $m_t^\ssup{a,b}= l^\ssup{a}_t+r_t^\ssup{b}$ and {that, from
\eqref{eq:muelleta2}, one obtains}
\begin{equation}\label{eq:forformain2}
 \mu_{\gG, B_\cdot, a, b}\left({\bf s}_t  \ne \sFR_t\right) =\left(1+\exp\big(m_t^\ssup{a,b}\sFR_t\big)\right)^{-1} .
\end{equation}  
Note that, by Lemma \ref{th:invtime}  , as $a\to- \infty$ and $b \to \infty$  the limit
of $\mu_{\gG, B_\cdot,a, b}\left({\bf s}_t \not= \sFR_t\right)$ is equal to 
\begin{equation}\label{eq:forformain3}
\left(1+\exp\big(m_t\sFR_t\big)\right)^{-1}
=:\mu_{\gG,B_\cdot,-\infty,\infty}\left({\bf s}_t \not= \sFR_t\right).
\end{equation}
%
%\begin{rem}
%PONCTUAL RESULT.
%
%From the convergence \eqref{eq:forformain3} we derive 
%\begin{equation}
%\lim_{a\to-\infty, b\to \infty} \e[ \mu_{a, b}\left({\bf s}_t \not= \sFR_t\right)]=\e[ \mu_{-\infty,\infty}\left({\bf s}_t \not= \sFR_t\right)].
%\end{equation}
%Replacing $B_\cdot$ by $B_\cdot^\ssup{t}$, the above expectation is $\e[ \mu_{-\infty,\infty}\left({\bf s}_0\right)]$, which we denote by $D_\gG$.
%\end{rem}
%[THE CONTENT FROM (6.24) to Remark 6.3 could actually be written directly in section 2.3.]

From Lemma \ref{th:OCbound} we derive the following Lemma, which quantifies the convergence in \eqref{eq:forformain3}.
\medskip

\begin{lemma}\label{th:OCboundforformain}
Choose any $\gG>0$, any trajectory $B_\cdot$ and any real numbers $a < t < b$. Then
\begin{equation}
\left|\mu_{\gG,B_\cdot, a, b}\left({\bf s}_t \not= \sFR_t\right)- \mu_{\gG,B_\cdot,-\infty, \infty}\left({\bf s}_t \not= \sFR_t\right)\right| 
\le A_\gep(t-a)+A_\gep(b-t)\, ,
\end{equation}
where we denoted $A_\gep(t)=2 \, \arcth\left(e^{-2\gep t}\right)$ for every $t>0$.
\end{lemma}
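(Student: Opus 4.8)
The plan is to reduce the statement to a Lipschitz estimate on the one-sided processes, which then lets me invoke Lemma~\ref{th:OCbound} directly. First I would record, via \eqref{eq:forformain2} and \eqref{eq:forformain3}, that $\mu_{\gG,B_\cdot,a,b}({\bf s}_t\neq\sFR_t)=(1+\exp(m^\ssup{a,b}_t\sFR_t))^{-1}$ with $m^\ssup{a,b}_t=l^\ssup{a}_t+r^\ssup{b}_t$, and that $\mu_{\gG,B_\cdot,-\infty,\infty}({\bf s}_t\neq\sFR_t)=(1+\exp(m_t\sFR_t))^{-1}$ with $m_t=l_t+r_t$. Since $\sFR_t\in\{-1,0,+1\}$ and $x\mapsto(1+e^x)^{-1}$ is $\tfrac14$-Lipschitz (its derivative $-e^x/(1+e^x)^2$ has modulus at most $\tfrac14$, attained at $x=0$), the quantity to be bounded is at most $\tfrac14|m^\ssup{a,b}_t-m_t|$ (and equals $0$ when $\sFR_t=0$). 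So the whole question reduces to a bound on $|m^\ssup{a,b}_t-m_t|\le|l^\ssup{a}_t-l_t|+|r^\ssup{b}_t-r_t|$.

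For the $l$-part, I would use that $l^\ssup{a}_\cdot=l^\ssup{a,+\infty}_\cdot$ (Proposition~\ref{th:Laxinfty}) and $l_\cdot=\lim_{a'\to-\infty}l^\ssup{a',+\infty}_\cdot$ (Proposition~\ref{th:lonR}) both solve \eqref{eq:lgGSDE} on $(a,\infty)$, with $l^\ssup{a}_t\ge l_t$ there by the monotonicity in the left endpoint established in the proof of Proposition~\ref{th:lonR}. By the non-crossing property of solutions of \eqref{eq:lgGSDE} (the stochastic flow of diffeomorphisms recalled at the start of the proof of Lemma~\ref{th:OCbound}), either $l^\ssup{a}_\cdot\equiv l_\cdot$ on $(a,\infty)$ — in which case the $l$-part vanishes — or $l^\ssup{a}_t>l_t$ for every $t>a$, and then \eqref{eq:OCbound2} applied with $l^\ssup{2}=l^\ssup{a}$, $l^\ssup{1}=l$ gives $l^\ssup{a}_t-l_t\le 4\arcth(e^{-2\gep(t-a)})$. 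In either case $0\le l^\ssup{a}_t-l_t\le 4\arcth(e^{-2\gep(t-a)})$. The $r$-part is handled symmetrically: the time-reversed processes $\mathrm{r}_\cdot=r_{-\cdot}$ and $r^\ssup{b}_{-\cdot}$ both solve an SDE of the same form as \eqref{eq:lgGSDE} driven by $-\Brev_\cdot$ (see \eqref{eq:Rsde} and \eqref{eq:r-SDE1}), the latter started at $+\infty$ at time $-b$; since the proof of Lemma~\ref{th:OCbound} only involves the difference of two solutions driven by the same process, its conclusion transfers verbatim and yields $0\le r^\ssup{b}_t-r_t\le 4\arcth(e^{-2\gep(b-t)})$ for $t<b$.

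Combining, $|m^\ssup{a,b}_t-m_t|\le 4\arcth(e^{-2\gep(t-a)})+4\arcth(e^{-2\gep(b-t)})$, so the left-hand side of the lemma is at most $\arcth(e^{-2\gep(t-a)})+\arcth(e^{-2\gep(b-t)})$, which is bounded by $A_\gep(t-a)+A_\gep(b-t)$ since $A_\gep(u)=2\arcth(e^{-2\gep u})$. The only point requiring a little care is the non-crossing/ordering dichotomy for $l^\ssup{a}_\cdot$ versus $l_\cdot$ on $(a,\infty)$ (and its reversed-time analogue for the $r$-processes), together with the observation that Lemma~\ref{th:OCbound} is insensitive to which Brownian motion drives the equation; I would not expect any genuine obstacle, since all of the hard analytic work is already packaged in Lemma~\ref{th:OCbound}.
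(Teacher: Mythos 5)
Your proposal is correct and follows essentially the same route as the paper: rewrite both Gibbs probabilities via \eqref{eq:forformain2}--\eqref{eq:forformain3}, use the Lipschitz property of $x\mapsto(1+e^x)^{-1}$ together with $|\sFR_t|\le 1$ to reduce to $|m^\ssup{a,b}_t-m_t|\le|l^\ssup{a}_t-l_t|+|r^\ssup{b}_t-r_t|$, and conclude with the contraction estimate \eqref{eq:OCbound2} of Lemma~\ref{th:OCbound} (applied to the time-reversed equation for the $r$-processes). The only differences are cosmetic: you use the sharp constant $\tfrac14$ where the paper uses $\tfrac12$, and you spell out the ordering/non-crossing dichotomy and the insensitivity of Lemma~\ref{th:OCbound} to the driving noise, which the paper leaves implicit.
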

\medskip

We observe that $A_\gep$ is integrable on $(0, \infty)$ and $\int_0^\infty A_\gep(t) \dd t=\pi^2/(8 \gep)$. 
\medskip 

\begin{proof}
Using \eqref{eq:forformain2} and \eqref{eq:forformain3} we have
\begin{equation} \label{eq:0lto-infinf}
\begin{split}
  \Big|\mu_{\gG,B_\cdot, a, b}\left({\bf s}_t \not= \sFR_t\right)- & \mu_{\gG,B_\cdot,-\infty, \infty}\left({\bf s}_t \not= \sFR_t\right)\Big| \\
  & \le \frac12 \left |m^\ssup{a, b}_t - m_t\right| \\ 
  & \le \frac12 \left| l_t^\ssup{a} - l_t\right |
    +\frac12 \left| r_t^\ssup{b} - r_t\right|\\ 
  & \le \, A_\gep(t-a)+A_\gep(b-t) \, ,
  \end{split}
  \end{equation}
  where for the first inequality we have used that $(1+\exp(\cdot))^{-1}$ is $\frac12$-Lipshitz and for the third inequality we have used Lemma \ref{th:OCbound}.
 \end{proof}

Now, we are ready to prove \eqref{eq:formain1} (with, as argued, $\sFR$ instead of $\sF$). From Lemma \ref{th:OCboundforformain} we have 
\begin{equation} \label{eq:0lto-infinf-1}
\begin{split}
  \bigg|
  \int_0^\ell \mu_{\gG,B_\cdot, 0,\ell}\left({\bf s}_t \not= \sFR_t\right)\, - \, &\int_0^\ell  \mu_{\gG,B_\cdot,-\infty, \infty}\left({\bf s}_t \not= \sFR_t\right) \dd t \bigg|
    \\
    & \le\int_0^\ell \left|\mu_{\gG,B_\cdot, 0,\ell}\left({\bf s}_t \not= \sFR_t\right)-  \mu_{\gG,B_\cdot,-\infty, \infty}\left({\bf s}_t \not= \sFR_t\right) \right| \dd t
  \\ &
   \le \, \int_0^\ell  A_\gep(t) +   A_\gep(\ell-t) \dd t 
   \\ & 
   \le \, 2 \int_0^\infty \,A_\gep(t) \dd t \, <\,  \infty\, .
  \end{split}
  \end{equation}

In view of \eqref{eq:0lto-infinf-1}
the proof of \eqref{eq:formain1} is straightforward  because, by time invariance of the Brownian motion, $\e[\mu_{\gG,B_\cdot,-\infty, \infty}\left({\bf s}_t \not= \sFR_t\right)]=\e[\mu_{\gG,B_\cdot,-\infty, \infty}\left({\bf s}_0 \not= \sFR_0\right)]$ for every $t\in\r$. 
 \medskip
  
 By  \eqref{eq:0lto-infinf-1} we have also that
  \eqref{eq:formain2} follows if we show that almost surely
  \begin{equation}
  \label{eq:tbp6wEBT}
   \frac1\ell \int_0^\ell \mu_{\gG,B_\cdot,-\infty, \infty}\left({\bf s}_t \not= \sFR_t\right) \dd t  \overset{\ell \to \infty}{\longrightarrow} \e\left[\mu_{\gG,B_\cdot,-\infty, \infty}\left({\bf s}_0 \not= \sFR_0\right)\right]\, .
   \end{equation}
   This is a direct application of the (continuous-time) Birkhoff Ergodic Theorem {(see for example \cite[p.~10]{cf:Krengel})}.
   In fact, observe that the family of operators $(\varphi^t)_{t\in\mathbb{R}}$ defined on the Wiener space, i.e. the space of continuous functions endowed with the topology of uniform convergence over compact sets,  by $\varphi^t(B_\cdot)=B_\cdot^\ssup{t}$ where we recall that
  \begin{equation}
  B^\ssup{t}_s\,=\, B_{t+s}-B_t\,, \qquad \text{for } s\in\mathbb{R}\, ,
  \end{equation}
  is an ergodic flow with respect to the (bilateral) Wiener measure $\bbP$. If we set
  \begin{equation}
  f(B_\cdot)\, :=\, \mu_{\gG,B_\cdot,-\infty, \infty}\left({\bf s}_0 \not= \sFR_0\right)\,,
  \end{equation}
 we have that $f(\cdot)\in \bbL ^\infty(\bbP)$. %is measurable with respect to the Wiener $\sigma-$algebra and is integrable with respect to $\bbE$,  
Therefore Birkhoff Ergodic Theorem yields that a.s.
  \begin{equation}
  \frac1\ell \int_0^\ell f\left(\varphi^t (B_\cdot)\right) \dd t \overset{\ell \to \infty}{\longrightarrow} \bbE\left[f(B_\cdot)\right]\, ,
  \end{equation}
  which is \eqref{eq:tbp6wEBT} and therefore \eqref{eq:formain2} is established.
  
  \medskip
  
  For the rest of the proof, i.e. establishing \eqref{eq:formain3},
   we need to be more explicit about boundary conditions. 
  Lemma \ref{th:OCboundforformain} can be extended to arbitrary boundary conditions by minimal changes in the proofs, see Remark~\ref{rem:bc0} and Remark~\ref{rem:bc}. Here we introduce 
  the notations for arbitrary (fixed) boundary conditions: for every $B_\cdot$, $\ell>0$ and for every boundary conditions $(bc)\in\{(++), (+-), (-+), (--)\}$, we denote by $\mu_\ell^\ssup{bc}=\mu_{\gG, B_\cdot, \ell}^\ssup{bc}$ the law introduced in Section \ref{sec:model}, but with the boundary conditions $(bc)$.
In particular, $\mu_{\gG, B_\cdot, \ell}$ (introduced in Section~\ref{sec:model}) coincides with  $\mu_{\gG, B_\cdot, \ell}^\ssup{++}$. 
We introduce also $\mu_{a, b}^\ssup{bc}=\mu_{\gG, B_\cdot, a, b}^\ssup{bc}$ which extends the notation $\mu_{a,b}=\mu_{\gG, B_\cdot, a, b}$ to general boundary conditions.
We note that the term $\mu_{\gG,B_\cdot,-\infty, \infty}\left({\bf s}_t \not= \sFR_t\right)$ in Lemma \ref{th:OCboundforformain} is independent of the boundary conditions.
For conciseness we formulated Proposition \ref{th:formain} and we now show \eqref{eq:formain3} with boundary conditions $(++)$ but in fact \eqref{eq:formain1},  \eqref{eq:formain2}  and \eqref{eq:formain3} hold for general boundary, with the same proof.

\smallskip

To prove the convergence in $\mu_{\gG, B_\cdot, \ell}$-probability of variable $I_\ell\left(({\bf s}_t )_{t\in[0, \ell]}\right)$, see \eqref{eq:defIwo(bc)},  we are going to control its variance under $\mu_{\gG, B_\cdot, \ell}$. Then we start by  computing the second moment. Using the fact that for $0\le t_1<t_2\le \ell$ and $\sigma_1, \sigma_2\in\{+1, -1\}$ we have
\begin{equation}
\mu_{\gG, B_\cdot, \ell}\left({\bf s}_{t_1}=\sigma_1, {\bf s}_{t_2}=\sigma_2\right)\,
=\, \mu_{\gG, B_\cdot, 0, \ell}^\ssup{++}\left({\bf s}_{t_1} =\sigma_1\right)
\mu_{\gG, B_\cdot, t_1, \ell}^\ssup{\sigma_1+}\left({\bf s}_{t_2}=\sigma_2\right)\, ,
\end{equation}
we establish that
\begin{multline}
\label{eq:spsfb}
\mu_{\gG, B_\cdot, \ell}\left( \left( \int_0^\ell \ind_{{\bf s}_t \neq \sFR_t} \dd t \right)^2 \right) \,
 = \\
 2 \int_0^\ell \int_{t_1}^\ell \mu_{\gG, B_\cdot, 0, \ell}^\ssup{++}\left({\bf s}_{t_1} \neq \sFR_{t_1}\right)
\mu_{\gG, B_\cdot, t_1, \ell}^\ssup{-\sFR_{t_1}+}\left({\bf s}_{t_2}\neq  \sFR_{t_2}\right) \dd t_2 \dd t_1\,.
\end{multline}
By using Lemma \ref{th:OCboundforformain} we bound the second moment in \eqref{eq:spsfb} by 
\begin{equation}
2 \int_0^\ell \int_{t_1}^\ell \mu_{\gG, B_\cdot,-\infty, \infty}\left({\bf s}_{t_1} \neq \sFR_{t_1}\right)
\mu_{\gG, B_\cdot, -\infty, \infty}\left({\bf s}_{t_2}\neq  \sFR_{t_2}\right) \dd t_2 \dd t_1\, ,
\end{equation}
i.e., $\left(\int_0^\ell \mu_{\gG, B_\cdot,-\infty, \infty}\left({\bf s}_{t} \neq \sFR_t\right)\right)^2$, plus an error term which can be bounded by
\begin{equation}
2 \int_0^\ell \int_{t_1}^\ell   
A_\gep(t_1) +  A_\gep(\ell- t_1) + A_\gep((t_2-t_1) + A_\gep(\ell-t_2)
\dd t_2 \dd t_1\,,
\end{equation}
where we used that if $x,x',y,y'\in[0,1]$, then $xy\le x'y'+|x-x'|+|y-y'|$. This error term is $O(\ell)$.

Then we observe that, by  \eqref{eq:0lto-infinf-1}, we have also that
\begin{equation}
\left(\mu_{\gG, B_\cdot, \ell}\left[ \int_0^\ell \ind_{{\bf s}_t \neq \sFR_t} \dd t\right]\right)^2\, 
= \, \left(\int_0^\ell \mu_{\gG, B_\cdot,-\infty, \infty}\left[{\bf s}_{t} \neq \sFR_t\right]\right)^2 + O(\ell).
\end{equation}

Therefore, we have shown that the variance of $I_\ell\left(({\bf s}_t )_{t\in[0, \ell]}\right)$ under $\mu_{\gG, B_\cdot, \ell}$ is $O(1/\ell)$. In particular it converges to 0 as $\ell$ goes to infinity and the proof of Proposition~\ref{th:formain} is complete. 
\end{proof}

\appendix

\section{Some results about Brownian motion}
\label{sec:B}

First of all, we need 
a control  on the continuity modulus of Brownian motion:  
there exists a constant $c_1>0$ such that for every  $t>a>0$ and $\lambda >0$, we have 
\begin{equation}
\label{BM1} 
\p\left( \sup_{0\le u\le s \le t, s-u \le a} \left\vert B_s-B_u\right\vert\ge \lambda\right) \,\le\,  c_1 \frac{t}{a} \exp\left(- \frac{\lambda^2}{3 a}\right)\,,
\end{equation} 
where $3$ may be replaced by any number larger than 2 (for a proof, see \cite{cf:CR81}).
\smallskip

The rest of the results of this appendix are more specific and related to the sequence of $\gG$-extrema.
Recall that, by Brownian scaling,  $(\ttdown(\Gamma), \tudown(\Gamma)) \law \Gamma^2 (\ttdown(1), \tudown(1))$
for any $\Gamma>0$. 
\smallskip

\begin{lemma} 
\label{f:1} Let $\Gamma>0$. 

(i)  The random variable $B_{\tudown(\Gamma)}$ is exponentially distributed with mean $\Gamma$ and  
\begin{equation} 
\e \left[e^{- \lambda \tudown(\Gamma)}\right] \,=\, \frac1{\Gamma\sqrt{2\lambda} \coth( \Gamma\sqrt{2\lambda})}\, , \qquad \e \left[e^{- \lambda \ttdown(\Gamma)}\right] \,=\, \frac1{\cosh( \Gamma\sqrt{2\lambda})}\, ,
\end{equation}
for every $\gl\ge 0$. In particular, $\e [e^{ \lambda \ttdown(\Gamma)}]<\infty$ if $\lambda < \frac{\pi^2}{8\Gamma^2}$. 

(ii) Let $(\tu_n(\Gamma))$ be the sequence of the times of $\Gamma$-extrema of $(B_t)_{t\ge 0}$. Then $(\tu_{n+1}(\Gamma)- \tu_n(\Gamma))_{n=1,2, \ldots}$ is an  i.i.d. sequence, with Laplace transform ($\gl>0$)
\begin{equation}
\e \left[e^{-\lambda(\tu_{n+1}(\Gamma)- \tu_n(\Gamma))}\right]\, =\,  \frac1{\cosh(\Gamma \sqrt{2 \lambda})}\, .
\end{equation} In particular $\e[\tu_{n+1}(\Gamma)- \tu_n(\Gamma)]= \Gamma^2$.

%(iii)
%There exists  $C>0$ such that for all $0<a<t$ and $\lambda >0$, we have 
%\begin{equation}
%\label{BM1} 
%\p\left(\omega(t, a)\ge \lambda\right) \,\le\,  C \frac{t}{a} \exp\left(- \frac{\lambda^2}{3 a}\right)\,,
%\end{equation} 
%where 
%\begin{equation}
%\omega(t, a)\, :=\, \sup_{0\le u\le s \le t, s-u \le a} \left\vert B_s-B_u\right\vert\, . 
%\end{equation}

 (iii) The process  $(B_{\tudown(\Gamma)}- B_{\tudown(\Gamma)+t})_{0\le t \le \ttdown(\Gamma)-\tudown(\Gamma)}$ is distributed as a three-dimensional Bessel process $R_3$, starting from $0$ and running until $T_\Gamma(R_3):= \inf\{t>0: R_3(t)=\Gamma\}$. 
 Moreover
  \begin{equation}  \e \exp\left( a \int_{\tudown(\Gamma)}^{\ttdown(\Gamma)} e^{-2 (B_{\tudown(\Gamma)}- B_t)} \d t \right) \le 
 \begin{cases}\frac{1}{J_0(\sqrt{2a})}, \qquad &\mbox{if  $0< a < \frac{j_0^2}2$}, \\
    \frac{1}{\cos(\Gamma\sqrt{2 a})},  \qquad &\mbox{if  $0< a < \frac{\pi^2}{8\Gamma^2}$}\end{cases}  , \label{eq:int1}  \end{equation} 
\noindent where $J_0(\cdot)$ denotes the Bessel function of the first kind \cite[(10.2.2)]{cf:DLMF} with index $0$ and $j_0>0$ is its smallest positive zero.  
 \end{lemma}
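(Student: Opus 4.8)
The plan is to reduce all three parts to classical excursion theory for reflecting Brownian motion. Set $M_s:=\sup_{0\le r\le s}B_r$ and let $D:=M-B$ be the drawdown. A direct rewriting gives, for every $t$, $\xi^\downarrow_t=\sup_{0\le r\le s\le t}(B_r-B_s)=\sup_{0\le s\le t}D_s$, so $\ttdown(\gG)=\inf\{t>0:\ D_t=\gG\}$ is the first time $D$ hits $\gG$, and $\tudown(\gG)=\sup\{t\le\ttdown(\gG):\ B_t=M_t\}$ is the left endpoint of the $D$-excursion away from $0$ that first reaches $\gG$; in particular $B_{\tudown(\gG)}=M_{\ttdown(\gG)}$. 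By Lévy's theorem $(D_t,M_t)_{t\ge0}\law(|\beta_t|,L_t)_{t\ge0}$, where $\beta$ is a standard Brownian motion and $L$ its local time at $0$, and all the quantities above thereby become functionals of $|\beta|$, of $L$, and of the excursions of $|\beta|$.

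\emph{Part (i).} Since $(|\beta_t|-L_t)_t$ is a martingale, optional stopping at $T_\gG:=\inf\{t:|\beta_t|=\gG\}$ (which has mean $\gG^2<\infty$) gives $\e[L_{T_\gG}]=\gG$; because $L$ is frozen along the excursion straddling $T_\gG$, $L_{T_\gG}$ equals the value of $L$ at the start of the first $|\beta|$-excursion reaching $\gG$, which by the Poissonian structure of excursions is exponentially distributed, so $B_{\tudown(\gG)}\law L_{T_\gG}$ is exponential with mean $\gG$. Next, $\ttdown(\gG)\law T_\gG$ is the exit time of $\beta$ from $(-\gG,\gG)$, and solving $\tfrac12 u''=\lambda u$ on $(-\gG,\gG)$ with $u(\pm\gG)=1$ gives $\e[e^{-\lambda\ttdown(\gG)}]=1/\cosh(\gG\sqrt{2\lambda})$, whence $\e[\ttdown(\gG)]=\gG^2$ by expansion at $\lambda=0$. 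Writing $\ttdown(\gG)=\tudown(\gG)+(\ttdown(\gG)-\tudown(\gG))$, the second summand is the length of the straddling excursion run up to the hitting of $\gG$; by Williams' decomposition (equivalently the Doob $h$-transform with $h(x)=x$) this piece is a $3$-dimensional Bessel process $R_3$ started at $0$ and run until $T_\gG(R_3)$, independent of the path before $\tudown(\gG)$ and hence of $\tudown(\gG)$. Solving $\tfrac12 u''+\tfrac1x u'=\lambda u$ on $(0,\gG)$ with $u$ bounded at $0$ and $u(\gG)=1$ gives $\e[e^{-\lambda T_\gG(R_3)}]=\gG\sqrt{2\lambda}/\sinh(\gG\sqrt{2\lambda})$, and dividing the two Laplace transforms yields $\e[e^{-\lambda\tudown(\gG)}]=\tanh(\gG\sqrt{2\lambda})/(\gG\sqrt{2\lambda})$, the stated formula. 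Finally $\e[e^{\lambda\ttdown(\gG)}]<\infty$ for $\lambda<\pi^2/(8\gG^2)$ since $z\mapsto1/\cosh(\gG\sqrt{2z})$ extends analytically past $z=-\lambda$ precisely when $\gG\sqrt{2\lambda}<\pi/2$ (equivalently, the Dirichlet principal eigenvalue of $-\tfrac12\partial_x^2$ on $(-\gG,\gG)$ equals $\pi^2/(8\gG^2)$).

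\emph{Parts (ii) and (iii).} For (ii) the input is the renewal property of the $\gG$-extrema established in \cite{cf:NP89}: the $\gG$-extrema enjoy a regeneration structure making $(\tu_{n+1}(\gG)-\tu_n(\gG))_{n\ge1}$ i.i.d., and a computation (carried out there, or via the strong Markov property at the intervening stopping times $\ttup_\bullet(\gG),\ttdown_\bullet(\gG)$ together with the Bessel decomposition of part (i)) identifies the common Laplace transform with $1/\cosh(\gG\sqrt{2\lambda})$ — the same as that of $\ttdown(\gG)$ — whence mean $\gG^2$. For the distributional statement in (iii): on $[0,\ttdown(\gG)-\tudown(\gG)]$ one has $B_{\tudown(\gG)}-B_{\tudown(\gG)+t}=D_{\tudown(\gG)+t}$ because $M$ is constant there, and this is exactly the first $|\beta|$-excursion reaching $\gG$, stopped at that instant, which by the Williams decomposition used above is $R_3$ started at $0$ and run until $T_\gG(R_3)$.

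For the bound \eqref{eq:int1}, write $\int_{\tudown(\gG)}^{\ttdown(\gG)}e^{-2(B_{\tudown(\gG)}-B_t)}\d t=\int_0^{T_\gG(R_3)}e^{-2R_3(s)}\d s$. Bounding $e^{-2R_3}\le1$ gives $\le T_\gG(R_3)$, and continuing $\gG\sqrt{2\lambda}/\sinh(\gG\sqrt{2\lambda})$ analytically to $\lambda=-a$ gives $\e[e^{aT_\gG(R_3)}]=\gG\sqrt{2a}/\sin(\gG\sqrt{2a})\le1/\cos(\gG\sqrt{2a})$ for $a<\pi^2/(8\gG^2)$, using $x\le\tan x$ on $(0,\pi/2)$; this is the second branch. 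For the first branch keep the full functional and apply Feynman--Kac: $u(x):=\e_x\exp\big(a\int_0^{T_\gG(R_3)}e^{-2R_3}\big)$ solves $\tfrac12 u''+\tfrac1x u'+ae^{-2x}u=0$ on $(0,\gG)$, and the substitution $v:=xu$ reduces this to $v''+2ae^{-2x}v=0$ with $v(0)=0$; the change of variables $w=e^{-x}$ turns it into the Bessel equation $v_{ww}+\tfrac1w v_w+2av=0$, solved by $J_0(\sqrt{2a}\,w)$ and $Y_0(\sqrt{2a}\,w)$. Since $\int_0^{T_\gG(R_3)}$ is non-decreasing in $\gG$ it suffices to treat $\gG=\infty$; imposing $v=0$ at $w=1$, $v\sim x$ as $w\downarrow0$, and using the Wronskian $J_1(z)Y_0(z)-J_0(z)Y_1(z)=2/(\pi z)$, one gets $u(0)=1/J_0(\sqrt{2a})$ for $a<j_0^2/2$, which is the first branch. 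The main obstacle is bookkeeping rather than conceptual: one must fix the local-time normalization in Lévy's theorem so that part (i) yields mean $\gG$ and not a spurious factor $2$, and make the Williams excursion decomposition and the Feynman--Kac ODE analysis rigorous in the presence of the singular Bessel drift $1/x$ and, for the first branch, of the limit $\gG\to\infty$; all underlying identities are classical, so nothing beyond careful assembly is needed.
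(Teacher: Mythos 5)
Your proposal is correct, and for the only part that the paper actually proves --- part (iii) --- your first step coincides with the paper's: parts (i) and (ii) are simply quoted from Neveu--Pitman in the paper (your self-contained proof of (i) via L\'evy's identity, optional stopping for $\e[L_{T_\gG}]=\gG$, the exit-time ODE and Williams' decomposition is sound, if extra, work), and the identification of $(B_{\tudown(\gG)}-B_{\tudown(\gG)+t})_{0\le t\le \ttdown(\gG)-\tudown(\gG)}$ with a three-dimensional Bessel process run until $T_\gG(R_3)$ is obtained in both cases from L\'evy's identity plus Williams' path decomposition. Where you genuinely diverge is the exponential-moment bound \eqref{eq:int1}. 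The paper writes the functional as an integral of the local times of $R_3$, dominates $L_{R_3}(T_\gG(R_3),\cdot)$ by $L_{R_3}(\infty,\cdot)$, invokes the Le Gall--Yor (Ray--Knight type) identity $L_{R_3}(\infty,\cdot)\law Z_2(\cdot)$ with $Z_2$ a squared two-dimensional Bessel process, and then uses the Pitman--Yor transform $1/I_0(\sqrt{2\lambda})$ for the first branch and the Cameron--Martin formula $1/\cosh(\gG\sqrt{2\lambda})$ for the second, each followed by analytic continuation. You instead bound $e^{-2R_3}\le 1$ and use the explicit transform $\gG\sqrt{2\lambda}/\sinh(\gG\sqrt{2\lambda})$ of $T_\gG(R_3)$ together with $x\le\tan x$ for the second branch (more elementary than the paper's route), and for the first branch you solve the Feynman--Kac ODE exactly (substitution $v=xu$, then $w=e^{-x}$, Bessel equation, Wronskian), recovering $1/J_0(\sqrt{2a})$ with no Ray--Knight input; the formal computation is correct and yields exactly the stated constants, uniformly in $\gG$ for the first branch as required.

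One point to tighten in the first branch: defining $u(x):=\e_x\exp\bigl(a\int_0^{\infty}e^{-2R_3(s)}\d s\bigr)$ for $a>0$ and asserting that it solves the ODE presupposes the finiteness you are trying to establish on $(0,j_0^2/2)$, and a verification (martingale) argument with the singular drift $1/x$ and the entrance boundary at $0$ is not automatic in that regime. The clean fix --- and the device the paper itself uses twice --- is to run your ODE computation for the Laplace transform, i.e.\ with exponent $-\lambda<0$, where everything is bounded and the identification $\e_0\exp\bigl(-\lambda\int_0^\infty e^{-2R_3(s)}\d s\bigr)=1/I_0(\sqrt{2\lambda})$ is routine, and then apply the same analytic-continuation argument you already invoke for the second branch (the moment generating function of a nonnegative variable is finite and given by the continuation up to its first singularity, here $a=j_0^2/2$ since $I_0(iy)=J_0(y)$). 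With that adjustment your argument is complete and constitutes a legitimate alternative to the paper's Ray--Knight/Pitman--Yor route.
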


%  Moreover for  $0< a < \frac{j_0^2}2$, \begin{equation}  \e \exp\Big( a \int_{\tudown(\Gamma)}^{\ttdown(\Gamma)} e^{-2 (B_{\tudown(\Gamma)}- B_t)} \d t \Big) \le  \frac{1}{J_0(\sqrt{2a})} , \label{eq:int1}  \end{equation} 

%\noindent where $J_0$ denotes the Bessel function of the first kind with index $0$ and $j_0>0$ is its smallest positive zero. If $0< a < \frac{\pi^2}{8\Gamma^2}$, we also have \begin{equation} \e \exp\Big( a \int_{\tudown(\Gamma)}^{\ttdown(\Gamma)} e^{-2 (B_{\tudown(\Gamma)}- B_t)} \d t \Big)   \le   \frac{1}{\cos(\Gamma\sqrt{2 a})}. \label{eq:int2} \end{equation} 

\medskip

%It is possible to compute explicitly the Laplace transform of $\int_{\tudown(\Gamma)}^{\ttdown(\Gamma)} e^{-2 (B_{\tudown(\Gamma)}- B_t)} d t$, but the upper bounds in \eqref{eq:int1}  are sufficient for us.
 We note that the first upper bound in \eqref{eq:int1} holds uniformly in $\Gamma$, and  the second one involves $\Gamma$ and will be useful when $\Gamma$ is small.

\begin{proof} Only $(iii)$   needs  a proof because $(i)$ and $(ii)$ are proven in \cite{cf:NP89}.  For the  first part of $(iii)$ we observe that, by L\'evy's identity,  $\beta_t:=\sup_{s\le t} B_s -B_t, t\ge 0$,  is distributed as a reflected Brownian motion. Then   $\ttdown(\Gamma)$  becomes the first hitting time of $\Gamma$ by $\beta$ and  $\tudown(\Gamma)$ is the last zero of $\beta$ before $\ttdown(\Gamma)$. Therefore, by applying Williams' path decomposition (\cite{RY}, Theorem VI.3.11) one recovers  the first part of $(iii)$, which in particular implies 
that
 \begin{equation}  \int_{\tudown(\Gamma)}^{\ttdown(\Gamma)} e^{-2 (B_{\tudown(\Gamma)}- B_t)} \d t \, \law\, \int_0^{T_\Gamma(R_3)} e^{-2 R_3(t)} \d t\, , \label{eq:R_3} \end{equation}
which is our first step toward \eqref{eq:int1}. Now, if 
 we denote by $L_{R_3}(\infty, x), x \ge 0, $ the local times of $R_3$ at position $x$ and time $\infty$, then by  \cite{legall-yor} we have that  $L_{R_3}(\infty, \cdot)$ is distributed as $Z_2(\cdot)$, i.e. the square of a two-dimensional Bessel process starting from $0$. Consequently $ \int_{\tudown(\Gamma)}^{\ttdown(\Gamma)} e^{-2 (B_{\tudown(\Gamma)}- B_t)} \d t$ is stochastically dominated by $\int_0^\Gamma e^{-2s} Z_2(s) \d s$.  
Now we use that    for every $\lambda>0$ \cite[p.~436]{PY82} 
\begin{equation} 
\e \exp\left( - \lambda \int_0^\infty e^{-2s} Z_2(s) \d s \right)\,=\,  \frac{1}{I_0(\sqrt{2\lambda})}, 
\end{equation}
 which after an analytic  continuation argument yields that for every  $a \in (0,{j_0^2}/2)$  
 \begin{equation} 
  \e \exp\left( a \int_0^\infty e^{-2s} Z_2(s) \d s \right)\,=\,  \frac{1}{J_0(\sqrt{2a})}\, .
  \end{equation}
  Since $ \int_{\tudown(\Gamma)}^{\ttdown(\Gamma)} e^{-2 (B_{\tudown(\Gamma)}- B_t)} \d t$ is stochastically dominated by $\int_0^\infty e^{-2s} Z_2(s) \d s$, we get the first upper bound in \eqref{eq:int1}. 
 
 For the second one, we use $\int_0^\Gamma e^{-2s} Z_2(s) \d s\le \int_0^\Gamma   Z_2(s) \d s$ and the Cameron--Martin formula  \cite[Corollary XI.1.8]{RY}: for every $\lambda>0$ 
 \begin{equation}
 \e \exp\left( - \lambda \int_0^\Gamma   Z_2(s) \d s \right)\,=\,  \frac{1}{\cosh(\Gamma\sqrt{2\lambda})}\, , 
 \end{equation}
 and 
 we conclude again by the analytic  continuation. 
\end{proof}

 % The constant $3$ in \eqref{BM1} can be replaced by any constant larger than $2$. 

%\medskip
%
%\begin{proof} (i) and (ii) follow from Neveu and Pitman \cite{cf:NP89}. In particular   $\ttdown(1)$ has the same law as $\inf\{t>0: |B_t|=1\}$ and from this we obtain \eqref{eq:f1-1}. The Brownian fluctuation result (iii) can be found in \cite{cf:CR81}. 
%\end{proof}

  \medskip

 Now, we collect some technical results. Recall Lemma \ref{th:invtimesimp} for the definitions of $\widehat \ell_0$ and $\widehat r_0$. 
 \medskip
 
 \begin{lemma}\label{th:excthlemma}

(i)   The largest drop before $\tudown(\Gamma)$,  i.e. $B^\downarrow_{\tudown(\Gamma)}$,  is uniformly distributed in $[0,\Gamma]$. 

(ii) The random variables  $\widehat \ell_0$ and $\widehat r_0$ are uniformly distributed in $[-\Gamma, \Gamma]$. 

(iii) There exists a constant $c_2>0$ such that  
\begin{equation}
\label{eq:expon1}
 \sup_{\Gamma\ge 1} \e \exp\left( c_2 \int_0^{\ttdown(\gG)}  e^{-2(B_{\tudown(\gG)}-B_t)} \d t \right) <\infty. 
\end{equation}

%(iv) 
%For any  $K>0$, we have for all large $\Gamma$,
%\begin{equation}\bbP\left[\int_0^{\tudown(\gG)}  e^{-2(\Gamma-(B_{\tudown(\gG)}-B_t))} \d t \ge \frac1{(\log \gG)^K} \, \Bigg| \, \ttdown(\gG)<\ttup(\gG)\right]=O\left(\frac{\loglog\gG}{\gG}\right). \label{eq:expon2} \end{equation}
%
%NEW VERSION OF (iv) SHOULD BE:

(iv) 
There exists a constant $c_3>0$ such that
\begin{equation}
\sup_{\Gamma\ge 1} \e\left[\exp\left(c_3 \int_0^{\tudown(\gG)}  e^{-2(\Gamma-(B_{\tudown(\gG)}-B_t))} \d t\right) \, \Bigg| \, \ttdown(\gG)<\ttup(\gG)\right]\,<\, \infty. \label{eq:expon2} 
\end{equation}

%Pick $\gG_0>0$. Then, for $\gG\ge\gG_0$, conditionally on $\{\ttdown(\gG)<\ttup(\gG)\}$, the variable 
%\begin{equation*}
%\int_0^{\tudown(\gG)}  e^{-2(\Gamma-(B_{\tudown(\gG)}-B_t))} dt
%\end{equation*}
%has finite exponential moments, uniformly bounded in $\gG$.

%(iv)
%For $\gG > 0$, the variable 
%\begin{equation*}
%\int_{\tudown(\gG)}^{\ttdown(\gG)}  e^{-2(B_{\tudown(\gG)}-B_t)} dt
%\end{equation*}
%has finite exponential moments, uniformly bounded in $\gG$.

\end{lemma}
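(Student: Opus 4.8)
The plan is to treat the four parts in increasing order of difficulty, the real work being in $(iii)$, from which $(iv)$ will follow.

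\emph{Parts $(i)$ and $(ii)$.} By L\'evy's identity the process $\beta_t:=\sup_{[0,t]}B-B_t$ is a reflected Brownian motion, $\sup_{[0,\cdot]}B$ is its local time at $0$, $B^\downarrow_t=\sup_{s\le t}\beta_s$, $\ttdown(\Gamma)=T_\beta(\Gamma)$ and $\tudown(\Gamma)$ is the last zero of $\beta$ before $T_\beta(\Gamma)$. Hence $B^\downarrow_{\tudown(\Gamma)}$ is the running maximum of $\beta$ at the left endpoint of the first $\beta$-excursion reaching height $\Gamma$, i.e.\ the supremum of the maxima of the excursions arriving, in the Poisson point process of local time $\times$ excursions, before an independent $\mathrm{Exp}\bigl(n(\max>\Gamma)\bigr)$ local time. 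Using $n(\max(e)>x)=c/x$, a one-line computation with the exponential formula gives $\mathbb P(B^\downarrow_{\tudown(\Gamma)}\le y)=y/\Gamma$ on $[0,\Gamma]$, which is $(i)$ (this is also a known Neveu--Pitman identity). For $(ii)$ I would argue from the simplified model directly: by construction $\ls^{(a)}_\cdot$ is the Skorokhod reflection of $2B_\cdot$ on $[-\Gamma,\Gamma]$, a reversible diffusion whose invariant law is uniform on $[-\Gamma,\Gamma]$; since by Lemma~\ref{th:invtimesimp} $\ls_0=\lim_{a\to-\infty}\ls^{(a)}_0$ is an almost sure limit of values of this reflected motion run over a time interval of length $|a|\to\infty$, it is distributed as the invariant law, hence uniform on $[-\Gamma,\Gamma]$; the same holds for $\rs_0$, and $\ls_0,\rs_0$ are independent because measurable with respect to $(B_t)_{t\le0}$ and $(B_t)_{t\ge0}$ respectively.

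\emph{Part $(iii)$.} Split $\int_0^{\ttdown(\Gamma)}=\int_0^{\tudown(\Gamma)}+\int_{\tudown(\Gamma)}^{\ttdown(\Gamma)}$. The second piece is exactly the quantity controlled, uniformly in $\Gamma$, by Lemma~\ref{f:1}$(iii)$. For the first piece write $B_{\tudown(\Gamma)}-B_t=(\ell^*-L_t)+\beta_t$ with $L_t:=\sup_{[0,t]}B$ and $\ell^*:=L_{\tudown(\Gamma)}$; on a $\beta$-excursion with left endpoint at local time $\ell$ the running maximum is frozen at $\ell$, so
\[
\int_0^{\tudown(\Gamma)}e^{-2(B_{\tudown(\Gamma)}-B_t)}\,dt=\sum_{e}e^{-2(\ell^*-\ell_e)}A(e),\qquad A(e):=\int_0^{\zeta(e)}e^{-2e(s)}\,ds,
\]
the sum running over the excursions $e$ of $\beta$ with $\max(e)<\Gamma$ occurring before the first one reaching $\Gamma$; that first excursion has left endpoint at the independent local time $\ell^*\sim\mathrm{Exp}\bigl(n(\max>\Gamma)\bigr)$ and its initial segment is a Bessel(3) run from $0$ to $\Gamma$. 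The decisive feature is the discount $e^{-2(\ell^*-\ell_e)}$, which makes the early excursions negligible even though $\ell^*$ may be of order $\Gamma$ (without it the bound would degrade like $\Gamma$). Conditioning on $\ell^*$ and applying the exponential formula,
\[
\mathbb E\Bigl[\exp\bigl(c_2\textstyle\sum_e e^{-2(\ell^*-\ell_e)}A(e)\bigr)\,\big|\,\ell^*\Bigr]=\exp\Bigl(\int_0^{\ell^*}\!\!\int\bigl(e^{c_2 e^{-2u}A(e)}-1\bigr)\,n(de)\mathbf 1_{\{\max e<\Gamma\}}\,du\Bigr)\le \exp\Bigl(\tfrac{c_2}2\int A(e)e^{c'A(e)}n(de)\Bigr),
\]
provided $c_2<c'$ and $\int A(e)e^{c'A(e)}n(de)<\infty$; combined with the independent Bessel(3) contribution of the first excursion reaching $\Gamma$ (Lemma~\ref{f:1}$(iii)$), this gives the desired uniform bound. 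The main obstacle, and the heart of the proof, is precisely the $\Gamma$-free estimate $\int(e^{c'A(e)}-1)\,n(de)<\infty$ for some $c'>0$: the naive bound $A(e)\le\zeta(e)$ is hopeless — a long low-lying excursion has $A(e)$ comparable to its length, and such excursions are only exponentially rare at a rate $\asymp\Gamma^{-2}$ — so one must use that the weight $e^{-2e(s)}$ concentrates $A(e)$ on the low levels of $e$, where local times are $O(1)$ regardless of its spatial extent. Concretely, via the Bismut--Williams description of the It\^o excursion measure ($e$ split at its maximum $m$, with $n(\max\in dm)\propto m^{-2}dm$, into two independent Bessel(3) pieces) one bounds $A(e)$, for each $m$, by a sum of two functionals of type $\int e^{-2(\cdot)}$ whose laws have exponential moments uniform in $m$ — using that $\int_0^\infty e^{-2R_3(s)}ds$ has $\mathbb E\exp(a\cdot)=1/J_0(\sqrt{2a})$ (the ingredient of Lemma~\ref{f:1}$(iii)$) and, for the descending piece, a Bessel comparison with the local times of a transient diffusion near $0$ — while for small $m$ one has $A(e)\le m^2\zeta_1$, taming the $m^{-2}dm$ singularity at the origin.

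\emph{Part $(iv)$.} Always $B_{\tudown(\Gamma)}=B_{\ttdown(\Gamma)}+\Gamma$ (since $\beta_{\ttdown(\Gamma)}=\Gamma$ and $\sup_{[0,\ttdown(\Gamma)]}B=B_{\tudown(\Gamma)}$), so $\Gamma-(B_{\tudown(\Gamma)}-B_t)=B_t-B_{\ttdown(\Gamma)}$; and on $\{\ttdown(\Gamma)<\ttup(\Gamma)\}$ one checks that $B_{\ttdown(\Gamma)}=\inf_{[0,\ttdown(\Gamma)]}B$ — otherwise a drop larger than $\Gamma$ would have occurred before $\ttdown(\Gamma)$ — so the integrand becomes $e^{-2(B_t-\inf_{[0,\ttdown(\Gamma)]}B)}$. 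Bounding the conditional expectation by twice the unconditional one and enlarging the domain to $[0,\ttdown(\Gamma)]$, it remains to estimate $\mathbb E[\exp(c_3\int_0^{\ttdown(\Gamma)}e^{-2(B_t-\inf_{[0,\ttdown(\Gamma)]}B)}dt)]$, which is handled exactly as in $(iii)$ with $\beta$ replaced by the reflected-from-below motion $\gamma_t:=B_t-\inf_{[0,t]}B$: on $\{\ttdown(\Gamma)<\ttup(\Gamma)\}$ the process $\gamma$ stays below $\Gamma$ on $[0,\ttdown(\Gamma)]$ and $\ttdown(\Gamma)$ is one of its zeros, so the same excursion decomposition and the same uniform excursion-functional estimate apply; the only extra care needed is to check that the conditioning on this probability-$\tfrac12$ event is compatible with the Poisson structure of $\gamma$'s excursions (working on the event, where every completed $\gamma$-excursion before $\ttdown(\Gamma)$ has maximum $<\Gamma$). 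Thus the genuinely hard point of the lemma is the uniform-in-$\Gamma$ exponential-moment estimate for the excursion functional $A(e)$ used in $(iii)$.
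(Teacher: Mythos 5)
Parts (i)--(iii) of your plan are sound. For (i) and (iii) you follow essentially the paper's own route: the same excursion decomposition of $\int_0^{\tudown(\gG)}$ with the discount $e^{-2(H-r)}$ in the local time of $\beta_t:=\sup_{s\le t}B_s-B_t$, the exponential formula conditionally on the independent exponential level $H$, Williams' description $n(\max\in \dd m)\propto m^{-2}\dd m$ with two Bessel(3) pieces, the $1/J_0$ Laplace transform for the uniform-in-$m$ bound and the $O(m^2)$ bound for small $m$; your reformulation (integrating the discount first and reducing to $\int A(e)e^{c'A(e)}n(\dd e)<\infty$) is the same computation in a different order. For (ii) you take a genuinely different and simpler route than the paper: instead of the explicit excursion computation of $\p(H\le s,\ \ttdown(\gG)<\ttup(\gG))$, you combine the a.s.\ stabilization $\ls_0^{\ssup{a}}=\ls_0$ for $a$ negative enough (Lemma~\ref{th:invtimesimp}) with convergence in law of the doubly reflected diffusion on $[-\gG,\gG]$ to its uniform invariant measure; this is correct, at the price of invoking ergodicity of the reflected process rather than producing the law by a direct calculation.

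The gap is in (iv). Your reduction is internally inconsistent: the rewriting $\gG-(B_{\tudown(\gG)}-B_t)=B_t-\inf_{[0,\ttdown(\gG)]}B$ uses $B_{\ttdown(\gG)}=\inf_{[0,\ttdown(\gG)]}B$, which holds only on $\{\ttdown(\gG)<\ttup(\gG)\}$ (and, incidentally, because otherwise a \emph{rise}, not a drop, of size $>\gG$ would have occurred earlier); you then drop the indicator and announce an \emph{unconditional} bound on $\e\exp\bigl(c_3\int_0^{\ttdown(\gG)}e^{-2(B_t-\inf_{[0,\ttdown(\gG)]}B)}\dd t\bigr)$, but the facts you immediately invoke to run the excursion argument — that $\gamma_t:=B_t-\inf_{s\le t}B_s$ stays below $\gG$ on $[0,\ttdown(\gG)]$ and that $\ttdown(\gG)$ is a zero of $\gamma$ — are exactly the event-dependent properties you have just discarded, and they fail off the event (after a large rise $\gamma_{\ttdown(\gG)}$ can be of order $\gG$). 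Moreover, even working on the event, what you dismiss as ``the only extra care needed'' is the actual difficulty: in the $\gamma$-picture neither $\ttdown(\gG)$, nor the anchor $-\inf_{[0,\ttdown(\gG)]}B$ of your discount, nor the event $\{\ttdown(\gG)<\ttup(\gG)\}$ is of the form ``first point of the excursion point process in a fixed region'' — the drop is measured from the running \emph{maximum}, a functional of the entire past excursion history of $\gamma$ — so there is no independence allowing the exponential formula to be applied ``exactly as in (iii)''. The paper resolves this by staying with the $\beta$-excursions: on the event every excursion before local time $H$ satisfies the pointwise cap $\max e'_r\le \gG-H+r$, so the integrand times the indicator is dominated by the exponential of the sum over the \emph{capped} excursions, with $H$ still independent of them; the exponential formula then produces a new estimate (not a repetition of (iii)), requiring a time reversal of the Bessel(3) piece to handle $e^{-2(\gG-h+r-\gamma(t))}$ and the bound $\int_0^\lambda \min(1,m^2)\,e^{-2(\lambda-m)}m^{-2}\dd m\le c\min(1,\lambda^{-2})$ integrated over $r$. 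Without such a mechanism converting the conditioning into a tractable constraint on the excursion process (or a proof of the unconditional bound you reduce to, which your sketch does not provide), part (iv) remains unproved.
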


%\begin{equation*}
%\int_{\tudown(\gG)}^{\ttdown(\gG)}  e^{-2(B_{\tudown(\gG)}-B_t)} dt
%\end{equation*}
%has finite exponential moments, uniformly bounded in $\gG$.
%

\begin{proof}
 The proof relies on the excursion theory. Let us review some facts and the context in which it applies. Denote by $(e_t, t>0)$  the excursion process  associated with {$\sup_{0\le s\le t} B_s - B_t, t\ge 0$}, the Brownian motion $B$ reflected at its maximum. 
 Specifically if  we define for $r>0$
\begin{equation}\label{def:Tr}  T_r:= \inf\{t>0: B_t >r\}\, ,  \end{equation}      
then the excursion process is defined as 
\begin{equation}\label{def:er} e_r(s):= r- B_{T_{r-}+s} \ \text { for } s \in [0, \zeta(e_r)]\, , 
\end{equation}
 if  $\zeta(e_r):= T_r- T_{r-}>0$ (and $e_r:=\partial$, i.e.  the cemetery point, if $T_r- T_{r-}=0$), {where $T_{r-}:=\lim_{s\uparrow t} T_s$}. 
 Let 
 \begin{equation} \label{def:H}    H\,:=\  \sup_{0\le t \le \ttdown(\Gamma)} B_t=B_{ \tudown(\gG)}\,. \end{equation}
Then 
\begin{equation}\label{eq:H}
H\, =\,  \inf\left\{r>0: \max e_r \ge  \Gamma\right\}\, ,
\end{equation}
where   $\max \gamma:= \sup_{s>0} \gamma(s)$ for every excursion $\gamma$. Denote by $n(d\gamma)$ the It\^{o} measure of the excursion process $(e_r)_{r>0}$. {Note that $n$ is equal to twice the standard It\^{o} measure restricted to positive Brownian excursions. By \cite{RY}, Proposition XII.3.6,   we have  $n(\{\gamma: \max\gamma  \ge  \Gamma\})= 1/\Gamma$,  it follows} that $H$ is exponentially distributed with mean $\Gamma$.

 Denote by $(e'_r)_{r>0}$   the restriction of $(e_r)_{r>0}$ to the set of excursions $\{\gamma: \max \gamma < \Gamma\}$. Then $e'$ is independent of $H$ and for $r< H$, $e'_r= e_r$.  The  It\^{o} measure of $e'$ is given by $n(\d \gamma, \max \gamma<\Gamma)$.

\subsubsection{Proof of $(i)$.}  Remark that $B^\downarrow_{\tudown(\Gamma)}= \sup\{ \max e_r: r < H\}$. It follows from the independence between  $(e'_t)$   and $H$ that for every $u \in (0, \Gamma)$ 
\begin{equation}
\p\left(B^\downarrow_{\tudown(\Gamma)}\le u\right)\,=\, \int_0^\infty e^{-\frac{h}\Gamma}   e^{- h (\frac1{u}- \frac1{\Gamma})} \frac{\d h}{\Gamma}\,=\,  \frac{u}{\Gamma}\,,
\end{equation}
so $(i)$ is proven.

\subsubsection{Proof of $(ii)$.} Plainly $\widehat  \ell_0$ and $\widehat  r_0$ have the same law. By \eqref{def-rs0}, 
  \begin{equation}\label{eq:r_0} 
   - \widehat r_0   \, =\, (-2\sup_{0\le s \le \ttdown(\Gamma)}B_s+\Gamma) \ind_{\{ \ttdown(\Gamma) < \ttup(\Gamma)\}}+ (-2\inf_{0\le s \le \ttup(\Gamma)}B_s-\Gamma) \ind_{\{\ttup(\Gamma)< \ttdown(\Gamma)\}} .
  \end{equation}
 Let us compute the distribution of $H := \sup_{0\le s \le \ttdown(\Gamma)}B_s$ conditioned to  $\{\ttdown(\Gamma) < \ttup(\Gamma)\}$. For every $s<\Gamma$ we have 
 \begin{equation}
 \left\{H \le s,\,  \ttdown(\Gamma) < \ttup(\Gamma)\right\}\,=\,  \left\{H \le  s \text{ and } H-r + \max e'_r < \Gamma \text{ for every } r\in [0, s) \right\}\,.
 \end{equation}
  It follows that 
  \begin{equation} 
  \begin{split}
  \p(H \le s, \, \ttdown(\Gamma) < \ttup(\Gamma))\,&=\,  \int_0^s e^{-\frac{h}{\Gamma}} \p\left(  \max  e'_r < \Gamma-h+r \text{ for every } r < h \right) \frac{\d h}\Gamma
 \\
 &=\, \int_0^s  e^{-\frac{h}{\Gamma}} e^{- \int_0^h  n(\{\gamma: \max\gamma \ge  \Gamma-h+r,\,  \max \gamma < \Gamma\}) \d r} \frac{\d h}\Gamma
 \\
 &=\, 
 \int_0^s e^{-\frac{h}{\Gamma}}  e^{- \int_0^h \left(\frac1{\Gamma-h+r}- \frac1\Gamma\right) \d r} \frac{\d h}\Gamma
 \\
 &=\,  \frac12- \frac12 \left(1- \frac{s}\Gamma\right)^2.
 \end{split}
 \end{equation}
  To complete the proof of $(ii)$, we observe that $(-2\inf_{0\le s \le \ttup(\Gamma)}B_s-\Gamma) \ind_{\{ \ttdown(\Gamma) > \ttup(\Gamma)\}}$ is distributed as $(2\sup_{0\le s \le \ttdown(\Gamma)}B_s-\Gamma) \ind_{\{ \ttdown(\Gamma) < \ttup(\Gamma)\}}$. Then for every $s \in [-1, 1]$, we have 
  \begin{equation}
  \begin{split}
\p\left(-\widehat r_0 \le s \, \Gamma\right) \,&=\,  \p\left( -2H+ \Gamma \le s \Gamma, \ttdown(\Gamma) < \ttup(\Gamma)\right) + 
\p\left(  2H - \Gamma \le s \Gamma, \ttdown(\Gamma) < \ttup(\Gamma)\right) 
\\
&=\,  \frac12 \left(1- \frac{1-s}2\right)^2+ \frac12-\left(1- \frac{1+s}2\right)^2 =\frac{1+s}2,
\end{split}
\end{equation}
and the proof of $(ii)$ is complete.

\subsubsection{Proof of $(iii)$.}  At first by  \eqref{eq:int1}, the proof of \eqref{eq:expon1} is reduced to show that for all small $c_2>0$,  \begin{equation}
\label{eq:expon3}
 \sup_{\Gamma\ge 1} \e \exp\left( c_2\,  \int_0^{\tudown(\gG)}  e^{-2\left(B_{\tudown(\gG)}-B_t\right)} \d t \right) \,<\,\infty\, . 
\end{equation}
 Observe that 
 \begin{equation}
 \int_0^{\tudown(\gG)}  e^{-2\left(B_{\tudown(\gG)}-B_t\right)} \d t\,=\, \sum_{0\le r < H} \int_0^{\zeta(e'_r)} e^{-2\left(H-r+e'_r(t)\right)} \d t\, .
 \end{equation}
 Since $\{(r, e'_r): e'_r\neq \partial\}$ forms a Poisson point process with intensity measure   $dr \otimes n(d \gamma, \max \gamma < \Gamma)$) and independent of $H$, we deduce from the exponential formula that 
 \begin{equation}
 \begin{split}   \e \exp&\Big( c_2  \sum_{0\le r < H} \int_0^{\zeta(e'_r)} e^{-2(H-r+e'_r(t))} \d t\Big) \\
&=\,
\int_0^\infty \frac{\d h}{\Gamma} e^{-\frac{h}{\Gamma}}\, \exp\left( \int_0^h \d r \int n(\d \gamma, \max\gamma< \Gamma) \left[ \left(e^{c_0 \int_0^{\zeta(\gamma)}  e^{-2(h-r+\gamma(t))} \d t}\right) -1 \right] \right) \\
&=\,
\int_0^\infty \frac{\d h}{\Gamma} e^{-\frac{h}{\Gamma}}\, \exp\left( \int_0^h \d r \int_0^\Gamma \frac{\d m}{m^2} \left[ \left(\e (e^{c_2 e^{-2 (h-r)} \int_0^{T_{R_3}(m)} e^{-2 R_3(t)} dt})\right)^2 -1\right]\right)\,,
\end{split}
\end{equation}
where in the last equality we use the Williams description of the It\^{o} measure (see \cite{RY}, Theorem XII.4.5) and $R_3$ denotes as before a three-dimensional Bessel process. By \eqref{eq:int1}, for  $0<c_2< \min({j_0^2}/3, {\pi^2}/9)$
\begin{equation}
\e \left[e^{c_2 e^{-2 (h-r)} \int_0^{T_{R_3}(m)} e^{-2 R_3(t)} \d t}\right] \,\le\, 1 + c_4\,  e^{-2 (h-r)} \min\left(1, m^2\right)\,,
\end{equation}
where $c_4=O(c_2)$. It follows that 
\begin{equation}
\int_0^h  \d r \int_0^\Gamma \frac{\d m}{m^2} \left[ \left(\e \left(e^{c_0 e^{-2 (h-r)} \int_0^{T_{R_3}(m)} e^{-2 R_3(t)} \d t}\right)\right)^2 -1\right] \,\le\, c_5 \int_0^h   e^{-2 (h-r)}  \d r\,  \le\, c_5\,,
\end{equation}
for a suitable choice of the positive constant $c_5$. This yields that 
\begin{equation}
\e \exp\left( c_2  \sum_{0\le r < H} \int_0^{\zeta(e'_r)} e^{-2(H-r+e'_r(t))} \d t\right) \, \le \, e^{c_5}\, ,
\end{equation} 
proving \eqref{eq:expon3}, hence $(iii)$ is established.

\subsubsection{Proof of $(iv)$.} The proof of \eqref{eq:expon2} bears some similarities with that of \eqref{eq:expon3}. 
%Indeed, by (i), we know that $B^\uparrow_{\ttup(\Gamma)}$ is uniformly distributed in $[0, \Gamma]$. Let $$\Theta:= \sup_{0\le t \le \tudown(\gG)}(B_{\tudown(\gG)}-B_t).$$
%Conditioning on $\{\ttdown(\gG)<\ttup(\gG)\}$, $\Theta \le B^\uparrow_{\ttup(\Gamma)}$, hence  \begin{equation}  \p\Big( \Theta > \Gamma- K \loglog \Gamma \, \big|\, \ttdown(\gG)<\ttup(\gG)\Big)\le \frac{K \loglog \Gamma}{\Gamma}.    \end{equation}
%
%\noindent Write $\Gamma_1:= \Gamma- K \loglog \Gamma$. On $\{\Theta\le \Gamma_1\}$, we have $$\int_0^{\tudown(\gG)}  e^{-2(\Gamma-(B_{\tudown(\gG)}-B_t))} \d t  \le  (\log \Gamma)^{-2K} \, \int_0^{\tudown(\gG)}  e^{-2(\Gamma_1-(B_{\tudown(\gG)}-B_t))} \d t ,$$
%
%\noindent then \eqref{eq:expon2} will follow if we can show that there exists some positive constants $c_3, c_4>0$ such that for all large $\Gamma$, \begin{equation} \label{eq:expon4} \e \Big(e^{c_3 \int_0^{\tudown(\gG)}  e^{-2(\Gamma_1-(B_{\tudown(\gG)}-B_t))} \d t } \, \ind_{ \{\Theta\le \Gamma_1\}}\Big) \le c_4.  \end{equation}
%
 Like in the proof of $(iii)$, we have 
 \begin{equation}
 \int_0^{\tudown(\gG)}  e^{-2\left(\Gamma-\left(B_{\tudown(\gG)}-B_t\right)\right)} \d t \,=\,\sum_{0\le r < H} \int_0^{\zeta(e'_r)} e^{-2\left(\Gamma-H +r -e'_r(t)\right)} \d t\, ,
 \end{equation}
 where we recall that $H$ is defined in \eqref{def:H}.  The condition $\{\ttdown(\gG)<\ttup(\gG)\}$ yields that   $\max e'_r \le \Gamma - H +r$ for every $r \in (0, H)$ (in particular $H \le \Gamma$). It follows that for every $c>0$ 
 \begin{multline} 
\e \left[e^{c \,  \int_0^{\tudown(\gG)}  e^{-2(\Gamma-(B_{\tudown(\gG)}-B_t))} \d t } \, \ind_{\{\ttdown(\gG)<\ttup(\gG)\}}\right]  \label{eq:expon5} 
\\
\le\,
\int_0^{\Gamma} \frac{\d h}{\Gamma} e^{-\frac{h}{\Gamma}} \, \exp\left(\int_0^h \d r \int n(\d \gamma) \ind_{\{\max \gamma \le  \Gamma - h +r\}} \left[ e^{c \int_0^{\zeta(\gamma)} e^{-2 (\Gamma-h+r-\gamma(t))} dt} -1\right]\right) \,, 
\end{multline}
where we have omitted the restriction $\max\gamma< \Gamma$ in the It\^{o} measure $n$ because it is automatically satisfied when $\max \gamma \le  \Gamma - h +r< \Gamma$. Again using Williams' description of $n$, we see that the integral with respect to $n(d \gamma)$ in the right-hand-side of \eqref{eq:expon5} is equal to 
\begin{equation} 
\int_0^{\Gamma - h +r} \frac{\d m}{m^2} \left[ \left(\e e^{c \, e^{-2 (\Gamma-h+r-m)} \int_0^{T_{R_3}(m)} e^{-2 (m-R_3(t))} \d t}\right)^2 - 1\right]\, .
\end{equation}
By time-reversal, $m- R_3(T_{R_3}(m)-t)$, for $ 0\le t \le T_{R_3}(m)$, has the same law as $R_3(t)$, for $ 0\le t \le T_{R_3}(m)$. Therefore  it follows as in the proof of $(ii)$ that for   $c_6>0$ sufficiently small  there exists 
 $c_7>0$ such that for every $\Gamma, h, r, m$, as long as  $\Gamma-h+r-m>0$, we have 
 \begin{equation}
  \left(\e \, e^{c_6\,  e^{-2 (\Gamma-h+r-m)} \int_0^{T_{R_3}(m)} e^{-2 (m-R_3(t))} \d t}\right)^2 - 1 \,\le\, c_7\,  \min \left(1, m^2\right) \, e^{-2 (\Gamma_1-h+r-m)}\, .
  \end{equation} 
 Then for every $h \le \Gamma$ 
 \begin{equation}
 \begin{split}  
  \int_0^h \d r \int n(\d \gamma) \ind_{\{\max \gamma \le  \Gamma - h +r\}} & \left[ e^{c_3 \int_0^{\zeta(\gamma)} e^{-2 (\Gamma-h+r-\gamma(t))} \d t} -1\right]
\\
&\le\,
 c_7\, \int_0^h \d r \int_0^{\Gamma - h +r}   \min (1, m^2) \, e^{-2 (\Gamma-h+r-m)}   \frac{\d m}{m^2}
 \\
 &\le\,
 c_7\, c_8\,    \int_0^h \min\left(1, \left(\Gamma - h +r\right)^{-2}\right)  \d r 
 \\
 &\le\, c_9\,, 
 \end{split}
 \end{equation}
 for a suitable choice of the  positive constant $c_9$ independent of $\Gamma$ and $h$. In the second inequality we have used the existence of  $c_8>0$ such that  $\int_0^\lambda   \min (1, m^2) \, e^{-2 (\lambda-m)} {\d m}/{m^2}  \le   c_8 \, \min(1, \lambda^{-2})$ for every $\lambda>0$.  Going back to \eqref{eq:expon5}, we see that the expectation term in the left-hand-side  is bounded above by $c_9$. This implies $(iv)$ and  completes the proof of Lemma \ref{th:excthlemma}. 
 \end{proof}

\section{On the $\gG$-extrema of the bilateral Brownian motion}
\label{sec:alt}

In this Appendix, we define the (bi-infinite) sequence of $\gG$-extrema of a bilateral Brownian motion $(B_t)_{t\in\R}$ and then we discuss its links with the algorithmic approach we presented in Section~\ref{sec:Gextrema} to define $s_\cdot^\ssup{F}$, i.e., Fisher's trajectory for the one-sided Brownian motion $(B_t)_{t\ge 0}$. 

\subsection{A global approach to the  $\gG$-extrema of the bilateral Brownian motion}
\label{sec:alt-1}

%\begin{definition}
%We define the notion of $\gG$-extremum of a bilateral Brownian motion $(B_t)_{t\in\R}$. Time $\tu\in\R$ is said to be
%\begin{itemize}
%\item a $\gG$-maximum of $B_\cdot$ if there exist real numbers $a$ and $b$ satisfying $a < \tu < b$ and such that $B_\tu=\sup_{[a,b]}B_\cdot$, $B_a< B_\tu-\gG$ and $B_b< B_\tu-\gG$. 
%\item a $\gG$-minimum of $B_\cdot$ if there exist real numbers $a$ and $b$ satisfying $a < \tu < b$ and such that $B_\tu=\inf_{[a,b]}B_\cdot$, $B_a> B_\tu+\gG$ and $B_b>B_\tu+\gG$.
%\end{itemize}
%Couple $(a, b)$ is said to \emph{testify} that $\tu$ is a $\gG$-maximum (respectively, a $\gG$-minimum). 

%We recall the classical notion of local extremum. Time $\tu$ is said to be
%\begin{itemize}
%\item a local maximum of $B_\cdot$ if there exists $a$ and $b$ such that $a<\tu<b$ and $B_\tu=\sup_{[a, b]} B_\cdot$.
%\item a local minimum of $B_\cdot$ if there exists $a$ and $b$ such that $a<\tu<b$ and $B_\tu=\inf_{[a, b]} B_\cdot$.
%\end{itemize} 

%\end{definition}

\begin{definition}
Let $B_\cdot=(B_t)_{t\in\r}$ be an element of the Wiener space and let $\tu\in \R$. We say that $\tu$ is a local maximum of $B_\cdot$ if there exist real numbers $a$ and $b$ satisfying $a < \tu < b$ and such that $B_\tu=\sup_{[a,b]}B_\cdot$. If we further have $B_a< B_\tu-\gG$ and $B_b< B_\tu-\gG$, then $\tu$ is said to be $\gG$-maximum of $B_\cdot$, and we say that this is testified by couple $(a, b)$.

The notions of local minimum and $\gG$-minimum are defined accordingly: $\tu$ is a local minimum (respectively, a $\gG$-minimum) of $B_\cdot$ if and only if $\tu$ is a local maximum (respectively, a $\gG$-maximum) of $-B_\cdot$.
%, replacing the supremum by an infimum, and the inequalities  $B_a< B_\tu-\gG$ and $B_b< B_\tu-\gG$ by  $B_a> B_\tu+\gG$ and $B_b> B_\tu+\gG$.
\end{definition}

Now, we describe the structure of the sets of $\gG$-maxima and $\gG$-minima of $B_\cdot$.

\begin{lemma}\label{th:AppBlembilatext}
We consider a Brownian trajectory that satisifies the following properties (note that almost every trajectory of $B_\cdot$ does):
\begin{itemize}
\item Trajectory $B_\cdot$ has no distinct local maxima, neither distinct local minima, sharing the same height;
\item Trajectory $B_\cdot$ is recurrent. % {(we will simply say ``recurrent'')}, i.e,  each position in $\R$ is visited at arbitray large positive \emph{and} negative times.
\end{itemize}
Then, the $\gG$-maxima and $\gG$-minima of $B_\cdot$ form two discrete and unbounded (in both directions) subsets of $\R$ which are intertwined, in the sense that they are disjoint and in between two consecutive $\gG$-maxima there is  exactly one $\gG$-minimum (and vice-versa). Furthermore, if $\tu$ is a $\gG$-minimum of $B_\cdot$, then  $(\tu_1, \tu_2)$, where $\tu_1$ and $\tu_2$ are the two adjacent $\gG$-maxima of $B_\cdot$, with $\tu_1<\tu_2$, testifies that $\tu$ is a $\gG$-minimum, and vice-versa.
%\begin{itemize}
%\item for all $r, s$ such that $\tu_1\le r\le s\le \tu$, we have $B_s-B_r\le \gG$;
%\item for all $r, s$ such that $\tu\le r\le s\le \tu_2$, we have $B_r-B_s\le \gG$.
%\end{itemize}
%in between two consecutive $\gG$-maxima is reached exactly at the $\gG$-minimum standing between them, and vice-versa.
Finally, considering $\tu<\tu'$ two consecutive $\gG$-extrema,
\begin{itemize}
\item if $\tu$ is a $\gG$-maximum (so $\tu'$ is a $\gG$-minimum), then $\sup_{\tu\le r\le s\le \tu'} B_s-B_r\le \gG$;
\item if $\tu$ is a $\gG$-minimum (so $\tu'$ is a $\gG$-maximum), then $\sup_{\tu\le r\le s\le \tu'} B_r-B_s\le \gG$.
\end{itemize}
\end{lemma}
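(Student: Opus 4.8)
The plan is to establish Lemma~\ref{th:AppBlembilatext} in stages, moving from the structure of a single $\gG$-extremum to the global intertwined picture. First I would fix a Brownian trajectory $B_\cdot$ that has no two distinct local maxima (nor two distinct local minima) at the same height and that is recurrent; both properties hold almost surely, the first by a standard argument on the countability of local extrema and the absolute continuity of Brownian increments, the second being classical. The key quantitative input is that, by recurrence, for every $t\in\R$ the trajectory eventually rises by more than $\gG$ and eventually drops by more than $\gG$, both to the left and to the right of $t$; this is what forces $\gG$-extrema to exist in every half-line and hence forces the two sets to be unbounded in both directions.

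The core combinatorial step is the \emph{intertwining}. I would argue that between two consecutive $\gG$-maxima $\tu_1<\tu_2$ there is exactly one $\gG$-minimum. For existence: consider the point $\tv$ realizing $\inf_{[\tu_1,\tu_2]}B_\cdot$ (unique by the no-ties hypothesis). One shows $B_{\tv}<B_{\tu_1}-\gG$ and $B_{\tv}<B_{\tu_2}-\gG$ — otherwise, say if $B_{\tv}\ge B_{\tu_1}-\gG$, one can use the defining couple $(a,b)$ of $\tu_1$ together with the fact that $B$ does not exceed $B_{\tu_1}$ on $[\tu_1,\tu_2]$ (since any intermediate point higher than $B_{\tu_1}-\gG$ on both sides would be a $\gG$-maximum strictly between $\tu_1$ and $\tu_2$, contradicting consecutiveness) to produce a contradiction. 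Hence $\tv$ is a $\gG$-minimum, testified exactly by $(\tu_1,\tu_2)$. For uniqueness: if there were two $\gG$-minima in $(\tu_1,\tu_2)$, between them the trajectory would have to rise by more than $\gG$ and then drop by more than $\gG$, producing an intermediate $\gG$-maximum strictly between $\tu_1$ and $\tu_2$; again a contradiction. The symmetric statement, a unique $\gG$-maximum between consecutive $\gG$-minima, follows by applying the argument to $-B_\cdot$. Discreteness of each set is then immediate: two $\gG$-maxima cannot be arbitrarily close, because a $\gG$-minimum lies strictly between any two of them, which forces a total oscillation of at least $2\gG$.

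For the final two bullet points, take $\tu<\tu'$ consecutive $\gG$-extrema, say $\tu$ a $\gG$-maximum and $\tu'$ a $\gG$-minimum (the other case is the reflection $B_\cdot\mapsto -B_\cdot$). I want $\sup_{\tu\le r\le s\le\tu'}(B_s-B_r)\le\gG$. Suppose not: then there exist $\tu\le r<s\le\tu'$ with $B_s-B_r>\gG$. Choosing $r$ to realize the infimum of $B$ on $[\tu,\tu']$ below some subsequent high point, and $s$ a later record of this rise, one locates a point of $[\tu,\tu']$ that is a local maximum exceeding both its left value (which is $\le B_r$, hence lower by more than $\gG$) and, using that $B_{\tu'}=\inf_{[\tu,\tu']}B_\cdot$ is more than $\gG$ below it, its value at $\tu'$; this exhibits a $\gG$-maximum strictly inside $(\tu,\tu')$, contradicting the fact that $\tu'$ is the next $\gG$-extremum after $\tu$. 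The clean way to organize this is: on $[\tu,\tu']$ the running maximum of $B_s-B_r$ (over $r\le s$ in the interval) never exceeds $\gG$, since the first time it would exceed $\gG$ would certify a $\gG$-maximum before $\tu'$, contradicting the structure already established.

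The main obstacle I anticipate is purely bookkeeping rather than conceptual: carefully ruling out the ``intermediate $\gG$-extremum'' in each of the above arguments requires paying attention to the testifying couples $(a,b)$ and to whether the relevant infimum/supremum is attained strictly inside the interval, and the no-ties hypothesis must be invoked at just the right places to guarantee uniqueness of the extremal point. Once the statement ``between two consecutive $\gG$-maxima lies a unique $\gG$-minimum, testified precisely by those two maxima'' is nailed down rigorously, the unboundedness (from recurrence), the discreteness, and the two oscillation bullets all follow by short contradiction arguments of the same flavor, with the reflection $B_\cdot\mapsto-B_\cdot$ halving the work.
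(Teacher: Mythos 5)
Your overall architecture matches the paper's (argmin between consecutive $\gG$-maxima is the $\gG$-minimum testified by that pair; reflection for the symmetric statement; discreteness from continuity; unboundedness from recurrence; the oscillation bullets by exhibiting an intermediate $\gG$-extremum), but the central existence step is not correctly argued. You deduce $B_{\tv}<B_{\tu_1}-\gG$ from ``the fact that $B$ does not exceed $B_{\tu_1}$ on $[\tu_1,\tu_2]$'', and this is simply not a fact: whenever $B_{\tu_2}>B_{\tu_1}$ (one of the two orderings, which do occur, since by the no-ties assumption $B_{\tu_1}\neq B_{\tu_2}$) the path exceeds $B_{\tu_1}$ at $\tu_2$ itself and typically on a whole stretch before it; e.g.\ with $\gG=1$, $B_{\tu_1}=0$, a dip to $-1.5$ followed by a climb to $B_{\tu_2}=0.5$ gives consecutive $\gG$-maxima with the path above $0$ near $\tu_2$. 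The parenthetical you offer as justification is also wrong: a point of $(\tu_1,\tu_2)$ lying above $B_{\tu_1}$ is a $\gG$-maximum only if the path drops by more than $\gG$ below \emph{its} height on both sides of an interval where it is the supremum, which is not implied. Finally, even granting the claim, you never exhibit the contradiction with $B_\tv\ge B_{\tu_1}-\gG$. The short correct route (essentially the paper's) uses the testimony couple of the \emph{lower} of the two maxima: say $B_{\tu_2}<B_{\tu_1}$ and $(a_2,b_2)$ testifies for $\tu_2$; if $a_2\le \tu_1$ then $\tu_1\in[a_2,b_2]$, so $B_{\tu_1}\le \sup_{[a_2,b_2]}B=B_{\tu_2}$, a contradiction; hence $a_2\in(\tu_1,\tu_2)$ and $B_\tv\le B_{a_2}<B_{\tu_2}-\gG<B_{\tu_1}-\gG$, so $(\tu_1,\tu_2)$ testifies that $\tv$ is a $\gG$-minimum. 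Note this argument uses neither consecutiveness nor any control of the path above $B_{\tu_1}$.

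A smaller soft spot of the same kind: for uniqueness you assert that between two $\gG$-minima the path ``would have to rise by more than $\gG$ and then drop by more than $\gG$''; that is not immediate, since the testimony points of a $\gG$-minimum need not lie between the two minima. The clean route, which is the paper's, is to apply the existence statement (valid for \emph{any} two $\gG$-maxima, hence for $-B_\cdot$ for any two $\gG$-minima) to get a $\gG$-maximum between the two minima, contradicting consecutiveness of $\tu_1,\tu_2$. Your treatment of discreteness, of unboundedness via recurrence, and of the final two bullets (from a rise of more than $\gG$ inside $[\tu,\tu']$ and $B_{\tu'}=\inf_{[\tu,\tu']}B$, the maximizer over $[r,\tu']$ is a $\gG$-maximum testified by $(r,\tu')$ strictly inside $(\tu,\tu')$) is in line with the paper and becomes sound once the existence step is repaired as above.
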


\begin{proof}
%First, let us observe that a time $\tu$ cannot be a $\gG$-maximum \emph{and} a $\gG$-minimum since there cannot be a time that is a local minimum \emph{and} a local maximum of $B_\cdot$ (otherwise $B$ would be constant on a open interval and there would be infinitely many local extrema sharing the same height).
Let us consider $\tu_1<\tu_2$ two consecutive $\gG$-maxima of $B_\cdot$ and associated $(a_1, b_1)$ and $(a_2, b_2)$ coming from the definition of a $\gG$-maximum. We consider $\tu$ the (unique by assumption) time at which $B_\cdot$ reaches its minimum over $[\tu_1, \tu_2]$. We are going to show that $\tu$ is a $\gG$-minimum.
Observe that $\tu_1$ and $\tu_2$ are local maxima of $B_\cdot$, hence $B_{\tu_1}\neq B_{\tu_2}$ by assumption. Without loss of generality, we consider the case that $B_{\tu_1} < B_{\tu_2}$. Then $B_{\tu_2}=\sup_{[a_2,b_2]}B_\cdot$ implies that $a_2>\tu_1$ and therefore we have $ B_{\tu}=\inf_{[\tu_1, \tu_2]} B_\cdot\le B_{a_2}<B_{\tu_2}-\gG$. Since $B_{\tu_1} < B_{\tu_2}$, we also derive that $B_{\tu} < B_{\tu_1}-\gG$. Of course, this implies that $\tu_1<\tu<\tu_2$ and thus couple $(\tu_1, \tu_2)$ testifies that $\tu$ is a $\gG$-minimum.

Hence, we have shown that in between two consecutive $\gG$-maxima there is  \emph{at least} one $\gG$-minimum. Using the same argument (or replacing $B_\cdot$ by $-B_\cdot$) one can show that in between two consecutive $\gG$-minima there is  \emph{at least} one $\gG$-maximum, and this is enough to conclude that in between two consecutive $\gG$-maxima there is  \emph{exactly} one $\gG$-minimum (and vice-versa). By construction, the two $\gG$-maxima surrounding a $\gG$-minimum testify that it is a $\gG$-minimum (and vice-versa), and the two sets are disjoint.

The fact that the set of $\gG$-maxima is discrete is an immediate consequence of the continuity of the trajectory of $B_\cdot$. The unboundedness is a consequence of recurrence, as we explain now. Indeed, if $a<b<c$ are such that $B_b>B_a+\gG$ and $B_c<B_b-\gG$ then considering $\tu$ the time at which $B_\cdot$ reaches its maximum over $[a,c]$, we see that couple $(a, c)$ testifies that $\tu$ is a $\gG$-maximum. By recurrence, there exist such triples $(a,b,c)$ arbitrarily far from the origin. 
Replacing $B_\cdot$ by $-B_\cdot$, the set of $\gG$-minima of $B_\cdot$ is also discrete and unbounded.

Finally, consider $\tu<\tu'$ two consecutive $\gG$-extrema, with $\tu$ a $\gG$-maximum and $\tu'$ a $\gG$-minimum. Assume that there exist $r$ and $s$ such that $\tu\le r\le s\le \tu'$ and $B_s-B_r > \gG$, then we see that $(\tu, s)$ testifies that $r$ is a $\gG$-minimum, hence $r=s=\tu'$, which is absurd. Replace $B_\cdot$ by $-B_\cdot$ to handle the converse case and conclude the proof.
\end{proof}

Based on Lemma \ref{th:AppBlembilatext}, we can index the $\gG$-extrema of the bilateral Brownian motion to obtain a bi-infinite increasing sequence $(\tu^\ssup{\r}_n(\gG))_{n\in\Z}$, together with alternating labels $\arrow^\ssup{\r}_n\in \{-1, +1\}$: 
if $\arrow^\ssup{\r}_n=+1$ (respectively $-1$) then $(B_t)_{t \in \bbR}$ has a $\gG$-maximum (respectively, a $\gG$-minimum) at $\tu^\ssup{\r}_n(\gG)$. A convention is needed to fix the indexation: we choose $\tu^\ssup{\r}_1(\gG)$ to be the smallest non-negative extremum. We then define the Fisher trajectory $\sFR$ for the bilateral Brownian motion by setting, for almost every trajectory of $B_\cdot$,
\begin{equation}
\label{eq:sFRalt}
 \sFR_t\, =\,  \sum_{n=-\infty}^\infty \arrow^\ssup{\r}_{n} \ind_{ (\tu^\ssup{\r}_{n-1}(\gG), \tu^\ssup{\r}_{n}(\gG))}(t)% \, \in \, \{-1,+1\}
 \,,
 \end{equation}
for every $t\in \bbR $. 

%Based on Lemma \ref{th:AppBlembilatext}, we define $\sFR$ for the bilateral Brownian motion. If $t$ is a $\gG$-extremum we set $\sFR_t=0$, while if $t\in(\tu, \tu')$, where $\tu<\tu'$ are two consecutive $\gG$-extrema, we set $\sFR_t=+1$ if $\tu'$ is a $\gG$-maximum and $\sFR_t=-1$ if $\tu'$ is a $\gG$-minimum.  
%\begin{rem}\label{rem:appBujr}
%\end{rem}

\begin{rem}
The reader may refer to \cite{cf:Brox}  for a slightly different presentation. 
If $u_0$ is a $\gG$-minimum, then $(u_{-1}, u_0, u_1)$ is, in the terminology of  \cite{cf:Brox}, the \emph{valley} of depth $\gG$ containing $0$, and if $u_0$ is a $\gG$-maximum, then $(u_0, u_1, u_2)$ is the \emph{valley} of depth $\gG$ containing $0$. Denoting, as in \cite{cf:Brox}, by $m$ the location of the bottom of the \emph{valley} containing 0, we have $\sFR_0=-\sign(m)$, with the convention $\sign(0)=0$.
\end{rem}

\subsection{Links with Section \ref{sec:Gextrema} and a local approach to $\gG$-extrema}
\label{sec:alt-2}

In this section, we aim at understanding how one can concretly determine the full sequence of $\gG$-extrema of the bilateral Brownian motion. We will work with a trajectory of $(B_t)_{t\in\R}$ that meets the assumptions of Lemma \ref{th:AppBlembilatext}. 
We turn to the procedure presented in Section \ref{sec:Gextrema}, which allowed us to define there trajectory $\sF$ for the one-sided Brownian motion $(B_t)_{t\ge 0}$. Recall that we built the sequence of stopping times $(\ttt_n(\gG))_{n\ge 1}$ (finite by recurrence) and the sequence of $\gG$-extrema with arrows $(\tu_n(\gG), \arrow_n)_{n\ge 1}$ by exploring the Brownian motion progressively from 0 to infinity. We call this procedure the \emph{forward Neveu-Pitman procedure}.

Recall the definition of $\ttdown(\gG)$, $\ttup(\gG)$, $\tudown(\gG)$, $\tuup(\gG)$, $\tu_n(\Gamma)$ and $\arrow_n(\gG), n\ge 1$ in Section~\ref{sec:Gextrema}, just before \eqref{eq:sFisher}.
They depend of course on the Brownian trajectory and the complete notation is, as in 
\eqref{eq:completenotation1} and \eqref{eq:completenotation2},  $\ttdown(\gG)=\ttdown_B(\gG)$ and so on.
This cumbersome notation will be useful here because 
we are going to consider the analogous random variables for the time-shifted Brownian motion $B^\ssup{a}=(B_{a+t}-B_a)_{t\in\r}$, for $a\in\r$, and for the time reversed Brownian motion $\Brev=(B_{-t})_{t \in\R}$. Note that the assumptions of Lemma \ref{th:AppBlembilatext} are invariant by time-shifting and time reversal. 

\subsubsection{Forward Neveu-Pitman procedure}

We apply the forward Neveu-Pitman procedure to $(B_t)_{t\ge 0}$, the restriction of $B_\cdot$ to $[0, \infty)$. We observe that, for each $n\ge 2$, time $\tu_n(\gG)$ is a $\gG$-extremum of the bilateral Brownian motion: indeed, if $\arrow_n=+1$ (respectively, $\arrow_n=-1$), then $(\tu_{n-1}(\gG), \ttt_n(\gG))$ testifies that $\tu_n(\gG)$ is a $\gG$-maximum (respectively, a $\gG$-minimum). Conversely, consider $\tu$ a $\gG$-extremum of $B_\cdot$ belonging to $[0, \infty)$, say $\tu=\tu_n^\ssup{\r}(\gG), n\ge 1$, we are going to show that there exists $k\ge 1$ such that $\tu=\tu_{k}(\gG)$. We proceed by recurrence. To initialize the recurrence, assume that $\tu=\tu_1^\ssup{\r}(\gG)$, the smallest positive $\gG$-extremum of the bilateral $B_\cdot$. Without loss of generality, further assume that $\tu$ is a $\gG$-maximum. We use Lemma \ref{th:AppBlembilatext} bearing in mind that $\tu_0^\ssup{\r}(\gG)<0$. We have $\sup_{0\le r\le s \le \tu} B_r-B_s \le \gG$, hence $\ttdown(\gG)\ge \tu$, and we also have $B_\tu=\sup_{[0, \tu_2^\ssup{\r}(\gG)]} B_\cdot$ and $B_{\tu_2^\ssup{\r}(\gG)}<B_{\tu}-\gG$, hence we derive that $\ttdown(\gG)=\inf\{t>\tu : B_t<B_\tu - \gG\}$ together with $\tu=\tudown(\gG)$, which is either $\tu_1(\gG)$ or $\tu_2(\gG)$.
To conclude the recurrence, assume that $\tu_n^\ssup{\r}(\gG)= \tu_{k}(\gG)$ and that, say, $\arrow_n=+1$, then using the same kind of arguments, we can show that $\ttt_{k+1}(\gG)=\inf\{t>\tu_{n+1}^\ssup{\r}(\gG): B_t>B_{\tu_{n+1}^\ssup{\r}(\gG)} + \gG\}$ and $\tu_{n+1}^\ssup{\r}(\gG)= \tu_{k+1}(\gG)$.

So the forward Neveu-Pitman procedure identifies correctly the $\gG$-extrema  that are  inside $[0, \infty)$, except maybe for $\tu_1(\gG)$ which may or may not be a \emph{true} $\gG$-extremum (i.e., a $\gG$-extremum of the bilateral Brownian motion): as we will discuss at the end of this section,  this depends also on the Brownian trajectory on $(-\infty, 0]$. In particular, $\sF_\cdot$ and $\sFR_\cdot$ coincide on $(\tu_1(\gG), \infty)$.

We generalize this construction by noticing that we can very well start the procedure at any time $a$ instead of time 0 (notably for $a<0$). Indeed, let us apply the forward Neveu-Pitman procedure to $B^\ssup{a}_\cdot$, and shift the resulting random variables by $a$. This yields a sequence of stopping times $(a+\ttt_{B^\ssup{a}, n})_{n\ge 1}$ and a sequence of $\gG$-extrema $(a+\tu_{B^\ssup{a}, n}(\gG), \arrow_{B^\ssup{a},n})_{n\ge 1}$. These are exactly the $\gG$-extrema of the bilateral Brownian motion that are in $[a, \infty)$, except maybe for the first one, $a+\tu_{B^\ssup{a},1}$, which remains uncertain. Thus, taking $a$ arbitrarily large and negative, we can recover the full sequence $(\tu^\ssup{\r}_n(\gG), \arrow^\ssup{\r}_n(\gG))_{n\in\Z}$ of $\gG$-extrema of the bilateral Brownian motion. 
Note that stopping times $(a+\ttt_{B^\ssup{a}, n})_{n\ge 2}$ remain consistent (up to indexation) as $a\to -\infty$. This allows us to define the bi-infinite increasing sequence of times $(\ttt_n^\ssup{\r}(\gG))_{n\in\Z}$, where we fix the indexation by requiring that $\tu^\ssup{\r}_n(\gG) < \ttt^\ssup{\r}_n(\gG) \le \tu^\ssup{\r}_{n+1}(\gG)$. These times are explicitly given by:
\begin{equation}\label{eq:deftjR}
\ttt^\ssup{\r}_n(\gG)=\inf\{\tu^\ssup{\r}_n<t\le \tu^\ssup{\r}_{n+1} \colon B_t=B_{\tu^\ssup{\r}_n(\gG)}-\arrow_n\gG\}.
\end{equation}

\subsubsection{Backward Neveu-Pitman procedure}

We describe yet another construction. Observe that the $\gG$-extrema of $B_\cdot$ are just minus the $\gG$-extrema of $\Brev_\cdot$. Hence applying the forward Neveu-Pitman procedure to $\Brev_\cdot$ allows us to recover the $\gG$-extrema of the bilateral Brownian motion  that are  in $(-\infty, 0]$, just with a doubt concerning the closest to 0.
%Recall the definitions of $\tup_\xi$, $\ttdown_\xi$, $\tuup_\xi$, $\tudown_\xi$ and of the arising sequences of random times from Section~\ref{sec:Gextrema}. We apply these definitions to $\Brev_\cdot:= B_{-\cdot}$ restricted to $[0, \infty)$ and set
%\begin{equation}
%\tsup\, :=\, -\ttup_{\Brev}, \ \ \ \tsdown\, :=\, -\ttdown_{\Brev}, \ \ \
%v^\uparrow\, :=\, -\tuup_{\Brev},
%\ \ \
%v^\downarrow\, :=\, -\tudown_{\Brev},
%\end{equation}
%and from this we define, in particular, the decreasing sequence of $\gG$-extrema $(v_j)_{j\ge 1}$, with $v_j=-\tu_{j,\Brev}(\gG)$.

Recall \eqref{eq:reversed-rv}. We further define 
$\ts_n(\gG)  :=\, -\ttt_{\Brev,n}(\Gamma)$,
$\tv_n(\gG)  :=\, -\tu_{\Brev,n}(\Gamma)$ and
$\arrow'_n\, :=\, \arrow_{\Brev, n}$ for $n=1,2, \ldots$.
The times of $\gG$-extrema of $B_\cdot$  that are in $(-\infty, 0]$ are the times $\tv_n(\gG), n\ge 1$, except maybe for $\tv_1(\gG)$, they are decreasingly indexed. More precisely $\tv_n(\gG)$ is a $\gG$-maximum when $\arrow'_n=+1$, a $\gG$-minimum when $\arrow'_n=-1$.

Again, instead of applying this procedure to $B$ restricted to $(-\infty, 0]$, we can conduct this procedure on $(-\infty, b]$ for any $b$ arbitrarily large and positive, and thus recover all $\gG$-extrema in $\R$. 
We also recover a bi-infinite increasing sequence of times $(\ts^\ssup{\r}_n(\gG))_{n\in\Z}$ (we reversed indexation), given by:
\begin{equation}\label{eq:defsjR}
\ts^\ssup{\r}_n(\gG)=\sup\{\tu^\ssup{\r}_n \le t<\tu^\ssup{\r}_{n+1} \colon B_t=B_{\tu^\ssup{\r}_{n+1}(\gG)}+\arrow_n\gG\}.
\end{equation}

\subsubsection{Matching the forward and backward Neveu-Pitman procedures around 0}

Finally, we focus on one last construction, which puts together the two one-sided approaches. Consider the forward Neveu-Pitman procedure on $[0, \infty)$ and the backward Neveu-Pitman procedure on $(-\infty,0]$, they allow us to locate the $\gG$-extrema in $\r$, with a doubt concerning $\tv_1(\gG)$ and $\tu_1(\gG)$. 
We determine wether $\tv_1(\gG)$ and $\tu_1(\gG)$ are true $\gG$-extrema by distinguishing cases. 
If $\arrow_1=\arrow_1'$, one of the two is a true $\gG$-extremum, which one depends on the sign of $B_{\tu_1(\Gamma)}- B_{\tv_1(\gG)}$. 
If $\arrow_1\ne \arrow_1'$, then they are both true $\gG$-extrema or both not true $\gG$-extrema, depending on the gap $\vert B_{\tu_1(\Gamma)} -B_{\tv_1 (\gG)}\vert$ compared to $\gG$.
Restraining ourselves to $\arrow_1=+1$, each situation is illustrated in Figure \ref{fig:4fig}, where we precised the value of the $\gG$-extrema closest to 0. For conciseness, we do not elaborate, and do not give the proofs, which are elementary. We simply point out that $\sFR_0$ is given by
\begin{equation}
\label{eq:sFR}
\sFR_0\, :=
\begin{cases} 
\arrow_1 & \text{ if } \arrow_1= \arrow'_1  \text{ and } \sign\left( B_{\tu_1(\Gamma)}- B_{\tv_1(\gG)}\right)=a_1 \, ,
 \\
 -\arrow_1 & \text{ if } \arrow_1= \arrow'_1  \text{ and } \sign\left( B_{\tu_1(\Gamma)}- B_{\tv_1(\gG)}\right)=-a_1\, ,
 \\
\arrow_1 & \text{ if }  \arrow_1\neq  \arrow'_1 \text{ and } \left\vert B_{\tu_1(\Gamma)} -B_{\tv_1 (\gG)}\right\vert > \gG \,,
\\
-\arrow_1 & \text{ if }  \arrow_1\neq  \arrow'_1 \text{ and } \left\vert B_{\tu_1(\Gamma)} -B_{\tv_1 (\gG)}\right\vert \le \gG \, ,
\\ 0  & \text{otherwise} 
\, .
\end{cases}
\end{equation}
\emph{Otherwise} consists in the case in which $\arrow_1= \arrow'_1$ and $B_{\tu_1(\Gamma)}- B_{\tv_1(\gG)}=0$, but using the assumption on the trajectory of $B$, this second condition reduces to $\tu_1(\Gamma)=\tv_1(\gG)=0$, which implies that 0 is a $\gG$-extremum of the bilateral Brownian motion (a maximum if $\arrow_1=+1$, a minimum otherwise). This case almost surely does not happen.

\begin{rem}
This last procedure can be used not only around 0, but also around any time $t$, simply by applying it to $B^\ssup{t}_\cdot$ instead of $B_\cdot$. Formula \eqref{eq:sFR} holds \emph{mutatis mutandis}. The \emph{otherwise} case then corresponds to the fact that $t$ is a $\gG$-extremum. If $t$ is a deterministic time, this case can be discarded almost surely, but if $t$ is random, of course we cannot exclude it a priori.

The reader can develop a full comprehension of which case in \eqref{eq:sFR} will occur depending on $t$, by considering the position of $t\in[\tu_j^\ssup{\r}(\gG), \tu_{j+1}^\ssup{\r}(\gG)]$ with respect to $\ttt_j^\ssup{\r}(\gG)$ and $\ts_j^\ssup{\r}(\gG)$ introduced in \eqref{eq:deftjR} and \eqref{eq:defsjR}. Since both $\ttt_j^\ssup{\r}(\gG)\le \ts_j^\ssup{\r}(\gG)$ and $\ts_j^\ssup{\r}(\gG)\le \ttt_j^\ssup{\r}(\gG)$ are possible, there are a variety of possible situations.
\end{rem}

\begin{figure}[ht!]   
 \includegraphics[width=.9\linewidth]{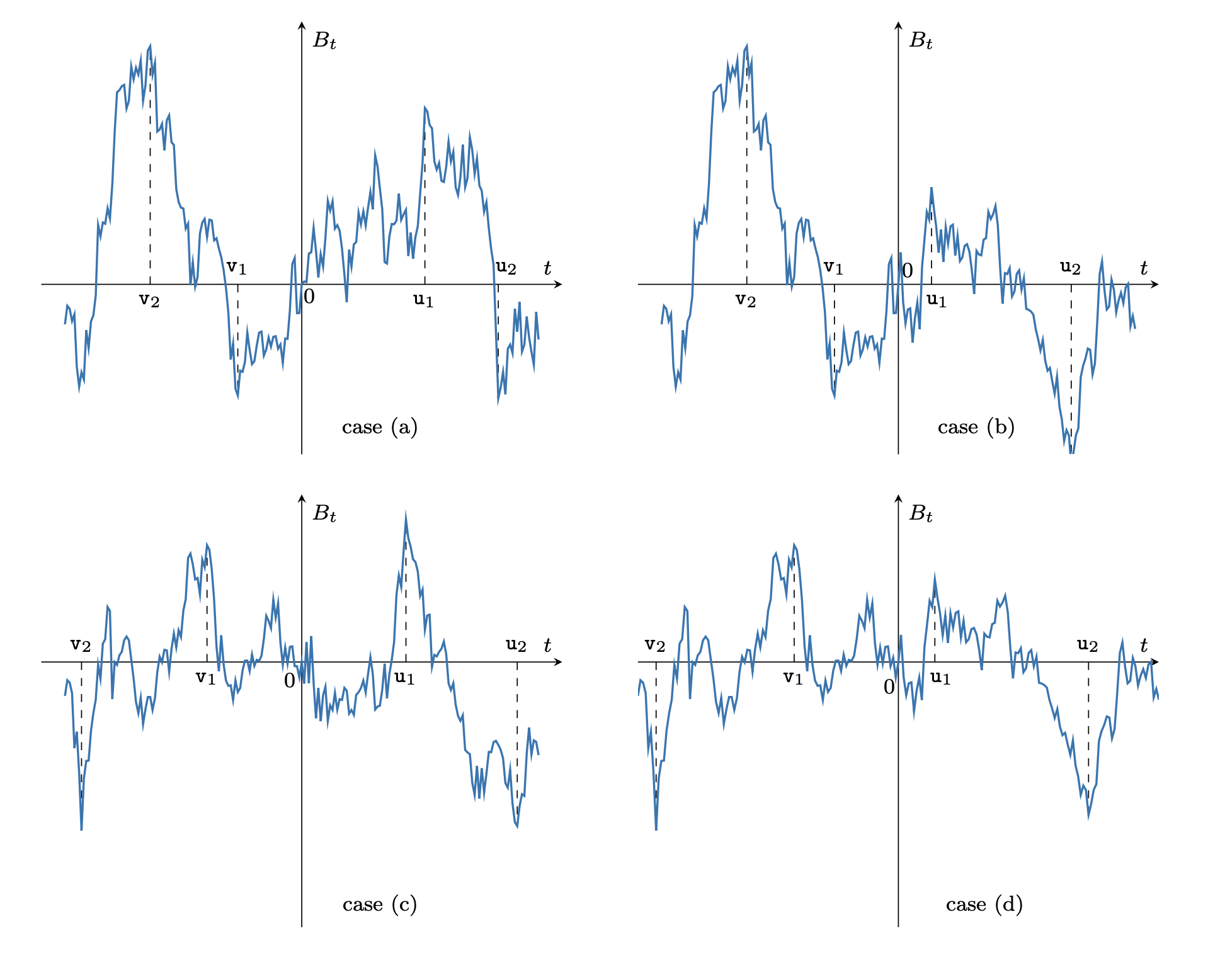}
\caption{%\leftskip=1truecm \rightskip=1truecm
\scriptsize Four illustrations for $\sFR_0$ in \eqref{eq:sFR}. The dependence on $\Gamma$ in ${\tt v}_1, {\tt v}_2, {\tt u}_1, {\tt u}_2$ is omitted.
\\
Case (a):  $1=\arrow_1\neq \arrow'_1=-1$ and $|B_{\tu_1(\Gamma)}- B_{\tv_1(\Gamma)}| > \Gamma.$ Then $\sFR_0= \arrow_1= 1$, $u^\ssup{\r}_0(\gG)=\tv_1(\Gamma)$ and $\tu^\ssup{\r}_1(\gG)=\tu_1(\Gamma)$.
\\
Case (b): $1=\arrow_1\neq \arrow'_1=-1$ and $|B_{\tu_1(\Gamma)}- B_{{\tt v}_1(\Gamma)}| < \Gamma.$ Then $\sFR_0=-\arrow_1=-1$, $u^\ssup{\r}_0(\gG)=\tv_2(\Gamma)$ and $u^\ssup{\r}_1(\gG)={\tt u}_2(\Gamma)$.
\\
Case (c): $\arrow_1= \arrow'_1=1$ and $\sign\left( B_{\tu_1(\Gamma)}- B_{{\tt v}_1(\gG)}\right) = \arrow_1.$ Then $\sFR_0=  \arrow_1= 1$, $u^\ssup{\r}_0(\gG)={\tt v}_2(\Gamma)$ and $u^\ssup{\r}_1(\gG)={\tt u}_1(\Gamma)$.
\\
Case (d): $\arrow_1= \arrow'_1=1$ and $\sign\left( B_{\tu_1(\Gamma)}- B_{{\tt v}_1(\gG)}\right) =- \arrow_1.$ Then $\sFR_0= -\arrow_1= -1$, $u^\ssup{\r}_0(\gG)={\tt v}_1(\Gamma)$ and $u^\ssup{\r}_1(\gG)={\tt u}_2(\Gamma)$.}
\label{fig:4fig}
\end{figure}

\section{From RFIC to Continuum  RFIC}
\label{sec:C0}

We introduce a small parameter $\gD>0$ and we consider  for $\ell>0$, $\ga \in \bbR$ and $\gG>0$
\begin{equation}
\label{eq:scaling}
N=\frac \ell \gD\,, \  \  \gd= \sqrt{\gD}\, , \  \ h= \ga \gD \,  \text{ and }  \, J=\frac \gG 2 -\frac 12 \log \gD\, ,
\end{equation}
where for $N$ we omit the integer part symbol. 
We introduce the partition function of the RFIC (recall \eqref{eq:RFIC})
\begin{equation}
\label{eq:Z-RFIC}
\mathtt{Z}_{N, \go, J, h, \gd}\, :=\, \sum_{\gs\in \{-1,1\}^N}\exp\left( J\sum_{j=1}^{N} \gs _j \gs_{j-1} + \sum_{j=1}^N (\gd\go_j +h) \gs_j \right)
\end{equation}
where $\gs_0=1$ and $(\go_j)$ is an IID sequence. We assume that
$\bbE[ \exp(t \go_1)] < \infty$ for $t$ in a neighborhood of $0$. Moreover $\bbE[\go_1]=0$ and var$(\go_1)=1$.
\medskip

\begin{proposition}
\label{th:PoissonBM}
For every choice of $\ell$, $\ga$ and $\gG$ and 
with $N$, $\gd$, $h$ and $J$ as in  in \eqref{eq:scaling} we have the convergence in law 
\begin{equation}
\label{eq:PoissonBM}
\lim_{\gD \searrow 0}
\frac{\mathtt{Z}_{N, \go, J, h, \gd}}{\mathtt{Z}_{N, J, 0}}\, =\, 
\bE_{\gG}\left[ 
\exp \left( \int_0^\ell \mathbf{s}_t \left( \dd B_t +\ga\dd t \right) \right)
\right]\, .
\end{equation}
\end{proposition}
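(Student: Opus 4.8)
The plan is to prove Proposition~\ref{th:PoissonBM} by the classical strategy of identifying the scaling limit of the discrete partition function ratio with the continuum one, exploiting the explicit structure of the pure (i.e., $\go\equiv 0$) RFIC as a two-state Markov chain. First I would rewrite the ratio $\mathtt{Z}_{N,\go,J,h,\gd}/\mathtt{Z}_{N,J,0}$ as an expectation with respect to the Gibbs measure of the pure chain: with $\gs_0=1$ fixed, $\mathtt{Z}_{N,J,0}^{-1}\sum_{\gs}\exp(J\sum \gs_j\gs_{j-1})\,(\cdots)$ is exactly $\bE^{\mathrm{pure}}_{N,J}[\,\cdot\,]$ where under $\bE^{\mathrm{pure}}_{N,J}$ the spins form the Markov chain described in the Introduction, with flip probability $1/(1+e^{2J})$. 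Thus
\begin{equation}
\label{eq:plan1}
\frac{\mathtt{Z}_{N, \go, J, h, \gd}}{\mathtt{Z}_{N, J, 0}}\, =\, \bE^{\mathrm{pure}}_{N,J}\left[ \exp\left( \sum_{j=1}^N (\gd \go_j + h)\gs_j \right)\right]\, .
\end{equation}
With the scaling \eqref{eq:scaling}, the flip probability is $1/(1+e^{2J})=1/(1+\gD^{-1}e^{\gG})=\gep\gD(1+o(1))$ with $\gep=e^{-\gG}$, so over $N=\ell/\gD$ sites the number of flips is asymptotically Poisson of intensity $\gep$ per unit of rescaled length: the piecewise-constant interpolation $\gs_{\lceil t/\gD\rceil}$, $t\in[0,\ell]$, converges in law (e.g.\ in the Skorokhod $J_1$ topology on $D[0,\ell]$) to the free spin process $(\mathbf{s}_t)_{t\in[0,\ell]}$ under $\bP_\gG$. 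This is a standard weak-convergence statement for the jump chain of a two-state process and I would cite or reprove it via convergence of the flip point process to a Poisson process.

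Next I would handle the exponent. Write $\sum_{j=1}^N \gd\go_j\gs_j = \gd\sum_{j}\go_j\gs_j$ and $\sum_{j=1}^N h\gs_j = \ga\gD\sum_j\gs_j = \ga\int_0^\ell \gs_{\lceil t/\gD\rceil}\dd t$. The drift term converges directly to $\ga\int_0^\ell \mathbf{s}_t\dd t$ by the functional convergence above. For the random term, since $\gs_\cdot$ under $\bE^{\mathrm{pure}}$ is independent of $(\go_j)$, I would condition on the spin configuration: $\bE_\go[\exp(\gd\sum_j \go_j \gs_j)]=\prod_{j=1}^N \Lambda(\gd\gs_j)=\prod_j\Lambda(\gd)$ where $\Lambda(t)=\bbE[e^{t\go_1}]$, using $\gs_j^2=1$; hence $\bE_\go[\exp(\gd\sum\go_j\gs_j)]=\Lambda(\gd)^N = \exp(N\log\Lambda(\sqrt\gD))$. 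Since $\log\Lambda(t)=\tfrac12 t^2 + O(t^3)$ as $t\to 0$ (recall $\bbE[\go_1]=0$, $\mathrm{var}(\go_1)=1$), we get $N\log\Lambda(\sqrt\gD)=\tfrac{\ell}{\gD}(\tfrac\gD2+O(\gD^{3/2}))=\tfrac\ell2+o(1)$. More importantly, the \emph{process} $t\mapsto \gd\sum_{j\le t/\gD}\go_j$, run along a fixed spin trajectory, converges to a Brownian motion $B_\cdot$; and jointly with the convergence of $\gs_\cdot$, the pair converges so that $\gd\sum_j\go_j\gs_j \Rightarrow \int_0^\ell \mathbf{s}_t\dd B_t$ in the sense of a stochastic integral against the limiting Brownian motion, using that $\mathbf{s}_\cdot$ is adapted and of finite variation (so the integral is a Riemann--Stieltjes/It\^o integral with no Itô correction beyond the deterministic $\ell/2$ term absorbed by the normalization choice). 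I would make this rigorous by working on the level of the joint law of $(\gs_\cdot^{(\gD)}, W^{(\gD)}_\cdot)$ with $W^{(\gD)}_t=\gd\sum_{j\le t/\gD}\go_j$, invoking Donsker's theorem for $W^{(\gD)}$ together with independence from $\gs^{(\gD)}$, establishing joint tightness, and identifying the limit as $(\mathbf{s}_\cdot, B_\cdot)$ with $\mathbf{s}_\cdot$ and $B_\cdot$ independent.

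Finally I would pass the convergence through the exponential. The functional $(\gs_\cdot, W_\cdot)\mapsto \exp(\int_0^\ell \gs_t\dd W_t + \ga\int_0^\ell \gs_t\dd t)$ is continuous on the relevant subset of $D[0,\ell]^2$ (since $\gs_\cdot$ has uniformly bounded variation — exactly $2N_\ell^{(\gD)}$, which is tight — integration by parts turns $\int \gs\dd W$ into $\gs_\ell W_\ell - \int W\dd\gs$, a continuous functional), so the continuous mapping theorem gives convergence in law of the integrand; then I would upgrade to convergence of expectations via uniform integrability, which follows from a uniform (in $\gD$) bound on $\bE[\exp(p\sum_j(\gd\go_j+h)\gs_j)]$ for some $p>1$ — obtainable by the same conditioning computation using the exponential-moment assumption on $\go_1$ and the exponential tail of $N_\ell$ under $\bP_\gG$. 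Collecting the pieces and noting the normalization in \eqref{eq:Z0} versus \eqref{eq:calZ0} reconciles the deterministic $\ell/2$ (or $\gep\ell$, depending on bookkeeping) factors, giving the right-hand side of \eqref{eq:PoissonBM}. The main obstacle I anticipate is the joint convergence of $(\gs_\cdot^{(\gD)}, W_\cdot^{(\gD)})$ together with the continuity of the stochastic-integral functional and the uniform integrability needed to go from convergence in law to convergence of expectations; the purely combinatorial parts (Markov chain identification, Poissonization of flips, Laplace expansion of $\Lambda$) are routine.
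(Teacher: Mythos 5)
Your reduction and your intermediate ingredients essentially reproduce the paper's route (rewrite the ratio as a Gibbs average under the pure two‑state Markov chain, Poisson limit of the flip process, Donsker for the disorder sums, integration by parts to make the exponent a continuous functional of the pair of paths, and an exponential‑moment bound), but there is a genuine gap in the endgame. The proposition is a \emph{quenched} statement: the left‑hand side of \eqref{eq:PoissonBM} is the random variable $\go\mapsto \bE^{\mathrm{pure}}_{N,J}\bigl[\exp\bigl(\sum_j(\gd\go_j+h)\gs_j\bigr)\bigr]$, in which the average over the spins has already been taken, and what must be proved is convergence \emph{in law of this disorder‑measurable quantity} to $Z^{\mathtt{f}}_{\gG,B_\cdot,\ell}$. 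Your final step — joint weak convergence of $(\gs^{\ssup{\gD}}_\cdot,W^{\ssup{\gD}}_\cdot)$, continuous mapping for the exponential functional, then ``upgrade to convergence of expectations via uniform integrability'' — only yields convergence of the annealed quantity $\bbE_\go\,\bE^{\mathrm{pure}}[\exp(\cdots)]$ to $\bbE_{B}\,\bE_\gG[\exp(\cdots)]$: the uniform integrability you invoke (conditioning on the spins and using the moment generating function of $\go_1$) is a bound under the \emph{joint} law, and weak convergence of the pair does not by itself give weak convergence of the conditional expectation given one coordinate. Nowhere in the proposal is the expectation taken over the spins only, at fixed disorder, nor is it explained how this conditional expectation passes to the limit; as written, the argument proves a different (weaker) statement than \eqref{eq:PoissonBM}.

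The paper closes exactly this hole in two moves that you would need to add. First, a truncation: it suffices to prove the statement with $\exp(\cdot)$ replaced by $\exp(\cdot)\wedge L$, because a uniform‑in‑$\gD$ bound on $\bbE\,\bE_{\gG,\gD}[\exp(2H)]$ (precisely the kind of moment computation you sketch) controls the truncation error of the spin average in $\bbL^1(\bbP)$ uniformly in $\gD$, so weak limits of the truncated spin averages transfer to the untruncated ones. Second, for the bounded truncated functional one writes the exponent as a functional $H_\ell$ of the flip‑time sequence and of the path $B^{\ssup{\gD}}_\cdot$ (your integration‑by‑parts observation plays this role), checks its continuity at almost every limit point, and then uses the Skorohod representation to couple the disorder paths so that $B^{\ssup{\gD}}_\cdot\to B_\cdot$ almost surely, combined with bounded convergence over the spin randomness, whose law converges independently of the disorder; this gives almost sure convergence, along the coupling, of the spin‑averaged truncated exponential as a function of the disorder, hence the claimed convergence in law. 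A minor additional point: there is no $\ell/2$ (or $\gep\ell$) normalization to ``reconcile'' in \eqref{eq:PoissonBM} — the ratio on the left is exactly the pure‑chain Gibbs average and the right‑hand side is exactly its limit, with no correction term — so that remark signals a confusion, though a harmless one; likewise the annealed computation $\Lambda(\gd)^N\to e^{\ell/2}$ is not needed except, with an exponent larger than $1$, as input for the moment bound above.
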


\medskip 

Note that the right-hand side of \eqref{eq:PoissonBM} coincides with 
  $Z^{\mathtt{f}}_{\gG, B_\cdot, \ell}$ (see \eqref{eq:Z0}). The choice of free boundary condition is just for conciseness.  

\medskip

\begin{proof}
We aim at showing that 
\begin{equation}
\label{eq:rewrite}
\lim_{\gD \searrow 0} \bE_{\gG , \gD}\left[
\exp \left(  \sum_{j=0}^{(\ell/\gD)-1} \gs_j \left(\sqrt{\gD}\go_j + \ga \gD\right)\right) \right]
\, =\, 
\bE_{\gG}\left[ 
\exp \left( \int_0^\ell s_t \left( \dd B_t +\ga\dd t \right) \right)
\right]\, ,
\end{equation}
where with respect to \eqref{eq:PoissonBM} there is the minor change of summing from $0$ up to $N-1$ instead of from $1$ to $N= \ell/ \gD$
and there is, above all, the introduction of the probability measure  $\bE_{\gG , \gD}$ under which 
$( \gs_j)$ is a Markov chain with two states ($-1$ and $+1$) and transition probabilities $Q(+1,+1)= Q(-1,-1)= 1/(1+ \epsilon)$ and 
$Q(+1,-1)=Q(-1,1)= \epsilon/(1+ \epsilon)$, $\epsilon= \exp(-2 J)= \gD \exp(-\gG)$. This is a  consequence of the fact that $\mathtt{Z}_{N, J,0}$ is equal to 
$(1,0)Q^N(1,1)^\mathrm{t}$ times $(1+ \epsilon)^{N}$. 
We recall that $\gs_0=1$ and we remark that, in matrix notation, $(1,0)^\mathrm{t}$ corresponds to spin up and $(0,1)^\mathrm{t}$ to spin down.

Before going into the convergence issues let us remark that it suffices to prove   \eqref{eq:rewrite} with $\exp(\cdot)$ replaced by
$\exp(\cdot ) \wedge L$, for every $L >0$. This is because if we call $Z_{\gD}$ the  partition function in the left-hand side of 
\eqref{eq:rewrite} and $\cZ$ the one in the right hand side, we have that $Z_\gD=  \bE_{\gG , \gD}[\exp(H)]$ with a suitable choice of $H$.
Hence 
\begin{equation}
0 \,\le\,  \bE_{\gG , \gD}[\exp(H)]- \bE_{\gG , \gD}[\exp(H)\wedge L]\,\le\,   \bE_{\gG , \gD}[\exp(H); \exp(H)>L]\, .
\end{equation}
In the same way $\cZ=  \bE_{\gG }[\exp(\cH)]$ and 
\begin{equation}
0 \,\le\,  \bE_{\gG }[\exp(\cH)]- \bE_{\gG }[\exp(\cH)\wedge L]\,\le\,   \bE_{\gG }[\exp(\cH); \exp(\cH)>L]\, .
\end{equation}
Therefore for this preliminary step  it suffices to show that 
\begin{equation}
\label{eq:tbs93}
\lim_{L \to \infty}
\sup_{\gD \in (0, \gD_0)}  \bbE\bE_{\gG , \gD}[\exp(H); \exp(H)>L]= 0\, ,
\end{equation} 
and that $\lim_L \bbE  \bE_{\gG }[\exp(\cH); \exp(\cH)>L]=0$. Equation \eqref{eq:tbs93} follows if we show that 
$\sup_{\gD \in (0, \gD_0)}\bE_{\gG , \gD}[\exp(2H)]< \infty$ and the second statement  follows from $ \bE_{\gG }[\exp(2\cH)] < \infty$. 
For the first bound 
we apply   the Fubini-Tonelli Theorem and we see that 
\begin{equation}
\bbE\bE_{\gG , \gD}[\exp(2H)]=  \bE_{\gG , \gD}\bbE\left[
\exp \left(  2\sum_{j=0}^{(\ell/\gD)-1} \gs_j \left(\sqrt{\gD}\go_j + \ga \gD\right)\right) \right]\overset{\gD\in (0, \gD_0)}\le   
\exp\left((2 \ga+4c) \ell \right), 
\end{equation}
where we have used that the hypotheses on $\go_1$ yield  the existence of $c>0$ and $t_0>0$ such that $\bbE[\exp( t \go_1)] \le 
\exp(c t^2)$ for $\vert t \vert \le t_0$. In the same way we obtain that $ \bbE\bE_{\gG }[\exp(2\cH)]$ is bounded by $ \exp(2 (\ga +4) \ell)$. 
This concludes the argument that shows that it suffices to  show 
\eqref{eq:rewrite} for $\exp(\cdot)$ replaced by
$\exp(\cdot ) \wedge L$.

In order to obtain \eqref{eq:rewrite}, let us remark that, given the initial condition,  the spin sequence $(\gs_j)$  is fully encoded by the sequence of the times of spin switch 
which is given by the renewal sequence $( T^\gD_n)_{n=0,1, \ldots}$ where we set fo convenience $T^\gD_0:=0$
and  $(T^\gD_n- T^\gD_{n-1})_{n=1,2, \ldots}$ is an IID sequence Geom$( \gD \exp(- \gG)/ (1+ \gD \exp(-\gG)))$. 
Note that $( \gD T^\gD_n)_{n=0,1, \ldots}$ converges in law for $\gD \searrow 0$  to $(\tau_n)_{n=0,1, \ldots}$, with $\tau_0=0$ and $( \tau_n-\tau_{n-1})_{n=1,2, \ldots}$ is 
an IID sequence of exponential random variables with mean $\exp(\gG)$, where the topology in the space of non decreasing sequences $\cS_0$ (with zero as first entry, that is for $t_\cdot \in \cS_0$ we have $t_0=0$)  is the 
one induced by the 
standard product topology on the sequence of increments: a distance for this topology is $d(t_\cdot, t'_\cdot)= \sum_{n=1}^\infty 2^{-n} (\vert (t_n-t_{n-1} -(t'_n -t'_{n-1}) \vert \wedge 1)$.  In connection with this sequence we introduce also the random variable $N_{\gD, \ell}: N_\ell(  \gD T^\gD_\cdot)$ and 
$N_\ell(  t_\cdot)=  \sup\{n=0,1, \ldots: \, t_n < \ell\}$. 
  where $N_\ell(\cdot)$ is continuos
at every  sequence $t_\cdot \in \cS_0$ such that $\vert\{j:\, t_j< x\} \vert < \infty$ for every $x>0$ and  such that $t_j \neq \ell$ for every $j$,
we readily see that
the convergence of the sequence $( \gD T^\gD_n)_{n=0,1, \ldots}$ entails the convergence of $N_{\gD, \ell}$ to
$N_\ell (\tau_\cdot)=\sup\{n:\, \tau_n <\ell\}$. Finally, we introduce for $t \ge 0$
\begin{equation}
B_t^\gD\,:=\, \sqrt{\gD} \sum_{j=0}^{\lfloor t/ \gD \rfloor} \go_j\,,   
\end{equation} 
and, by Donsker Theorem, $( B^\gD_{t})_{t \ge 0}$ converges in law to a simple Brownian motion $B$ as $\gD \searrow 0$.
In this case $B^\gD_\cdot$, as well as $B_\cdot$, is an element of the Skorhohod space of CADLAG trajectories, equipped with the standard 
Skorohod topology  \cite[Ch.~3]{cf:Bill}. Of course we are just interested in $t \in [0, \ell]$: with standard notation the function space is $D([0, \ell]; \bbR)$.

Now let us remark that with these notations we can write
\begin{equation}
\begin{split}
  \sum_{j=0}^{(\ell/\gD)-1} \gs_j \left(\sqrt{\gD}\go_j + \ga \gD\right)\, =\, & 
 \sum_{n=1}^{N_{\gD, \ell}} (-1)^{n}  \left( \left(B^\gD_{\gD T^\gD_n}-  B^\gD_{\gD T^\gD_{n-1}}\right)+ \ga \gD\left( T^\gD_n- T^\gD_{n-1}\right) \right) 
 \\
 &+ (-1)^{N_{\gD, \ell} }
  \left( \left(B^\gD_{\ell}-  B^\gD_{\gD T^\gD_{{N_{\gD, \ell} }}}\right)+ \ga \left( \ell- \gD T^\gD_{{N_{\gD, \ell} }}\right) \right)
  \\
  =: &\,  H_\ell\left(\gD T^\gD_\cdot,B^\gD_\cdot \right)\, ,
\end{split}
\end{equation} 
where the first sum should be read as $0$ if $N_{\gD, \ell}=0$. Moreover, let us give explicitly
\begin{multline}
H_\ell\left(t_\cdot,f_\cdot \right)\, =
\\
\sum_{n=1}^{N_\ell(t_\cdot)} (-1)^{n}  \left(f_{t_n}- f_{t_{n-1}}+ \ga \left( t_n- t_{n-1}\right) \right) 
+ (-1)^{N_\ell(t_\cdot) }
  \left( f_{\ell}-  f_{t_{N_\ell(t_\cdot)}}+ \ga \gD\left( \ell- t_{N_\ell(t_\cdot)}\right) \right),
\end{multline}
for $t_\cdot \in \cS_0$, with $N_\ell(t_\cdot)< \infty$, and for $f_\cdot \in D([0, \ell]; \bbR)$.

Since (also) on the limit measure  $N_\ell (\tau_\cdot)< \infty$ a.s.
and since the limit trajectory $B_\cdot$ (a Brownian motion) is continuous, we see that the function $H_\ell: 
\cS\times D([0, \ell]; \bbR)\to \bbR$
is continuous at each $(t_\cdot, f_\cdot)$ with $N_\ell(t_\cdot)< \infty$ and $f_\cdot \in C^0([0, \ell]; \bbR)$  (the topology for $ \cS\times D([0, \ell]; \bbR)$ is the product one).
From this we conclude that we have the convergence in law
\begin{equation}
\label{eq:wconv34}
\exp\left( H_\ell\left(\gD T^\gD_\cdot,B^\gD_\cdot \right)\right)\wedge L\overset{ \gD \to 0}{
\Longrightarrow
}\exp\left( H_\ell\left(\tau_\cdot,B_\cdot \right)\right)\wedge L\,,
\end{equation}
and by the Skorohod representation \cite[Ch.~1, Th.~6.7]{cf:Bill} and the Dominated Convergence Theorem we obtain  
\begin{equation}
\label{eq:wconv34-2}
\bE_{\gG , \gD}\left[
\exp\left( H_\ell\left(\gD T^\gD_\cdot,B^\gD_\cdot \right)\right)\wedge L\right] \overset{ \gD \to 0}{\Longrightarrow}
\bE_{\gG }\left[\exp\left( H_\ell\left(\tau_\cdot,B_\cdot \right)\right)\wedge L\right]\,,
\end{equation}
and the proof of Proposition~\ref{th:PoissonBM} is complete.
\end{proof}
\medskip

We include in this appendix also a quick discussion of overlap issues. 

We start with 
\begin{equation}
\partial_ \gl \log \mathtt{Z}_{N, \go, J, \gl h, \gl \gd}\big \vert_{\gl=1} \,=\, \sum_{\gs\in \{-1,1\}^N} \nu_{N, \go}( \gs) \sum_{j=1}^N \left(\gd \go_j+h \right) \gs_j \, ,
\end{equation}
where $ \nu_{N, \go}(\cdot)$ is the (Gibbs) probability associated to the partition function  
\eqref{eq:Z-RFIC}, that is the probability measure that defines the RFIC.
If $\go_j$ is a standard Gaussian random variable by performing an integration by parts (this is an important and well
known step in the statistical mechanics of disordered systems, see for example \cite[p.~182]{cf:Bov}) we obtain 
\begin{equation}
\label{eq:overlapIsing}
\partial_ \gl \bbE \log \mathtt{Z}_{N, \go, J, h, \gl \gd}\big \vert_{\gl=1} \,=\, h
\bbE 
\sum_{\gs} \nu_{N, \go} (\gs) \sum_{j=1}^N \gs_j +
\bbE 
\sum_{\gs, \gs '} \nu_{N, \go}( \gs)\nu_{N, \go}( \gs') \sum_{j=1}^N \gd^2 \left(1-\gs_j \gs'_j \right) \, ,
\end{equation}
and of course $1-\gs_j \gs'_j= 2\, \ind_{\gs_j \neq \gs'_j}$ so a suitable derivative of the free energy is directly related to the \emph{overlap} between two independent spin realizations that share the same disorder.
This result holds also for the Continuum RFIC, where explicit expressions can be obtained. For conciseness we limit ourselves to  $\ga=0$:

\medskip

\begin{lemma}
\label{th:overlap}
 For $\ga=0$
 \begin{equation}
 \label{eq:overlap-th}
  \lim_{\ell \to \infty} \frac1\ell \e \left[ \mu^{\otimes 2}_{\gG, B_\cdot,\ell}   \left(  \int_0^\ell \ind_{\left\{ {\bf s}_t^\ssup{1} \neq  {\bf s}_t^\ssup{2}\right\}} \dd t   \right)\right] \,=\, \frac{\varepsilon^2}{2} \left(1- \frac{2   K_1(\varepsilon)^2}{K_0(\varepsilon)^2} +   \frac{K_2(\varepsilon)}{K_0(\varepsilon)}\right)\overset{\gG \to \infty} \sim \frac1\Gamma,
  \end{equation}
   where ${\bf s}^\ssup{1}_\cdot,   {\bf s}^\ssup{2}_\cdot$ denote two independent copies of ${\bf s}_\cdot$ under $\mu^{\otimes 2}_\ell$. 
\end{lemma}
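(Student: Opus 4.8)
The strategy is to combine the integration-by-parts identity \eqref{eq:overlapIsing} (adapted to the Brownian/continuum setting) with the explicit free energy formula \eqref{eq:fe}. First I would establish the continuum analog of \eqref{eq:overlapIsing}: namely, that for Gaussian disorder one has
\begin{equation}
\label{eq:IbP-cont}
\frac1\ell\,\e\!\left[\mu_{\gG,B_\cdot,\ell}\!\left(\int_0^\ell {\bf s}_t\dd B_t\right)\right]
\,=\,
\frac2\ell\,\e\!\left[\mu^{\otimes 2}_{\gG,B_\cdot,\ell}\!\left(\int_0^\ell \ind_{\{{\bf s}^\ssup{1}_t\neq {\bf s}^\ssup{2}_t\}}\dd t\right)\right]\,.
\end{equation}
This is the Gaussian integration-by-parts (Stein's lemma) argument: writing $\mu_{\gG,B_\cdot,\ell}(\cdot)$ in terms of $Z_{\gG,B_\cdot,\ell}$, differentiating $\log Z$ along the disorder, and using $\e[F(B_\cdot)\,\dd B_t]=\e[\mathbb{D}_t F(B_\cdot)]\dd t$ where $\mathbb{D}_t$ is the Malliavin derivative (equivalently, a discretize–then–pass-to-the-limit argument starting from \eqref{eq:overlapIsing} via Proposition~\ref{th:PoissonBM}); the replica term $1-{\bf s}^\ssup{1}_t{\bf s}^\ssup{2}_t=2\,\ind_{\{{\bf s}^\ssup{1}_t\neq {\bf s}^\ssup{2}_t\}}$ appears exactly as in the discrete case. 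Care must be taken with the stochastic-integral interpretation and with exchanging $\partial_\gl$, $\e$ and the $\ell\to\infty$ limit; uniform integrability follows from the exponential-moment bounds on the disorder already used in Appendix~\ref{sec:C0}.

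Second, I would identify the left-hand side of \eqref{eq:IbP-cont}. This is precisely the \emph{density of disorder energy} computed in item (2) after \eqref{eq:DW-formula}: by the scaling argument there, replacing $B_\cdot$ by $\gl B_\cdot$ multiplies $\gep$ by $1/\gl^2$ in the free energy, so differentiating $\tf_0(\gG)$ in $\gl$ at $\gl=1$ gives
\begin{equation}
\lim_{\ell\to\infty}\frac1\ell\,\mu_{\gG,B_\cdot,\ell}\!\left(\int_0^\ell {\bf s}_t\dd B_t\right)
\,\overset{\mathrm{a.s.}}{=}\,
\frac{\gep^2\big(K_0(\gep)^2+K_2(\gep)K_0(\gep)-2K_1(\gep)^2\big)}{K_0(\gep)^2}\,,
\end{equation}
together with a.s.\ and $\bbL^1$ convergence (again from \cite{cf:CGG} / the concentration remarks in Section~\ref{sec:exactresults}). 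The computation uses the recurrence relations among $K_0,K_1,K_2$ and the fact that $\partial_x\big(xK_{-1}(x)/K_0(x)\big)$ can be written in terms of $K_0,K_1,K_2$; this is the routine Bessel bookkeeping that I would not reproduce in detail.

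Dividing by $2$ gives \eqref{eq:overlap-th}. Finally, the asymptotics $\overset{\gG\to\infty}{\sim}1/\gG$ follow from $\gep=e^{-\gG}\to 0$ and the small-argument expansions $K_0(\gep)\sim\log(1/\gep)=\gG$, $K_1(\gep)\sim 1/\gep$, $K_2(\gep)\sim 2/\gep^2$: the bracket $\gep^2(K_0^2+K_2K_0-2K_1^2)/K_0^2$ has leading term $\gep^2\cdot K_2/K_0\sim 2/\gG$, while $\gep^2 K_1^2/K_0^2\sim 1/\gG^2$ is lower order, so the half of it is $\sim 1/\gG$. The main obstacle is the rigorous justification of the continuum integration-by-parts formula \eqref{eq:IbP-cont} — in particular controlling the Malliavin derivative of $\log Z_{\gG,B_\cdot,\ell}$ and interchanging it with the thermodynamic limit — rather than the Bessel asymptotics, which are standard; since the excerpt says the proof of \eqref{eq:overlap} is only \emph{sketched} in the appendix, I would present \eqref{eq:IbP-cont} via the discrete route of Proposition~\ref{th:PoissonBM}, where \eqref{eq:overlapIsing} is already available, and pass to the limit.
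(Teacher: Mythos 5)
Your plan is correct and follows essentially the same route as the paper's (sketched) argument: the Gaussian integration-by-parts identity relating the disorder energy to the overlap, justified by passing from the discrete identity \eqref{eq:overlapIsing} through the convergence of Proposition~\ref{th:PoissonBM} (the paper's alternative being approximation of $\mathbf{s}_\cdot$ by finitely-many-jump processes, your alternative a Malliavin computation), combined with the $\gl$-scaling derivative of the explicit free energy \eqref{eq:fe} and the standard small-argument Bessel asymptotics. No substantive gap beyond the interchange-of-limits points you already flag, which is exactly the level of detail the paper itself leaves as a sketch.
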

\medskip

\begin{proof} We just give a sketch of the argument.
First of all we remark that, by scaling properties, if we replace  $B_\cdot$ with $\gl B_\cdot$ ($\gl>0$) in the Continuum RFIC, 
the free energy density of the model becomes
$\gl^2\tf_0(\gG+2\log \gl)$ and
\begin{equation}
\label{eq:derFla}
\partial_\lambda \left( \gl^2\tf_0\left(\gG+2\log \gl\right)\right)\big\vert_{\gl=1}\, =\, \lim_{\ell \to \infty}
\bbE\left[ \mu_{\gG, B_\cdot,\ell} \left(\frac 1 \ell  \int_0^\ell \mathbf{s}_t \dd B_t \right)\right]\, .
\end{equation}
In analogy with \eqref{eq:overlapIsing} one guesses that 
\begin{equation}
\label{eq:overlap2C0}
\bbE\left[ \mu_{\gG, B_\cdot,\ell} \left(  \int_0^\ell \mathbf{s}_t \dd B_t \right)\right]\, =\, 
\bbE\left[\mu^{\otimes 2}_{\gG, B_\cdot,\ell} \left(  \int_0^\ell \left(  1- {\bf s}^\ssup{1}_t {\bf s}^\ssup{2}_t\right) \dd t \right)
\right]\, ,
\end{equation}
and \eqref{eq:derFla}-\eqref{eq:overlap2C0}, together with the Bessel  function expression for $\tf_0(\gG)$ in \eqref{eq:fe}, lead to  
\eqref{eq:overlap-th}.
A proof of \eqref{eq:overlap2C0} follows by exploiting the result in the discrete framework, i.e. \eqref{eq:overlapIsing},  and  by applying a mild generalization of the statement in 
Proposition~\ref{th:PoissonBM}. In alternative, one can construct a sequence of processes 
$\left(\mathbf{s}_{k,\cdot}\right)_{k \in \bbN}$,
with $\mathbf{s}_{k,\cdot}= \left(\mathbf{s}_{k,t}\right)_{t \in [0, \ell]}$ such that $\mathbf{s}_{k,\cdot}$ has only a finite number of jumps and approaches $\mathbf{s}_\cdot$ for $k \to \infty$. The integration by parts formula may be applied to 
 the model built on $\mathbf{s}_{k, \cdot}$ and \eqref{eq:overlap2C0} is recovered in the $k\to \infty$ limit.
\end{proof}

\section*{Acknowledgments and declarations}
We are grateful to Bernard Derrida for several fruitful exchanges. Y.H. acknowledges the support of ANR (Agence Nationale de la Recherche), project  ANR-22-CE40-0012.
No other funding was received for conducting this study.


\begin{thebibliography}{99}




\bibitem{cf:AKQ}
T.~Alberts, K.~Khanin and J.~Quastel,
\emph{The intermediate disorder regime for directed polymers in dimension
$1+ 1$}, 
 Ann. Probab. {\bf 42} (2014), 1212-1256.

\bibitem{cf:Bill} P.~Billingsley, \emph{Convergence of Probability Measures}, John Wiley \& Sons, 2nd edition, 1999.

\bibitem{cf:Bov} A.~Bovier, Statistical mechanics of disordered systems. A mathematical perspective. Cambridge University Press, 2006. 

\bibitem{cf:BF08}
 A.~Bovier and A.~Faggionato, \emph{Spectral analysis of Sinai's walk for small eigenvalues}, Ann. Probab. {\bf 36} (2008),  198-254. 
 
 \bibitem{cf:BS22} A.~Bowditch and R.~Sun, 
\emph{The two-dimensional continuum random field Ising model},
Ann. Probab. {\bf 50} (2022),  419-454.
 
 \bibitem{cf:Brox} Th. Brox, 
 \emph{A one-dimensional diffusion process in a Wiener medium},
Ann. Probab. {\bf 14} (1986), 1206-1218. 


\bibitem{cf:CSZ} F.~Caravenna, R.~Sun and N.~Zygouras, Polynomial chaos and scaling limits of disordered systems, J. Eur. Math. Soc. (JEMS) {\bf 19} (2017), 1-65.


\bibitem{cf:Cheliotis}
D.~Cheliotis, \emph{Diffusion in random environment and the renewal theorem}, Ann. Probab. {\bf 33} (2005),  1760-1781.

\bibitem{cf:CDDHLS}
 X.~Chen, V.~Dagard, B.~Derrida, Y.~Hu, M.~Lifshits and Z.~Shi, \emph{The Derrida-Retaux conjecture on recursive models}, Ann. Probab. {\bf 49} (2021),  637-670.
 
 
 \bibitem{cf:CGH} O. Collin, G. Giacomin and Y. Hu,
\emph{The random field Ising chain domain-wall structure in the large interaction limit},
arXiv:2401.03927



\bibitem{cf:CGG} F.~Comets, G.~Giacomin and R.L.~Greenblatt,
\emph{Continuum limit of random matrix products in statistical mechanics of disordered systems},
Commun. Math. Phys. {\bf 369} (2019), 171-219. 


\bibitem{cf:CTT}
A.~Comtet, C.~Texier and Y.~Tourigny, 
\emph{Lyapunov exponents, one-dimensional Anderson localization and products of random matrices},
J. Phys. A: Math. Theor. {\bf 46} (2013),
 254003 (20pp). 


\bibitem{CPV}
A.~Crisanti, G.~Paladin and A.~Vulpiani, 
\emph{Products of random matrices in statistical physics},  Springer Series in Solid-State Sciences {\bf 104}, 1993.




\bibitem{cf:CR81}  M.~Cs\H{o}rg\H{o} and P.~R\'ev\'esz,  \emph {Strong Approximations in Probability and Statistics,} Academic Press, 1981.


\bibitem{DM} C.~Dasgupta and S.-K. ~Ma, Low-temperature properties of the random Heisenberg antiferromagnetic chain, Phys. Rev. B 22 (1980), 1305



\bibitem{cf:DH83} B. ~Derrida and H.J. ~Hilhorst, \emph{Singular behaviour of certain infinite products of random $2\times2$ matrices},  J. Phys. A {\bf 16} (1983), 2641-2654.

\bibitem{cf:DR}
B. Derrida and M. Retaux, \emph{The depinning transition in presence of disorder: a toy model}, J. Stat. Phys. {\bf 156} (2014), 268-290.



\bibitem{cf:Faggionato}
A.~Faggionato, 
\emph{The alternating marked point process of h-slopes of drifted Brownian motion},
Stochastic Process. Appl. {\bf 119} (2009), 1765-1791.

\bibitem{cf:F92} D. S. Fisher, \emph{Random transverse field Ising spin chains},  Phys. Rev. Lett. {\bf 69} (1992), 534-537.


\bibitem{cf:F95} D. S. Fisher, \emph{Critical behavior of random transverse-field Ising spin chains},  Phys. Rev. B  {\bf 51}  (1995), 6411-6461. 

\bibitem{cf:FLDM01}
 D.~S.~Fisher, P.~Le Doussal and C.~Monthus, \emph{Nonequilibrium dynamics of random field Ising spin chains: exact results via real space renormalization group}, Phys. Rev. E  {\bf 64} (2001),  066107, 41 pp..
 
 \bibitem{cf:FrLl}
 H.~L. Frisch and  S.~P.~Lloyd,
 \emph{Electron levels in a one-dimensional random lattice}, Phys. Rev. {\bf 120} (1960), 1175-1189.
 
 \bibitem{cf:GB} G. Giacomin, \emph{Disorder and critical phenomena through basic probability models}, 
  Lecture notes from the $40^{\mathrm{th}}$ Probability Summer School held in Saint-Flour, 2010, Lecture Notes in Mathematics {\bf 2025}, Springer, 2011.

 
 \bibitem{cf:GGG17} 
G. Genovese, G.Giacomin and R. L.  Greenblatt, \emph{Singular behavior of the leading Lyapunov exponent of a product of random $2 \times 2$ matrices}, Commun. Math. Phys.  {\bf 351} (2017), 923-958.


\bibitem{cf:GG22} G. Giacomin and R. L. Greenblatt, \emph{Lyapunov exponent for products of random Ising transfer matrices: the balanced disorder case}, ALEA {\bf 19} (2022), 701-728.

\bibitem{cf:IM} F. Igl\'oi and C. Monthus, \emph{Strong disorder RG approach of random systems}, Phys. Rep. {\bf 412} (2005), 277-431.

\bibitem{cf:IM2} 
F. Igl\'oi and C. Monthus, \emph{Strong disorder RG approach -- a short review of recent developments}, Eur. Phys. J. B {\bf 91} (2018), Paper No. 290, 25 pp..


\bibitem{IW77} N. Ikeda and S. Watanabe, \emph{A comparison theorem for solutions of stochastic differential equations and its applications},  Osaka J. Math.  {\bf 14}   (1977), 619-633.

\bibitem{cf:Krengel} U.~Krengel, 
\emph{Ergodic theorems}, 
De Gruyter Stud. Math. {\bf 6},
Walter de Gruyter \& Co.,  1985.


\bibitem{Kruk07} L. Kruk, J. Lehoczky, K. Ramanan and S. Shreve, \emph{An explicit formula for the Skorokhod map on $[0, a]$}, Ann. Probab. {\bf 35}  (2007), 1740-1768.

\bibitem{Kunita82} H. Kunita,  \emph{Stochastic differential equations and stochastic flows of diffeomorphisms.}  
Ecole d'\'et\'e de probabilit\'es de Saint-Flour XII - 1982, Lect. Notes Math. 1097, 143-303 (1984). 




\bibitem{cf:RWRE-phys} 
P.~Le Doussal, C.~Monthus and D.~Fisher, 
\emph{Random walkers in one-dimensional random environments: exact renormalization group analysis}, 
Phys. Rev. E (3) {\bf 59} (1999),  4795-4840. 

\bibitem{legall-yor}
	J.F. ~Le Gall and M. ~Yor, 
	Excursions Browniennes et carr\'es de processus de Bessel.
	{\it C.\ R.\ Acad.\ Sc.\ Paris}, (I) {\bf 303}, (1986) 73-76.

\bibitem{cf:Luck-book} J.-M. Luck, \emph{Syst\`emes d\'esordonn\'es unidimensionnel}, Collection Al\'ea Saclay, 1992.

%\bibitem{cf:MWbook} B. M. McCoy  and T. T. Wu,  \emph{The Two-Dimensional Ising Model}. Harvard University Press, 1973.

\bibitem{cf:MW1}
B. M. McCoy  and T. T. Wu,
\emph{Theory of a two-dimensional {I}sing model with random impurities.
  {I}. Thermodynamics}.
Phys. Rev. \textbf{176} (1968), 631-643.


 
 


\bibitem{cf:NP89}
J.~Neveu and J.~Pitman,  \emph{Renewal property of the extrema and tree property of the excursion of a one-dimensional Brownian motion}, S\'eminaire de Probabilit\'es XXIII,  Lecture Notes in Math.  {\bf 1372} (1989), 239-247.

\bibitem{cf:NL86}
T. M. Nieuwenhuizen and J.-M. Luck, \emph{Exactly soluble random field Ising models in one dimension}, J. Phys. A, {\bf 19} (1986), 1207-1227. 

\bibitem{cf:DLMF}
F. W. J. Olver, A. B. Olde Daalhuis, D. W. Lozier, B. I. Schneider, R. F. Boisvert, C. W. Clark, B. R. Miller, B. V. Saunders, H. S. Cohl, and M. A. McClain, (Editors)
  \emph{NIST Digital Library of Mathematical Functions}.
  http://dlmf.nist.gov/, Release 1.0.28 of 2020-09-15. 

\bibitem{PY82} J.~Pitman  and M.~Yor,   {\it A decomposition of Bessel Bridges}, Z. Wahrsche. verw Gebiete {\bf 59},  (1982), 425-457. 

\bibitem{RY} D.~Revuz and M.~Yor, \emph{Continuous Martingales and Brownian Motion}, 3rd ed. Springer, Berlin, 1999.

\bibitem{cf:Viana}
 M.~Viana, \emph{Lectures on Lyapunov exponents},  Cambridge Studies in Advanced Mathematics, {\bf 145} Cambridge University Press,  2014.


\bibitem{cf:Vojta} T.~Vojta, \emph{Rare region effects at classical, quantum and nonequilibrium phase transitions}, J. Phys. A: Math. Gen. {\bf 39} (2006), R143-R205.

%\bibitem{Zeitouni} O.~Zeitouni, \emph{Random walks in random environment},   Ecole d'Et\'e de probabilit\'es de Saint-Flour XXXI.   Lect. Notes Math. 1837, 191-312 (2004), Berlin, Springer.
 
\bibitem{Zeitouni}  O.~Zeitouni,  \emph{Random walks in random environments}, J. Phys. A {\bf 39} (2006), no. 40, R433-R464. 
 
\end{thebibliography}
\end{document}